\newcommand{\mysection}[1]{\section{#1}
\setcounter{equation}{0}}
\newtheorem{theorem}{Theorem}[section]
\newtheorem{corollary}[theorem]{Corollary}
\newtheorem{lemma}[theorem]{Lemma}
\newtheorem{proposition}[theorem]{Proposition}
\theoremstyle{definition}
\newtheorem{remark}[theorem]{Remark}
\theoremstyle{definition}
\theoremstyle{definition}
\newtheorem{assumption}[theorem]{Assumption}
\def\dashint{\operatorname%
{\,\,\text{\bf--}\kern-.98em\DOTSI\intop\ilimits@\!\!}}
\def\vu{\textit{\textbf{u}}}
\def\vv{\textit{\textbf{v}}}
\def\vw{\textit{\textbf{w}}}
\def\vf{\textit{\textbf{f}}}
\def\bfA{\mathbf{A}}
\def\bR{\mathbb{R}}
\def\bN{\mathbb{N}}
\def\bH{\mathbb{H}}
\def\bC{\mathbb{C}}
\def\bW{\mathbb{W}}
\def\fL{\mathfrak{L}}
\def\cC{\mathcal{C}}
\def\cD{\mathcal{D}}
\def\cE{\mathcal{E}}
\def\cH{\mathcal{H}}
\def\cP{\mathcal{P}}
\def\cM{\mathcal{M}}
\def\cO{\mathcal{O}}
\def\cQ{\mathcal{Q}}
\def\cR{\mathcal{R}}
\def\cL{\mathcal{L}}
\newcommand{\set}[1]{\left\{#1\right\}}
\newcommand{\ii}{\textit{\textbf{i\,}}}
\begin{document}
\title[Higher order systems]{On the $L_p$-solvability of higher order parabolic and elliptic systems with
BMO coefficients}

\author[H. Dong]{Hongjie Dong}
\address[H. Dong]{Division of Applied Mathematics, Brown University,
182 George Street, Providence, RI 02912, USA}
\email{Hongjie\_Dong@brown.edu}
\thanks{H. Dong was partially supported by NSF grant number DMS-0800129.}

\author[D. Kim]{Doyoon Kim}
\address[D. Kim]{Department of Applied Mathematics, Kyung Hee University, 1, Seochun-dong, Gihung-gu, Yongin-si, Gyeonggi-do 446-701 Korea}
\email{doyoonkim@khu.ac.kr}

\subjclass[2000]{35K52, 35J58}

\keywords{higher order systems, boundary value problems, BMO coefficients, Sobolev spaces.}

\begin{abstract}
We prove the solvability in Sobolev spaces for both divergence and non-divergence form higher order parabolic and elliptic systems in the whole space, on a half space, and on a bounded domain.
The leading coefficients are assumed to be merely measurable in the time variable and have small mean oscillations with respect to the spatial variables in small balls or cylinders. For the proof,
we develop a set of new techniques to produce mean oscillation estimates for systems on a half space.
\end{abstract}

\maketitle

\setcounter{tocdepth}{1}
\tableofcontents

\mysection{Introduction}

The paper is devoted to the study of the $L_p$-theory of higher order parabolic and elliptic systems. 
More precisely, we expand the $L_p$-theory of higher order elliptic and parabolic systems
to include a class of {\em not necessarily continuous coefficients} via a unified approach for both divergence type and non-divergence type systems in the whole space, on a half space, and on a bounded domain.
The coefficients we consider are complex valued and,
especially, the leading coefficients of parabolic systems are only measurable in the time variable and belong to the class of BMO (bounded mean oscillations) as functions of the spatial variables. The mean oscillations of the coefficients only need to be sufficiently small over small cylinders.

To present the exact forms of systems, we let
$$
Lu = \sum_{|\alpha|\le m,|\beta|\le m} A^{\alpha\beta}D^{\alpha}D^{\beta}u,\quad
\cL u = \sum_{|\alpha|\le m,|\beta|\le m}D^\alpha(A^{\alpha\beta}D^\beta u),
$$
where $m$ is a positive integer,
$$
D^\alpha=D_1^{\alpha_1}\cdots D_d^{\alpha_d},
\quad
\alpha=(\alpha_1,\cdots,\alpha_d),
$$
and, for each $\alpha$, $\beta$, $A^{\alpha\beta}=[A^{\alpha\beta}_{ij}(t,x)]_{i,j=1}^n$ is an $n\times n$ complex matrix-valued function.
The involved functions are complex vector-valued functions, that is,
$$
u = (u^1,\cdots,u^n)^{\text{tr}},
\quad
f = (f^1,\cdots,f^n)^{\text{tr}},
\quad
f_{\alpha}=(f_{\alpha}^1,\cdots,f_{\alpha}^n)^{\text{tr}}.
$$
The parabolic systems we study are
$$
u_t + (-1)^m L u = f,
\quad
u_t + (-1)^m \cL u = \sum_{|\alpha|\le m}D^\alpha f_{\alpha},
$$
where the  first one is in non-divergence form and the second one is in divergence form.
The elliptic systems, non-divergence form and divergence form, respectively, are
$$
Lu = f,
\quad
\cL u = \sum_{|\alpha|\le m}D^\alpha f_{\alpha}.
$$
Whenever elliptic systems are considered, coefficients, $u$, $f$, and $f_{\alpha}$ are independent of $t$. When the domain is other than the whole space, we impose the homogeneous Dirichlet boundary condition.

In the case of non-divergence type elliptic systems, we prove that, for a given $f \in L_p(\Omega)$, there is a unique solution $u \in W_p^{2m}(\Omega)$ to the system
$Lu = f$ in $\Omega$, where $\Omega$ is either the whole space $\bR^d$, the half space $\bR^d_+ = \{(x_1,\cdots,x_d) \in \bR^d, x_1 > 0\}$, or a bounded domain.
We also prove the corresponding results for the other types of elliptic and parabolic systems; see Section \ref{sec082001}.

As is well known, the key ingredient in establishing $L_p$-theory is apriori $L_p$-estimates of solutions to given systems. Largely, this is done in two steps.
First, one establishes $L_p$-estimates for systems with `simple' coefficients, for example, constant coefficients.
Second, if the given system is in some sense close to systems with simple coefficients, one obtains the desired $L_p$-estimates by using a perturbation argument.

The $L_p$-estimates for systems with constant coefficients, in many references, for example, \cite{ADN64},  rely on the exact representation of solutions and the Calder\'on-Zygmund theorem.
Another approach for such $L_p$-estimates is that of Campanato-Stampachia
using Stampacchia's interpolation theorem (see \cite{Giaq93}).
As to perturbation arguments, if the coefficients of given systems are uniformly continuous, the estimates are carried out by using the local closeness of the coefficients to constant coefficients in $L_{\infty}$ norm.
When the class of VMO (vanishing mean oscillations) coefficients was first introduced,
another perturbation argument was used in \cite{CFL1,CFL2,BC93},
where the continuity of coefficients is measured in the average sense,
not in the pointwise sense, through a representation formula of solutions and the Coifman-Rochberg-Weiss commutator theorem.

In this paper, in establishing the key $L_p$-estimates, we replace the first step,
$L_p$-estimates of solutions to systems with simple coefficients, by {\em mean oscillation estimates} of solutions to the systems.
Then for the second step we use a different perturbation argument, which is well suited to the mean oscillation estimates.
For instance, if the system under consideration is elliptic in the form of $Lu = f$ with constant coefficients in the whole space,
then by the mean oscillation estimate of $D^{2m}u$ we mean a pointwise estimate of the form
\begin{multline}
								\label{eq0901}
\dashint_{B_r(x_0)}|D^{2m}u - \dashint_{B_r(x_0)} D^{2m}u \, dy | \, dx
\\
\le N\kappa^{-1}\left(\dashint_{B_{\kappa r}(x_0)} |D^{2m} u|^2 \, dx \right)^{1/2}
+N\kappa^{\frac d 2} \left(\dashint_{B_{\kappa r}(x_0)}|f|^2 \, dx \right)^{1/2}
\end{multline}
for all $x_0 \in \bR^d$, $r \in (0,\infty)$, and $\kappa \in [\kappa_0,\infty)$,
where $B_r(x_0)$ is a ball with center $x_0$ and radius $r$.
Indeed, this implies the $L_p$-estimate of $D^{2m}u$ by the well known Fefferman-Stein theorem on sharp functions and the Hardy-Littlewood maximal function theorem.
But more importantly, this type of estimates well embraces the perturbation between the original systems and systems with simple coefficients when the coefficients have small mean oscillations over small balls or small parabolic cylinders.
This approach was first introduced by Krylov \cite{Krylov_2005,Krylov_2007_mixed_VMO} to deal with second order elliptic and parabolic equations with VMO coefficients in the whole space, and is well explained in his book \cite{Krylov:book:2008}.

Due to the well adaptiveness of estimates like \eqref{eq0901} to the perturbation argument, our main effort in this paper focuses on obtaining mean oscillation estimates of systems with simple coefficients.
Since in the parabolic case we allow coefficients to be only measurable in the time direction, the systems with simple coefficients in our case are naturally those with {\em measurable coefficients depending only on $t$}.

For systems in the whole space, which corresponds to interior estimates,
the mean oscillation estimates follow rather easily by adapting the techniques in \cite{Krylov_2005,Krylov:book:2008} to higher order systems.
However, differently from the arguments in \cite{Krylov_2005}, we derive the non-divergence case as a corollary from the divergence case.
Another noteworthy difference is that we prove the mean oscillation estimates not only for the highest order terms but also for the lowest order terms, so we are able to avoid the argument in \cite{Krylov_2005} deriving the $L_p$-estimates of solutions from those of the highest order terms, which is technically difficult in the case of higher order equations.

For systems on a half space or on a bounded domain, which corresponds to boundary estimates, it is not possible to use the approach in \cite{Krylov_2005, Krylov:book:2008} since the estimates developed there are only for equations {\em in the whole space} (interior estimates).
Thus here we develop a set of new techniques to produce mean oscillation estimates
for systems {\em on a half space}.
This is a new approach for boundary  $L_p$-estimates, which is applicable to a wide class of equations or systems.
To get these boundary mean oscillation estimates, as in the whole space case, we start with $L_2$-estimates of systems on a half space.
Although the $L_2$-estimate for divergence type systems is well known under appropriate ellipticity or parabolicity conditions on the leading coefficients,
our Theorem \ref{th06_01} regarding the $L_2$-estimate for non-divergence type systems on a half space with coefficients measurable in time, as it alone, is a new result to our best knowledge.
In the proof we only use that of divergence type systems and an interpolation argument.
It is worth noting that $L_2$-estimates for higher order elliptic equations and systems were obtained in \cite{Fried76,MR1914441} by using bootstrap arguments. For parabolic equations, however, in \cite{Fried76} the coefficients are assumed to be H\"older continuous in the time variable since a semigroup method was used.

From the $L_2$-estimates, we derive the boundary mean oscillation estimates of {\em some} of highest order derivatives of solutions, precisely, $D_{x'}^m u$ in the case of divergence systems and $D_{x'}^{2m}u$ in the case of non-divergence systems,
where $x'$ denotes the last $d-1$ coordinates of $x = (x_1,x')$ in $\bR^d$. These estimates alone, however, are not sufficient for us to prove the main theorems. Because of this, we then consider a parabolic system with special coefficients, such that in a periodic pattern certain order normal derivatives of solutions to the system vanish on the boundary. This gives us
the boundary mean oscillation estimates of $D_{1}^m u$ or $D_{1}^{2m} u$; see Lemma \ref{lem3.34}.
Once we have all required mean oscillation estimates, we proceed as in \cite{Krylov_2005} to the desired $L_p$-estimates using the perturbation argument, the details of which are illustrated for divergence type systems in the whole space; see Section \ref{sec5}.

In the literature, for uniformly continuous coefficients, a rather complete $L_p$-theory can be found for general linear elliptic systems in \cite{ADN64, A65} and for parabolic systems in \cite{Solo,LSU,Ejd,Fried}.
If coefficients are in the class of VMO, non-divergence type higher order systems in the whole space have been investigated, for example, in \cite{CFF,HHH,PS1,PS3}, where leading coefficients of systems are either VMO with respect to all the variables or independent of the time variable. For divergence type higher order elliptic systems with VMO coefficients, we refer the reader to a recent interesting preprint \cite{MaMiSh} in which the inhomogeneous Dirichlet problem on Lipschitz domains was studied. In all these papers, the method of singular integrals is used, so measurable coefficients are not allowed.

Restricted to {\em second order} systems or equations, there are a relatively larger number of papers which can be compared to this paper.
Non-divergence elliptic and parabolic equations on smooth domains with VMO coefficients were first studied in \cite{CFL1,CFL2,BC93} by using the technique of singular integrals. For further related results, we refer the reader to the book \cite{MaPaSo00} and reference therein. The corresponding results for divergence elliptic equations were obtained in \cite{DFG,AuscherQafsaoui} by a similar technique. These results were later improved by the authors of \cite{BW04} in several papers for divergence type equations/systems without lower order terms on non-smooth domains by using a perturbation argument based on the maximal function theorem and a covering lemma (see \cite{B09} for an extension to fourth order systems).
An interesting question would be whether the methods in \cite{BW04,B09} can be applied to equations with lower order terms or non-divergence form equations/systems.
The methodology developed by Krylov in \cite{Krylov_2005, Krylov_2007_mixed_VMO} was later developed and extended in \cite{DK08} for divergence and non-divergence systems in the whole space with the same class of coefficients, and in \cite{KimKrylov07,KimKrylov:par06,Krylov08} for non-divergence parabolic and elliptic equations in the whole space with partially BMO coefficients for $p>2$, and in \cite{Dong09} for any $p\in (1,\infty)$. In \cite{DongKim08a,Dong09,DKCalVarPDE}, this method was further adapted to divergence parabolic and elliptic equations/systems in the whole space with partially BMO coefficients.
It is worth noting that in \cite{Dong09}-\cite{DKCalVarPDE} and \cite{KimKrylov07}-\cite{Krylov08} only interior mean oscillation estimates were derived. When dealing with equations and systems on a half space or on a bounded domain in \cite{KimKrylov07,KimKrylov:par06,DongKim08a,DKCalVarPDE},
the authors took full advantage of the facts that the coefficients are allowed to be merely measurable in one spatial direction and the given systems are second order. Thus without using any {\em boundary mean oscillation estimates} developed here, the boundary $L_p$-estimates were derived from interior estimates as corollary type results by using odd and even extension techniques. However, the extension techniques do not work for higher order equations or systems.  This is the first paper in which the ideas in  \cite{Krylov_2005, Krylov_2007_mixed_VMO} are adapted to boundary estimates, in both divergence and non-divergence cases.

As noted above, the first critical step of the proof is the $L_2$-estimates of
systems with relatively simple coefficients under the ellipticity or parabolicity conditions on the leading coefficients.
In this paper, we use so-called Legendre-Hadamard ellipticity condition, which is more general than the strong ellipticity condition considered, for example, in \cite{LKS,B09,DKCalVarPDE}. Nevertheless, it is still stronger than the uniform parabolicity condition in the sense of Petrovskii, which was used in \cite{Ejd,PS1,Solo} with more regularity assumptions on the leading coefficients.
We shall discuss in details these conditions in Section \ref{sec11}.

The organization of the paper is as follows. We introduce some notation and state the main results in the next section. The remaining part of the paper is divided into two parts. In the first part, we treat systems in the whole space. Section \ref{sec3} and \ref{sec_aux} are devoted to the $L_2$-estimates and mean oscillation estimates for both divergence and non-divergence parabolic systems with simple coefficients. In Section \ref{sec5} we complete the proofs of the $L_p$-solvability of systems in the whole space. The second part is the main part of the paper, in which we treat systems on a half space or on a bounded domain. In Section \ref{sec6} we establish the $L_2$-solvability of divergence and non-divergence parabolic systems with simple coefficients on a half space. Then in Section \ref{sec7}, we obtain the boundary mean oscillation estimates of $D_{x'}^m u$ and $D_{x'}^{2m} u$ for divergence and non-divergence systems respectively. Section \ref{sec8} is devoted to the estimates for a special type of systems. With these preparations, in Section \ref{sec9} and \ref{sec10} we establish the $L_p$-solvability of both divergence and non-divergence parabolic systems on a half space and on a bounded domain. Finally, we discuss in Section \ref{sec11} some other ellipticity conditions used in the literature, and show how our results can be extended to systems under those conditions.

\mysection{Main results}								 \label{sec082001}

We first introduce some notation used throughout the paper.
A point in $\bR^d$ is denoted by $x = (x_1,\cdots,x_d)$.
Whenever needed, we denote $x$ by $(x_1,x')$ where $x' \in \bR^{d-1}$.
A point in
$$
\bR^{d+1} = \bR \times \bR^d = \{ (t,x) : t \in \bR, x \in \bR^d \}
$$
is denoted by $X = (t,x)$.
For $T \in (-\infty,\infty]$, set
$$
\cO_T = (-\infty,T) \times \bR^d,
\quad
\cO_T^+ = (-\infty,T) \times \bR^d_+,
$$
where $\bR^d_+ = \{x = (x_1,\cdots,x_d) \in \bR: x_1 > 0\}$.
Especially, if $T = \infty$, we have, for example,
$\cO_{\infty}^+ = \bR \times \bR^d_+$.
We also have
$$
B_r(x) = \{ y \in \bR^d: |x-y| < r\},
\quad
B_r'(x') = \{ y' \in \bR^{d-1}: |x'-y'| < r\},
$$
$$
Q_r(t,x) = (t-r^{2m},t) \times B_r(x),
\quad
Q_r'(t,x') = (t-r^{2m},t) \times B_r'(x'),
$$
$$
Q_r^+(t,x) = Q_r(t,x) \cap\cO_\infty^+.
$$
We denote
$$
\langle f,g \rangle_{\Omega} = \int_{\Omega} f^{\text{tr}} \bar{g}
= \sum_{j=1}^n\int_{\Omega} f^j \overline{g^j}.
$$
For a function $f$ on $\cD \subset \bR^{d+1}$, we set
\begin{equation*}
(f)_{\cD} = \frac{1}{|\cD|} \int_{\cD} f(t,x) \, dx \, dt
= \dashint_{\cD} f(t,x) \, dx \, dt,
\end{equation*}
where $|\cD|$ is the
$d+1$-dimensional Lebesgue measure of $\cD$.

In order to state and prove our results on systems in Sobolev spaces,
in addition to the well known spaces $L_p$ and $W_p^k$,
we introduce the following function spaces.
As a solution space for non-divergence type parabolic equations,
we use
$$
W_p^{1,2m}((S,T) \times \Omega)
=\{u : u_t, D^{\alpha}u \in L_p((S,T)\times\Omega), 0 \le |\alpha| \le 2m\}
$$
equipped with its natural norm.
Unless specified otherwise,  in this paper $D^{\alpha}u(t,x)$ means the spatial derivative of $u$.
For divergence type parabolic equations with $\Omega = \bR^d$, we introduce
$$
\cH_p^m((S,T) \times \bR^d)
= (1-\Delta)^{\frac m 2}W_p^{1,2m}((S,T) \times \bR^d)
$$
equipped with the norm
$$
\|u\|_{\cH_p^m((S,T) \times \bR^d)}
= \| (1-\Delta)^{-\frac m 2} u \|_{W_p^{1,2m}((S,T)\times\bR^d)}.
$$
Note that if we set
$$
\bH_p^{-m}((S,T) \times \bR^d)
= (1-\Delta)^{\frac m 2}L_p((S,T) \times \bR^d),
$$
$$
\|f\|_{\bH_p^{-m}((S,T) \times \bR^d)}
= \|(1-\Delta)^{-\frac m 2} f\|_{L_p((S,T)\times\bR^d)},
$$
then
$$
\|u\|_{\cH_p^m((S,T) \times \bR^d)}
\cong
\|u_t\|_{\bH_p^{-m}((S,T) \times \bR^d)}
+ \sum_{|\alpha|\le m} \|D^{\alpha}u\|_{L_p((S,T)\times\bR^d)}.
$$
For a general $\Omega$, we set
$$
\bH^{-m}_p((S,T)\times \Omega)
= \left\{ f: f = \sum_{|\alpha|\le m} D^{\alpha}f_{\alpha}, \quad f_{\alpha} \in L_p((S,T) \times \Omega)\right\},
$$
$$
\|f\|_{\bH^{-m}_p((S,T)\times \Omega)}
= \inf \left\{ \sum_{|\alpha|\le m} \|f_{\alpha}\|_{L_p((S,T)\times \Omega)} : f = \sum_{|\alpha|\le m} D^{\alpha}f_{\alpha}\right\},
$$
and
$$
\cH_p^m((S,T) \times \Omega)
=\{u: u_t \in \bH_p^{-m}((S,T)\times\Omega), D^{\alpha}u \in L_p((S,T)\times\Omega), 0 \le |\alpha| \le m \},
$$
$$
\|u\|_{\cH_p^m((S,T) \times \Omega)}
= \|u_t\|_{\bH_p^{-m}((S,T)\times\Omega)} + \sum_{|\alpha|\le m} \|D^{\alpha}u\|_{L_p((S,T)\times\Omega)}.
$$

Let $\delta,K>0$ be two constants. Throughout the paper, we assume that all the coefficients are measurable, complex valued and bounded,
$$
|A^{\alpha\beta}| \le
\left\{
\begin{aligned}
\delta^{-1}, \quad &|\alpha| = |\beta| = m,
\\
K, \quad &\text{otherwise}.
\end{aligned}
\right.
$$
In addition, we impose the Legendre-Hadamard ellipticity on the leading coefficients (see, for instance, \cite{Fried,Giaq93}).
Here we call $A^{\alpha\beta}$ the leading coefficients if $|\alpha|=|\beta|=m$. All the other coefficients are called lower-order coefficients.
By the Legendre-Hadamard ellipticity we mean
\begin{equation}
                                \label{eq7.9.17}
\Re\left(\sum_{|\alpha|=|\beta|=m}
\theta^{\text{tr}} \xi^{\alpha}\xi^{\beta}A^{\alpha\beta}(t,x)\bar\theta\right)
\ge \delta |\xi|^{2m}|\theta|^2
\end{equation}
for all $(t,x) \in \bR^{d+1}$, $\xi \in \bR^d$, and $\theta \in \bC^n$.
Here we use $\Re(f)$ to denote the real part of $f$.

Now we state our regularity assumption on the leading coefficients.
Let
$$
\text{osc}_x\left(A^{\alpha\beta},Q_r(t,x)\right)
= \dashint_{t-r^{2m}}^{\,\,\,t}
\dashint_{B_r(x)} \big| A^{\alpha\beta}(s,y) - \dashint_{B_r(x)} A^{\alpha\beta}(s,z) \, dz \big| \, dy \, ds.
$$
Then we set
$$
A^{\#}_R = \sup_{(t,x) \in \bR^{d+1}} \sup_{r \le R}  \sup_{|\alpha|=|\beta|=m} \text{osc}_{x} \left(A^{\alpha\beta},Q_r(t,x)\right).
$$

We impose on the leading coefficients the small mean oscillation
condition with a parameter $\rho>0$, which will be
specified later.
\begin{assumption}[$\rho$]                          \label{assumption20080424}
There is a constant $R_0\in (0,1]$ such that $A_{R_0}^{\#} \le \rho$.
\end{assumption}

Contrary to non-divergence type systems where equations are defined almost everywhere,
solutions to divergence type equations are understood in the weak sense.
More precisely, for example,
we say that $u \in \cH_{p,\text{loc}}^m((S,T) \times \Omega)$, where $1<p<\infty$, $\Omega \subset \bR^d$, and $-\infty \le S < T \le \infty$, satisfies
$$
u_t + (-1)^m \cL u + \lambda u = \sum_{|\alpha|\le m}D^\alpha f_{\alpha}
\quad
\text{in}
\quad
(S,T) \times \Omega,
$$
provided that
$$
\int_S^t \int_{\Omega} \left(- \varphi_t \cdot u  + (-1)^{m+|\alpha|} D^{\alpha}\varphi \cdot A^{\alpha\beta} D^{\beta}u \right) \, dx \, ds
$$
$$
= (-1)^{|\alpha|} \int_S^t \int_{\Omega} D^{\alpha}\varphi \cdot f_{\alpha}\, dx \, ds
+ \int_{\Omega} u(S,x) \varphi(S,x) \, dx - \int_{\Omega} u(t,x) \varphi(t,x) \, dx
$$
for every $t \in (S,T]$ and $\varphi = (\varphi^1,\cdots, \varphi^n) \in C^{\infty}(\overline{(S,T)\times\Omega})$
such that $\varphi(t,\cdot) \in C_0^{\infty}(\Omega)$ for all $t \in [S,T]$.
If $S=-\infty$ or $T=\infty$, we take $\varphi \in C^{\infty}(\overline{(S,T)\times\Omega})$
such that $\varphi(-\infty,\cdot) = 0$ or $\varphi(\infty,\cdot) = 0$, respectively.

We are now ready to present our main results.

\begin{theorem}[Divergence parabolic systems in the whole space]
             \label{Thm1}

Let $p \in (1,\infty)$, $T\in (-\infty,\infty]$
and $f_\alpha \in L_p(\cO_T)$ for $|\alpha|\le m$.
Then there exists a constant $\rho=\rho(d,m,n,p,\delta)$
such that, under Assumption \ref{assumption20080424} ($\rho$),
the following hold true.

\noindent
(i)
For any $u \in \cH_p^m(\cO_T)$ satisfying
\begin{equation}							 \label{eq081902}
u_t+(-1)^m \cL u + \lambda u = \sum_{|\alpha|\le m}D^\alpha f_{\alpha}\quad \text{in}\,\,\cO_T,
\end{equation}
we have
\begin{equation*}							 
\sum_{|\alpha|\le m} \lambda^{1-\frac{|\alpha|}{2m}} \| D^{\alpha} u \|_{L_p(\cO_T)}
\le N \sum_{|\alpha| \le m} \lambda^{\frac{|\alpha|}{2m}} \| f_{\alpha} \|_{L_p(\cO_T)},	
\end{equation*}
provided that $\lambda \ge \lambda_0$,
where $N$ and $\lambda_0 \ge 0$
depend only on $d$, $m$, $n$, $p$, $\delta$, $K$ and $R_0$.

\noindent
(ii)
For any  $\lambda > \lambda_0$, there exists a unique $u \in \cH_p^m(\cO_T)$ satisfying \eqref{eq081902}.
\end{theorem}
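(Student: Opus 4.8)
The plan is to follow the two-step scheme laid out in the introduction. \emph{First} I would establish the statement for the model operator whose leading coefficients $A^{\alpha\beta}$ ($|\alpha|=|\beta|=m$) depend only on $t$ and whose lower-order coefficients are set to zero; \emph{second} I would remove these simplifications by a perturbation argument built on mean oscillation estimates, and convert the resulting pointwise bounds into the $L_p$ estimate via the Fefferman--Stein theorem on sharp functions and the Hardy--Littlewood maximal function theorem. Existence in (ii) then follows from the a priori estimate in (i) by the method of continuity. (The case of finite $T$ can be treated in parallel with, or reduced to, the case $T=\infty$; I will not belabor this.)

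For the model problem, the $L_2$-version of (i) comes from an energy argument: testing \eqref{eq081902} against $u$, applying the Legendre--Hadamard condition \eqref{eq7.9.17} through Plancherel's theorem in the spatial variables (a G\aa rding-type inequality for the symbol frozen in $t$), and taking $\lambda$ large gives $\sum_{|\alpha|\le m}\lambda^{1-|\alpha|/(2m)}\norm{D^\alpha u}_{L_2(\cO_T)} \le N\sum_{|\alpha|\le m}\lambda^{|\alpha|/(2m)}\norm{f_\alpha}_{L_2(\cO_T)}$, with existence in $\cH_2^m(\cO_T)$ from a Galerkin scheme or the Lax--Milgram lemma on the associated Gelfand triple. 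From this $L_2$-estimate I would then derive the interior mean oscillation estimate: if $u$ solves the homogeneous equation with $t$-dependent coefficients in a cylinder $Q_{\kappa r}$, then for every $|\alpha|\le m$ the oscillation $\dashint_{Q_r}\abs{D^\alpha u-(D^\alpha u)_{Q_r}}$ is bounded by $N\kappa^{-\gamma}\big(\dashint_{Q_{\kappa r}}(\abs{D^m u}^2+\abs{u}^2)\big)^{1/2}$ plus an $f_\alpha$-term, obtained by differentiating the equation in the $x'$-directions, recovering $\partial_t$ and normal derivatives from the equation itself, and iterating Caccioppoli-type inequalities; crucially I would prove this for all $|\alpha|\le m$ at once, so as to avoid the reduction from top-order derivatives used for second-order equations.

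For the general system I would freeze the leading coefficients at the center of a small cylinder $Q_r$ with $r\le R_0$ and rewrite \eqref{eq081902} with the frozen operator on the left, moving the error terms $(\,\overline{A^{\alpha\beta}}-A^{\alpha\beta}\,)D^\beta u$ and the lower-order terms to the right-hand side. Applying the model mean oscillation estimate and bounding the error by H\"older's inequality together with Assumption \ref{assumption20080424}($\rho$) --- after a self-improving higher-integrability (reverse H\"older) estimate for $D^m u$ that upgrades the $L_1$-type smallness of $\text{osc}_x(A^{\alpha\beta},\cdot)$ to an $L_q$-type smallness pairable with $D^\beta u\in L_p$ --- yields a pointwise bound for the sharp function of $D^\alpha u$ by $N(\rho^{\sigma}+\kappa^{-\gamma})$ times the maximal function of $\big(\sum_{|\beta|\le m}\abs{D^\beta u}^2\big)^{1/2}$ plus controlled $f_\alpha$- and lower-order contributions. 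The Fefferman--Stein and Hardy--Littlewood theorems give the estimate for $p\ge 2$; taking $\kappa$ large and then $\rho$ small absorbs the $D^\beta u$ term, the standard parabolic rescaling $x\mapsto\lambda^{1/(2m)}x$, $t\mapsto\lambda t$ (reducing to $\lambda=1$) keeps track of the $\lambda$-powers and, for $\lambda\ge\lambda_0$, absorbs the lower-order coefficients; the range $1<p<2$ follows by duality, the adjoint of a divergence system being again of the same type. This proves (i). For (ii) I would connect $\cL$ to $(-\Delta)^m$ through the family $\cL_\tau=(1-\tau)(-\Delta)^m+\tau\cL$, which for every $\tau\in[0,1]$ satisfies \eqref{eq7.9.17} and Assumption \ref{assumption20080424}($\rho$) with the same constants, and run the method of continuity using the uniform a priori estimate from (i); uniqueness is immediate from (i).

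I expect the perturbation step to be the main obstacle: one must arrange that \emph{every} error term genuinely carries a small constant. This needs, simultaneously, the right scaling in $\lambda$ (so the bounded lower-order coefficients enter only after division by a positive power of $\lambda$) and the reverse-H\"older improvement for $D^m u$ (so the mean-oscillation smallness of the leading coefficients, which is only an $L_1$-type quantity, can be paired against $D^\beta u$ in $L_p$); carrying this bookkeeping through for all $|\alpha|\le m$ at the same time, and uniformly in $T\in(-\infty,\infty]$, is where the real work lies.
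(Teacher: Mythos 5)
Your proposal tracks the paper's architecture closely: $L_2$-estimate for coefficients depending only on $t$, interior mean oscillation estimates for the model operator proved for all $|\alpha|\le m$ simultaneously (indeed the paper's Theorem \ref{thm4.2} does exactly this), perturbation via freezing, Fefferman--Stein and Hardy--Littlewood, duality for $p<2$ with $p=2$ by interpolation, and the method of continuity for existence. The one place where you deviate---and where your reasoning would overshoot---is the step you describe as a ``self-improving higher-integrability (reverse H\"older) estimate for $D^m u$ that upgrades the $L_1$-type smallness of $\mathrm{osc}_x$.'' No such estimate is invoked in the paper, and none is needed. In Lemma \ref{lem10.48}, after freezing $A^{\alpha\beta}$ at the point $y$, averaging the error $I_y$ over $y\in B$, and applying H\"older with conjugate exponents $(\mu,\nu)$ where $\mu>1$ is chosen so that $2\mu<p$, the coefficient-oscillation factor is pared down to $(A_R^\#)^{1/(2\nu)}$ using only the \emph{boundedness} of $A^{\alpha\beta}$ (so that $|A(t,y)-A(t,x)|^{2\nu}\le N|A(t,y)-A(t,x)|$), while the price paid is that $D^m u$ enters the mean oscillation estimate raised to the power $2\mu$, i.e.\ as $(|D^m u|^{2\mu})^{1/(2\mu)}_{Q_{\kappa r}}$. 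This is then handled \emph{not} by reverse H\"older but directly: since one is proving an a priori estimate for $u$ already assumed to lie in $\cH_p^m(\cO_T)$ (and one can further reduce to $u\in C_0^\infty(Q_{R_0})$ by a partition of unity), $D^m u\in L_p$ is given from the start, and the Hardy--Littlewood maximal function theorem applied with exponent $p/(2\mu)>1$ bounds $\|(\cM(|D^m u|^{2\mu}))^{1/(2\mu)}\|_{L_p}$ by $N\|D^m u\|_{L_p}$. Introducing a Gehring-type self-improvement here would be both redundant and fragile: it would rest on a Caccioppoli inequality and Sobolev embedding at a moment when one is still in the middle of establishing the very $L_p$ estimate, and it is simply not how the smallness is exploited. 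The role played by the choice $2\mu<p$ is exactly the bookkeeping trick you flag as ``where the real work lies''; once you replace your reverse-H\"older step by this choice of $\mu$ combined with the maximal-function bound, your argument reproduces the paper's.
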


\begin{theorem}[Non-divergence parabolic systems  in the whole space]
              \label{Thm2}
Let $p \in (1,\infty)$, $T\in (-\infty,\infty]$
and $f \in L_p(\cO_T)$.
Then there exists a constant $\rho=\rho(d,m,n,p,\delta)$
such that, under Assumption \ref{assumption20080424} ($\rho$),
the following hold true.

\noindent
(i)
For any $u \in W_p^{1,2m}(\cO_T)$ satisfying
\begin{equation}							 \label{eq081902b}
u_t+(-1)^m L u + \lambda u = f \quad \text{in}\,\,\cO_T,
\end{equation}
we have
\begin{equation*}							
\| u_t\|_{L_p(\cO_T)}+\sum_{|\alpha|\le 2m} \lambda^{1-\frac{|\alpha|}{2m}} \| D^{\alpha} u \|_{L_p(\cO_T)}
\le N \| f\|_{L_p(\cO_T)},	
\end{equation*}
provided that $\lambda \ge \lambda_0$,
where $N$ and $\lambda_0 \ge 0$
depend only on $d$, $m$, $n$, $p$, $\delta$, $K$ and $R_0$.

\noindent
(ii)
For any  $\lambda > \lambda_0$, there exists a unique $u \in W_p^{1,2m}(\cO_T)$ satisfying \eqref{eq081902b}.
\end{theorem}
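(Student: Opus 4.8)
The plan is to prove Theorem \ref{Thm2} by the same scheme used for Theorem \ref{Thm1} --- $L_2$- and mean oscillation estimates for ``model'' systems, a perturbation argument based on the sharp function, and the method of continuity --- the only genuinely new point being that in the whole space the non-divergence model estimates are read off from the divergence ones. Concretely, I would first treat $u_t+(-1)^mL_0u+\lambda u=f$ with $L_0=\sum_{|\alpha|=|\beta|=m}A^{\alpha\beta}(t)D^\alpha D^\beta$, i.e.\ only leading terms, depending only on $t$. Since these coefficients are independent of $x$,
\begin{equation*}
\sum_{|\alpha|=|\beta|=m}A^{\alpha\beta}(t)\,D^\alpha D^\beta u=\sum_{|\alpha|=|\beta|=m}D^\alpha\big(A^{\alpha\beta}(t)\,D^\beta u\big),
\end{equation*}
so $u$ is also a solution of the divergence model system with data $f_0=f$, $f_\alpha=0$ otherwise, and the $L_2$-theory and interior mean oscillation estimates for divergence model systems (of the type \eqref{eq0901}, adapted to parabolic cylinders) apply to it. To recover the derivatives of order between $m$ and $2m$ and the term $u_t$, I would use that the divergence model operator commutes with $D^\gamma$: for $|\gamma|\le m$, $D^\gamma u$ solves the same divergence system with data $D^\gamma f$, and substituting into Theorem \ref{Thm1}(i) yields $\lambda^{1-|\alpha|/2m}\|D^\alpha u\|_{L_p(\cO_T)}\le N\|f\|_{L_p(\cO_T)}$ for all $|\alpha|\le 2m$ with the sharp power of $\lambda$, together with the corresponding mean oscillation estimate for the whole vector $(u_t,Du,\dots,D^{2m}u)$; finally $\|u_t\|_{L_p(\cO_T)}\le\|f\|_{L_p(\cO_T)}+\|L_0u\|_{L_p(\cO_T)}+\lambda\|u\|_{L_p(\cO_T)}\le N\|f\|_{L_p(\cO_T)}$.

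With these model estimates, part (i) is obtained by transcribing, with the obvious modifications, the perturbation argument of Section \ref{sec5}. After reducing by approximation to sufficiently regular $u$, I would freeze the leading coefficients in $x$ over small cylinders $Q_r$ with $r\le R_0$ and put the resulting error, together with all lower-order terms of $L$, on the right-hand side: the frozen-coefficient error is small because $A^\#_{R_0}\le\rho$, while the lower-order terms are absorbed for $\lambda$ large by the usual interpolation bookkeeping. Combined with the model mean oscillation estimate this gives a pointwise bound for the sharp function of $U:=(u_t,Du,\dots,D^{2m}u)$ by the Hardy--Littlewood maximal functions of $|U|$ and $|f|$ with small coefficient, whence the Fefferman--Stein theorem and the maximal function theorem give the estimate of part (i), after fixing $\rho=\rho(d,m,n,p,\delta)$ small and $\lambda_0=\lambda_0(d,m,n,p,\delta,K,R_0)$ large.

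For part (ii) I would use the method of continuity along $L_\tau:=(1-\tau)\Delta^m+\tau L$, $\tau\in[0,1]$. Every $L_\tau$ has leading coefficients satisfying the Legendre--Hadamard condition, is bounded, and its leading coefficients have $x$-oscillation equal to $\tau$ times that of $L$, hence $L_\tau$ satisfies Assumption \ref{assumption20080424}($\rho$) with the same $\rho$ and the same $R_0$; therefore part (i) applies to $u_t+(-1)^mL_\tau u+\lambda u=f$ with a constant independent of $\tau$ and, for $\lambda\ge\lambda_0$, controls $\|u\|_{W_p^{1,2m}(\cO_T)}$ by $N\|f\|_{L_p(\cO_T)}$. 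At $\tau=0$ the equation $u_t+(-\Delta)^mu+\lambda u=f$ is uniquely solvable in $W_p^{1,2m}(\cO_T)$ by classical constant-coefficient parabolic theory (e.g.\ via the Fourier transform, or from Theorem \ref{Thm1}(ii) plus interior regularity for constant-coefficient systems). The method of continuity then yields unique solvability for $L_1=L$, which is part (ii).

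The step that requires real care is the bootstrap in the model estimate: one must extract from the divergence model theory the mean oscillation estimates for \emph{all} spatial derivatives up to order $2m$ as well as for $u_t$ --- not just for the top-order spatial derivatives --- while keeping the power of $\lambda$ sharp. Once this is in place the remaining steps are routine adaptations of the divergence argument, and none of the genuinely hard constructions of the paper (the boundary mean oscillation estimates and the non-divergence $L_2$-theory on a half space) is needed in the whole space.
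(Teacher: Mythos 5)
Your overall scheme --- read off the model (coefficients depending only on $t$) non-divergence estimates from the divergence ones, then run the same frozen-coefficient/sharp-function perturbation argument --- is the paper's scheme for $p>2$, and the way you derive the model $W_p^{1,2m}$ estimate by applying Theorem~\ref{Thm1}(i) to $D^\gamma u$ is exactly what the paper does (cf.\ Theorem~\ref{th06_05} and Corollary~\ref{cor06_03}). You also misidentify the step that requires care: the paper never needs a mean oscillation estimate for the ``whole vector'' $(u_t,Du,\dots,D^{2m}u)$; Corollary~\ref{cor06_03} estimates only $D^{2m}u$ and $\lambda u$, and the intermediate derivatives and $u_t$ are recovered by interpolation and from the equation after the $L_p$ estimate is in hand.

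The genuine gap is the case $p\in(1,2]$. The sharp-function bound you invoke is not a bound by $\cM|U|$ and $\cM|f|$; the frozen-coefficient error contributes a term of the form $\bigl(\cM\bigl(|D^{2m}u|^{2\mu}\bigr)\bigr)^{1/(2\mu)}$ with $\mu>1$, and to control its $L_p$ norm by $\|D^{2m}u\|_{L_p}$ via the maximal function theorem one needs $p/(2\mu)>1$, i.e.\ $p>2\mu>2$. So ``transcribing the perturbation argument'' fails for $p\le 2$, and the divergence-case fix --- direct duality for the BMO-coefficient operator --- is not available here because $L$ is not in divergence form. The paper's remedy, which your proposal omits, is a bootstrap: first establish the model $W_q^{1,2m}$ theory for some $q<p\le 2$ (your step~1 actually gives this, since Theorem~\ref{Thm1}(i) holds for all $q$; the paper instead uses Case~1 for $q>2$ plus duality for the $t$-dependent model operator), and then \emph{rederive the auxiliary mean oscillation estimates of Section~\ref{sec_aux} with $L_q$ in place of $L_2$}, so that the term above becomes $\bigl(\cM\bigl(|D^{2m}u|^{q\mu}\bigr)\bigr)^{1/(q\mu)}$ with $q\mu<p$. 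Without that replacement, your argument establishes part~(i) only for $p>2$.
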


\begin{remark}
								\label{remark02}
We can also solve Cauchy problems for systems defined on $(0,T)\times\bR^d$ in divergence or non-divergence form.
If the initial condition is zero,
this is done by extending the original system to a system defined on $(-\infty, T)\times\bR^d$
with the right-hand side being zero for $t \in (-\infty,0)$.
We deal with, in the same manner, Cauchy problems for the systems below defined on a half space or on a bounded domain.
Note that in the case $T<\infty$, by considering $e^{-(\lambda_0+1) t}u$ instead of $u$ we can take $\lambda=0$ in the theorems above and below with the expense that $N$ also depends on $T$.
\end{remark}

The next two theorems are about the boundary value problem of systems in divergence and non-divergence form on a half space $\cO_T^+ = (-\infty,T) \times \bR^d_+$.

\begin{theorem}[Divergence parabolic systems on a half space]
              \label{Thm5}
Let $p \in (1,\infty)$, $T\in (-\infty,\infty]$
and $f_\alpha \in L_p(\cO_T^+)$ for $|\alpha|\le m$.
Then there exists a constant $\rho=\rho(d,m,n,p,\delta)$
such that, under Assumption \ref{assumption20080424} ($\rho$),
the following hold true.

\noindent
(i)
For any $u \in \cH_p^m(\cO_T^+)$ satisfying
\begin{equation}							 \label{eq081902c}
\left\{
  \begin{aligned}
    u_t+(-1)^m \cL u + \lambda u = \sum_{|\alpha|\le m}D^\alpha f_{\alpha} \quad & \hbox{in $\cO_T^+$;} \\
    u=D_1 u=...=D_1^{m-1}u=0 \quad & \hbox{on $\partial_p \cO_T^+$,}
  \end{aligned}
\right.
\end{equation}
where $\partial_p \cO_T^+ = (-\infty,T) \times \partial \bR^d_+$,
we have
\begin{equation}							 
\sum_{|\alpha|\le m} \lambda^{1-\frac{|\alpha|}{2m}} \| D^{\alpha} u \|_{L_p(\cO_T^+)}
\le N \sum_{|\alpha| \le m} \lambda^{\frac{|\alpha|}{2m}} \| f_{\alpha} \|_{L_p(\cO_T^+)},	
\end{equation}
provided that $\lambda \ge \lambda_0$,
where $N$ and $\lambda_0 \ge 0$
depend only on $d$, $m$, $n$, $p$, $\delta$, $K$ and $R_0$.

\noindent
(ii)
For any  $\lambda > \lambda_0$, there exists a unique $u \in \cH_p^m(\cO_T^+)$ satisfying \eqref{eq081902c}.
\end{theorem}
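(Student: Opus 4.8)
The plan is to follow the two-step scheme sketched in the introduction: first prove the a priori estimate in part (i) for systems whose coefficients are merely measurable in $t$, by combining mean oscillation estimates with a perturbation argument, and then obtain the solvability in part (ii) by the method of continuity based on the $L_2$-theory on a half space of Section \ref{sec6}. By Remark \ref{remark02} it suffices to treat $T = \infty$, and after a standard scaling and partition-of-unity reduction one may work with the model operator obtained by freezing the spatial variable in the leading coefficients, i.e.\ with leading coefficients $A^{\alpha\beta}(t)$ depending only on $t$.

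First I would establish, for such model systems $u_t + (-1)^m \cL_0 u + \lambda u = \sum_{|\alpha|\le m} D^\alpha f_\alpha$ on $\cO_\infty^+$ with the Dirichlet conditions $u = D_1 u = \cdots = D_1^{m-1} u = 0$ on $\partial_p \cO_\infty^+$, a boundary mean oscillation estimate for the top-order derivatives: for $Q_r^+$ centered on $\{x_1 = 0\}$ and $\kappa$ large,
\begin{equation*}
\dashint_{Q_r^+} \big| D^m u - (D^m u)_{Q_r^+} \big| \le N \kappa^{-\mu} \Big( \dashint_{Q_{\kappa r}^+} |D^m u|^2 \Big)^{1/2} + N \kappa^{\nu} \sum_{|\alpha|\le m} \Big( \dashint_{Q_{\kappa r}^+} |f_\alpha|^2 \Big)^{1/2}
\end{equation*}
for suitable exponents $\mu, \nu > 0$, together with an analogous but simpler estimate for the lower-order derivatives $D^\alpha u$, $|\alpha| < m$ (which is why one tracks all $|\alpha|\le m$ rather than only $|\alpha| = m$). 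The route is the familiar one: split $u = v + w$ where $w$ solves the homogeneous model system on a larger half-cylinder with the same boundary values, so that $w$ inherits the $L_2$-estimate of Section \ref{sec6} and the associated higher regularity, while $v = u - w$ satisfies a Caccioppoli-type energy inequality on the smaller half-cylinder; interior cylinders are handled by the whole-space results of Sections \ref{sec3}--\ref{sec_aux}.

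The step I expect to be the main obstacle is that the $L_2$-estimates and this decomposition only yield control of the \emph{tangential} highest-order derivatives $D_{x'}^m u$ near the boundary, not of the pure normal derivative $D_1^m u$, whereas the perturbation argument needs the full $D^m u$. To recover $D_1^m u$ I would use the device of Lemma \ref{lem3.34}: pass to an auxiliary system with specially chosen leading coefficients so that, in a periodic pattern in $x_1$, certain normal derivatives of its solution vanish on $\{x_1 = 0\}$; the tangential estimate already proved, applied to this auxiliary system and transferred back, supplies the missing boundary mean oscillation estimate for $D_1^m u$, hence for all of $D^m u$, for model systems. Combined with the interior estimates this gives a global mean oscillation bound on $\cO_\infty^+$.

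With the model-coefficient mean oscillation estimates in hand, the rest is the perturbation argument as carried out for the whole space in Section \ref{sec5} and for the half space in Section \ref{sec9}: write the given equation as a model equation with error term $\sum_{|\alpha| = |\beta| = m}(A^{\alpha\beta} - (A^{\alpha\beta})_{Q})D^\beta u$ plus lower-order terms, bound the mean oscillation of the error over small cylinders by Assumption \ref{assumption20080424} ($\rho$) and H\"older's inequality, and apply the Fefferman--Stein sharp function theorem together with the Hardy--Littlewood maximal function theorem. Choosing $\kappa$ large and then $\rho = \rho(d,m,n,p,\delta)$ small lets one absorb the resulting multiples of $\sum_{|\alpha|\le m}\lambda^{1 - |\alpha|/(2m)}\|D^\alpha u\|_{L_p(\cO_T^+)}$ on the left-hand side, and enlarging $\lambda_0$ absorbs the lower-order coefficients, yielding (i). For (ii), uniqueness is immediate from (i), and existence follows from the method of continuity along $(-1)^m[(1-s)\Delta^m + s\cL]$, $s \in [0,1]$: the a priori estimate of (i) holds uniformly in $s$, the endpoint $s = 0$ is solvable on $\cO_T^+$ by the $L_2$-theory of Section \ref{sec6} together with a density argument passing between $\cH_p^m$ and $\cH_2^m$, and the usual continuity-method bootstrap then gives solvability for all $s \in [0,1]$ and all $\lambda > \lambda_0$.
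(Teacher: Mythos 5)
Your proposal follows the paper's proof in its essential architecture: reduce to $T=\infty$ and lower-order-free operators; prove a boundary mean oscillation estimate for the tangential top-order derivatives $D_{x'}^m u$ via the split into a locally homogeneous part (with H\"older regularity from the $L_2$ theory of Section~\ref{sec6}) and an inhomogeneous part carrying the $L_2$ estimate; recognize that the pure normal derivative $D_1^m u$ is not captured by this route and must be recovered through the special operator $\fL_0 = A(t)D_1^{2m}+\sum_{j\ge 2}D_j^{2m}$ (Lemma~\ref{lem3.34}, Propositions~\ref{prop3.35}--\ref{prop3.36}), using the interpolation inequality of Proposition~\ref{prop07_01} to control the cross derivatives $D_1^k D_{x'}^{m-k}u$ when combining the two estimates; and close by freezing coefficients, applying Fefferman--Stein and Hardy--Littlewood, then choosing $\kappa$ large and $\rho$ small. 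The method of continuity step for (ii) matches the paper as well. You correctly isolate the main new idea (the normal derivative via the specially structured system), which is the heart of the argument.

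There is, however, one genuine omission. The sharp-function machinery as you invoke it (averaging $|D^m u|^2$ over $Q_{\kappa r}$ and then applying the maximal function theorem) absorbs the error term only when $p>2$, since one needs $\cM(|g|^2)^{1/2}\in L_p$ which fails at $p\le 2$ without further work. The paper handles $p\in(1,2)$ for divergence systems by a duality argument (possible precisely because the system is in divergence form and the boundary operator is self-adjoint for the Dirichlet conditions considered), and then $p=2$ by interpolation between the two ranges. Your proposal does not mention this, and as written the perturbation step fails for $p\le 2$. You should state the duality reduction explicitly. A smaller imprecision: in your decomposition you attribute the $L_2$ estimate to the homogeneous part $w$ and a Caccioppoli inequality to $v=u-w$; in the paper the roles are reversed -- the locally homogeneous piece carries the bootstrapped interior/boundary H\"older regularity (Lemmas~\ref{lem6.2}--\ref{lem06_03}), while the piece carrying the truncated data satisfies the global $L_2$ estimate of Theorem~\ref{thVMO02}. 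This does not change the strategy but should be corrected in a written-out proof.
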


\begin{theorem}[Non-divergence parabolic systems on a half space]
              \label{Thm6}
Let $p \in (1,\infty)$, $T\in (-\infty,\infty]$
and $f \in L_p(\cO_T^+)$.
Then there exists a constant $\rho=\rho(d,m,n,p,\delta)$
such that, under Assumption \ref{assumption20080424} ($\rho$),
the following hold true.

\noindent
(i)
For any $u \in W_p^{1,2m}(\cO_T^+)$ satisfying
\begin{equation}							 \label{eq081902d}
\left\{
  \begin{aligned}
    u_t+(-1)^m L u + \lambda u = f \quad & \hbox{in $\cO_T^+$;} \\
    u=D_1 u=...=D_1^{m-1}u=0 \quad & \hbox{on $\partial_p \cO_T^+$,}
  \end{aligned}
\right.
\end{equation}
we have
\begin{equation*}							 
\|u_t\|_{L_p(\cO_T^+)}+\sum_{|\alpha|\le 2m} \lambda^{1-\frac{|\alpha|}{2m}} \| D^{\alpha} u \|_{L_p(\cO_T^+)}
\le N \| f\|_{L_p(\cO_T^+)},	
\end{equation*}
provided that $\lambda \ge \lambda_0$,
where $N$ and $\lambda_0 \ge 0$
depend only on $d$, $m$, $n$, $p$, $\delta$, $K$ and $R_0$.

\noindent
(ii)
For any  $\lambda > \lambda_0$, there exists a unique $u \in W_p^{1,2m}(\cO_T^+)$ satisfying \eqref{eq081902d}.
\end{theorem}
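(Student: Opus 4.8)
The plan is to follow the scheme that is by now standard for such problems: reduce the assertion to an \emph{a priori} $L_p$-estimate for large $\lambda$ in the model case where the leading coefficients $A^{\alpha\beta}$ ($|\alpha|=|\beta|=m$) are measurable functions of $t$ only and the lower-order coefficients vanish; deduce that estimate from a pointwise bound on the sharp function of $D^{2m}u$ and $u_t$ via the Fefferman--Stein theorem on sharp functions and the Hardy--Littlewood maximal function theorem; pass to general coefficients satisfying Assumption \ref{assumption20080424}$(\rho)$ by a partition of unity and a perturbation argument; and finally obtain solvability (part (ii)) from the \emph{a priori} estimate by the method of continuity, with the $L_2$-theory of Section \ref{sec6} as the starting point.

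\textbf{$L_p$-estimate for model coefficients.} Suppose $A^{\alpha\beta}=A^{\alpha\beta}(t)$ satisfy \eqref{eq7.9.17}, the lower-order coefficients are zero, and $u\in W_p^{1,2m}(\cO_T^+)$ solves \eqref{eq081902d}. The goal is a mean-oscillation estimate of the form
\[
\dashint_{\cQ}\big|D^{2m}u-(D^{2m}u)_{\cQ}\big|
\le N\kappa^{-1}\Big(\dashint_{\kappa\cQ}\big(|D^{2m}u|^2+|u_t|^2\big)\Big)^{1/2}
+N\kappa^{d/2+m}\Big(\dashint_{\kappa\cQ}|f|^2\Big)^{1/2},
\]
for $\cQ$ either an interior cylinder $Q_r(t_0,x_0)\subset\cO_T^+$ or a boundary cylinder $Q_r^+(t_0,x_0)$ with $x_0\in\partial\bR^d_+$, $\kappa\cQ$ the corresponding dilated cylinder, valid for all $r>0$ and all large $\kappa$. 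The interior version is the whole-space estimate from Sections \ref{sec3}--\ref{sec_aux}. Given the full (interior and boundary) estimate, the Fefferman--Stein and Hardy--Littlewood theorems --- applied after first localizing $f$ and $D^{2m}u$ to a large cylinder --- give $\|u_t\|_{L_p}+\|D^{2m}u\|_{L_p}\le N\|f\|_{L_p}+N(\|D^{2m}u\|_{L_2}+\|u_t\|_{L_2}+\lambda\|u\|_{L_2})$ over $\cO_T^+$; dilating in $x$ and $t$ (equivalently replacing $u$ by $e^{-\mu t}u$ and rescaling) absorbs the $L_2$ terms for $\lambda$ large and produces the weighted estimate with the factors $\lambda^{1-|\alpha|/2m}$. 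The bound on $u_t$ comes from the equation once $D^{2m}u$ (hence $Lu$) and $\lambda u$ are under control, and the derivatives of intermediate order are handled by interpolation with the $\lambda$-weights.

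\textbf{Boundary mean-oscillation estimates (the main obstacle).} This is the crux, since the odd/even reflection tricks that dispatch second-order equations fail at order $2m$. Two groups of derivatives must be estimated on half-cylinders $Q_r^+(t_0,x_0)$ with $x_0\in\partial\bR^d_+$. Purely tangential derivatives $D_{x'}^{2m}u$ are comparatively routine: $D_{x'}$ preserves the Dirichlet conditions $u=D_1u=\dots=D_1^{m-1}u=0$, so one may differentiate the system in tangential directions and, invoking the $L_2$-solvability of non-divergence systems on a half space (Theorem \ref{th06_01}, itself deduced from the divergence $L_2$-theory by interpolation) together with Caccioppoli- and Sobolev-type bounds, obtain the mean-oscillation estimate for $D_{x'}^{2m}u$ essentially as in the whole space. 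The genuinely new point is the \emph{normal} derivatives $D_1^ku$ with $m\le k\le 2m$, which need not vanish on $\{x_1=0\}$. Following Section \ref{sec8}, one introduces an auxiliary non-divergence system whose coefficients have a special piecewise, $x_1$-periodic structure forcing certain normal derivatives of its solutions to vanish on the hyperplane, solves it on the half space in $L_2$, and compares $u$ with that solution; this transfers the estimates already obtained for $D_{x'}^{2m}u$ and $u_t$ into a mean-oscillation estimate for $D_1^{2m}u$, which is precisely Lemma \ref{lem3.34}. Combining this with \eqref{eq081902d}, which expresses the highest normal coefficient times $D_1^{2m}u$ through $D_{x'}$-derivatives, $u_t$, $\lambda u$ and $f$, recovers all of $D^{2m}u$; the analogous (easier) estimates for every order $|\alpha|\le 2m$ are produced in the same way, which is why the paper estimates all orders rather than deducing them from the top one.

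\textbf{Perturbation and solvability.} With the model estimate in hand, a partition of unity on the scale $R_0$ together with $A_{R_0}^\#\le\rho$ reduces the general case to the model one up to an error $N\rho\|D^{2m}u\|_{L_p}+N_{R_0}(\text{terms of order}<2m)$; choosing $\rho=\rho(d,m,n,p,\delta)$ small absorbs the first term, while the lower-order terms are controlled by interpolation at the cost of enlarging $\lambda_0$ to depend also on $K$ and $R_0$ --- this gives part (i). Part (ii) follows by the method of continuity: connect $L$ through $L_s=(1-s)(-\Delta)^m+sL$, $s\in[0,1]$ (each $L_s$ still satisfies \eqref{eq7.9.17} with the same $\delta$ and the model structure), use the uniform \emph{a priori} bound of part (i) along the homotopy, and start from the classical $L_p$-solvability of $(-\Delta)^m$ on $\bR^d_+$ with the Dirichlet conditions; uniqueness and the initial solvability are furnished by the $L_2$-theory of Section \ref{sec6}. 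As indicated, the expected main difficulty is the boundary mean-oscillation estimate for the normal derivatives, i.e.\ the construction and analysis of the special systems underlying Lemma \ref{lem3.34}.
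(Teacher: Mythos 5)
Your outline captures the overall strategy (half-cylinder mean oscillation estimates, Fefferman--Stein, freezing coefficients, method of continuity), but there are two concrete problems.

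First, you misdescribe the special system of Section \ref{sec8}. The auxiliary operator is $\fL_0 u = A^{\hat\alpha\hat\alpha}(t)D_1^{2m}u + \sum_{j\ge 2}D_j^{2m}u$: its coefficients are measurable in $t$ only and do not vary in $x$ at all, let alone in a ``piecewise, $x_1$-periodic'' way. What is periodic is the pattern of \emph{vanishing normal derivatives of the solution}: since $D_1^{2m}u = (-1)^{m+1}A^{-1}(t)u_t - A^{-1}(t)\sum_{j\ge 2}D_j^{2m}u$ and the Dirichlet data kill $D_1^ku$, $k\le m-1$, on the boundary, one gets by induction $D_1^k D_1^{2ml}u(t,0,x')=0$ for $k=0,\dots,m-1$ and every $l\ge 0$, which is exactly what drives the Caccioppoli/Sobolev bootstrap behind Lemma \ref{lem3.34}. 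Moreover, the resulting estimate is used not by comparing $u$ with an auxiliary solution, but by rewriting \eqref{eq081902d} so that only $A^{\hat\alpha\hat\alpha}(t,y)D_1^{2m}u + \sum_{j\ge 2}D_j^{2m}u$ stays on the left; the mixed derivatives, $D_{x'}^{2m}u$, and the oscillation of the leading coefficient move to the right and are then absorbed via Proposition \ref{prop07_01} together with the tangential bound from Corollary \ref{cor8.7}. Simply ``expressing $D_1^{2m}u$ from the equation'' would leave the mixed terms $D_1^kD_{x'}^{2m-k}u$, $0<k<2m$, uncontrolled.

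Second, and more seriously, your plan does not cover $p\in(1,2]$. For divergence systems duality finishes the job at once, but for the non-divergence operator $L$ no direct duality is available, and all the mean oscillation machinery in Sections \ref{sec7}--\ref{sec8} is $L_2$-based, so by itself it only produces $L_p$ estimates for $p>2$. To handle $p\le 2$ one must (a) apply duality to the \emph{model} operator $u_t+(-1)^m\cL_0u+\lambda u$ on the half space, which is legitimate because $A^{\alpha\beta}=A^{\alpha\beta}(t)$ admits a simultaneous divergence-form representation, obtaining $W_q^{1,2m}$ solvability for $q\in(1,2)$; and then (b) rerun Sections \ref{sec7}--\ref{sec8} with $L_q$ replacing $L_2$, which requires an $L_q$ version of the boundary H\"older estimate (Lemma \ref{lem06_02}) -- and this does \emph{not} follow from a single Sobolev embedding when $q$ is close to $1$, but needs a finite bootstrap through increasing exponents on shrinking half-cylinders. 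Without these steps there is no route from $p>2$ to $p\le 2$.
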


\begin{remark}
                                \label{rem2.01}
By using a scaling argument, it is easy to see that we can choose $\lambda_0$ to be zero in the theorems above provided that $\cL$ or $L$ has no lower-order terms and the leading coefficients depend only on $t$.
\end{remark}

\begin{remark}
								\label{remark01}
In the above we presented the results only for parabolic systems.
From those results we obtain easily
the corresponding results for higher order elliptic systems in divergence form and non-divergence form.
The key idea is viewing solutions to elliptic systems as steady state solutions
to the corresponding parabolic systems. We refer the reader to \cite{Krylov_2005} and \cite{DK08} for details.
To show the exact form of results for elliptic systems, we state below the cases for elliptic systems on a bounded domain, Theorem \ref{Thm9} and Theorem \ref{Thm10}.
\end{remark}

Next we consider the solvability of systems in domains with the homogeneous Dirichlet boundary condition.
For divergence systems, we assume the boundary $\partial \Omega$ of the domain $\Omega$ is locally the graph of a Lipschitz continuous function with a small Lipschitz constant. More precisely, we make the following assumption containing a parameter $\rho_1 \in (0,1]$, which will be specified later.

\begin{assumption}[$\rho_1$]
                                    \label{assump2}
There is a constant $R_1\in (0,1]$ such that, for any $x_0\in \partial\Omega$ and $r\in(0,R_1]$, there exists a Lipschitz
function $\phi$: $\bR^{d-1}\to \bR$ such that
$$
\Omega\cap B_r(x_0) = \{x\in B_r(x_0)\, :\, x^1 >\phi(x')\}
$$
and
$$
\sup_{x',y'\in B_r'(x_0'),x' \neq y'}\frac {|\phi(y')-\phi(x')|}{|y'-x'|}\le \rho_1
$$
in some coordinate system.
\end{assumption}
Note that all $C^1$ domains satisfy this assumption for any $\rho_1>0$.
Below we denote $\Omega_T  = (-\infty,T) \times \Omega$, where $\Omega \subset \bR^d$.

\begin{theorem}[Divergence parabolic systems on a bounded domain]
              \label{Thm7}
Let $p \in (1,\infty)$, $T\in (-\infty,\infty]$.
Then there exist constants $\rho=\rho(d,m,n,p,\delta)$, $\rho_1=\rho_1(d,m,n,p,\delta,K,R_0)$ and $\lambda_0=\lambda_0(d,m,n,p,\delta,K,R_0,R_1) > 0$, such that under Assumption \ref{assumption20080424} ($\rho$) and Assumption \ref{assump2}  ($\rho_1$) the following is true.
For any  $f_\alpha\in L_p(\Omega_T)$, $|\alpha|\le m$, and $\lambda \ge \lambda_0$, there is a unique solution
$u \in \cH_p^m(\Omega_T)$ to
\begin{equation*}							
\left\{
  \begin{aligned}
    u_t+(-1)^m \cL u + \lambda u = \sum_{|\alpha|\le m}D^\alpha f_{\alpha} \quad & \hbox{in $\Omega_T$;} \\
    u=|D u|=...=|D^{m-1}u|=0 \quad & \hbox{on $(-\infty,T)\times \partial \Omega$,}
  \end{aligned}
\right.
\end{equation*}
and we have
\begin{equation*}							 
\sum_{|\alpha|\le m} \lambda^{1-\frac{|\alpha|}{2m}} \| D^{\alpha} u \|_{L_p(\Omega_T)}
\le N \sum_{|\alpha| \le m} \lambda^{\frac{|\alpha|}{2m}} \| f_{\alpha} \|_{L_p(\Omega_T)},	
\end{equation*}
where $N$ depends only on $d$, $m$, $n$, $p$, $\delta$, $K$, $R_0$ and $R_1$. 
\end{theorem}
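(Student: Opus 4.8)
\emph{Proof proposal.} The plan is to localize the problem by a finite, $t$-independent partition of unity, reduce each piece either to a divergence system in the whole space (Theorem \ref{Thm1}) or to a divergence system on a half space (Theorem \ref{Thm5}), assemble the pieces into an a priori estimate on $\Omega_T$, and finally obtain solvability by approximating $\Omega$ and the coefficients by smooth ones. Throughout, $\rho$ and $\rho_1$ are chosen small at the very end so that all the smallness requirements invoked below are met (in particular $\rho$ is below the thresholds of both Theorems \ref{Thm1} and \ref{Thm5}, and $\rho_1$ is small enough that the flattened coefficients still meet the threshold of Theorem \ref{Thm5}), and $\lambda_0$ is taken large enough to absorb the lower-order error terms.

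\emph{Step 1 (a priori estimate).} Cover $\overline\Omega$ by finitely many balls of radius $R_1/4$ centered at points $x_k$, split the indices into interior ones (with $B_{R_1/2}(x_k)\subset\Omega$) and boundary ones, and take a subordinate smooth partition of unity $\{\zeta_k\}$ depending only on $x$. For an interior patch, $\zeta_k u\in\cH_p^m(\cO_T)$ (after extension by zero) solves a system of the form \eqref{eq081902} whose right-hand side is $\sum_{|\alpha|\le m}D^\alpha(\zeta_k f_\alpha)$ plus commutator terms; since the top-order part of $[\cL,\zeta_k]$ cancels, in the weak formulation these terms can be written in divergence form $\sum_{|\alpha|\le m}D^\alpha g_{k,\alpha}$ with $\|g_{k,\alpha}\|_{L_p}\le N\sum_{|\gamma|\le m-1}\|D^\gamma u\|_{L_p(\Omega_T)}+N\sum_{|\alpha|\le m}\|f_\alpha\|_{L_p(\Omega_T)}$, and the localization of the time derivative is handled through the definition of $\bH_p^{-m}$; Theorem \ref{Thm1} then applies. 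For a boundary patch, Assumption \ref{assump2} ($\rho_1$) provides a Lipschitz $\phi_k$ with $\|\nabla\phi_k\|_\infty\le\rho_1$ and $\Omega\cap B_{R_1}(x_k)=\{x_1>\phi_k(x')\}$; the bi-Lipschitz change of variables $y_1=x_1-\phi_k(x')$, $y'=x'$ (suitably regularized when $m\ge2$, so that the factors produced by the transformation are bounded) maps the boundary patch onto a piece of $\partial\bR^d_+$ and turns $\zeta_k u$ into an element of $\cH_p^m(\cO_T^+)$ vanishing together with $D_1,\dots,D_1^{m-1}$ on $\partial_p\cO_T^+$ and solving a system as in \eqref{eq081902c}. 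Because $\|\nabla\phi_k\|_\infty\le\rho_1$, the leading coefficients of the transformed system differ from a rotation of the original leading coefficients by $O(\rho_1)$, hence remain Legendre--Hadamard elliptic and satisfy Assumption \ref{assumption20080424} with $\rho$ replaced by $N(\rho+\rho_1)$; for $\rho,\rho_1$ small this is admissible for Theorem \ref{Thm5}, which we apply. Transforming back, summing over $k$, and moving the commutator and transformation errors (all of order $\le m-1$ in $u$) to the left via standard interpolation inequalities, these errors are absorbed once $\lambda\ge\lambda_0$ is large, giving
\[
\sum_{|\alpha|\le m}\lambda^{1-\frac{|\alpha|}{2m}}\|D^\alpha u\|_{L_p(\Omega_T)}\le N\sum_{|\alpha|\le m}\lambda^{\frac{|\alpha|}{2m}}\|f_\alpha\|_{L_p(\Omega_T)}
\]
for every $u\in\cH_p^m(\Omega_T)$ solving the system, provided $\lambda\ge\lambda_0$.

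\emph{Step 2 (existence and uniqueness).} Uniqueness is immediate from the displayed estimate applied to the difference of two solutions. For existence, fix $\lambda\ge\lambda_0$ and approximate: smooth the Lipschitz graphs defining $\partial\Omega$ to obtain smooth domains $\Omega^{(j)}$ still satisfying Assumption \ref{assump2} ($\rho_1$) and exhausting $\Omega$, and mollify the coefficients to obtain smooth $A^{\alpha\beta,(j)}$ still satisfying Assumption \ref{assumption20080424} ($\rho$) and Legendre--Hadamard ellipticity. On $\Omega^{(j)}_T$ with these uniformly continuous coefficients, $\cH_p^m$-solvability is classical (\cite{Solo,LSU,Fried}); since Step 1 applies to each $j$ with the \emph{same} constants $N$ and $\lambda_0$, the solutions $u^{(j)}$ are bounded in $\cH_p^m$, and a weak limit solves the original problem, the estimate passing to the limit by lower semicontinuity. (Alternatively, once solvability for one operator of this class on $\Omega_T$ is available, the method of continuity based on the uniform a priori estimate yields the general case.)

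\emph{Main obstacle.} The technical heart is the boundary analysis in Step 1 for $m\ge2$: the naive flattening $y_1=x_1-\phi_k(x')$ is only Lipschitz, so one must use a regularized flattening and verify that the resulting divergence system (i) still has leading coefficients with mean oscillation below the threshold of Theorem \ref{Thm5} — which works precisely because that oscillation is controlled by $\rho+\rho_1$, both small — and (ii) generates transformation error terms that, although mildly singular near the flattened boundary when $m\ge2$, can be dominated using a Hardy-type inequality together with the smallness of $\rho_1$ before being absorbed by large $\lambda$. Granting the half-space estimate of Theorem \ref{Thm5}, where the genuinely new techniques of the paper reside, the remaining localization is by now standard.
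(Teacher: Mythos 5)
Your proposal follows essentially the same route as the paper: a partition of unity, interior patches handled via Theorem \ref{Thm1}, boundary patches handled via Theorem \ref{Thm5} after a regularized flattening of the Lipschitz boundary (the paper uses $\tilde\phi(x)=\int_{\bR^{d-1}}\eta(y')\phi(x'-x_1y')\,dy'$, which satisfies $|D^k\tilde\phi|\le N x_1^{1-k}\rho_1$), with the resulting singular transformation terms controlled by Hardy's inequality and absorbed by taking $\rho_1$ small and $\lambda_0$ large. One small imprecision: in Step 1 you say the regularization makes the transformation factors \emph{bounded}, which is not quite right for $m\ge 2$ — the higher derivatives of $\tilde\phi$ still blow up near $x_1=0$ — but you correct this in your ``Main obstacle'' paragraph where you acknowledge the mild singularity and the role of Hardy's inequality, so the substance matches the paper's argument.
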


\begin{theorem}[Non-divergence parabolic systems on a bounded domain]
              \label{Thm8}
Let $p \in (1,\infty)$, $T\in (-\infty,\infty]$ and $\Omega$ be a $C^{2m-1,1}$ domain with the $C^{2m-1,1}$ norm bounded by $K$. Then there exist constants $\rho=\rho(d,m,n,p,\delta)$ and $\lambda_0=\lambda_0(d,m,n,p,\delta,K,R_0)>0$, such that under Assumption \ref{assumption20080424} ($\rho$) the following is true.
For any  $f\in L_p(\Omega_T)$ and $\lambda \ge \lambda_0$, there is a unique solution
$u \in W_p^{1,2m}(\Omega_T)$ to
\begin{equation}							 \label{eq8.10.51}
\left\{
  \begin{aligned}
    u_t+(-1)^m L u + \lambda u = f \quad & \hbox{in $\Omega_T$;} \\
    u=|D u|=...=|D^{m-1}u|=0 \quad & \hbox{on $(-\infty,T)\times \partial \Omega$,}
  \end{aligned}
\right.
\end{equation}
and we have
\begin{equation*}							 
\| u_t\|_{L_p(\Omega_T)}+\sum_{|\alpha|\le 2m} \lambda^{1-\frac{|\alpha|}{2m}} \| D^{\alpha} u \|_{L_p(\Omega_T)}
\le N \| f\|_{L_p(\Omega_T)},
\end{equation*}
where $N$ depends only on $d$, $m$, $n$, $p$, $\delta$, $K$ and $R_0$.
\end{theorem}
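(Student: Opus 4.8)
The plan is to derive Theorem \ref{Thm8} from the half space estimate Theorem \ref{Thm6} and the interior estimate Theorem \ref{Thm2} by a partition of unity combined with a flattening of the boundary, and then to pass from the a priori estimate to solvability by the method of continuity.

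\textbf{A priori estimate.} Since $\partial\Omega\in C^{2m-1,1}$ with norm at most $K$, I would fix a finite cover of $\overline\Omega$ by balls of a small radius $r_0=r_0(d,m,K)$ with a subordinate partition of unity $\{\zeta_k\}$, $\sum_k\zeta_k\equiv1$ on $\overline\Omega$, splitting the indices into interior ones and boundary ones. For a boundary index one flattens $\partial\Omega$ inside the ball by a $C^{2m-1,1}$ diffeomorphism $\Phi_k$ with derivatives controlled by $K$; in the new coordinates the push-forward of $\zeta_k u$ solves a non-divergence parabolic system on $\cO_T^+$ whose leading coefficients still satisfy \eqref{eq7.9.17} (with an ellipticity constant depending on $\delta$ and $K$), have small mean oscillation after shrinking $r_0$, and whose lower-order coefficients are bounded in terms of $K$; the Dirichlet data $u=|Du|=\dots=|D^{m-1}u|=0$ on $\partial\Omega$ becomes exactly $u=D_1u=\dots=D_1^{m-1}u=0$ on $\{x_1=0\}$. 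Hence Theorem \ref{Thm6} applies to each boundary piece and Theorem \ref{Thm2} to each interior piece. Writing the right-hand side for $\zeta_k u$ as $\zeta_k f+(-1)^m[L,\zeta_k]u$ and noting that the commutator involves only derivatives of $u$ of order $\le 2m-1$ with coefficients controlled by $\delta$, $K$ and $\zeta_k$, transporting the estimates back and summing over $k$ (using $u=\sum_k\zeta_k u$) yields
\begin{equation*}
\|u_t\|_{L_p(\Omega_T)}+\sum_{|\alpha|\le 2m}\lambda^{1-\frac{|\alpha|}{2m}}\|D^\alpha u\|_{L_p(\Omega_T)}
\le N\|f\|_{L_p(\Omega_T)}+N\sum_{|\alpha|\le 2m-1}\|D^\alpha u\|_{L_p(\Omega_T)}
\end{equation*}
for $\lambda\ge 1$. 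The interpolation inequality on the $C^{2m-1,1}$ domain $\Omega$ bounds the last sum by $\tfrac12\sum_{|\alpha|=2m}\|D^\alpha u\|_{L_p(\Omega_T)}+N'\|u\|_{L_p(\Omega_T)}$; the first term is absorbed into the left side, and since $\lambda\|u\|_{L_p(\Omega_T)}$ is itself a term on the left side, so is $N'\|u\|_{L_p(\Omega_T)}$ once $\lambda\ge\lambda_0:=\max\{1,2N'\}$. This gives the asserted a priori estimate, and uniqueness for all $p$ follows from it.

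\textbf{Solvability.} Introduce the homotopy $L^\tau=(1-\tau)L_0+\tau L$, $\tau\in[0,1]$, where $L_0$ is a fixed constant-coefficient operator satisfying \eqref{eq7.9.17} with the same $\delta$ and with $|A_0^{\alpha\beta}|\le\delta^{-1}$. Each $L^\tau$ satisfies Assumption \ref{assumption20080424} ($\rho$) with the same $\rho$ and the same ellipticity constant, so the a priori estimate above holds for $u_t+(-1)^mL^\tau u+\lambda u=f$ with a constant $N$ and a threshold $\lambda_0$ independent of $\tau$. By the method of continuity it then suffices to establish $W_p^{1,2m}(\Omega_T)$-solvability for one operator on the path; at $\tau=0$ the constant-coefficient problem is solved first in $L_2$ by the energy method (the bounded-domain analogue of Theorem \ref{th06_01}), and $L_p$-solvability follows by approximating $f\in L_p$ by functions in $L_2\cap L_p$, invoking the $\tau$-uniform a priori $L_p$-estimate on the $L_2$-solutions, and passing to the limit.

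\textbf{Main obstacle.} The technical heart is the flattening step: one must check that after the $C^{2m-1,1}$ change of variables the transformed system still lies in the class handled by Theorem \ref{Thm6}, and in particular that the smallness threshold $\rho$ for its leading coefficients depends only on $d,m,n,p,\delta$ and not on $K$. I would arrange this by running the outer perturbation argument on the original, unflattened coefficients first — it only sees $\delta$ — and applying $\Phi_k$ to the resulting model problem whose leading coefficients are independent of $x$; the push-forward of such coefficients oscillates by $O(r_0)$ in $x$, which is made as small as needed purely by shrinking $r_0$, so the $K$-dependence is confined to $\lambda_0$. The remaining care is bookkeeping: tracking the boundary conditions and the lower-order commutator terms through $\Phi_k$, the $\lambda$-weighted interpolation on $\Omega$, and the preliminary $L_2$-solvability on the bounded domain needed to seed the continuity method.
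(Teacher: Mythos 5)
Your plan — flatten the boundary, apply Theorem \ref{Thm2} on interior patches and Theorem \ref{Thm6} on boundary patches, sum via a partition of unity, absorb the commutator/lower-order terms by interpolation and a large $\lambda$, then get solvability by the method of continuity from the constant-coefficient $L_2$ case — is exactly the paper's proof, up to bookkeeping (the paper localizes the half-space and whole-space estimates first, via the Lemma \ref{lem06_01}-type iteration, rather than carrying $[L,\zeta_k]u$ explicitly; the two routes are equivalent).

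The one place you go astray is the \emph{Main obstacle} paragraph. You cannot ``run the outer perturbation argument on the original, unflattened coefficients first'' and then ``apply $\Phi_k$ to the resulting model problem'': the frozen-coefficient comparison is internal to Theorem \ref{Thm6}, which already lives on $\cO_T^+$, so you must flatten \emph{before} you can invoke it, and the coefficients being perturbed are the flattened ones. The real concern — that the ellipticity constant and hence the admissible $\rho$ for the flattened system might depend on $K$ through the Jacobian $D\Phi_k$ — is resolved differently: at each boundary base point $x_0$, rotate the coordinates so that $\nabla\phi(x_0')=0$, so that on $B_{r_0}(x_0)$ the Jacobian of $y=\Phi_k(x)$ is $I+O(r_0)$ with constants governed by $K$. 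Shrinking $r_0$ (a $K$- and $R_0$-dependent choice, folded into $\lambda_0$) keeps the ellipticity constant of the flattened leading coefficients uniformly comparable to $\delta$ and makes the extra oscillation contributed by $D\Phi_k$ as small as needed, so the threshold $\rho$ for Theorem \ref{Thm6} applied to the flattened system depends only on $d,m,n,p,\delta$ as claimed. With that correction, the proof closes in the same way as the paper's.
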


As discussed in Remark \ref{remark01},
the theorems above have elliptic analogies. We state the results below for elliptic systems on a bounded domain for future references.

\begin{theorem}[Divergence elliptic systems on a bounded domain]
                  \label{Thm9}

Let $p \in (1,\infty)$.
Then there exist constants $\rho=\rho(d,m,n,p,\delta)$, $\rho_1=\rho_1(d,m,n,p,\delta,K,R_0)$ and $\lambda_0=\lambda_0(d,m,n,p,\delta,K,R_0,R_1)>0$, such that under Assumption \ref{assumption20080424} ($\rho$) and Assumption \ref{assump2}  ($\rho_1$) the following is true.
For any  $f_\alpha\in L_p(\Omega),|\alpha|\le m$ and $\lambda \ge \lambda_0$, there is a unique solution
$u \in W_p^m(\Omega)$ to
\begin{equation*}							 
\left\{
  \begin{aligned}
    \cL u + (-1)^m \lambda u = \sum_{|\alpha|\le m}D^\alpha f_{\alpha} \quad & \hbox{in $\Omega$;} \\
    u=|D u|=...=|D^{m-1}u|=0 \quad & \hbox{on $\partial \Omega$,}
  \end{aligned}
\right.
\end{equation*}
and we have
\begin{equation*}							 
\sum_{|\alpha|\le m} \lambda^{1-\frac{|\alpha|}{2m}} \| D^{\alpha} u \|_{L_p(\Omega)}
\le N \sum_{|\alpha| \le m} \lambda^{\frac{|\alpha|}{2m}} \| f_{\alpha} \|_{L_p(\Omega)},	
\end{equation*}
where $N$ depends only on $d$, $m$, $n$, $p$, $\delta$, $K$, $R_0$ and $R_1$. 
\end{theorem}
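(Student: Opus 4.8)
The strategy, following the ``steady state'' principle recalled in Remark~\ref{remark01}, is to derive Theorem~\ref{Thm9} from its parabolic counterpart Theorem~\ref{Thm7}. Since $T$ does not enter the elliptic problem, I work on $\Omega_T$ with $T=\infty$, i.e.\ on $\bR\times\Omega$. The key observation is that a \emph{time-independent} $u=u(x)\in W_p^m(\Omega)$ satisfies
$$\cL u+(-1)^m\lambda u=\sum_{|\alpha|\le m}D^\alpha f_\alpha\ \text{ in }\Omega,\qquad u=|Du|=\dots=|D^{m-1}u|=0\ \text{ on }\partial\Omega$$
if and only if, viewed on $\bR\times\Omega$, it solves
$$u_t+(-1)^m\cL u+\lambda u=\sum_{|\alpha|\le m}D^\alpha\big((-1)^m f_\alpha\big)\ \text{ in }\bR\times\Omega$$
with the same Dirichlet condition (multiply the elliptic equation by $(-1)^m$ and use $(-1)^{2m}=1$; recall that for elliptic systems $A^{\alpha\beta}=A^{\alpha\beta}(x)$ is independent of $t$, so the parabolic operator arising in this reduction has $t$-independent coefficients, hence is translation invariant in $t$). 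The only obstruction to invoking Theorem~\ref{Thm7} directly is that a nonzero $t$-independent function is not in $\cH_p^m(\bR\times\Omega)$; I remove it with a time cut-off for the a priori estimate and with an averaging over time translates for existence.

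\textbf{A priori estimate and uniqueness.} Let $u\in W_p^m(\Omega)$ solve the elliptic system and $\lambda\ge\lambda_0$. Fix a nonzero $\eta\in C_0^\infty(\bR)$ and set $v(t,x)=\eta(t)u(x)$. Then $v\in\cH_p^m(\bR\times\Omega)$, it vanishes on $\bR\times\partial\Omega$ together with all its spatial derivatives of order at most $m-1$, and, weakly,
$$v_t+(-1)^m\cL v+\lambda v=\sum_{1\le|\alpha|\le m}D^\alpha\big((-1)^m\eta f_\alpha\big)+\big((-1)^m\eta f_0+\eta' u\big)\ \text{ in }\bR\times\Omega.$$
Applying the estimate of Theorem~\ref{Thm7}, using $\|D^\alpha v\|_{L_p(\bR\times\Omega)}=\|\eta\|_{L_p(\bR)}\|D^\alpha u\|_{L_p(\Omega)}$ and the analogous identities on the right-hand side, and dividing by $\|\eta\|_{L_p(\bR)}$, one obtains
$$\sum_{|\alpha|\le m}\lambda^{1-\frac{|\alpha|}{2m}}\|D^\alpha u\|_{L_p(\Omega)}\le N\sum_{|\alpha|\le m}\lambda^{\frac{|\alpha|}{2m}}\|f_\alpha\|_{L_p(\Omega)}+N\,\frac{\|\eta'\|_{L_p(\bR)}}{\|\eta\|_{L_p(\bR)}}\,\|u\|_{L_p(\Omega)}.$$
Replacing $\eta(\cdot)$ by $\eta(\varepsilon\,\cdot)$ multiplies the ratio $\|\eta'\|_{L_p(\bR)}/\|\eta\|_{L_p(\bR)}$ by $\varepsilon$, so choosing $\varepsilon$ with $N\|\eta'\|_{L_p(\bR)}/\|\eta\|_{L_p(\bR)}\le\lambda/2$ lets me absorb the last term into the $|\alpha|=0$ summand on the left, yielding the asserted estimate with $N$ independent of $\lambda$. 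Applied to the difference of two solutions with the same data, it gives uniqueness.

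\textbf{Existence.} Given $f_\alpha\in L_p(\Omega)$ and $\lambda\ge\lambda_0$, fix $\phi\in C_0^\infty(\bR)$ with $\int_\bR\phi\,dt=1$ and put $\hat f_\alpha(t,x)=(-1)^m\phi(t)f_\alpha(x)\in L_p(\bR\times\Omega)$. By Theorem~\ref{Thm7} there is a unique $v\in\cH_p^m(\bR\times\Omega)$ solving $v_t+(-1)^m\cL v+\lambda v=\sum_{|\alpha|\le m}D^\alpha\hat f_\alpha$ with the homogeneous Dirichlet condition. As $\hat f_\alpha$ is supported in $t\in[a,b]$, uniqueness forces $v\equiv0$ for $t<a$, while a standard $L_2$-energy estimate on $(b,\infty)\times\Omega$ based on the Legendre--Hadamard condition (enlarging $\lambda_0$ if needed), combined with parabolic regularity, gives $\|D^\alpha v(t,\cdot)\|_{L_p(\Omega)}\le Ce^{-c(t-b)}$ for $t\ge b+1$, $|\alpha|\le m$. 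Hence $s\mapsto\|D^\alpha v(s,\cdot)\|_{L_p(\Omega)}$ is integrable on $\bR$, and $u(x):=\int_\bR v(s,x)\,ds$ lies in $W_p^m(\Omega)$ and vanishes, together with its spatial derivatives of order $\le m-1$, on $\partial\Omega$. Integrating the weak formulation of the parabolic problem over $s\in\bR$ and using $\int_\bR\phi=1$ together with $\int_\bR v_s\,ds=0$ (which holds by the decay of $v$ at $+\infty$ and $v\equiv0$ near $-\infty$), the $t$-independent function $u$ is seen to satisfy $u_t+(-1)^m\cL u+\lambda u=\sum_{|\alpha|\le m}D^\alpha\big((-1)^m f_\alpha\big)$, i.e.\ $u$ is a weak solution of the elliptic system with the prescribed boundary condition; its norm bound is the a priori estimate above. (One may instead argue existence by the method of continuity, connecting $\cL$ within the admissible class to the polyharmonic operator, whose $L_p$-solvability with Dirichlet data is classical.)

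\textbf{Main obstacle.} Given Theorem~\ref{Thm7}, the reduction itself is largely soft. The one point requiring genuine work lies in the existence step: either the exponential-in-$t$ decay of the parabolic solution $v$, which hinges on the coercivity supplied by the Legendre--Hadamard condition together with parabolic smoothing, or, in the alternative route, the classical $L_p$-solvability for $p\ne2$ of the polyharmonic model operator. Everything else---the properties of $v=\eta u$, the rescaling of $\eta$, and the interchanges of integration---is routine.
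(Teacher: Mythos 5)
Your proof follows the same strategy the paper intends for Theorem~\ref{Thm9}: reduce the elliptic problem to the parabolic one on $\Omega_T$ (Remark~\ref{remark01}, citing \cite{Krylov_2005,DK08}). Your a priori estimate via $v=\eta(t)u(x)$ with rescaling of $\eta$ is complete and correct; in particular the bookkeeping $v_t+(-1)^m\cL v+\lambda v=\sum_{|\alpha|\le m}D^\alpha((-1)^m\eta f_\alpha)+\eta' u$ and the identity $\|D^\alpha v\|_{L_p(\bR\times\Omega)}=\|\eta\|_{L_p(\bR)}\|D^\alpha u\|_{L_p(\Omega)}$ are exactly what is needed, and the absorption of $N\varepsilon\|\eta'\|_{L_p}/\|\eta\|_{L_p}\|u\|_{L_p}$ into the $|\alpha|=0$ term on the left is legitimate once $\lambda\ge\lambda_0>0$.

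The one substantive point to press you on is the decay used in your existence step. You claim ``a standard $L_2$-energy estimate on $(b,\infty)\times\Omega$ based on the Legendre--Hadamard condition (enlarging $\lambda_0$ if needed)'' gives exponential decay. Be careful: the paper's $L_2$-coercivity (Theorem~\ref{thVMO02}, Remark~\ref{rem5.29}) is proved only for leading coefficients $A^{\alpha\beta}=A^{\alpha\beta}(t)$, independent of $x$, because it relies on extension by zero plus Plancherel in $\xi$. In your existence argument $\cL$ has $x$-dependent leading coefficients, and the Legendre--Hadamard condition alone does \emph{not} give $\Re\langle D^\alpha v,A^{\alpha\beta}(x)D^\beta v\rangle_\Omega\ge\delta\|D^m v\|^2$; you need a G\aa rding inequality, which in this generality requires a frozen-coefficients/partition-of-unity argument that exploits Assumption~\ref{assumption20080424}. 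It is true, but it is a genuine step that you are sweeping under ``standard.'' Also, the jump from $L_2$ decay to $\|D^\alpha v(t,\cdot)\|_{L_p(\Omega)}$ decay requires parabolic smoothing on the Lipschitz domain that the paper does not supply explicitly (interior regularity is not enough, and $\partial\Omega$ is only Lipschitz under Assumption~\ref{assump2}). Your alternative---method of continuity against $\sum_j D_j^{2m}+(-1)^m\lambda$---is the more routine route here, but note that on the merely Lipschitz domains admitted by Assumption~\ref{assump2} the polyharmonic $W_p^m$ Dirichlet theory is not simply ``classical''; the clean fix is to observe that the reference operator has \emph{constant} leading coefficients, for which the $L_2$-coercivity and hence $p=2$ solvability is immediate (Lax--Milgram, no G\aa rding issues), and then to bootstrap to general $p$ using the a priori estimate you have already derived together with a density/approximation argument. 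Apart from this, your argument that the time-independent $u$ you construct is a genuine weak solution (integrating the weak formulation against $t$-independent test functions, using $\int_\bR v_s\,ds=0$ and $\int_\bR\phi\,dt=1$) is correct.
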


\begin{theorem}[Non-divergence elliptic systems on a bounded domain]
                  \label{Thm10}
Let $p \in (1,\infty)$ and $\Omega$ be a $C^{2m-1,1}$ domain with the $C^{2m-1,1}$ norm bounded by $K$. Then there exist constants $\rho=\rho(d,m,n,p,\delta)$ and $\lambda_0=\lambda_0(d,m,n,p,\delta,K,R_0)>0$, such that under Assumption \ref{assumption20080424} ($\rho$) the following is true.
For any  $f\in L_p(\Omega)$ and $\lambda \ge \lambda_0$, there is a unique solution
$u \in W_p^{2m}(\Omega)$ to
\begin{equation*}							 
\left\{
  \begin{aligned}
    L u + (-1)^m \lambda u = f \quad & \hbox{in $\Omega$;} \\
    u=|D u|=...=|D^{m-1}u|=0 \quad & \hbox{on $\partial \Omega$,}
  \end{aligned}
\right.
\end{equation*}
and we have
\begin{equation*}							 
\sum_{|\alpha|\le 2m} \lambda^{1-\frac{|\alpha|}{2m}} \| D^{\alpha} u \|_{L_p(\Omega)}
\le N \| f\|_{L_p(\Omega)},
\end{equation*}
where $N$ depends only on $d$, $m$, $n$, $p$, $\delta$, $K$ and $R_0$.
\end{theorem}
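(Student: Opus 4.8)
\emph{The plan} is to derive Theorem \ref{Thm10} from its parabolic counterpart, Theorem \ref{Thm8}, by viewing a solution of the elliptic system as a time-independent solution of the associated parabolic system on $\bR\times\Omega$, in the spirit of \cite{Krylov_2005, DK08}. Two points need to be established: the a priori estimate and existence; uniqueness will be immediate from the estimate. Throughout, the hypotheses of Theorem \ref{Thm10} ($\Omega$ a $C^{2m-1,1}$ domain, Assumption \ref{assumption20080424}($\rho$), $\lambda\ge\lambda_0$) are precisely those needed to invoke Theorem \ref{Thm8} for the $t$-independent operator $L$.

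\emph{A priori estimate.} Given $u\in W_p^{2m}(\Omega)$ solving $Lu+(-1)^m\lambda u=f$ with the homogeneous Dirichlet conditions, I would regard $u$ and $f$ as functions on $\bR\times\Omega$ that are independent of $t$. Fixing a nontrivial $\zeta\in C_0^\infty(\bR)$ and setting $\zeta_R(t)=\zeta(t/R)$, $w_R(t,x)=\zeta_R(t)u(x)$, one has, since the coefficients of $L$ do not depend on $t$, that $w_R\in W_p^{1,2m}(\bR\times\Omega)$ solves
\[
\partial_t w_R+(-1)^m L w_R+\lambda w_R=\zeta_R'\,u+\zeta_R\,f \quad\text{in }\bR\times\Omega
\]
with the homogeneous Dirichlet conditions on $(-\infty,\infty)\times\partial\Omega$. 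Applying the estimate of Theorem \ref{Thm8} (with $T=\infty$) and using the product structure in $t$, I expect to obtain
\[
\|\zeta_R\|_{L_p(\bR)}\sum_{|\alpha|\le 2m}\lambda^{1-\frac{|\alpha|}{2m}}\|D^\alpha u\|_{L_p(\Omega)} \le N\|\zeta_R'\|_{L_p(\bR)}\|u\|_{L_p(\Omega)}+N\|\zeta_R\|_{L_p(\bR)}\|f\|_{L_p(\Omega)}.
\]
Since $\|\zeta_R\|_{L_p(\bR)}=R^{1/p}\|\zeta\|_{L_p(\bR)}$ and $\|\zeta_R'\|_{L_p(\bR)}=R^{1/p-1}\|\zeta'\|_{L_p(\bR)}$, dividing by $\|\zeta_R\|_{L_p(\bR)}$ and letting $R\to\infty$ kills the term containing $\|u\|_{L_p(\Omega)}$, leaving the desired estimate.

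\emph{Existence and uniqueness.} Uniqueness follows by taking $f=0$ in the estimate. For existence I would use the method of continuity along $L_\tau:=\tau L+(1-\tau)L_0$, $\tau\in[0,1]$, where $L_0$ is the constant-coefficient operator with $(-1)^mL_0=(-\Delta)^m I_n$. Each $L_\tau$ has $t$-independent leading coefficients still satisfying \eqref{eq7.9.17} with the same $\delta$ (by convexity in the coefficients) and the same bound $A_{R_0}^{\#}\le\rho$ (since $L_0$ has constant coefficients), and bounded lower-order coefficients; hence the a priori estimate just derived holds for every $L_\tau$ with a constant $N$ independent of $\tau$. For $\tau=0$ the system $L_0u+(-1)^m\lambda u=f$ decouples into $n$ scalar polyharmonic Dirichlet problems, uniquely $W_p^{2m}$-solvable on the $C^{2m-1,1}$ domain $\Omega$ by classical theory \cite{ADN64}. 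The method of continuity then yields unique solvability at $\tau=1$, i.e.\ for $L$.

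\emph{The main obstacle.} Essentially all of the real work has already been absorbed into Theorem \ref{Thm8}; what remains requires care only at two points. First, one cannot simply take $f$ time-independent on $\bR\times\Omega$, since then $\|f\|_{L_p(\bR\times\Omega)}=\infty$; the rescaled cutoff $\zeta_R$ and the limit $R\to\infty$ are what make the reduction work and simultaneously remove the lower-order term $\|u\|_{L_p(\Omega)}$. Second, one must verify that Legendre--Hadamard ellipticity with constant $\delta$ and the small-mean-oscillation Assumption \ref{assumption20080424}($\rho$) are preserved along the continuity path, so that the constant $N$ in the a priori estimate is uniform in $\tau$.
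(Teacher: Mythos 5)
Your proposal is correct and follows essentially the same route the paper takes: Remark \ref{remark01} states that the elliptic results are derived from the parabolic ones by "viewing solutions to elliptic systems as steady state solutions to the corresponding parabolic systems," citing \cite{Krylov_2005} and \cite{DK08} for details, and the cutoff-in-time argument with $\zeta_R(t)=\zeta(t/R)$ followed by $R\to\infty$ is precisely the standard mechanism in those references. The only imprecision is a harmless sign: from $Lu+(-1)^m\lambda u=f$ one gets $\partial_t w_R+(-1)^m Lw_R+\lambda w_R=\zeta_R'u+(-1)^m\zeta_R f$, not $\zeta_R'u+\zeta_R f$, but this changes nothing in the norm estimates.
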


\part{Systems in the whole space}

This part of the paper is devoted to the proofs of the $L_p$-solvability of systems in the whole space, i.e., Theorem \ref{Thm1} and \ref{Thm2}. In Section \ref{sec3} we obtain several $L_2$-estimates for systems with coefficients depending only on $t$. By using these estimates, in Section \ref{sec_aux} we prove the mean oscillation estimates for systems with the same class of coefficients. We complete the proofs of Theorem \ref{Thm1} and \ref{Thm2} in Section \ref{sec5}.

\mysection{$L_2$-estimates for systems with simple coefficients in the whole space}
                                \label{sec3}
In this section we obtain $L_2$-estimates
of parabolic systems in divergence and non-divergence form
when the coefficient matrices are measurable functions of only the time variable
satisfying the Legendre-Hadamard ellipticity condition \eqref{eq7.9.17}.
Even though our proofs are basic,
we present them here for the sake of completeness.
In particular, we derive the $L_2$-estimate of systems in non-divergence form only using that of divergence type systems.
Throughout the section we set
$$
\cL_0 u = \sum_{|\alpha|=|\beta|=m}D^{\alpha}(A^{\alpha\beta}D^{\beta} u),
$$
where $A^{\alpha\beta} = A^{\alpha\beta}(t)$.
Since $A^{\alpha\beta}$ are independent of $x \in \bR^d$, we can write
$$
\cL_0 u = \sum_{|\alpha|=|\beta|=m}A^{\alpha\beta}D^{\alpha}D^{\beta} u.
$$
Let
$C_0^{\infty}(\overline{\cO_T})$ be the collection of infinitely differentiable functions
defined on $\overline{\cO_T}$ vanishing for large $|(t,x)|$.

\begin{theorem}							\label{thVMO01}
Let $T \in (-\infty,\infty]$.	
There exists $N = N(d,n,m,\delta)$ such that,
for any $\lambda \ge 0$,
\begin{equation}							\label{eqVMO03}
\sum_{|\alpha|\le m} \lambda^{1-\frac{|\alpha|}{2m}} \| D^{\alpha} u \|_{L_2(\cO_T)}
\le N \sum_{|\alpha| \le m} \lambda^{\frac{|\alpha|}{2m}} \| f_{\alpha} \|_{L_2(\cO_T)},	
\end{equation}
if $u \in \cH_2^m(\cO_T)$, $f_{\alpha} \in L_2(\cO_T)$, $|\alpha|\le m$, and
\begin{equation}							\label{eqVMO01}
u_t + (-1)^m \cL_0 u + \lambda u = \sum_{|\alpha|\le m} D^{\alpha} f_{\alpha}
\end{equation}
in $\cO_T$.
Furthermore, for $\lambda > 0$ and $f_{\alpha} \in L_2(\cO_T)$, $|\alpha| \le m$,
there exists a unique $u \in \cH_2^m(\cO_T)$ satisfying \eqref{eqVMO01}.
\end{theorem}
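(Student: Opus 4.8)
\emph{Reduction.} Since the coefficients $A^{\alpha\beta}=A^{\alpha\beta}(t)$ are independent of $x$, I would base everything on the Fourier transform in the spatial variables. Writing $\cL_0u=\sum_{|\alpha|=|\beta|=m}A^{\alpha\beta}(t)D^\alpha D^\beta u$ and denoting by $\hat u(t,\xi)$ the Fourier transform of $u$ in $x$, equation \eqref{eqVMO01} becomes, for a.e.\ $\xi\in\bR^d$, the $n$-dimensional linear ODE system
\[
\partial_t\hat u+\bigl(B(t,\xi)+\lambda I\bigr)\hat u=\hat g(t,\xi),\qquad
B(t,\xi)=\!\!\sum_{|\alpha|=|\beta|=m}\!\!\xi^\alpha\xi^\beta A^{\alpha\beta}(t),\qquad
\hat g=\!\!\sum_{|\alpha|\le m}(i\xi)^\alpha\hat f_\alpha,
\]
the factor $(-1)^m$ being absorbed by $i^{2m}=(-1)^m$. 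Reading the Legendre--Hadamard condition \eqref{eq7.9.17} with $\theta=\overline{\hat u}$ gives $\Re\bigl(\overline{\hat u}^{\,\text{tr}}B(t,\xi)\hat u\bigr)\ge\delta|\xi|^{2m}|\hat u|^2$. Since $u\in\cH_2^m(\cO_T)$, Plancherel's theorem and Fubini show that for a.e.\ $\xi$ the function $\hat u(\cdot,\xi)$ lies in $H^1\bigl((-\infty,T);\bC^n\bigr)$, so $t\mapsto|\hat u(t,\xi)|^2$ is absolutely continuous with $|\hat u(t,\xi)|\to0$ as $t\to-\infty$; this is all that is needed to run the argument below for a general $u\in\cH_2^m(\cO_T)$ rather than only for $u\in C_0^\infty(\overline{\cO_T})$.

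\emph{A priori estimate.} Pairing the ODE with $\hat u$ and taking real parts,
\[
\tfrac12\,\partial_t|\hat u|^2+\bigl(\delta|\xi|^{2m}+\lambda\bigr)|\hat u|^2\le|\hat g|\,|\hat u|\qquad\text{a.e.\ }t.
\]
Integrating over $(-\infty,T)$ (the boundary term at $-\infty$ vanishes, the one at $T$ has a good sign) and applying Cauchy--Schwarz with weight $\delta|\xi|^{2m}+\lambda$ yields
\[
\int_{-\infty}^{T}\bigl(\delta|\xi|^{2m}+\lambda\bigr)|\hat u|^2\,dt\le\int_{-\infty}^{T}\frac{|\hat g|^2}{\delta|\xi|^{2m}+\lambda}\,dt.
\]
Using $|\hat g|^2\le N\sum_{|\alpha|\le m}|\xi|^{2|\alpha|}|\hat f_\alpha|^2$ together with the elementary inequalities $|\xi|^{2|\alpha|}\le\delta^{-|\alpha|/m}\lambda^{|\alpha|/m-1}(\delta|\xi|^{2m}+\lambda)$ and $\delta|\xi|^{2m}+\lambda\ge\delta\lambda^{1-|\alpha|/m}|\xi|^{2|\alpha|}$ (both consequences of $a+b\ge a^{\theta}b^{1-\theta}$ for $a,b\ge0$, $\theta\in[0,1]$, assuming w.l.o.g.\ $\delta\le1$), then multiplying by $\lambda$, integrating in $\xi$, invoking Plancherel, and using $|\xi^\alpha|^2\le|\xi|^{2|\alpha|}$, one arrives at
\[
\sum_{|\alpha|\le m}\lambda^{2-|\alpha|/m}\|D^\alpha u\|_{L_2(\cO_T)}^2\le N\sum_{|\alpha|\le m}\lambda^{|\alpha|/m}\|f_\alpha\|_{L_2(\cO_T)}^2,\qquad N=N(d,n,m,\delta),
\]
and \eqref{eqVMO03} follows by taking square roots; for $\lambda=0$ the estimate is vacuous.

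\emph{Existence and uniqueness.} Uniqueness for $\lambda>0$ is immediate: applying \eqref{eqVMO03} with $f_\alpha\equiv0$ forces $\|u\|_{L_2(\cO_T)}=0$. For existence I would use the method of continuity, joining $\cL_0$ to the model operator $\cL_0^{\mathrm m}$ with $A^{\alpha\alpha}=c_\alpha I$ (with $c_\alpha>0$ chosen so that $\sum_{|\alpha|=m}c_\alpha\xi^{2\alpha}=|\xi|^{2m}$) and $A^{\alpha\beta}=0$ for $\alpha\ne\beta$, for which \eqref{eqVMO01} decouples into $n$ copies of the polyharmonic heat equation $v_t+(-\Delta)^m v+\lambda v=g$. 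For this model the solution is given explicitly on the Fourier side by $\hat v(t,\xi)=\int_{-\infty}^{t}e^{-(|\xi|^{2m}+\lambda)(t-s)}\hat g(s,\xi)\,ds$, and Young's convolution inequality in $t$ combined with $g\in\bH_2^{-m}(\cO_T)$ shows $v\in\cH_2^m(\cO_T)$. Since $(1-\tau)\cL_0^{\mathrm m}+\tau\cL_0$ satisfies \eqref{eq7.9.17} with the same $\delta$ and has coefficients bounded uniformly in $\tau\in[0,1]$, the a priori estimate holds uniformly along the path, so solvability propagates from $\tau=0$ to $\tau=1$. (Equivalently, one can construct $\hat u(t,\xi)=\int_{-\infty}^{t}\Phi_\xi(t,s)\hat g(s,\xi)\,ds$ directly from the Carath\'eodory propagator $\Phi_\xi$ of $\partial_t+B(t,\xi)+\lambda I$, using the bound $|\Phi_\xi(t,s)|\le e^{-\lambda(t-s)}$ for $t\ge s$.)

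\emph{Main difficulty.} There is no genuine obstacle here, as the authors themselves remark; the points that require attention are the bookkeeping of the powers of $\lambda$ and $|\xi|$ through the weighted arithmetic--geometric inequality, so that the final weights come out exactly as $\lambda^{1-|\alpha|/2m}$ and $\lambda^{|\alpha|/2m}$, and the verification that the energy identity and the decay of $|\hat u(t,\xi)|^2$ at $t=-\infty$ are legitimate for every $u\in\cH_2^m(\cO_T)$, which is what makes the a priori estimate applicable both to the uniqueness claim and to the solutions produced by the continuity argument.
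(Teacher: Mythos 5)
Your proof is correct and follows essentially the same path as the paper's: Fourier transform in $x$ to convert the Legendre--Hadamard condition into a pointwise coercivity bound, an energy pairing with $u$ in which the boundary term at $t=T$ has a favorable sign and the one at $t=-\infty$ vanishes, Young/AM-GM to distribute the powers of $\lambda$, and the method of continuity for existence. The only (cosmetic) difference is that you run the energy estimate pointwise in $\xi$ as a scalar ODE inequality and obtain all the weights $\lambda^{1-|\alpha|/2m}$ directly from the weighted AM-GM, whereas the paper works with the $x$-integrated identity and then invokes interpolation inequalities to fill in the intermediate derivatives; these are two ways of saying the same thing.
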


\begin{proof}
We assume $\lambda > 0$.
If $\lambda = 0$, the inequality \eqref{eqVMO03} holds trivially
or we obtain
$$
\sum_{|\alpha| = m}\| D^{\alpha} u \|_{L_2(\cO_T)}
\le N \sum_{|\alpha| = m} \| f_{\alpha} \|_{L_2(\cO_T)}
\quad
\text{if}
\,\,
f_{\alpha} = 0
\,\,
\text{for}
\,\,
|\alpha| < m
$$
using the inequality \eqref{eqVMO03} for $\lambda > 0$ and letting $\lambda \searrow 0$.

Let us assume that the inequality \eqref{eqVMO03} is proved.
Then due to the fact that
$$
u_t = -(-1)^m \sum_{|\alpha| = |\beta| =m} D^{\alpha}(A^{\alpha\beta}D^{\beta}u) - \lambda u + \sum_{|\alpha| \le m}D^{\alpha}f_{\alpha},
$$
we obtain
$\|u\|_{\cH_2^m(\cO_T)}
\le N \|\cP_{\lambda} u \|_{\bH_2^{-m}(\cO_T)}$,
where $\cP_{\lambda} u = u_t + (-1)^m \cL_0 u + \lambda u$
and $N = N(d,n,m,\delta,\lambda)$.
Then using the estimate, the method of continuity, and the unique solvability of systems with coefficients $A^{\alpha\beta} = \delta_{\alpha\beta}I_{n \times n}$
we prove the second assertion of the theorem.
Therefore, we only need to prove the inequality \eqref{eqVMO03}.
Moreover, since $\cP_{\lambda}$ is a bounded linear operator from $\cH_2^m(\cO_T)$
to $\bH_2^{-m}(\cO_T)$,
it suffices to concentrate on $u \in C_0^{\infty}(\overline{\cO_T})$.

Multiply both sides of \eqref{eqVMO01} by $u$ and integrate them on $\cO_T$.
Then by integration by parts we have
\begin{equation}
									\label{eqVMO02}
\langle u,u_t \rangle_{\cO_T} + \langle D^{\alpha}u, A^{\alpha\beta}D^{\beta}u \rangle_{\cO_T}
+ \lambda \langle u, u \rangle_{\cO_T}
= \sum_{|\alpha|\le m}(-1)^{|\alpha|} \langle D^{\alpha}u, f_{\alpha} \rangle_{\cO_T}.
\end{equation}
Note that
$$
\langle D^{\alpha}u, A^{\alpha\beta}D^{\beta}u \rangle_{\cO_T}
= \langle (\ii\xi)^{\alpha}\tilde{u}, A^{\alpha\beta} (\ii\xi)^{\beta}\tilde{u} \rangle_{\cO_T}
= \int_{\cO_T}\xi^{\alpha}\xi^{\beta}\tilde{u}^{\text{tr}}\overline{A^{\alpha\beta}\tilde u} \, d\xi \, dt.
$$
Here $\tilde u$ is the Fourier transform of $u$ in $x$.
By the ellipticity condition we get
$$
\delta \int_{\cO_T} |\xi|^{2m} |\tilde u|^2 \, d\xi dt
\le \int_{\cO_T}\Re \left(\xi^{\alpha}\xi^{\beta}\tilde{u}^{\text{tr}}\overline{A^{\alpha\beta}\tilde u}\right) \, d\xi \, dt.
$$
Also note that
$$
\int_{\bR^d} |u|^2(T,x) \, dx
= \int_{\cO_T} \frac{\partial}{\partial t} |u|^2(t,x) \, dt \, dx
= \langle u, u_t \rangle_{\cO_T} + \langle u_t, u \rangle_{\cO_T},
$$
$$
\Re \langle u,u_t \rangle_{\cO_T} = \frac{1}{2}\int_{\bR^d} |u|^2(T,x) \, dx \ge 0.
$$
Thus, if we denote the right-hand side of \eqref{eqVMO02} by $I$,
we obtain
$$
\delta \int_{\cO_T} |\xi|^{2m} |\tilde u|^2 \, d\xi dt
+ \lambda \langle u,u \rangle_{\cO_T}
\le \Re I \le
\sum_{|\alpha|\le m} | \langle D^{\alpha}u,f_{\alpha} \rangle_{\cO_T}|.
$$
Since
$$
\|D^m u\|^2_{L_2(\cO_T)} \le N\int_{\cO_T} |\xi|^{2m} |\tilde u|^2 \, d\xi dt
$$
and
$$
| \langle D^{\alpha}u,f_{\alpha} \rangle_{\cO_T}| \le \varepsilon \lambda^{\frac {m-|\alpha|} m}\|D^\alpha u\|^2_{L_2(\cO_T)} + N\varepsilon^{-1} \lambda^{-\frac {m-|\alpha|} m}\|f_\alpha\|^2_{L_2(\cO_T)}
$$
for all $\varepsilon > 0$,
the inequality \eqref{eqVMO03} follows by using the interpolation inequalities and choosing an appropriate $\varepsilon$.
\end{proof}

\begin{theorem}
								\label{th06_05}
Let $T \in (-\infty,\infty]$.
There exists $N = N(d,n,m,\delta)$ such that
$$
\| u_t \|_{L_2(\cO_T)}
+ \sum_{|\alpha|\le 2m} \lambda^{1-\frac{|\alpha|}{2m}}\|D^{\alpha}u\|_{L_2(\cO_T)}
\le N \| u_t + (-1)^m\cL_0 u + \lambda u\|_{L_2(\cO_T)}
$$
for all $\lambda \ge 0$ and $u \in W_2^{1,2m}(\cO_T)$.
Moreover, for $\lambda > 0$ and $f \in L_2(\cO_T)$, there exists a unique $u \in W_2^{1,2m}(\cO_T)$
satisfying
$$
u_t + (-1)^m \cL_0 u + \lambda u = f
$$
in $\cO_T$.
\end{theorem}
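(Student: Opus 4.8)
\emph{Proof proposal.} The plan is to deduce everything from the divergence-form results of Theorem \ref{thVMO01} by differentiating the equation in $x$, using crucially that the coefficients of $\cL_0$ are independent of $x$. I would first observe that $\cP_\lambda u := u_t + (-1)^m \cL_0 u + \lambda u$ is a bounded operator from $W_2^{1,2m}(\cO_T)$ into $L_2(\cO_T)$ and that $C_0^\infty(\overline{\cO_T})$ is dense in $W_2^{1,2m}(\cO_T)$; hence it is enough to prove the asserted inequality for $u \in C_0^\infty(\overline{\cO_T})$, and the general case follows by approximation.

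For such a $u$, set $f = \cP_\lambda u$. Viewing $f$ as a divergence-form right-hand side $f = D^0 f$, Theorem \ref{thVMO01} applied to $u$ already gives
$$
\sum_{|\alpha|\le m}\lambda^{1-\frac{|\alpha|}{2m}}\|D^\alpha u\|_{L_2(\cO_T)}\le N\|f\|_{L_2(\cO_T)}.
$$
To reach derivatives of order up to $2m$, I would differentiate: for any $\gamma$ with $|\gamma|=m$, the function $v:=D^\gamma u\in C_0^\infty(\overline{\cO_T})$ satisfies $v_t+(-1)^m\cL_0 v+\lambda v=D^\gamma f$, precisely because $\cL_0$ has $x$-independent coefficients. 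Applying Theorem \ref{thVMO01} to $v$ with right-hand side $D^\gamma f$ (that is, $f_\gamma=f$ and $f_\alpha=0$ for $\alpha\ne\gamma$) gives
$$
\sum_{|\alpha|\le m}\lambda^{1-\frac{|\alpha|}{2m}}\|D^{\alpha+\gamma}u\|_{L_2(\cO_T)}\le N\lambda^{\frac12}\|f\|_{L_2(\cO_T)}.
$$
Since any $\beta$ with $m\le|\beta|\le 2m$ is of the form $\alpha+\gamma$ with $|\gamma|=m$, $|\alpha|=|\beta|-m\le m$, and since $1-\frac{|\alpha|}{2m}-\frac12=1-\frac{|\beta|}{2m}$, this yields $\lambda^{1-\frac{|\beta|}{2m}}\|D^\beta u\|_{L_2(\cO_T)}\le N\|f\|_{L_2(\cO_T)}$ for all such $\beta$. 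Together with the previous display, this controls $\sum_{|\alpha|\le 2m}\lambda^{1-\frac{|\alpha|}{2m}}\|D^\alpha u\|_{L_2(\cO_T)}$ by $N\|f\|_{L_2(\cO_T)}$. The time derivative is then handled by the equation itself: $\|u_t\|_{L_2(\cO_T)}\le\|f\|_{L_2(\cO_T)}+\|\cL_0 u\|_{L_2(\cO_T)}+\lambda\|u\|_{L_2(\cO_T)}$, where the last two terms are bounded by constants times $\|D^{2m}u\|_{L_2(\cO_T)}$ and $\lambda^{1-0}\|D^0 u\|_{L_2(\cO_T)}$ respectively, both already estimated.

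For the solvability part, uniqueness is immediate from the estimate applied to the difference of two solutions. For existence with $\lambda>0$, the estimate shows $\cP_\lambda\colon W_2^{1,2m}(\cO_T)\to L_2(\cO_T)$ is injective with closed range, and I would obtain surjectivity by the method of continuity, connecting $\cL_0$ to the decoupled operator with $A^{\alpha\beta}=\delta_{\alpha\beta}I_{n\times n}$; for the latter the equation $\cP_\lambda u=f$ is solved explicitly via the Fourier transform in $x$ and an ODE in $t$, exactly as in Theorem \ref{thVMO01}, and all intermediate operators keep coefficients depending only on $t$ and satisfy \eqref{eq7.9.17} with an ellipticity constant depending only on $d$, $m$, $\delta$, so the a priori estimate is uniform along the family. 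I expect the only genuinely delicate point to be conceptual rather than technical: recognizing that one $x$-differentiation converts the non-divergence equation into a divergence equation whose right-hand side is exactly one order rougher, which is what makes Theorem \ref{thVMO01} directly applicable. This maneuver trades regularity for divergence structure and works only because $\cL_0$ has $x$-independent coefficients.
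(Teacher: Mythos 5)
Your argument is correct and follows the same route as the paper: reduce to $u\in C_0^\infty(\overline{\cO_T})$, apply Theorem \ref{thVMO01} directly (for the low-order terms), differentiate $m$ times in $x$ — exploiting that $\cL_0$ has $x$-independent coefficients — and apply Theorem \ref{thVMO01} again to the differentiated system, then read off $u_t$ from the equation, with the method of continuity handling solvability. The only cosmetic difference is that the paper extracts just the endpoint bounds $\lambda\|u\|_{L_2}$ and $\|D^{2m}u\|_{L_2}$ and fills in the intermediate derivatives by standard interpolation, whereas you keep the full range of terms produced by Theorem \ref{thVMO01} and match exponents directly, avoiding an explicit appeal to interpolation inequalities.
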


\begin{proof}
As in the proof of Theorem \ref{thVMO01},
we only prove the estimate assuming that $u \in C_0^{\infty}(\overline{\cO_T})$.
Let $f = u_t + (-1)^m\cL_0 u + \lambda u$
and write
\begin{equation}							\label{eqVMO04}
u_t + (-1)^m D^{\alpha} (A^{\alpha\beta}D^{\beta} u ) + \lambda u = f.	
\end{equation}
Then by Theorem \ref{thVMO01}
\begin{equation}							\label{eqVMO05}
\lambda \| u \|_{L_2(\cO_T)} \le N \|f\|_{L_2(\cO_T)}.	
\end{equation}
Now by differentiating both sides of \eqref{eqVMO04} $m$ times with respect to $x$ we get
$$
(D^m u)_t + (-1)^m D^{\alpha} (A^{\alpha\beta}D^{\beta} D^m u )
+ \lambda D^m u = D^m f.
$$
This with Theorem \ref{thVMO01} shows that
\begin{equation}							\label{eqVMO06}
\sum_{|\alpha|=m}\|D^{\alpha}D^m u\|_{L_2(\cO_T)}
\le N \| f \|_{L_2(\cO_T)}.	
\end{equation}
Using \eqref{eqVMO05}, \eqref{eqVMO06}, and the interpolation inequalities,
we obtain
$$
\sum_{|\alpha|\le 2m} \lambda^{1-\frac{|\alpha|}{2m}}\|D^{\alpha}u\|_{L_2(\cO_T)}
\le N \| f \|_{L_2(\cO_T)}.
$$
Finally, observe that
$$
\|u_t\|_{L_2(\cO_T)} = \|f - (-1)^m\cL_0u - \lambda u\|_{L_2} \le N \| f \|_{L_2(\cO_T)}.
$$
The theorem is proved.
\end{proof}

\mysection{Mean oscillation estimates for systems in the whole space}
\label{sec_aux}

In this section we continue working on the operator
$$
\cL_0 u = \sum_{|\alpha|=|\beta|=m}A^{\alpha\beta}D^{\alpha}D^{\beta} u,
$$
where $A^{\alpha\beta}=A^{\alpha\beta}(t)$.
The main objective of this section is to obtain mean oscillation estimates
for divergence type systems (Theorem \ref{thm4.2})
and for non-divergence type systems (Corollary \ref{cor06_03})
defined in the whole space.

\subsection{Some auxiliary results for systems in the whole space}

First we prove the following localized version of Theorem \ref{th06_05}.
\begin{lemma}
                                            \label{lem06_01}
Let $0<r<R<\infty$.
Assume $u\in W_2^{1,2m}(Q_R)$ 
and
$$
u_t + (-1)^m\cL_0 u= f
$$
in $Q_R$, where $f \in L_2(\cO_T)$.
Then there exists a constant $N=N(d,n,m,\delta)$ such that
\begin{equation}
                                          \label{eq06_03}
\|u_t\|_{L_2(Q_r)} + \|D^{2m} u\|_{L_2(Q_r)}
\leq N \|f\|_{L_2(Q_R)}
+N (R-r)^{-2m}\|u\|_{L_2(Q_R)}.
\end{equation}
Furthermore,
\begin{equation}
								\label{eq06_12}
\|u\|_{W_2^{1,2m}(Q_r)}
\le N \|f\|_{L_2(Q_R)}
+ N \|u\|_{L_2(Q_R)},
\end{equation}
where
$N = N(d,n,m,\delta,r,R)$.
\end{lemma}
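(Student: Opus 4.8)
The plan is to localize the global $L_2$-estimate of Theorem~\ref{th06_05} by a cutoff argument and then to eliminate the resulting lower-order terms by interpolation combined with a standard iteration lemma. Note first that \eqref{eq06_12} follows at once from \eqref{eq06_03}: by the interpolation inequality on the \emph{fixed} cylinder $Q_r$, the norms $\|D^j u\|_{L_2(Q_r)}$ with $1\le j\le 2m-1$ are bounded by $\varepsilon\|D^{2m}u\|_{L_2(Q_r)}+N(d,m,\varepsilon)\|u\|_{L_2(Q_r)}$, and then \eqref{eq06_03} controls $\|u_t\|_{L_2(Q_r)}+\|D^{2m}u\|_{L_2(Q_r)}$, the constant now being allowed to depend on $r$ and $R$ through $(R-r)^{-2m}$. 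So the whole point is to prove \eqref{eq06_03}.

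Fix $r\le\rho<\rho'\le R$ and pick a smooth parabolic cutoff $\zeta$ with $\zeta\equiv1$ on $Q_\rho$, supported in $Q_{\rho'}$ and vanishing near the lateral and bottom parts of $\partial Q_{\rho'}$, with $|\partial_t\zeta|\le N(\rho'-\rho)^{-2m}$ and $|D^k\zeta|\le N(\rho'-\rho)^{-k}$ for $1\le k\le 2m$. Set $v=\zeta u$; extended by zero, $v\in W_2^{1,2m}(\cO_T)$ for a suitable $T$, and by the Leibniz rule, since $\cL_0$ has order $2m$ with $x$-independent coefficients,
\begin{equation*}
v_t+(-1)^m\cL_0 v=\zeta f+(\partial_t\zeta)u+(-1)^m\bigl(\cL_0(\zeta u)-\zeta\cL_0 u\bigr)=:g,
\end{equation*}
where the commutator $\cL_0(\zeta u)-\zeta\cL_0 u$ is a sum of terms $(D^k\zeta)(D^{2m-k}u)$, $1\le k\le 2m$, with bounded coefficients, so $g\in L_2$ with $\|g\|_{L_2}\le N\|f\|_{L_2(Q_R)}+N\sum_{j=0}^{2m-1}(\rho'-\rho)^{-(2m-j)}\|D^j u\|_{L_2(Q_{\rho'})}$. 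Applying Theorem~\ref{th06_05} with $\lambda=0$ to $v$ and restricting to $Q_\rho$, where $v=u$, gives
\begin{equation*}
\|u_t\|_{L_2(Q_\rho)}+\|D^{2m}u\|_{L_2(Q_\rho)}\le N\|f\|_{L_2(Q_R)}+N\sum_{j=0}^{2m-1}(\rho'-\rho)^{-(2m-j)}\|D^j u\|_{L_2(Q_{\rho'})}.
\end{equation*}
For each $1\le j\le 2m-1$ I would then invoke the interpolation inequality $\|D^j u\|_{L_2(Q_{\rho'})}\le\eta\|D^{2m}u\|_{L_2(Q_{\rho'})}+N\eta^{-j/(2m-j)}\|u\|_{L_2(Q_{\rho'})}$ (valid with $N=N(d,m)$ for $\eta\le(\rho')^{2m-j}$, by scaling from the unit cylinder) with the choice $\eta=\epsilon(\rho'-\rho)^{2m-j}$; since $-(2m-j)-j=-2m$, the $j$-th term becomes $\epsilon\|D^{2m}u\|_{L_2(Q_{\rho'})}+N\epsilon^{-j/(2m-j)}(\rho'-\rho)^{-2m}\|u\|_{L_2(Q_{\rho'})}$. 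Summing and fixing $\epsilon$ small depending only on $d,m,n$, we arrive at
\begin{equation*}
\phi(\rho)\le\frac12\phi(\rho')+N\|f\|_{L_2(Q_R)}+N(\rho'-\rho)^{-2m}\|u\|_{L_2(Q_R)},\qquad \phi(\rho):=\|u_t\|_{L_2(Q_\rho)}+\|D^{2m}u\|_{L_2(Q_\rho)},
\end{equation*}
for all $r\le\rho<\rho'\le R$.

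The only genuine obstacle is that $\|D^{2m}u\|_{L_2(Q_{\rho'})}$ lives on the \emph{larger} cylinder, so it cannot be absorbed into the left-hand side directly, and taking $\rho=r$, $\rho'=R$ in one step would leave an uncontrolled $\|D^{2m}u\|_{L_2(Q_R)}$ on the right. This is precisely the situation handled by the well-known iteration lemma: if $\phi\ge0$ is bounded on $[r,R]$ and $\phi(s)\le\theta\phi(t)+A(t-s)^{-2m}+B$ for all $r\le s<t\le R$ with $\theta\in[0,1)$, then $\phi(r)\le N(m,\theta)\bigl(A(R-r)^{-2m}+B\bigr)$. Here $\phi$ is bounded on $[r,R]$ exactly because $u\in W_2^{1,2m}(Q_R)$, so applying the lemma with $\theta=1/2$, $A=N\|u\|_{L_2(Q_R)}$, $B=N\|f\|_{L_2(Q_R)}$ yields \eqref{eq06_03}; \eqref{eq06_12} then follows as explained at the outset.
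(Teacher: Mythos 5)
Your proof is correct, and it reaches the estimate \eqref{eq06_03} by a route that differs in a recognizable way from the paper's. The paper fixes a single pair of radii $r<R$ and works with an infinite cascade of nested cutoffs $\zeta_j$ with radii $R_j\uparrow R$; after applying Theorem~\ref{th06_05} to each $\zeta_j u$ and interpolating, it obtains $I_j\le\varepsilon I_{j+1}+N\|f\|_{L_2(Q_R)}+N2^{2mj}(R-r)^{-2m}\|u\|_{L_2(Q_R)}$ and closes the estimate by multiplying by $\varepsilon^j$ and summing the geometric series (taking $\varepsilon=2^{-2m-1}$ so that the doubling factor $2^{2mj}$ is dominated). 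You instead use a \emph{single} cutoff for each intermediate pair $\rho<\rho'$, calibrate the interpolation parameter $\eta=\epsilon(\rho'-\rho)^{2m-j}$ so that the inflated constant is always $(\rho'-\rho)^{-2m}$, and then invoke the standard Giaquinta-type iteration lemma to absorb the $\tfrac12\phi(\rho')$ term; the finiteness of $\phi$ on $[r,R]$, which is needed to start the iteration, is correctly justified by $u\in W_2^{1,2m}(Q_R)$. These two devices -- weighted geometric sum over a telescoping family of cutoffs, versus iteration lemma with a one-parameter family of cutoffs -- are essentially interchangeable for this kind of Caccioppoli/interior estimate; the paper's version is more self-contained (no external lemma cited), while yours is shorter and arguably more transparent about where the exponent $-2m$ comes from. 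Your reduction of \eqref{eq06_12} to \eqref{eq06_03} by interpolation on the fixed cylinder $Q_r$ is also exactly what the paper does. No gaps.
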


\begin{proof}
Let
$$
R_0 = r,
\quad
R_j = r + (R-r)\sum_{l=1}^j 2^{-l},
\quad
j = 1, 2, \cdots.
$$
For each $j = 0, 1, \cdots$,
we take $\zeta_j \in C_0^{\infty}(\bR^{d+1})$ satisfying
$$
\zeta_j =
\left\{
\begin{aligned}
1 \quad &\text{on} \quad Q_{R_j}\\
0 \quad &\text{on} \quad \bR^{d+1} \setminus (-R_{j+1}^{2m}, R_{j+1}^{2m}) \times B_{R_{j+1}}
\end{aligned}
\right.,
$$
and
$$
|D^k \zeta_j| \le N 2^{kj} (R-r)^{-k},
\quad
|(\zeta_j)_t| \le N 2^{2 mj} (R-r)^{-2m},
$$
where $k = 0,1,\cdots,2m$.
Indeed, we can take $\zeta_j$ as follows.
Let $g(z) \in C^{\infty}(\bR)$ be
a function such that
$$
0 \le g \le 1,
\quad
g(z) = 1
\,\,\,
\text{if}
\,\,\,
z \le 0,
\quad
g(z) = 0
\,\,\,
\text{if}
\,\,\,
z \ge 1/2.
$$
Then set $\zeta_j(t,x) = \psi_j(t) \eta_j(x)$,
where
$$
\psi_j(t) = g(2^j(R-r)^{-1}(|t|^{\frac{1}{2m}} - R_j)),
$$
$$
\eta_j(x) = g(2^j(R-r)^{-1}(|x|-R_j)).
$$

Now we apply Theorem \ref{th06_05} with $\lambda = 0$ to $\zeta_j u \in W_2^{1,2m}(\cO_0)$,
so that
\begin{multline}
								\label{eq06_04}
\| (\zeta_j u)_t \|_{L_2(\cO_0)}
+ \| D^{2m} (\zeta_j u) \|_{L_2(\cO_0)}
\le N \|(\zeta_j u)_t + (-1)^m \cL_0 (\zeta_j u) \|_{L_2(\cO_0)}
\\
\le N \| f \|_{L_2(Q_R)}
+ N \| (\zeta_j)_t u \|_{L_2(\cO_0)}
+ N \sum_{k=1}^{2m} \| D^k \zeta_j D^{2m-k} u \|_{L_2(\cO_0)}.	
\end{multline}
Using the properties of $\zeta_j$ and interpolation inequalities (see, for instance,
\cite{Krylov:book:2008}), for each $1\le k < 2m$,
we have
$$
\| D^k \zeta_j D^{2m-k}u \|_{L_2(\cO_0)}
= \| D^k \zeta_j D^{2m-k}(\zeta_{j+1} u) \|_{L_2(\cO_0)}
$$
$$
\le N 2^{kj}(R-r)^{-k} \|D^{2m-k}(\zeta_{j+1} u) \|_{L_2(\cO_0)}
$$
\begin{equation}
								\label{eq06_10}
\le \varepsilon \| D^{2m} (\zeta_{j+1}u) \|_{L_2(\cO_0)}
+ N 2^{2mj}(R-r)^{-2m} \| u \|_{L_2(Q_R)}.	
\end{equation}
Furthermore, we have
\begin{equation}
								\label{eq06_11}
\|(\zeta_j)_t u\|_{L_2(\cO_0)}
+ \|u D^{2m}\zeta_j\|_{L_2(\cO_0)}
\le N 2^{2mj}(R-r)^{-2m}\| u \|_{L_2(Q_R)}.
\end{equation}

Therefore, if we set
$$
I_j = \|(\zeta_j u)_t\|_{L_2(\cO_0)}
+ \|D^{2m}(\zeta_j u)\|_{L_2(\cO_0)},
$$
from \eqref{eq06_04}, \eqref{eq06_10}, and \eqref{eq06_11}
we obtain
$$
I_j \le \varepsilon I_{j+1} + N \|f\|_{L_2(Q_R)}
+ N 2^{2mj}(R-r)^{-2m}\|u\|_{L_2(Q_R)}.
$$
Multiply both sides by $\varepsilon^j$ and make summations with respect to $j$ to get
$$
\sum_{j=0}^{\infty}\varepsilon^j I_j
\le \sum_{j=1}^{\infty}\varepsilon^j I_j
+ N \sum_{j=0}^{\infty} \varepsilon^j \|f\|_{L_2(Q_R)}
+ N (R-r)^{-2m} \sum_{j=0}^{\infty} \varepsilon^j 2^{2mj} \|u\|_{L_2(Q_R)}.
$$
Upon choosing, for example, $\varepsilon = 2^{-2m-1}$,
the summations are finite, so from the above inequality
we have
$$
\| D^{2m}(\zeta_0 u) \|_{L_2(\cO_0)}
+ \| (\zeta_0 u)_t \|_{L_2(\cO_0)}
\le
N \|f\|_{L_2(Q_R)}
+ N (R-r)^{-2m} \| u \|_{L_2(Q_R)}.
$$
This proves the inequality \eqref{eq06_03}
because the left-hand side of the above inequality
is bigger than that of \eqref{eq06_03}.
Finally, the inequality \eqref{eq06_12} follows
from \eqref{eq06_03} and the interpolation inequalities.
\end{proof}

In the sequel we denote $u \in \bW_p^{1,\infty}(Q_r(t_0,x_0))$, $1<p<\infty$, $(t_0,x_0) \in \bR^{d+1}$,
if $D^{\alpha}u$, $D^{\alpha}u_t \in L_p(Q_r(t_0,x_0))$ for all multi-index $\alpha$
including $\alpha = (0,\cdots,0)$.

\begin{corollary}
                                    \label{cor33}
Let $0<r<R<\infty$ and $u\in \bW_2^{1,\infty}(Q_R)$ 
satisfy
\begin{equation}							\label{eq0609_004}
u_t + (-1)^m\cL_0 u=0
\end{equation}
in $Q_R$.
Then for any multi-index $\gamma$, we have
$$
\|D^\gamma u\|_{L_2(Q_r)}
+ \|D^\gamma u_t\|_{L_2(Q_r)} \le N\|u\|_{L_2(Q_R)},	
$$
where $N=N(d,n,m,\delta,r,R, \gamma)$.
\end{corollary}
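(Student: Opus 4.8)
The plan is to deduce this from the localized estimate of Lemma~\ref{lem06_01} by a bootstrap, exploiting that the coefficients $A^{\alpha\beta}=A^{\alpha\beta}(t)$ do not depend on $x$. The observation is that for any $d$-dimensional multi-index $\nu$ the operator $D^\nu$ commutes with both $\partial_t$ and $\cL_0$, so $v:=D^\nu u$ again solves the homogeneous system $v_t+(-1)^m\cL_0 v=0$ in $Q_R$; moreover $v\in\bW_2^{1,\infty}(Q_R)\subset W_2^{1,2m}(Q_R)$. Hence \eqref{eq06_12} (with right-hand side $0$) applies to $v$ on any concentric pair $Q_{r'}\subset Q_{R'}$ with $r'<R'\le R$ and gives $\|D^\nu u\|_{W_2^{1,2m}(Q_{r'})}\le N(d,n,m,\delta,r',R')\,\|D^\nu u\|_{L_2(Q_{R'})}$.

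First I would prove the auxiliary claim that for every spatial multi-index $\mu$ and every $0<r<R$,
\begin{equation*}
\|D^\mu u\|_{L_2(Q_r)}\le N(d,n,m,\delta,r,R,|\mu|)\,\|u\|_{L_2(Q_R)},
\end{equation*}
arguing by induction on $k=\lceil|\mu|/(2m)\rceil$. For $k\le 1$ it is immediate, since $\|D^\mu u\|_{L_2(Q_r)}\le\|u\|_{W_2^{1,2m}(Q_r)}\le N\|u\|_{L_2(Q_R)}$ by \eqref{eq06_12} applied to $u$ itself. For the inductive step ($k\ge2$, so $|\mu|>2m$) I would write $\mu=\sigma+\nu$ with $|\sigma|=2m$, hence $\lceil|\nu|/(2m)\rceil=k-1$, fix $\rho=(r+R)/2$, and estimate
\begin{equation*}
\|D^\mu u\|_{L_2(Q_r)}=\|D^\sigma(D^\nu u)\|_{L_2(Q_r)}\le\|D^\nu u\|_{W_2^{1,2m}(Q_r)}\le N\|D^\nu u\|_{L_2(Q_\rho)}\le N\|u\|_{L_2(Q_R)},
\end{equation*}
where the middle inequality is \eqref{eq06_12} for $D^\nu u$ on $Q_r\subset Q_\rho$ and the last is the induction hypothesis applied to $D^\nu u$ with radii $(\rho,R)$.

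With the claim in hand, the corollary follows quickly: for the given $\gamma$ put $\rho=(r+R)/2$ and apply \eqref{eq06_12} to $v=D^\gamma u$ on $Q_r\subset Q_\rho$ and then the claim with radii $(\rho,R)$,
\begin{equation*}
\|D^\gamma u\|_{L_2(Q_r)}+\|D^\gamma u_t\|_{L_2(Q_r)}\le\|D^\gamma u\|_{W_2^{1,2m}(Q_r)}\le N\|D^\gamma u\|_{L_2(Q_\rho)}\le N\|u\|_{L_2(Q_R)},
\end{equation*}
using that $\|v_t\|_{L_2}$ is part of the $W_2^{1,2m}$-norm and $(D^\gamma u)_t=D^\gamma u_t$. (Alternatively, $\|D^\gamma u_t\|_{L_2(Q_r)}$ can be bounded directly via the equation $u_t=-(-1)^m\sum_{|\alpha|=|\beta|=m}A^{\alpha\beta}D^{\alpha}D^{\beta}u$ and $|A^{\alpha\beta}|\le\delta^{-1}$ by $N\sum_{|\alpha|=|\beta|=m}\|D^{\gamma+\alpha+\beta}u\|_{L_2(Q_r)}$, and then by the claim.)

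The only point needing care is the bookkeeping in the induction: each use of \eqref{eq06_12} gains $2m$ spatial derivatives at the price of shrinking the cylinder, so one needs on the order of $|\gamma|/(2m)$ nested cylinders between $Q_r$ and $Q_R$, and the resulting constant $N$ grows with $|\gamma|$ — but it stays finite, which is all that is asserted. The justifications that $D^\nu u\in W_2^{1,2m}(Q_R)$ and that $D^\nu u$ solves the homogeneous system are immediate from $u\in\bW_2^{1,\infty}(Q_R)$ and the $x$-independence of the coefficients, so there is no genuine analytic obstacle.
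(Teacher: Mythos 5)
Your proof is correct and takes essentially the same route as the paper: in both cases the $u_t$ part is reduced to spatial derivatives via the equation $D^\gamma u_t=-(-1)^m\cL_0 D^\gamma u$, and the spatial estimate is obtained by bootstrapping the local estimate from Lemma \ref{lem06_01}, shrinking the cylinder each time one gains $2m$ derivatives. The paper peels off $D^{2m}$ and applies \eqref{eq06_03}/\eqref{eq06_12} to $D^\vartheta u$ with the informal phrase ``repeat this process as many times as needed''; you carry out the identical bookkeeping as a formal induction on $\lceil|\mu|/(2m)\rceil$, which is just a tidier write-up of the same iteration.
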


\begin{proof}
Note that
$$
D^\gamma u_t = -(-1)^m \cL_0 D^{\gamma} u
$$
in $Q_R$.
Hence it is enough to prove
\begin{equation}					
								\label{eq06_05}
\|D^\gamma u\|_{L_2(Q_r)} \le N\|u\|_{L_2(Q_R)}.
\end{equation}
Since $u \in W_2^{1,2m}(Q_R)$,
this inequality follows from \eqref{eq06_12}
if $|\gamma| \le 2m$,
so assume that $|\gamma| > 2m$ and
$$
D^{\gamma} u = D^{2m}D^{\vartheta}u.
$$
Note that $D^{\vartheta}u$ is in $W_2^{1,2m}(Q_R)$
and satisfies \eqref{eq0609_004}.
Thus applying \eqref{eq06_03}
to the equation \eqref{eq0609_004} with $D^{\vartheta}u$ in place of $u$
we get
$$
\| D^{\gamma} u \|_{L_2(Q_r)}
\le N \| D^{\vartheta} u \|_{L_2(Q_{R_0})},
$$
where $r < R_0 < R$.
We repeat this process as many times as needed to get
$$
\| D^{\gamma} u \|_{L_2(Q_r)}
\le N \| D^{\gamma_0} u \|_{L_2(Q_{R_1})},
$$
where $|\gamma_0| \le 2m$ and $r < R_1 < R$.
Then the inequality \eqref{eq06_05}
for $|\gamma| > 2m$
follows from the same inequality for $|\gamma|\le 2m$
(with $R_1$ in place of $r$).
\end{proof}

\begin{lemma}							\label{lemma0611_01}
If $u \in \bW_2^{1,\infty}(Q_4)$ 
satisfies	
\eqref{eq0609_004} in $Q_4$,
then
\begin{equation*}							
\sup_{Q_1}|D u(t,x)|
+ \sup_{Q_1}|u_t(t,x)|
\le N \| u \|_{L_2(Q_4)},	
\end{equation*}
where $N=N(d,n,m,\delta)$.
\end{lemma}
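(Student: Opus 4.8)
The plan is to promote the $L_2$-bounds on \emph{all} spatial derivatives of $u$ and of $u_t$ furnished by Corollary~\ref{cor33} to the claimed pointwise bounds, by means of a Sobolev embedding that treats the time variable separately from the spatial ones. To set up, fix the least integer $K$ with $K>d/2$, so that $W_2^K(B_2)\hookrightarrow C(\overline{B_2})$, and put $I=(-2^{2m},0)$, $X=W_2^K(B_2)$; note that $Q_1\subset\overline I\times\overline{B_2}$ since $2^{2m}\ge 4$.

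First I would apply Corollary~\ref{cor33} with the radii $r=2$, $R=4$, which gives, for every spatial multi-index $\eta$,
$$\|D^{\eta}u\|_{L_2(Q_2)}+\|D^{\eta}u_t\|_{L_2(Q_2)}\le N\,\|u\|_{L_2(Q_4)},$$
with $N$ depending on $d,n,m,\delta$ and $\eta$. Then, for a fixed spatial multi-index $\gamma$, I would regard $v:=D^{\gamma}u$ as the map $t\mapsto v(t,\cdot)$ with values in $X$: summing the displayed estimate over $|\eta|\le K$ shows $v\in L_2(I;X)$, and since the time derivative of $v$ is $D^{\gamma}u_t$, the same summation (now using the bound for $u_t$) gives $v_t\in L_2(I;X)$, with $\|v\|_{L_2(I;X)}+\|v_t\|_{L_2(I;X)}\le N\|u\|_{L_2(Q_4)}$. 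The elementary one-dimensional Sobolev embedding for $X$-valued functions then yields $v\in C(\overline I;X)$ and $\sup_{\overline I}\|v(t,\cdot)\|_X\le N(\|v\|_{L_2(I;X)}+\|v_t\|_{L_2(I;X)})$; composing with $X\hookrightarrow C(\overline{B_2})$ gives $\sup_{Q_1}|D^{\gamma}u|\le N\|u\|_{L_2(Q_4)}$. Choosing $\gamma=e_1,\dots,e_d$ bounds $\sup_{Q_1}|Du|$. For $u_t$ I would not differentiate the equation (this is not possible, see below) but read it off \eqref{eq0609_004} directly: $u_t=-(-1)^m\sum_{|\alpha|=|\beta|=m}A^{\alpha\beta}D^{\alpha+\beta}u$ with $|A^{\alpha\beta}|\le\delta^{-1}$, so $\sup_{Q_1}|u_t|$ is dominated by a finite, $(d,m)$-indexed sum of the quantities $\sup_{Q_1}|D^{\gamma}u|$ with $|\gamma|=2m$, each already controlled. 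Keeping track of the constants, $N=N(d,n,m,\delta)$ at every step.

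The one point that needs care — and the reason the argument is arranged this way — is that the leading coefficients $A^{\alpha\beta}=A^{\alpha\beta}(t)$ are only measurable in $t$, so $u$ has no time regularity beyond $u_t\in L_2$, and there is no spacetime bootstrap available to conclude $u\in C^{\infty}(Q_1)$. The argument must therefore use only \emph{spatial} extra regularity, which is exactly what Corollary~\ref{cor33} supplies, together with the single time derivative $u_t\in L_2$, and must recover $\sup|u_t|$ algebraically from the equation. The remaining ingredients — the values of $K$ and of the radii, and the $X$-valued Sobolev embedding invoked above — are routine.
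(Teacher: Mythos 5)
Your proof is correct and follows essentially the same route as the paper: it relies on Corollary~\ref{cor33} to control all spatial derivatives of $u$ and $u_t$ in $L_2(Q_2)$, reads $u_t$ algebraically off the equation, and then upgrades to pointwise bounds via an anisotropic (1D-in-time combined with higher-order-in-space) Sobolev embedding; your Bochner-space phrasing $H^1(I;W_2^K(B_2))\hookrightarrow C(\overline I;C(\overline{B_2}))$ is just a tidier packaging of the two successive one-variable Sobolev estimates the paper uses.
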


\begin{proof}
Thanks to the fact
that $u_t = -(-1)^m \cL_0 u$ in $Q_4$,
it suffices to prove
\begin{equation}
								\label{eq06_08}
\sup_{(t,x) \in Q_1}|D^{\gamma} u(t,x)|
\le N \| u \|_{L_2(Q_4)},
\end{equation}
for a multi-index $\gamma$.
By the Sobolev embedding theorem
$$
\sup_{t\in(-1,0)}|D^{\gamma}u(t,x)|^2 \le N\int_{-1}^0|D^{\gamma}u(s,x)|^2 \, ds
+ N\int_{-1}^0 |D^{\gamma}u_t(s,x)|^2 \, ds	
$$
for each $x \in B_1$,
where $D^{\gamma}u(t,x)$ is considered as a function of $t \in (-1,0)$ for each fixed $x\in B_1$.
On the other hand, again by the Sobolev embedding theorem there exists a positive number $k$
such that
$$
\sup_{x\in B_1}|D^{\gamma} u(s,x)| \le
N \| D^{\gamma} u(s,\cdot) \|_{W_2^k(B_1)}
$$
for each $s \in (-1,0)$,
where $D^{\gamma} u(s,x)$ is considered as a function of $x \in B_1$
for each fixed $s \in (-1,0)$.
We have the same inequality as above with $D^{\gamma}u_t$ in place of $D^{\gamma}u$.
Therefore, we obtain
$$
\sup_{(t,x) \in Q_1}|D^{\gamma}u(t,x)|^2
\le N \sum_{|\vartheta|\le k}\|D^{\vartheta}D^{\gamma} u\|_{L_2(Q_1)}
+ N\sum_{|\vartheta|\le k}\|D^{\vartheta} D^{\gamma} u_t\|_{L_2(Q_1)}.
$$
This together with Corollary \ref{cor33} gives the inequality \eqref{eq06_08}.
\end{proof}

\begin{lemma}							\label{lemma6.2}
Let $\lambda \ge 0$ and $u \in \bW_2^{1,\infty}(Q_4)$ 
satisfy	
\begin{equation}
 					\label{eq10.32b}
u_t + (-1)^m \cL_0 u+\lambda u=0
\end{equation}
in $Q_4$.
Then we have
\begin{multline}							\label{eqeq061002}
\sup_{Q_1}|D^{m+1} u(t,x)|
+ \sup_{Q_1}|D^m u_t(t,x)|
+ \lambda^{\frac 1 2}\sup_{Q_1} |Du(t,x)|
\\
+ \lambda^{\frac 1 2}\sup_{Q_1} |u_t(t,x)|
\le N \sum_{k=0}^m \lambda^{\frac{1}{2}-\frac{k}{2m}} \| D^k u \|_{L_2(Q_4)},	
\end{multline}
where $N=N(d,n,m,\delta)$.
\end{lemma}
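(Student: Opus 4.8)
The plan is to remove the zeroth order term $\lambda u$ by Agmon's device of adjoining one extra spatial variable, after which everything reduces to Lemma~\ref{lemma0611_01} applied in dimension $d+1$. Assume $\lambda>0$ (the case $\lambda=0$ is immediate from Lemma~\ref{lemma0611_01} applied to $u$ and to its order‑$m$ derivatives). Set $\mu=\lambda^{1/(2m)}$ and define, for $(t,x,y)\in\bR\times\bR^{d+1}$,
\[
\bar u(t,x,y)=u(t,x)\,e^{\ii\mu y}.
\]
Since $\ii^{2m}=(-1)^m$, one has $(-1)^mD_y^{2m}\bar u=\mu^{2m}\bar u=\lambda\bar u$, and hence $\bar u$ satisfies
\[
\bar u_t+(-1)^m\widehat\cL_0\bar u=0,\qquad \widehat\cL_0:=\cL_0+D_y^{2m},
\]
on the parabolic cylinder $\widehat Q_4\subset\bR\times\bR^{d+1}$ (time interval $(-4^{2m},0)$, spatial ball of radius $4$ in $\bR^{d+1}$). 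The first step is to check that $\widehat\cL_0$ is an admissible operator in $d+1$ spatial variables: its coefficients depend only on $t$ and are bounded by $\max(\delta^{-1},1)$, and, using $(|\xi|^2+\eta^2)^m\le 2^{m-1}(|\xi|^{2m}+\eta^{2m})$, the Legendre--Hadamard condition \eqref{eq7.9.17} holds for $\widehat\cL_0$ with an ellipticity constant $\widehat\delta=\widehat\delta(\delta,m)>0$. One also checks $\bar u\in\bW_2^{1,\infty}(\widehat Q_4)$, which follows since $u\in\bW_2^{1,\infty}(Q_4)$ and $e^{\ii\mu y}$ together with all its derivatives is bounded on the bounded $y$‑range of $\widehat Q_4$.

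Next, for every multi-index $\gamma$ in the $x$-variables with $|\gamma|=m$, the function $D^\gamma_x\bar u$ again lies in $\bW_2^{1,\infty}(\widehat Q_4)$ and solves the same homogeneous equation, and so does $\bar u$ itself. Applying Lemma~\ref{lemma0611_01} (in $d+1$ spatial variables, with ellipticity constant $\widehat\delta$) to each $D^\gamma_x\bar u$ and summing over $|\gamma|=m$ gives, with $N=N(d,n,m,\delta)$,
\[
\sup_{\widehat Q_1}|D^{m+1}_x\bar u|+\sup_{\widehat Q_1}|D^m_x\bar u_t|\le N\,\|D^m_x\bar u\|_{L_2(\widehat Q_4)},
\]
while applying it to $\bar u$ itself gives $\sup_{\widehat Q_1}|D\bar u|+\sup_{\widehat Q_1}|\bar u_t|\le N\,\|\bar u\|_{L_2(\widehat Q_4)}$.

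Finally one translates both inequalities back to $u$. Since $|e^{\ii\mu y}|=1$ and $\widehat Q_1$ projects onto $Q_1$ in the $(t,x)$-variables, the $y$-independent quantities on the left satisfy $\sup_{\widehat Q_1}|D^{m+1}_x\bar u|=\sup_{Q_1}|D^{m+1}u|$, $\sup_{\widehat Q_1}|D^m_x\bar u_t|=\sup_{Q_1}|D^m u_t|$, $\sup_{\widehat Q_1}|D\bar u|\ge\sup_{Q_1}|Du|$, and $\sup_{\widehat Q_1}|\bar u_t|=\sup_{Q_1}|u_t|$; and since the $y$-integration runs over a fibre of length at most $8$, $\|D^m_x\bar u\|_{L_2(\widehat Q_4)}\le N\|D^m u\|_{L_2(Q_4)}$ and $\|\bar u\|_{L_2(\widehat Q_4)}\le N\|u\|_{L_2(Q_4)}$. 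Multiplying the second inequality by $\lambda^{1/2}=\mu^m$ and adding it to the first yields
\[
\sup_{Q_1}|D^{m+1}u|+\sup_{Q_1}|D^m u_t|+\lambda^{\frac12}\sup_{Q_1}|Du|+\lambda^{\frac12}\sup_{Q_1}|u_t|\le N\big(\|D^m u\|_{L_2(Q_4)}+\lambda^{\frac12}\|u\|_{L_2(Q_4)}\big),
\]
and the right-hand side is dominated by $N\sum_{k=0}^m\lambda^{\frac12-\frac{k}{2m}}\|D^k u\|_{L_2(Q_4)}$, being precisely its $k=m$ and $k=0$ terms; this is \eqref{eqeq061002}. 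I expect the only points needing care to be the verification of the Legendre--Hadamard condition for the augmented operator $\widehat\cL_0$ and the bookkeeping of the factors $\mu^m=\lambda^{1/2}$ and of the fibre length when passing between norms over $\widehat Q_4$ and over $Q_4$; everything else is a direct invocation of Lemma~\ref{lemma0611_01}.
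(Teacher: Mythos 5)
Your proof is correct and follows essentially the same route as the paper: Agmon's device of adjoining a spatial variable and then invoking the $\lambda=0$ estimate (Lemma~\ref{lemma0611_01}) in $d+1$ dimensions. The only difference is cosmetic and arguably a touch cleaner: you use the complex exponential $e^{\ii\mu y}$ where the paper uses $\zeta(y)=\cos(\mu y)+\sin(\mu y)$; since $|e^{\ii\mu y}|\equiv 1$ you can apply Lemma~\ref{lemma0611_01} directly to $D^\gamma_x\bar u$ and to $\bar u$ and transfer the $L_2$ norms over the augmented cylinder back to $Q_4$ trivially, whereas the paper instead expands $D^m\hat u$ into a sum of terms $\lambda^{\frac12-\frac k{2m}}D^k_x u$ times bounded trigonometric factors. (This variant is legitimate here because the paper works throughout with complex-valued solutions.)
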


\begin{proof}
The case $\lambda =0$
follows by Lemma \ref{lemma0611_01} applied to $D^m u$
since $D^m u$ satisfies \eqref{eq0609_004}.
For the case $\lambda > 0$,
we follow an idea by S. Agmon.
Consider
$$
\zeta(y) = \cos (\lambda^{\frac{1}{2m}} y) + \sin (\lambda^{\frac{1}{2m}} y).
$$
Note that
$$
(-1)^m D^{2m}_y \zeta(y) = \lambda \zeta(y),
\quad
\zeta(0) = 1,
\quad
|D^m \zeta(0) | = \lambda^{\frac 1 2}.
$$
Denote by $(t,z) = (t,x,y)$ a point in $\bR^{d+2}$,
where $z = (x,y) \in \bR^{d+1}$,
and set
$$
\hat{u}(t,z) = u(t,x) \zeta(y),
\quad
\hat{Q}_r = (-r^{2m}, 0) \times \{ |z| < r, z \in \bR^{d+1}\}.
$$
Since $u$ satisfies \eqref{eq10.32b}, $\hat{u}$ satisfy
$$
\hat{u}_t + (-1)^m \cL_0 \hat{u} + (-1)^m D^{2m}_y \hat{u} = 0
$$
in $\hat{Q}_4$.
Upon applying the inequality \eqref{eqeq061002} with $\lambda =0$ just proved above,
we get
\begin{multline}							\label{eq0611_100}
\sup_{\hat Q_1}|D^{m+1}_x \hat u(t,z)|
+ \sup_{\hat Q_1} |D^m_x \hat u_t(t,z)|
+ \sup_{\hat Q_1}|D^m_y D_x \hat u(t,z)|
\\
+ \sup_{\hat Q_1}|D^m_y \hat u_t(t,z)|
\le N \| D^m \hat u \|_{L_2(\hat Q_4)}.
\end{multline}
Since, for example,
$$
\sup_{(t,x) \in Q_1}\lambda^{\frac 1 2}|D_x u(t,x)|
\le \sup_{(t,z) \in \hat Q_1} |D^m_y D_x \hat u (t,z)|,
$$
the left-hand side of \eqref{eq0611_100} is bigger than
that of \eqref{eqeq061002}.
On the other hand,
$D^m \hat u$ is a linear combination of terms like
$$
\lambda^{\frac{1}{2} - \frac{k}{2m}} \cos (\lambda^{\frac{1}{2m}} y) D_x^k u(t,x),
\quad
\lambda^{\frac{1}{2} - \frac{k}{2m}} \sin (\lambda^{\frac{1}{2m}} y) D_x^k u(t,x),
\quad
k = 0, \cdots, m.
$$
Thus  we see that the right-hand side of \eqref{eq0611_100} is bounded by
that of \eqref{eqeq061002}.
The lemma is proved.
\end{proof}

Recall that we denote by $X$ a point in $\bR^{d+1} = \bR \times \bR^d$.

\begin{lemma}
 				\label{lem4.1}
Let $r\in (0,\infty)$, $\kappa\in [4,\infty)$, $\lambda\ge 0$,
and $X_0 = (t_0,x_0) \in \bR^{d+1}$.
Assume $u\in \cH_{2,\text{loc}}^m (\bR^{d+1})$ 
satisfies \eqref{eq10.32b}
in $Q_{\kappa r}(X_0)$.
Then for any $\alpha$, $|\alpha|=m$,
we have
\begin{multline}	
 			\label{eq5.111}
\left(|D^\alpha u-(D^\alpha u)_{Q_r(X_0)}|\right)_{Q_r(X_0)}
+\lambda^{\frac 1 2}\left(|u-(u)_{Q_r(X_0)}|\right)_{Q_r(X_0)}
\\
\le N\kappa^{-1} \sum_{k=0}^m\lambda^{\frac 1 2-\frac k {2m}}(|D^k u|^2)_{Q_{\kappa r}(X_0)}^{\frac 1 2},
\end{multline}
where $N=N(d,n,m,\delta)>0$.
\end{lemma}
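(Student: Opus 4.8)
The plan is to derive \eqref{eq5.111} from the interior pointwise bounds of Lemma~\ref{lemma6.2} by a translation, a parabolic rescaling, and the elementary observation that the mean oscillation of a $C^1$ function over a parabolic cylinder is controlled by the sizes of the cylinder times the suprema of its first derivatives. Two preliminaries come first. If the right-hand side of \eqref{eq5.111} is infinite there is nothing to prove, so I may assume $D^ku\in L_2(Q_{\kappa r}(X_0))$ for $0\le k\le m$. Since $A^{\alpha\beta}=A^{\alpha\beta}(t)$ does not depend on $x$, both \eqref{eq10.32b} and the ellipticity constant $\delta$ are invariant under the translation carrying $X_0$ to the origin (the coefficients become $A^{\alpha\beta}(\,\cdot\,+t_0)$, still measurable in $t$ and Legendre-Hadamard with the same $\delta$), so I may take $X_0=0$. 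Because $u$ is only assumed to lie in $\cH_{2,\text{loc}}^m$ whereas Lemma~\ref{lemma6.2} is stated for functions in $\bW_2^{1,\infty}$, I would next mollify $u$ in the spatial variables only: the mollification $u^{(\varepsilon)}$ again solves \eqref{eq10.32b} on a slightly smaller cylinder (again since the coefficients are $x$-independent), and, as $D^\gamma u^{(\varepsilon)}_t=-(-1)^m\cL_0 D^\gamma u^{(\varepsilon)}-\lambda D^\gamma u^{(\varepsilon)}$ for every $\gamma$, it belongs to $\bW_2^{1,\infty}$ on every compact subcylinder, while $D^ku^{(\varepsilon)}\to D^ku$ in $L_2$ locally for $k\le m$. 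Hence it suffices to prove \eqref{eq5.111} for $u$ with this extra regularity (with $Q_{\kappa r}$ slightly shrunk) and then let $\varepsilon\to0$.

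For the elementary ingredient, fix $w\in C^1(\overline{Q_r})$ with $Q_r=Q_r(0)$; for $(s,x),(\tau,y)\in Q_r$, splitting $w(s,x)-w(\tau,y)=\big(w(s,x)-w(\tau,x)\big)+\big(w(\tau,x)-w(\tau,y)\big)$ and using that $Q_r$ is the product of a time interval of length $r^{2m}$ and a ball of radius $r$, one gets
\begin{equation*}
\big(|w-(w)_{Q_r}|\big)_{Q_r}\le\dashint_{Q_r}\dashint_{Q_r}|w(X)-w(Y)|\,dY\,dX\le Nr\sup_{Q_r}|D_xw|+Nr^{2m}\sup_{Q_r}|w_t|.
\end{equation*}
Taking $w=D^\alpha u$ with $|\alpha|=m$, and $w=\lambda^{1/2}u$, bounds the left-hand side of \eqref{eq5.111} by
\begin{equation*}
Nr\Big(\sup_{Q_r}|D^{m+1}u|+\lambda^{1/2}\sup_{Q_r}|Du|\Big)+Nr^{2m}\Big(\sup_{Q_r}|D^mu_t|+\lambda^{1/2}\sup_{Q_r}|u_t|\Big).
\end{equation*}

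It remains to estimate these suprema, which is where Lemma~\ref{lemma6.2} enters after rescaling. Set $\rho=\kappa r/4\ge r$ and $v(t,x)=u(\rho^{2m}t,\rho x)$, so that $v$ solves $v_t+(-1)^m\cL_0 v+\widetilde\lambda\,v=0$ in $Q_4$ with $\widetilde\lambda=\rho^{2m}\lambda$ and the same $\delta$ (the cylinder $Q_4$ for $v$ corresponds to $Q_{\kappa r}$ for $u$, where \eqref{eq10.32b} holds). Applying Lemma~\ref{lemma6.2} to $v$, together with the scaling identities $\|D^kv\|_{L_2(Q_4)}=N\rho^k(|D^ku|^2)_{Q_{\kappa r}}^{1/2}$ and $\widetilde\lambda^{\frac12-\frac{k}{2m}}=\rho^{m-k}\lambda^{\frac12-\frac{k}{2m}}$ and the inclusion $Q_r\subset Q_\rho$, yields
\begin{equation*}
\rho\sup_{Q_r}|D^{m+1}u|+\rho\lambda^{1/2}\sup_{Q_r}|Du|+\rho^{2m}\sup_{Q_r}|D^mu_t|+\rho^{2m}\lambda^{1/2}\sup_{Q_r}|u_t|\le N\sum_{k=0}^m\lambda^{\frac12-\frac{k}{2m}}(|D^ku|^2)_{Q_{\kappa r}}^{1/2}.
\end{equation*}
Substituting into the oscillation bound and using $r/\rho=4/\kappa$ together with $(r/\rho)^{2m}\le4^{2m}\kappa^{-1}$ (valid since $\kappa\ge4\ge1$) gives exactly \eqref{eq5.111} with $N=N(d,n,m,\delta)$.

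I do not expect a genuine analytic obstacle, since the substantive work is already contained in Lemma~\ref{lemma6.2} (in particular Agmon's device for the $\lambda$-dependence). The points needing care are the bookkeeping of the parabolic scaling---checking that $\lambda$ rescales to $\rho^{2m}\lambda$, that the powers of $\rho$ generated by the rescaled operator and by passing from $L_2$ norms to $L_2$ averages cancel, and that the two surviving powers $\kappa^{-1}$ and $\kappa^{-2m}$ collapse into a single $\kappa^{-1}$---and the preliminary spatial mollification that legitimizes applying an estimate stated for $\bW_2^{1,\infty}$-functions to a mere $\cH_{2,\text{loc}}^m$-solution.
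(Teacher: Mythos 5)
Your argument is correct and follows the same route as the paper's: translate $X_0$ to the origin, mollify in $x$ to justify applying the $\bW_2^{1,\infty}$-estimate of Lemma~\ref{lemma6.2}, bound the mean oscillation of $D^\alpha u$ and of $\lambda^{1/2}u$ over $Q_r$ by $r$ and $r^{2m}$ times the relevant suprema, and then control those suprema via the parabolic rescaling that brings $Q_{\kappa r}$ to $Q_4$. The only cosmetic difference is that you make the rescaling explicit through the auxiliary parameter $\rho=\kappa r/4$ and keep the two powers $r/\rho$ and $(r/\rho)^{2m}$ separate before collapsing them to $\kappa^{-1}$, whereas the paper first normalizes to $r=4/\kappa$ and absorbs $r^{2m}$ into $r$; the substance and the constants' dependencies are identical.
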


\begin{proof}
Let us prove the inequality \eqref{eq5.111}
when $X_0 = (0,0)$.	
This with a translation of the coordinates proves the inequality for general $X_0 \in \bR^{d+1}$.

Since the standard mollification of $u$ with respect to $x$
satisfies \eqref{eq10.32b} in a little bit smaller cylinder than $Q_{\kappa r}$,
we assume that $D^{\alpha} u \in L_2(Q_{\kappa r})$ for all multi-index $\alpha$.
Furthermore, \eqref{eq10.32b} implies that $D^{\gamma} u_t \in L_2(Q_{\kappa r})$
if $D^{\alpha}u \in L_2(Q_{\kappa r})$ for all $\alpha$.
Therefore, without loss of generality we assume that $u \in \bW_2^{1,\infty}(Q_{\kappa r})$.

Due to a scaling argument (for instance, see the proof of Lemma \ref{lem06_03}),
it suffices to deal with the case $r=4/\kappa$.
Observe that, for example,
$$
\left(|D^\alpha u-(D^\alpha u)_{Q_r}|\right)_{Q_r}
\le  N r \sup_{Q_1} |D^{\alpha+1} u(t,x)|
+ N r \sup_{Q_1} |D^{\alpha}u_t(t,x)|.
$$
By Lemma \ref{lemma6.2},
the right-hand side of the above inequality is bounded by
that of \eqref{eq5.111} (recall $r = 4 \kappa^{-1}$).
The lemma is proved.
\end{proof}

\subsection{Mean oscillation estimates for systems in the whole space}

In the next theorem, we prove a mean oscillation estimate for divergence form systems with simple coefficients in the whole space.

\begin{theorem}
 				\label{thm4.2}
Let $r\in (0,\infty)$, $\kappa\in [8,\infty)$, $\lambda> 0$,
$X_0 = (t_0,x_0) \in \bR^{d+1}$, and
$f_\alpha \in L_{2,\text{loc}}(\bR^{d+1})$, $|\alpha|\le m$.
Assume that $u\in \cH_{2,\text{loc}}^m (\bR^{d+1})$ 
satisfies
\begin{equation*}
u_t + (-1)^m\cL_0 u+\lambda u=\sum_{|\alpha|\le m}D^\alpha f_\alpha
\end{equation*}
in $Q_{\kappa r}(X_0)$.
Then for any $\alpha$, $|\alpha|=m$,
we have
\begin{multline}
 			\label{eq5.11}
\left(|D^\alpha u-(D^\alpha u)_{Q_r(X_0)}|\right)_{Q_r(X_0)}
+\lambda^{\frac 1 2}\left(|u-(u)_{Q_r(X_0)}|\right)_{Q_r(X_0)}
\\
\le N\kappa^{-1}\sum_{k=0}^m\lambda^{\frac 1 2-\frac k {2m}}(|D^k u|^2)_{Q_{\kappa r}(X_0)}^{\frac 12}+N\kappa^{m+\frac d 2}\sum_{|\alpha|\le m}\lambda^{\frac {|\alpha|} {2m}-\frac 1 2}(|f_\alpha|^2)_{Q_{\kappa r}(X_0)}^{\frac 12},	
\end{multline}
where $N=N(d,n,m,\delta)>0$.
\end{theorem}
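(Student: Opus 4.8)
The plan is to derive \eqref{eq5.11} from the homogeneous estimate of Lemma~\ref{lem4.1} by peeling off the contribution of the right-hand side using the whole-space $L_2$-solvability of Theorem~\ref{thVMO01}, in the spirit of \cite{Krylov_2005}. By translating the coordinates we may assume $X_0=(0,0)$, and by the parabolic scaling $u(t,x)\mapsto u(r^{2m}t,rx)$, $f_\alpha(t,x)\mapsto r^{|\alpha|}f_\alpha(r^{2m}t,rx)$, $\lambda\mapsto r^{2m}\lambda$ — which preserves the class of operators $\cL_0$ (coefficients depending only on $t$) and the Legendre--Hadamard condition — we may normalize $r$ as convenient, exactly as in the proofs of Lemma~\ref{lem4.1} and Lemma~\ref{lem06_03}. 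Note that $Q_{\kappa r}=Q_{\kappa r}(0,0)=(-(\kappa r)^{2m},0)\times B_{\kappa r}\subset\cO_0$, so all the objects below live over $\cO_0$.

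First I would introduce $w\in\cH_2^m(\cO_0)$, the unique solution furnished by Theorem~\ref{thVMO01} of
\[
w_t+(-1)^m\cL_0 w+\lambda w=\sum_{|\alpha|\le m}D^\alpha\bigl(f_\alpha\mathbf{1}_{Q_{\kappa r}}\bigr)\quad\text{in }\cO_0,
\]
so that, by the same theorem, $\sum_{|\alpha|\le m}\lambda^{1-\frac{|\alpha|}{2m}}\|D^\alpha w\|_{L_2(\cO_0)}\le N\sum_{|\alpha|\le m}\lambda^{\frac{|\alpha|}{2m}}\|f_\alpha\|_{L_2(Q_{\kappa r})}$. Then $v:=u-w$ lies in $\cH^m_{2,\mathrm{loc}}$ and satisfies $v_t+(-1)^m\cL_0 v+\lambda v=0$ in $Q_{\kappa r}$, so Lemma~\ref{lem4.1} (with the same $r$, $\kappa$, $\lambda$, $X_0$; note $\kappa\ge 8\ge 4$) applies to $v$ and bounds $(|D^\alpha v-(D^\alpha v)_{Q_r}|)_{Q_r}+\lambda^{\frac12}(|v-(v)_{Q_r}|)_{Q_r}$ by $N\kappa^{-1}\sum_{k=0}^m\lambda^{\frac12-\frac k{2m}}(|D^k v|^2)_{Q_{\kappa r}}^{1/2}$.

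Next I would estimate the $w$-part using the elementary inequalities $(|g-(g)_Q|)_Q\le 2(|g|^2)_Q^{1/2}$ and $(|g|^2)_Q^{1/2}\le |Q|^{-1/2}\|g\|_{L_2(\cO_0)}$, valid for any cylinder $Q$. Taking $g=D^\alpha w$ with $|\alpha|=m$ and $g=w$ on $Q=Q_r$, inserting the $L_2$-bound on $w$, and using $|Q_r|=N r^{d+2m}$ together with $\|f_\alpha\|_{L_2(Q_{\kappa r})}=N(\kappa r)^{(d+2m)/2}(|f_\alpha|^2)_{Q_{\kappa r}}^{1/2}$, the ratio of the measures of $Q_r$ and $Q_{\kappa r}$ produces precisely the factor $\kappa^{(d+2m)/2}=\kappa^{m+d/2}$, giving
\[
(|D^\alpha w-(D^\alpha w)_{Q_r}|)_{Q_r}+\lambda^{\frac12}(|w-(w)_{Q_r}|)_{Q_r}\le N\kappa^{m+\frac d2}\sum_{|\alpha|\le m}\lambda^{\frac{|\alpha|}{2m}-\frac12}(|f_\alpha|^2)_{Q_{\kappa r}}^{1/2}.
\]
In the same way, since $Q_{\kappa r}$ is the domain of integration on both sides (so no power of $\kappa$ appears), $(|D^k w|^2)_{Q_{\kappa r}}^{1/2}\le N\lambda^{\frac k{2m}-1}\sum_{|\alpha|\le m}\lambda^{\frac{|\alpha|}{2m}}(|f_\alpha|^2)_{Q_{\kappa r}}^{1/2}$ for $0\le k\le m$.

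Finally I would assemble: write $u=v+w$, apply the triangle inequality to the left side of \eqref{eq5.11}, bound the $v$-oscillation by the Lemma~\ref{lem4.1} estimate after splitting $(|D^k v|^2)_{Q_{\kappa r}}^{1/2}\le(|D^k u|^2)_{Q_{\kappa r}}^{1/2}+(|D^k w|^2)_{Q_{\kappa r}}^{1/2}$ — the $u$-piece giving the first sum in \eqref{eq5.11}, and the $w$-piece, multiplied by $\kappa^{-1}\lambda^{\frac12-\frac k{2m}}$, collapsing to $N\kappa^{-1}\sum_{|\alpha|\le m}\lambda^{\frac{|\alpha|}{2m}-\frac12}(|f_\alpha|^2)_{Q_{\kappa r}}^{1/2}$, which is absorbed into the second sum since $\kappa^{-1}\le\kappa^{m+d/2}$ — and bound the $w$-oscillation by the displayed estimate above. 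This yields \eqref{eq5.11}. I expect the only real difficulty to be the bookkeeping of the powers of $\lambda$ and $\kappa$, in particular checking that the passage from the absolute $L_2$-bound on $w$ to averaged quantities over $Q_r$ lands exactly on $\kappa^{m+d/2}\lambda^{\frac{|\alpha|}{2m}-\frac12}(|f_\alpha|^2)_{Q_{\kappa r}}^{1/2}$; the analytic content is already contained in Theorem~\ref{thVMO01} and Lemma~\ref{lem4.1}. The slack between $\kappa\ge 8$ and the $\kappa\ge 4$ needed in Lemma~\ref{lem4.1} is convenient if one prefers instead to solve for $w$ with $f_\alpha$ restricted to $Q_{\kappa r/2}$ and apply Lemma~\ref{lem4.1} to $v$ on $Q_{\kappa r/2}$, which is harmless because $(|f_\alpha|^2)_{Q_{\kappa r/2}}\le 2^{d+2m}(|f_\alpha|^2)_{Q_{\kappa r}}$.
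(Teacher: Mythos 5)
Your proposal is correct and follows essentially the same route as the paper: decompose $u=v+w$, where $w$ solves the auxiliary problem with cut-off data via Theorem~\ref{thVMO01}, apply Lemma~\ref{lem4.1} to the homogeneous remainder $v$, and then track the powers of $\lambda$ and $\kappa$ exactly as you do. The only cosmetic differences are that you cut off $f_\alpha$ with $\mathbf{1}_{Q_{\kappa r}}$ and solve on $\cO_0$, while the paper uses a smooth cut-off equal to $1$ on $Q_{\kappa r/2}$ and solves on $\cO_\infty$ (which is why the paper applies Lemma~\ref{lem4.1} on $Q_{\kappa r/2}$, consuming the slack $\kappa\ge 8 = 2\cdot 4$).
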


\begin{proof}
We take, for the sake of simplicity,
$X_0 = (0,0)$.
As mentioned earlier, a translation gives the result for general $X_0$.

Take an infinitely differentiable function $\zeta$ defined on $\bR^{d+1}$
such that
$$
\zeta = 1 \quad \text{on} \quad Q_{\kappa r/2},
\quad
\zeta = 0 \quad \text{outside}
\quad (-(\kappa r)^{2m}, (\kappa r)^{2m}) \times B_{\kappa r}.
$$
By Theorem \ref{thVMO01}, for $\lambda > 0$, there exists a unique solution $w \in \cH_2^m(\cO_{\infty})$
to the equation
\begin{equation}
								\label{eq06_25}
w_t+(-1)^m\cL_0 w + \lambda w = \sum_{|\alpha|\le m}D^\alpha( \zeta f_\alpha )	
\end{equation}
in $\cO_{\infty} = \bR^{d+1}$.
Let $v := u - w$.
Then the function $v \in \cH_{2,\text{loc}}^m(\bR^{d+1})$
satisfies
$$
v_t+(-1)^m\cL_0v+\lambda v  = 0\quad \text{in} \quad Q_{\kappa r/2}.
$$
By Lemma \ref{lem4.1} (note that $\kappa /2 \ge 4$) applied to $v$, we have
\begin{equation}	
 			\label{eq1003}
\left(|D^\alpha v-(D^\alpha v)_{Q_r}|\right)_{Q_r}
+\lambda^{\frac 1 2}\left(|v-(v)_{Q_r}|\right)_{Q_r}
\le N\kappa^{-1}\sum_{k=0}^m\lambda^{\frac 1 2-\frac k {2m}}(|D^k v|^2)_{Q_{\kappa r/2}}^{\frac 1 2}.
\end{equation}

Next we estimate $w$,
which is the unique solution to the equation \eqref{eq06_25} considered on $\cO_0$. By Theorem \ref{thVMO01}, we have
$$
\sum_{|\alpha|\le m}\lambda^{1-\frac {|\alpha|} {2m}} \|D^\alpha w \|_{L_2(\cO_0)}
\le N \sum_{|\alpha|\le m}\lambda^{\frac {|\alpha|} {2m}} \| \zeta f_\alpha \|_{L_2(\cO_0)}.
$$
In particular,
\begin{equation}							\label{eq1001}
\left(|D^m w|^2\right)_{Q_r}^{\frac 1 2}
+ \lambda^{\frac 1 2}  \left(|w|^2\right)_{Q_{r}}^{\frac 1 2}
\le N \kappa^{m+\frac d 2} \sum_{|\alpha|\le m}\lambda^{\frac {|\alpha|} {2m}-\frac 1 2}(|f_\alpha|^2)_{Q_{\kappa r}}^{\frac 1 2},
\end{equation}
\begin{equation}							\label{eq1004}
\sum_{k=0}^m\lambda^{\frac 1 2-\frac k {2m}}\left(|D^k w|^2\right)_{Q_{\kappa r}}^{\frac 1 2}
\le N \sum_{|\alpha|\le m}\lambda^{\frac {|\alpha|} {2m}-\frac 1 2}(|f_\alpha|^2)_{Q_{\kappa r}}^{\frac 1 2}.
\end{equation}

Now we are ready to prove \eqref{eq5.11}.
Since
$$
(|D^{\alpha}u - (D^{\alpha}u)_{Q_r}|)_{Q_r}
\le 2 (|D^{\alpha}u - c|)_{Q_r}
$$
for any constant $c$, by taking $c= (D^{\alpha}v)_{Q_r}$
and repeating the same argument for the second term,
we bound the left-hand side of \eqref{eq5.11} by
a constant times
$$
(|D^{\alpha}u - (D^{\alpha}v)_{Q_r}|)_{Q_r}
+ \lambda^{\frac 1 2}(u - (v)_{Q_r})_{Q_r},
$$
which is, due to the fact that $u = w + v$,
controlled by
$$
\left(|D^\alpha v-(D^\alpha v)_{Q_r}|\right)_{Q_r}
+\lambda^{\frac 12}\left(|v-(v)_{Q_r}|\right)_{Q_r}
+\left(|D^m w|^2\right)_{Q_r}^{\frac 12}
+ \lambda^{\frac 1 2}  \left(|w|^2\right)_{Q_{r}}^{\frac 1 2}.
$$
Using \eqref{eq1003}
and \eqref{eq1001},
we see that
the above is less than
$$
N\kappa^{-1}\sum_{k=0}^m\lambda^{\frac 1 2-\frac k {2m}}(|D^k v|^2)_{Q_{\kappa r/2}}^{\frac 1 2}+N \kappa^{m+\frac d 2} \sum_{|\alpha|\le m}\lambda^{\frac {|\alpha|} {2m}-\frac 1 2}(|f_\alpha|^2)_{Q_{\kappa r}}^{\frac 1 2}.
$$
Finally, we use the fact that $v = u-w$ and
\eqref{eq1004} to prove that
the terms above are not greater than
the right-hand side of \eqref{eq5.11}.
\end{proof}

Next we consider the corresponding mean oscillation estimate for non-divergence type systems in the whole space.

\begin{corollary}
 				\label{cor06_03}
Let $r\in (0,\infty)$, $\kappa\in [8,\infty)$, $\lambda> 0$,
$X_0 \in \bR \times \bR^d$, and
$f \in L_{2,\text{loc}}(\bR^{d+1})$, $|\alpha|\le m$.
Assume that $u\in W_{2,\text{loc}}^{1,2m}(\bR^{d+1})$ 
satisfies
\begin{equation*}
u_t + (-1)^m\cL_0 u+\lambda u=f
\end{equation*}
in $Q_{\kappa r}(X_0)$.
Then for any $\alpha$, $|\alpha|=2m$,
we have
$$
\left(|D^\alpha u-(D^\alpha u)_{Q_r(X_0)}|\right)_{Q_r(X_0)}
+\lambda\left(|u-(u)_{Q_r(X_0)}|\right)_{Q_r(X_0)}
$$
$$
\le N\kappa^{-1}\sum_{k=0}^{2m}\lambda^{1-\frac k {2m}}(|D^k u|^2)_{Q_{\kappa r}(X_0)}^{\frac 1 2}
+N\kappa^{m+\frac d 2}(|f|^2)_{Q_{\kappa r}(X_0)}^{\frac 1 2},	
$$
where $N=N(d,n,m,\delta)>0$.
\end{corollary}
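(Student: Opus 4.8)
The plan is to reduce the statement entirely to the divergence-form mean oscillation estimate of Theorem~\ref{thm4.2}, exploiting that the coefficients $A^{\alpha\beta}=A^{\alpha\beta}(t)$ of $\cL_0$ are independent of $x$, so that $\cL_0$ commutes with every spatial derivative $D^\gamma$. By a translation we may take $X_0=(0,0)$. Since $u\in W^{1,2m}_{2,\text{loc}}(\bR^{d+1})\subset\cH^m_{2,\text{loc}}(\bR^{d+1})$, both $u$ and each of its spatial derivatives of order at most $m$ are admissible inputs for Theorem~\ref{thm4.2}, so all that remains is to apply that theorem twice and reconcile the powers of $\lambda$ and $\kappa$.

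\emph{The $\lambda$-term.} I would first view the right-hand side $f$ as divergence data $\sum_{|\gamma|\le m}D^\gamma f_\gamma$ with $f_0=f$ and $f_\gamma=0$ for $|\gamma|\ge1$, and apply Theorem~\ref{thm4.2} to $u$ itself. Keeping only the $\lambda^{1/2}$-term on the left of \eqref{eq5.11} and noting that the exponent $\frac{|\gamma|}{2m}-\frac12$ in its $f$-sum equals $-\frac12$ for $\gamma=0$, this yields
\begin{equation*}
\lambda^{\frac12}(|u-(u)_{Q_r}|)_{Q_r}
\le N\kappa^{-1}\sum_{k=0}^m\lambda^{\frac12-\frac k{2m}}(|D^ku|^2)_{Q_{\kappa r}}^{\frac12}
+N\kappa^{m+\frac d2}\lambda^{-\frac12}(|f|^2)_{Q_{\kappa r}}^{\frac12}.
\end{equation*}
Multiplying through by $\lambda^{\frac12}$ gives the bound for $\lambda(|u-(u)_{Q_r}|)_{Q_r}$, the $f$-term now carrying no power of $\lambda$.

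\emph{The top-order term.} For fixed $\alpha$ with $|\alpha|=2m$ I would split $\alpha=\alpha_1+\alpha_2$ with $|\alpha_1|=|\alpha_2|=m$ (possible since $|\alpha|=2m$) and set $v:=D^{\alpha_2}u$. Differentiating the equation and commuting $D^{\alpha_2}$ through $\cL_0$, the function $v\in\cH^m_{2,\text{loc}}(\bR^{d+1})$ satisfies in $Q_{\kappa r}$ the divergence-form equation $v_t+(-1)^m\cL_0 v+\lambda v=D^{\alpha_2}(f)$, that is, the equation with divergence data $g_{\alpha_2}=f$ of order $m$ and all other $g_\gamma=0$ (note that no derivative of $f$ is needed, as $D^{\alpha_2}$ is absorbed into the divergence structure, so $f\in L_{2,\text{loc}}$ suffices). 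Applying Theorem~\ref{thm4.2} to $v$ with the multi-index $\alpha_1$, using $D^{\alpha_1}v=D^\alpha u$ and that the resulting $f$-exponent becomes $\frac{|\alpha_2|}{2m}-\frac12=0$, and finally estimating $(|D^kv|^2)_{Q_{\kappa r}}\le(|D^{k+m}u|^2)_{Q_{\kappa r}}$ and reindexing $j=k+m$, the $u$-sum $\sum_{k=0}^m\lambda^{\frac12-\frac k{2m}}(\cdot)^{1/2}$ turns into $\sum_{j=m}^{2m}\lambda^{1-\frac j{2m}}(\cdot)^{1/2}$. Enlarging both summation ranges to $\{0,\dots,2m\}$ and adding this to the bound from the previous step produces exactly the asserted inequality.

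I do not expect a genuine obstacle: the scheme is the ``derive non-divergence from divergence'' device announced in the introduction, and no new estimate is required. The only care needed is (i) checking that $v=D^{\alpha_2}u$ really lies in $\cH^m_{2,\text{loc}}$ — clear since $D^{\alpha_2}u_t\in\bH^{-m}_{2,\text{loc}}$ and $D^\gamma D^{\alpha_2}u\in L_{2,\text{loc}}$ for $|\gamma|\le m$ when $u\in W^{1,2m}_{2,\text{loc}}$ — and that $v$ solves the divergence equation in the weak sense on $Q_{\kappa r}$; and (ii) the power bookkeeping, where the two cancellations $\lambda^{m/(2m)-1/2}=1$ and $\lambda^{1/2}\cdot\lambda^{-1/2}=1$ are precisely what make the $f$-term appear with the factor $\kappa^{m+d/2}$ and no power of $\lambda$, in agreement with the corollary.
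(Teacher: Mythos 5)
Your proposal is correct and follows essentially the same route as the paper: apply Theorem~\ref{thm4.2} once to $u$ with $f$ placed at divergence level $0$ and multiply by $\lambda^{1/2}$ to obtain the $\lambda$-term, then apply Theorem~\ref{thm4.2} a second time to an $m$-th order spatial derivative of $u$ (the paper writes $D^m u$ where you write $D^{\alpha_2}u$ with $|\alpha_2|=m$) with $f$ placed at divergence level $m$, exploiting that $\cL_0$ has $t$-only coefficients so the differentiation commutes through. Your power bookkeeping ($\lambda^{m/2m-1/2}=1$) and the enlargement of the summation range to $\{0,\dots,2m\}$ match the paper's computation exactly.
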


\begin{proof}
Again let $X_0 = (0,0)$ for simplicity.	
By Theorem \ref{thm4.2}, it follows that (after multiplying both sides by $\lambda^{\frac 1 2}$)
\begin{equation}							\label{eq061010}
\lambda (|u-(u)_{Q_r}|)_{Q_r}
\le N \kappa^{-1} \sum_{k=0}^m \lambda^{1-\frac{k}{2m}} (|D^k u|^2)_{Q_{\kappa r}}^{\frac 1 2}
+ N \kappa^{m+\frac d 2} (|f|^2)_{Q_{\kappa r}}^{\frac 1 2}.	
\end{equation}
Differentiate $m$ times both sides of the system with respect to $x$
to get
$$
D^m u_t + (-1)^m \cL_0 D^m u + \lambda D^m u = D^m f.
$$
By Theorem \ref{thm4.2} applied to $D^m u$ in place of $u$,
$$
(|D^{\gamma}D^mu - (D^{\gamma}D^mu)_{Q_r}|)_{Q_r}
\le N \kappa^{-1}\sum_{k=0}^m \lambda^{\frac{1}{2}-\frac{k}{2m}}(|D^kD^m u|^2)_{Q_{\kappa r}}^{\frac 1 2}
$$
$$
+ N \kappa^{m+\frac d 2} (|f|^2)_{Q_{\kappa r}}^{\frac 1 2},
$$
where $|\gamma|=m$.
This combined with \eqref{eq061010} gives the inequality in the corollary.
\end{proof}

\mysection{$L_p$-estimates for systems in the whole space}
							\label{sec5}

In this section, we  use the mean oscillation estimates obtained in the previous section
to prove  Theorem \ref{Thm1} and \ref{Thm2}.

Let $\cQ=\set{Q_r(t,x): (t,x) \in \bR^{d+1}, r \in (0, \infty)}$.
For a function $g$ defined on $\bR^{d+1}$,
we denote its (parabolic) maximal and sharp function, respectively, by
\begin{align*}
\cM g (t,x) &= \sup_{Q\in \cQ: (t,x) \in Q}
\dashint_{Q} | g(s,y) | \, dy \, ds,\\
g^{\#}(t,x) &= \sup_{Q\in \cQ:(t,x) \in Q}
\dashint_{Q} | g(s,y) - (g)_Q | \, dy \, ds.
\end{align*}

Then
$$
\| g \|_{L_p} \le N \| g^{\#} \|_{L_p},
\quad
\| \cM g \|_{L_p} \le N \| g\|_{L_p},
$$
if $g \in L_p$, where $1 < p < \infty$ and $N = N(d,p)$.
As is well known, the first inequality above is due to the Fefferman-Stein theorem on sharp functions and the second one is the Hardy-Littlewood maximal function theorem.

We use the idea of freezing the coefficients to obtain

\begin{lemma}
                                        \label{lem10.48}
Let $\cL$ be the operator in Theorem \ref{Thm1}.
Suppose the lower-order coefficients of $\cL$ are all zero.
Let $\mu,\nu \in (1,\infty)$, $1/\mu + 1/\nu = 1$,
and $\lambda,R \in (0, \infty)$.
Assume $u \in C_0^{\infty}(\bR^{d+1})$ vanishing outside $Q_R$
and
$$
u_t + (-1)^m\cL u+\lambda u=\sum_{|\alpha|\le m}D^\alpha f_\alpha,
$$ where $f_\alpha \in L_{2,\text{loc}}(\bR^{d+1})$.
Then there exists a constant $N = N(d,m,n,\delta,\mu)$ such that
for any $\alpha$, $|\alpha|=m$, $r \in (0,\infty)$, $\kappa \ge 8$,
and $X_0 \in \bR^{d+1}$,
we have
$$
\left(|D^\alpha u-(D^\alpha u)_{Q_r(X_0)}|\right)_{Q_r(X_0)}
+\lambda^{\frac 1 2}\left(|u-(u)_{Q_r(X_0)}|\right)_{Q_r(X_0)}
$$
$$
\le N\kappa^{-1}\sum_{k=0}^{m}\lambda^{\frac 1 2-\frac k {2m}}(|D^k u|^2)_{Q_{\kappa r}(X_0)}^{\frac 1 2}
$$
$$
+N\kappa^{m+\frac d 2}\left(\sum_{|\alpha|\le m}\lambda^{\frac {|\alpha|} {2m}-\frac 1 2}(|f_\alpha|^2)_{Q_{\kappa r}(X_0)}^{\frac 1 2}+(A_R^\#)^{\frac 1 {2\nu}} (|D^m u|^{2\mu})_{Q_{\kappa r}(X_0)}^{\frac 1 {2\mu}}\right).
$$
\end{lemma}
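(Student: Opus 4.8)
The plan is to use the standard device of freezing the leading coefficients in the spatial variables, turning the difference of coefficients into an extra forcing term so that the mean oscillation estimate of Theorem \ref{thm4.2} (which concerns operators with coefficients depending only on $t$) can be invoked directly. By a translation I would take $X_0=(0,0)$.

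First, I would pick a ball $B\subset\bR^d$ — for instance $B=B_{\kappa r}(x_0)$ when $\kappa r\le R$, and a fixed ball of radius $R$ containing the spatial support of $u$ when $\kappa r>R$ — and set
\begin{equation*}
\bar A^{\alpha\beta}(t)=\dashint_B A^{\alpha\beta}(t,\cdot),\qquad \cL_0 w=\sum_{|\alpha|=|\beta|=m}D^\alpha\big(\bar A^{\alpha\beta}D^\beta w\big).
\end{equation*}
Averaging in $x$ preserves the bound $|\bar A^{\alpha\beta}|\le\delta^{-1}$ and the Legendre--Hadamard condition \eqref{eq7.9.17}, so $\cL_0$ is an admissible operator for Theorem \ref{thm4.2}. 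Since $\cL$ has no lower-order terms, the equation rewrites as $u_t+(-1)^m\cL_0 u+\lambda u=\sum_{|\alpha|\le m}D^\alpha g_\alpha$, with $g_\alpha=f_\alpha$ for $|\alpha|<m$ and $g_\alpha=f_\alpha+(-1)^m\sum_{|\beta|=m}(\bar A^{\alpha\beta}-A^{\alpha\beta})D^\beta u$ for $|\alpha|=m$. Applying Theorem \ref{thm4.2} on $Q_{\kappa r}(X_0)$ (legitimate because $\kappa\ge 8$) and using $\lambda^{|\alpha|/(2m)-1/2}=1$ when $|\alpha|=m$, I would bound the left side of the lemma by $N\kappa^{-1}\sum_{k=0}^m\lambda^{1/2-k/(2m)}(|D^k u|^2)_{Q_{\kappa r}(X_0)}^{1/2}$ plus $N\kappa^{m+d/2}\sum_{|\alpha|\le m}\lambda^{|\alpha|/(2m)-1/2}(|f_\alpha|^2)_{Q_{\kappa r}(X_0)}^{1/2}$ plus the perturbative term $N\kappa^{m+d/2}\sum_{|\alpha|=|\beta|=m}(|(\bar A^{\alpha\beta}-A^{\alpha\beta})D^\beta u|^2)_{Q_{\kappa r}(X_0)}^{1/2}$.

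The heart of the matter is to estimate that last sum by $N(A_R^\#)^{1/(2\nu)}(|D^m u|^{2\mu})_{Q_{\kappa r}(X_0)}^{1/(2\mu)}$. Bounding the operator norm pointwise and applying H\"older's inequality with the conjugate exponents $\nu$ and $\mu$ gives
\begin{equation*}
\big(|(\bar A^{\alpha\beta}-A^{\alpha\beta})D^\beta u|^2\big)_{Q_{\kappa r}(X_0)}\le\Big(\dashint_{Q_{\kappa r}(X_0)}|\bar A^{\alpha\beta}-A^{\alpha\beta}|^{2\nu}\Big)^{1/\nu}\big(|D^\beta u|^{2\mu}\big)_{Q_{\kappa r}(X_0)}^{1/\mu};
\end{equation*}
since $|\bar A^{\alpha\beta}-A^{\alpha\beta}|\le 2\delta^{-1}$, the first factor is at most $N(\delta,\mu)\big(\dashint_{Q_{\kappa r}(X_0)}|\bar A^{\alpha\beta}-A^{\alpha\beta}|\big)^{1/\nu}$. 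When $\kappa r\le R$ and $B=B_{\kappa r}(x_0)$, this last average is precisely $\text{osc}_x(A^{\alpha\beta},Q_{\kappa r}(X_0))\le A_R^\#$, and taking square roots, together with $|D^\beta u|\le|D^m u|$, yields the claimed bound.

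The main obstacle will be the range $\kappa r>R$, where $\text{osc}_x(A^{\alpha\beta},Q_{\kappa r}(X_0))$ is no longer dominated by $A_R^\#$. Here I would use that $D^\beta u$ vanishes outside $Q_R$ to confine all integrals to $Q_R$, freeze with respect to the fixed ball of radius $R$, run H\"older on $Q_R$ so that $\text{osc}_x(A^{\alpha\beta},Q_R)\le A_R^\#$ reappears, and absorb the resulting volume ratio $|Q_R|/|Q_{\kappa r}|\le 1$; in the sub-case where $Q_r(X_0)$ does not meet $Q_R$ the left side of the inequality is zero and nothing is needed. Making this dichotomy precise while keeping the constant dependent only on $d,m,n,\delta,\mu$ — in particular independent of $\lambda$ and $R$ — is the only genuine bookkeeping. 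Combining the two cases with the bound produced by Theorem \ref{thm4.2} then completes the proof.
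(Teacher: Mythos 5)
Your proposal is correct and follows essentially the same path as the paper: freeze the leading coefficients in $x$, apply the mean oscillation estimate for simple-coefficient systems (Theorem \ref{thm4.2}) to the frozen system, and control the resulting perturbative term $\sum_{|\beta|=m}(\bar A^{\alpha\beta}-A^{\alpha\beta})D^\beta u$ by H\"older's inequality together with boundedness of the coefficients and the definition of $A_R^\#$, splitting into the cases $\kappa r\le R$ and $\kappa r>R$ exactly as the paper does (with the ball $B_{\kappa r}(x_0)$ in the first case and $B_R$ in the second, absorbing the volume ratio $|Q_R|/|Q_{\kappa r}|\le 1$).

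The one difference is the freezing device. You freeze at the spatial average $\bar A^{\alpha\beta}(t)=\dashint_B A^{\alpha\beta}(t,\cdot)$, which requires, and you correctly note, the small extra observation that the Legendre--Hadamard condition \eqref{eq7.9.17} and the bound $|A^{\alpha\beta}|\le\delta^{-1}$ are preserved under averaging in $x$. This lets the quantity $\dashint_{Q}|\bar A^{\alpha\beta}-A^{\alpha\beta}|$ appear directly as $\text{osc}_x(A^{\alpha\beta},Q)$ by definition. The paper instead freezes at the pointwise value $A^{\alpha\beta}(t,y)$ for a fixed $y\in B$ (for which admissibility of the frozen operator is immediate), applies Theorem \ref{thm4.2} for each such $y$, and only afterward averages the resulting inequality over $y\in B$, turning $\dashint_B\int_Q|A^{\alpha\beta}(t,y)-A^{\alpha\beta}(t,x)|\,dx\,dt\,dy$ into $N|Q|\,\text{osc}_x(A^{\alpha\beta},Q)$ via Fubini and the triangle inequality. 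The two devices are interchangeable and yield the same constant $N(d,m,n,\delta,\mu)$; yours avoids the final averaging-in-$y$ step, at the cost of the closedness-under-averaging observation.
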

\begin{proof}
Let $\kappa \ge 8$ and $r \in (0, \infty)$.
Fix a $y\in \bR^d$ and set $\cL_y u =A^{\alpha\beta}(t,y) D^{\alpha}D^{\beta} u(t,x)$.
Then we have
$$
u_t + (-1)^m \cL_y u+\lambda u=\sum_{|\alpha|\le m}D^\alpha \tilde f_\alpha,
$$
where
$$
\tilde f_\alpha=f_\alpha+(-1)^m\sum_{|\beta|=m}(A^{\alpha\beta}(t,y)- A^{\alpha\beta}(t,x))D^\beta u.
$$
It follows from Theorem \ref{thm4.2} that
\begin{multline}                            \label{05162007_01}
\left(|D^\alpha u-(D^\alpha u)_{Q_r(X_0)}|\right)_{Q_r(X_0)}
+\lambda^{\frac 1 2}\left(|u-(u)_{Q_r(X_0)}|\right)_{Q_r(X_0)}\\
\le N\kappa^{-1}\sum_{k=0}^{m}\lambda^{\frac 1 2-\frac k {2m}}(|D^k u|^2)_{Q_{\kappa r}(X_0)}^{\frac 1 2}
+N\kappa^{m+\frac d 2}\sum_{|\alpha|\le m}\lambda^{\frac {|\alpha|} {2m}-\frac 1 2}(|\tilde f_\alpha|^2)_{Q_{\kappa r}(X_0)}^{\frac 1 2}.
\end{multline}
Note that
\begin{equation} \label{esti_chi}
\int_{Q_{\kappa r}(X_0)}
|\tilde f_\alpha|^2 \, dx \, dt
\le N \int_{Q_{\kappa r}(X_0)}
|f_\alpha|^2 \, dx \, dt + N I_y,
\end{equation}
where, for $|\alpha|=m$,
$$
I_y = \int_{Q_{\kappa r}(X_0)}
|(A^{\alpha\beta}(t,y)-A^{\alpha\beta}(t,x)) D^{\beta} u|^2\, dx \, dt.
$$
Denote $B$ to be $B_{\kappa r}(x_0)$ if $\kappa r<R$, or to be $B_R$ otherwise; denote $Q$ to be $Q_{\kappa r}(t_0,x_0)$ if $\kappa r<R$, or to be $Q_R$ otherwise.
Now we take average of $I_y$ with respect to $y$ in $B$.
Since $u = 0$ outside $Q_R$, by the H\"{o}lder inequality
we get
$$
\dashint_B I_y\,dy =\dashint_B\int_{Q_{\kappa r}(X_0) \cap Q_R}
|(A^{\alpha\beta}(t,y)-A^{\alpha\beta}(t,x)) D^{\beta}u|^2 \, dx \, dt\,dy
$$
$$
\leq \dashint_B\left(\int_{Q}
| A^{\alpha\beta}(t,y) - A^{\alpha\beta}(t,x)|^{2 \nu}\right)^{\frac 1 {\nu}}\,dy
\left(\int_{Q_{\kappa r}(X_0) \cap Q_R} |D^m u|^{2 \mu}\right)^{\frac 1 {\mu}},
$$
where, by the boundedness of $A^{\alpha\beta}$
as well as the definitions of $\text{osc}_x$ and $A_R^{\#}$,
the integral over $B$ in the last term above is bounded by a constant times
$$
\dashint_B\left(\int_{Q}
| A^{\alpha\beta}(t,y) - A^{\alpha\beta}(t,x)|\right)^{\frac 1 {\nu}} dy
\le \left(\dashint_B\int_{Q}
| A^{\alpha\beta}(t,y) - A^{\alpha\beta}(t,x)|\,dx\,dt\,dy\right)^{\frac 1 {\nu}}
$$
$$
\le N \left( |Q| \text{osc}_x(A^{\alpha\beta},Q)\right)^{\frac1\nu}
\le N \left(R^{2m+d} A_R^{\#}\right)^{\frac1\nu}.
$$
This together with \eqref{05162007_01} and \eqref{esti_chi} completes the proof of the lemma.
\end{proof}

\begin{proof}[Proof of Theorem \ref{Thm1}]
Due to the method of continuity, it suffices to obtain an apriori estimate. By moving all the lower-order terms to the right-hand side and taking a sufficient large $\lambda$, we may assume that all the lower-order coefficients are zero.

{\it Case 1: $p\in (2,\infty)$.} First we suppose $T=\infty$ and $u\in C_0^\infty(Q_{R_0})$. Choose a $\mu>1$ such that $2\mu<p$. Under these assumptions, from Lemma \ref{lem10.48} we easily deduce
$$
(D^\alpha u)^\#(X_0)+\lambda^{\frac 1 2} u^\#(X_0)\le N\kappa^{-1} \sum_{k=0}^{m}\lambda^{\frac 1 2-\frac k {2m}}(\cM (D^k u)^2(X_0))^{\frac 1 2}
$$
$$
+N\kappa^{m+\frac d 2}\left(\sum_{|\alpha|\le m}\lambda^{\frac {|\alpha|} {2m}-\frac 1 2}(\cM f_\alpha^2(X_0))^{\frac 1 2}+\rho^{\frac 1 {2\nu}} (\cM (D^m u)^{2\mu}(X_0))^{\frac 1 {2\mu}}\right),
$$
for any $\alpha$, $|\alpha|=m$, $r \in (0,\infty)$, $\kappa \ge 8$,
and $X_0 \in \bR^{d+1}$. This together with the interpolation inequality, the Fefferman-Stein theorem and the Hardy-Littlewood maximal function theorem yields
$$
\sum_{k=0}^{m}\lambda^{\frac 1 2-\frac k {2m}}\|D^k u\|_{L_p}
\le N\|D^\alpha u\|_{L_p}+N\lambda^{\frac 1 2} \|u\|_{L_p}
$$
\begin{equation}
                                \label{eq11.50}
\le N(\kappa^{-\frac 1 2}+\kappa^{m+\frac d 2}\rho^{\frac 1 {2\nu}}) \sum_{k=0}^{m}\lambda^{\frac 1 2-\frac k {2m}}\|D^k u\|_{L_p}+N\kappa^{m+\frac d 2}\sum_{|\alpha|\le m}\lambda^{\frac {|\alpha|} {2m}-\frac 1 2}\|f_\alpha\|.
\end{equation}
Now we can choose $\kappa$ sufficiently large and $\rho$ sufficiently small in \eqref{eq11.50} to get the desired estimate. A standard partition of the unity enables us to remove the restriction that $u\in C_0^\infty(Q_{R_0})$.
The extension to the case $T\in (-\infty,+\infty]$ is by now standard; see, for instance, \cite{Krylov_2005}.
We omit the details.

{Case 2: $p\in (1,2)$.} Since the system is in divergence form, this case follows immediately from the previous case by using the duality argument.

Finally the case $p=2$ is obtained by an interpolation argument.
\end{proof}

In a similar way, from Corollary \ref{cor06_03} we get the following counterpart of Lemma \ref{lem10.48} for non-divergence systems.

\begin{lemma}
                                        \label{lem12.52}
Let $L$ be the operator in Theorem \ref{Thm2}.
Suppose the lower-order coefficients of $L$ are all zero.
Let $\mu,\nu \in (1,\infty)$, $1/\mu + 1/\nu = 1$,
and $\lambda,R \in (0, \infty)$.
Assume $u \in C_0^{\infty}(\bR^{d+1})$ vanishing outside $Q_R$
and
$$
u_t + (-1)^m L u+\lambda u=f,
$$ where $f \in L_{2,\text{loc}}(\bR^{d+1})$.
Then there exists a constant $N = N(d,m,n,\delta,\mu)$ such that
for any $\alpha$, $|\alpha|=2m$, $r \in (0,\infty)$, $\kappa \ge 8$,
and $X_0 \in \bR^{d+1}$,
we have
$$
\left(|D^\alpha u-(D^\alpha u)_{Q_r(X_0)}|\right)_{Q_r(X_0)}
+\lambda\left(|u-(u)_{Q_r(X_0)}|\right)_{Q_r(X_0)}
$$
$$
\le N\kappa^{-1}\sum_{k=0}^{2m}\lambda^{1-\frac k {2m}}(|D^k u|^2)_{Q_{\kappa r}(X_0)}^{\frac 1 2}
$$
$$
+N\kappa^{m+\frac d 2}\left((|f|^2)_{Q_{\kappa r}(X_0)}^{\frac 1 2}+(A_R^\#)^{\frac 1 {2\nu}} (|D^{2m} u|^{2\mu})_{Q_{\kappa r}(X_0)}^{\frac 1 {2\mu}}\right).
$$
\end{lemma}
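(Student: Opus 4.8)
The plan is to follow the proof of Lemma \ref{lem10.48} almost verbatim, using the non-divergence mean oscillation estimate (Corollary \ref{cor06_03}) in place of the divergence one (Theorem \ref{thm4.2}). Fix $y \in \bR^d$ and freeze the leading coefficients at $(t,y)$, setting $L_y u = \sum_{|\alpha|=|\beta|=m} A^{\alpha\beta}(t,y) D^{\alpha}D^{\beta} u$, which has coefficients depending only on $t$. Then $u$ satisfies, in $Q_{\kappa r}(X_0)$,
\begin{equation*}
u_t + (-1)^m L_y u + \lambda u = \tilde f, \qquad \tilde f := f + (-1)^m \sum_{|\alpha|=|\beta|=m}\bigl(A^{\alpha\beta}(t,y)-A^{\alpha\beta}(t,x)\bigr) D^{\alpha}D^{\beta} u.
\end{equation*}
Applying Corollary \ref{cor06_03} to this equation (with $\cL_0 = L_y$ and right-hand side $\tilde f$) gives, for $|\alpha| = 2m$,
\begin{multline*}
\left(|D^\alpha u-(D^\alpha u)_{Q_r(X_0)}|\right)_{Q_r(X_0)} + \lambda\left(|u-(u)_{Q_r(X_0)}|\right)_{Q_r(X_0)} \\
\le N\kappa^{-1}\sum_{k=0}^{2m}\lambda^{1-\frac k {2m}}(|D^k u|^2)_{Q_{\kappa r}(X_0)}^{\frac 1 2} + N\kappa^{m+\frac d 2}(|\tilde f|^2)_{Q_{\kappa r}(X_0)}^{\frac 1 2}.
\end{multline*}

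Next I would bound the $\tilde f$ term. Pointwise $|\tilde f|^2 \le N|f|^2 + N I_y$, where $I_y = \int_{Q_{\kappa r}(X_0)} |(A^{\alpha\beta}(t,y)-A^{\alpha\beta}(t,x)) D^{\alpha}D^{\beta} u|^2\,dx\,dt$ with $|\alpha|=|\beta|=m$. Since the left-hand side of the displayed inequality is independent of $y$, I would average over $y \in B$, where $B = B_{\kappa r}(x_0)$ if $\kappa r < R$ and $B = B_R$ otherwise. Using that $u$ vanishes outside $Q_R$ together with H\"older's inequality for the conjugate exponents $\mu,\nu$, one bounds $\dashint_B I_y\,dy$ by $\bigl(\dashint_B\int_Q |A^{\alpha\beta}(t,y)-A^{\alpha\beta}(t,x)|^{2\nu}\,dx\,dt\,dy\bigr)^{1/\nu}$ times $\bigl(\int_{Q_{\kappa r}(X_0)\cap Q_R} |D^{2m} u|^{2\mu}\,dx\,dt\bigr)^{1/\mu}$, with $Q$ equal to $Q_{\kappa r}(X_0)$ or $Q_R$ correspondingly, exactly as in the proof of Lemma \ref{lem10.48}. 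The boundedness of the leading coefficients together with the definitions of $\text{osc}_x$ and $A_R^\#$ controls the first factor by $N(R^{2m+d}A_R^\#)^{1/\nu}$; dividing through by the volume $|Q_{\kappa r}(X_0)|$ and taking square roots then produces precisely the term $N\kappa^{m+\frac d2}(A_R^\#)^{1/(2\nu)}(|D^{2m}u|^{2\mu})_{Q_{\kappa r}(X_0)}^{1/(2\mu)}$. Combining this with the previous display finishes the proof.

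The argument is essentially routine once Corollary \ref{cor06_03} is available; the points to watch are the normalizing volume factors in the two cases $\kappa r < R$ and $\kappa r \ge R$ (one must use that $u$ is supported in $Q_R$, so the integral of $|D^{2m}u|^{2\mu}$ over $Q_{\kappa r}(X_0)$ coincides with that over $Q_{\kappa r}(X_0)\cap Q_R$), and the conceptual point that the perturbation $\tilde f - f$ contains the top-order derivative $D^{2m}u$, so it cannot be estimated in $L_2$ directly. It is the extra integrability margin $2\mu > 2$ that allows the small factor $(A_R^\#)^{1/(2\nu)}$ to survive in front of $D^{2m}u$, which is later absorbed in the proof of Theorem \ref{Thm2} by choosing $\kappa$ large and $\rho$ small.
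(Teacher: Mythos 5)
Your proposal is correct and is precisely what the paper intends: the paper offers no separate proof of Lemma \ref{lem12.52}, stating only that it follows ``in a similar way'' from Corollary \ref{cor06_03} as Lemma \ref{lem10.48} does from Theorem \ref{thm4.2}, and your freezing argument, H\"older with exponents $\mu,\nu$, the case split on $\kappa r$ versus $R$, and the use of $A_R^{\#}$ to control the oscillation factor reproduce that argument faithfully.
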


\begin{proof}[Proof of Theorem \ref{Thm2}]
As in the proof of Theorem \ref{Thm1}, it suffices to prove the apriori estimate for $T=\infty$.

{\it Case 1: $p\in (2,\infty)$.} We only need to consider the case when  $u\in C_0^\infty(Q_{R_0})$, since the general case follows from a partition of the unity. The proof of this case is almost the same as that of Theorem \ref{Thm1}, by using Lemma \ref{lem12.52} instead of Lemma \ref{lem10.48}. So we omit it.

{\it Case 2: $p\in (1,2]$.} Note that here we cannot use the duality argument directly. From Case 1 and Remark \ref{rem2.01}, we already have the $W^{1,2m}_q$ solvability of
$$
u_t+(-1)^m \cL_0 u+\lambda u=f
$$
in the whole space for any $q\in (2,\infty)$ and $\lambda>0$.
For this system, since $A^{\alpha\beta}$ are
measurable function of time only we can make use of the duality argument, which yields the solvability of the same equation for any $q\in (1,2)$. Fix a $q=(1+p)/2$. Now we can repeat the arguments in the previous section with $q$ in place of $2$, and get the estimate in Lemma \ref{lem12.52} with $q$ in place of $2$.
Finally, following the proof of Case 1 completes the proof of this case.
\end{proof}

\part{Systems on a half space or a bounded domain}

This is the most novel part of the paper. The objective of this part is to establish the $L_p$-solvability of parabolic systems on a half space or on a domain.

In the next section, we prove the $L_2$-estimates for systems with coefficients measurable in $t$ on a half space. Relying on these $L_2$-estimates, in Section \ref{sec7} we are able to derive mean oscillation estimates of some partial derivatives of solutions to systems on a half space. These estimates alone are not sufficient for our purpose. So in Section \ref{sec8} we consider a certain system with special coefficients. Combining the results in Section \ref{sec7} and \ref{sec8} together enables us to prove the $L_p$-solvability on a half space (Theorem \ref{Thm5}, \ref{Thm6}). Section \ref{sec10} is devoted to the proofs of the bounded domain cases (Theorem \ref{Thm7}, \ref{Thm8}). Finally we give several remarks about other ellipticity conditions.

\mysection{$L_2$-estimates for systems with simple coefficients on a half space}  \label{sec6}

In this section, we prove the $L_2$-estimate for systems on a half space. We again consider
$$
\cL_0 u = \sum_{|\alpha|=|\beta|=m}D^{\alpha}(A^{\alpha\beta}D^{\beta} u)
= \sum_{|\alpha|=|\beta|=m}A^{\alpha\beta}D^{\alpha}D^{\beta} u,
$$
where $A^{\alpha\beta} = A^{\alpha\beta}(t)$.
Recall that $\cO_T^+ = (-\infty, T) \times \bR^d_+$. In the divergence case (Theorem \ref{thVMO02}), the proof is rather standard. However, in the case of non-divergence systems (Theorem \ref{th06_01}), the proof is much more involved. To the best of our knowledge, Theorem \ref{th06_01} is new for higher order parabolic equations and systems with measurable coefficients depending only on $t$.

\subsection{Divergence case}

Throughout the paper, we use the notation $D_{x'}^m u$ to indicate one of $D^{\alpha}u$, where $\alpha = (\alpha_1, \cdots, \alpha_d)$, $\alpha_1 = 0$, and $|\alpha|=m$.
Sometimes, depending on the context, $D_{x'}^m u$ means the whole collection
of $D^{\alpha}u$, $|\alpha|=m$, $\alpha_1=0$.
Similar to $C_0^{\infty}(\overline{\cO_T})$,
we denote by $C_0^{\infty}(\overline{\cO^+_T})$
the collection of infinitely differentiable functions defined on $\overline{\cO^+_T}$ vanishing for large $|(t,x)| \in \overline{\cO^+_T}$.

\begin{theorem}							\label{thVMO02}
Let $T \in (-\infty,\infty]$ and $f_{\alpha} \in L_2(\cO^+_T)$.	
There exists a constant $N = N(d,n,m,\delta)$ such that
\begin{equation}							\label{eqVMO5-1}
\sum_{|\alpha|\le m} \lambda^{1-\frac{|\alpha|}{2m}} \| D^{\alpha} u \|_{L_2(\cO^+_T)}
\le N \sum_{|\alpha| \le m} \lambda^{\frac{|\alpha|}{2m}} \| f_{\alpha} \|_{L_2(\cO^+_T)}	
\end{equation}
for any $\lambda \ge 0$
and $u \in \cH_2^m(\cO^+_T)$ satisfying
\begin{equation}							\label{eqVMO5-2}
u(t,0,x') = \cdots = D^{m-1}_1u(t,0,x') = 0	
\end{equation}
on $(-\infty, T) \times \bR^{d-1}$ and
\begin{equation}
								\label{eq06_26}
u_t + (-1)^m\cL_0 u + \lambda u = \sum_{|\alpha|\le m} D^{\alpha} f_{\alpha}	
\end{equation}
in $\cO^+_T$.
Furthermore, for $\lambda > 0$ and $f_{\alpha} \in L_2(\cO^+_T)$, $|\alpha| \le m$,
there exists a unique $u \in \cH_2^m(\cO^+_T)$ satisfying \eqref{eq06_26}
in $\cO^+_T$ and \eqref{eqVMO5-2} on $(-\infty,T) \times \bR^{d-1}$.
\end{theorem}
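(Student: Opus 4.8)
The plan is to mimic the whole-space proof of Theorem \ref{thVMO01}, exploiting the fact that the Dirichlet conditions \eqref{eqVMO5-2} are precisely what make the integration by parts close without boundary terms. As in that proof, since $\cP_\lambda$ is bounded from $\cH_2^m(\cO_T^+)$ to $\bH_2^{-m}(\cO_T^+)$ and $C_0^\infty(\overline{\cO_T^+})$-functions satisfying \eqref{eqVMO5-2} are dense in the corresponding subspace of $\cH_2^m(\cO_T^+)$, it suffices to prove \eqref{eqVMO5-1} for smooth $u$ vanishing near $\partial\bR^d_+$ to the required order. I would first reduce to $\lambda>0$ exactly as before, noting the $\lambda=0$ case follows by letting $\lambda\searrow 0$ (or is trivial).

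The core computation: multiply \eqref{eq06_26} by $u$, integrate over $\cO_T^+$, and integrate by parts. The key point is that the Dirichlet conditions \eqref{eqVMO5-2} kill all boundary contributions on $(-\infty,T)\times\partial\bR^d_+$ arising when moving the $D^\alpha$ off $A^{\alpha\beta}D^\beta u$, because each such boundary term involves a trace $D_1^j u|_{x_1=0}$ with $j\le m-1$. This yields the identity
\begin{equation*}
\langle u,u_t\rangle_{\cO_T^+} + \langle D^\alpha u, A^{\alpha\beta}D^\beta u\rangle_{\cO_T^+} + \lambda\langle u,u\rangle_{\cO_T^+} = \sum_{|\alpha|\le m}(-1)^{|\alpha|}\langle D^\alpha u, f_\alpha\rangle_{\cO_T^+}.
\end{equation*}
As in the whole-space case, $\Re\langle u,u_t\rangle_{\cO_T^+} = \tfrac12\int_{\bR^d_+}|u|^2(T,x)\,dx\ge 0$. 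The one genuine difference is the coercivity of the leading form: the Fourier-in-$x$ argument of Theorem \ref{thVMO01} is unavailable since we are on a half space. Instead I would extend $u$ by zero to all of $\bR^d$ (legitimate since $u$ and its normal derivatives up to order $m-1$ vanish on the boundary, so the zero extension lies in $W_2^m(\bR^d)$ for a.e.\ $t$), apply the Legendre-Hadamard Gårding-type inequality, and conclude
\begin{equation*}
\delta\|D^m u\|_{L_2(\cO_T^+)}^2 \le \Re\langle D^\alpha u, A^{\alpha\beta}D^\beta u\rangle_{\cO_T^+}.
\end{equation*}
Concretely, Gårding's inequality on $\bR^d$ gives $\Re\langle D^\alpha \tilde u,A^{\alpha\beta}D^\beta\tilde u\rangle \ge \delta\|D^m\tilde u\|^2$ for the zero-extension $\tilde u$, and both sides are unchanged by restricting to $\cO_T^+$. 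Then, exactly as before, Young's inequality on the right-hand side together with the interpolation inequalities $\|D^\alpha u\|_{L_2}\le \varepsilon\lambda^{(m-|\alpha|)/(2m)\cdot ?}$-type bounds — more precisely $\|D^\alpha u\|_{L_2(\cO_T^+)}\le N\|D^m u\|_{L_2(\cO_T^+)}^{|\alpha|/m}\|u\|_{L_2(\cO_T^+)}^{1-|\alpha|/m}$, which hold on the half space — and an appropriate choice of $\varepsilon$ yield \eqref{eqVMO5-1}.

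For the solvability assertion, once \eqref{eqVMO5-1} is established the argument is the standard method of continuity: the a priori estimate plus the identity $u_t = -(-1)^m\cL_0 u - \lambda u + \sum D^\alpha f_\alpha$ gives $\|u\|_{\cH_2^m(\cO_T^+)}\le N\|\cP_\lambda u\|_{\bH_2^{-m}(\cO_T^+)}$ with $N$ depending on $\lambda$; then one connects $\cL_0$ to the model operator with $A^{\alpha\beta}=\delta_{\alpha\beta}I_{n\times n}$ along a segment of Legendre-Hadamard-elliptic coefficients. Solvability for the model operator on $\cO_T^+$ with the Dirichlet conditions is classical (e.g.\ by the Galerkin method or by odd reflection for the decoupled equation $u_t+(-1)^m\Delta^m u+\lambda u = \dots$, using that each component decouples). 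I expect the main obstacle to be the clean handling of the boundary terms in the integration by parts — verifying that the conditions \eqref{eqVMO5-2} on $u$ (and the matching test-function regularity built into the weak formulation of $\cH_2^m$) genuinely annihilate every boundary integral that appears — and, relatedly, justifying that the zero-extension trick applies in the weak $\cH_2^m$ setting rather than only for smooth $u$; this is why reducing to smooth $u$ via density at the outset is the right first move.
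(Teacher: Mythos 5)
Your argument is correct and follows exactly the paper's approach: reduce to $\lambda>0$ and smooth $u$, test with $u$ over $\cO_T^+$, use the Dirichlet conditions \eqref{eqVMO5-2} to justify extending $u$ by zero to $\cO_T$ so that the leading bilinear form over $\cO_T^+$ equals that over $\cO_T$, and then invoke the whole-space Fourier/Plancherel coercivity of the Legendre--Hadamard form already established in Theorem \ref{thVMO01}, finishing by Young and interpolation. One small slip in your solvability parenthetical: for $m\ge 2$, odd reflection across $\{x_1=0\}$ does \emph{not} enforce $u=D_1u=\cdots=D_1^{m-1}u=0$ (it only forces even-order normal derivatives to vanish), so that route does not directly give solvability for the model operator; the Galerkin argument you also cite does work, so the conclusion is unaffected.
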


\begin{proof}
As in the proof of Theorem \ref{thVMO01}, we consider only the case $\lambda > 0$.
We follow the lines of the proof of Theorem \ref{thVMO01}.
One noteworthy fact is that, because $u \in \cH_2^m(\cO^+_T)$ satisfies \eqref{eqVMO5-2},
we have
$$
\langle D^{\alpha} u, A^{\alpha\beta} D^{\beta} u \rangle_{\cO_T^+}
= \langle D^{\alpha} u, A^{\alpha\beta} D^{\beta} u \rangle_{\cO_T},
$$
where the function $u$ on the right-hand side is viewed as an extension of $u$ to $\cO_T$ so that it is zero on $\cO_T \setminus \cO_T^+$.
Similarly,
$$
\|D^m u \|^2_{L_2(\cO^+_T)}
\le N \int_{\cO_T} |\xi|^{2m} |\tilde u|^2 \, d\xi \, dt,
$$
where $\tilde u$ is the Fourier transform of the extension.
\end{proof}

\begin{remark}
                                \label{rem5.29}
Theorem \ref{thVMO02} can be extended to systems in a cylindrical domain $\Omega_T$, where $\Omega$ is a bounded Lipschitz domain. For small $\lambda\ge 0$, we have a better estimate than \eqref{eqVMO5-1}. Indeed, from the proof above, we get
$$
\|D^m u\|_{L_2(\Omega_T)}^2\le N\sum_{|\alpha|\le m}\|f_\alpha\|_{L_2(\Omega_T)}\|D^\alpha u\|_{L_2(\Omega_T)}.
$$
By using the Poincar\'{e} inequality,
$$
\|u\|_{L_2(\Omega_T)}\le N\|Du\|_{L_2(\Omega_T)}\le N \|D^2u\|_{L_2(\Omega_T)}\le...\le N\|D^mu\|_{L_2(\Omega_T)}.
$$
Thus, we conclude
$$
\sum_{k=0}^m \|D^k u\|_{L_2(\Omega_T)}\le N\sum_{|\alpha|\le m}\|f_\alpha\|_{L_2(\Omega_T)}.
$$
Note that in this case, the solvability also holds for $\lambda=0$.
\end{remark}

\subsection{Non-divergence case}

Let us introduce some additional notation.
Let $\tau \in \bN$ and $\{c_1, \cdots, c_{2\tau}\}$ be the solution to the following system:
\begin{equation}							\label{eq0514-02}
\sum_{k=1}^{2\tau} \left(-\frac{1}{k}\right)^j c_k = 1,
\quad
j=0,\cdots,2\tau-1.	
\end{equation}
For a function $w$ defined on $\bR^d_+$, set
\begin{equation*}
\cE_{\tau} w =
\left\{
\begin{aligned}
&w(x_1,x')	\quad \text{if} \quad x_1 > 0\\
&\sum_{k=1}^{2\tau} c_k w(-\frac{1}{k}x_1,x') \quad \text{otherwise}
\end{aligned}
\right..
\end{equation*}
Note that
$\cE_{\tau} w \in C^{2\tau-1}(\bR^d)$ if $w \in C^{\infty}(\overline{\bR^d_+})$.
Indeed, by \eqref{eq0514-02}
$$
D_1^j \left(\sum_{k=1}^{2\tau} c_k w(-\frac{1}{k}x_1,x')\right)\bigg|_{x_1=0}
= \sum_{k=1}^{2\tau} \left(-\frac1k\right)^j c_k D_1^jw(0,x')
= D_1^jw(0,x')
$$
for $j = 0, \cdots, 2\tau-1$.

We remark that similar extension operators were used in \cite{Fried76} and \cite{MR1914441} in the study of elliptic systems. We will use the following interpolation estimate.

\begin{proposition}
								\label{prop07_01}
Let $1<p<\infty$ and $u \in W_p^{m}(\bR^d_+)$.
For any $\varepsilon > 0$,
there exists $N=N(d,n,m, p, \varepsilon)$ such that
\begin{equation*}
\|D_1^kD_{x'}^{m-k} u\|_{L_p(\bR^d_+)}
\le \varepsilon \|D_1^{m}u\|_{L_p(\bR^d_+)}
+ N \sum_{j=2}^d\|D^{m}_j u\|_{L_p(\bR^d_+)},	
\end{equation*}
where $k = 0, 1, \cdots, m-1$.
\end{proposition}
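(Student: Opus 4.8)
The plan is to remove the boundary by means of the extension operator $\cE_\tau$ introduced just above, and then to prove the corresponding anisotropic interpolation inequality on the whole space via a Fourier multiplier bound combined with an anisotropic dilation. The point to keep in mind is that, since no boundary conditions are imposed on $u$, one cannot interpolate purely inside $\bR^d_+$; the operator $\cE_\tau$ is precisely what makes the problem linearizable on $\bR^d$.

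First I would fix $\tau\in\bN$ with $2\tau-1\ge m$, so that, by the excerpt, $\cE_\tau w\in C^{2\tau-1}(\bR^d)$ whenever $w\in C^\infty(\overline{\bR^d_+})$, and hence $\cE_\tau w\in W^m_p(\bR^d)$ when $w$ also has compact support. Using the explicit formula for $\cE_\tau$ and, in each summand, the change of variables $y_1=-x_1/k$, one checks directly that for every multi-index $\beta$
\[
\|D^\beta \cE_\tau w\|_{L_p(\bR^d)}\le N(m,p)\,\|D^\beta w\|_{L_p(\bR^d_+)},
\]
so that $\cE_\tau$ extends to a bounded operator $W^m_p(\bR^d_+)\to W^m_p(\bR^d)$ which preserves, up to a constant, each pure $m$-th derivative norm $\|D_j^m\cdot\|_{L_p}$. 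A routine density argument ($C^\infty$ functions smooth up to the boundary with compact support are dense in $W^m_p(\bR^d_+)$) then reduces the proposition to proving, for $v\in C_0^\infty(\bR^d)$ and $0\le k\le m-1$,
\[
\|D_1^k D_{x'}^{m-k}v\|_{L_p(\bR^d)}\le \varepsilon\,\|D_1^m v\|_{L_p(\bR^d)}+N\sum_{j=2}^d\|D_j^m v\|_{L_p(\bR^d)},
\]
and then restricting this to $\bR^d_+$ and applying it to $v=\cE_\tau u$ with $\varepsilon$ replaced by $\varepsilon/C$, $C$ being the constant above for $|\beta|=m$.

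For the whole-space inequality I would first prove the ``crude'' bound $\|D^\alpha v\|_{L_p(\bR^d)}\le N(d,m,p)\sum_{j=1}^d\|D_j^m v\|_{L_p(\bR^d)}$ for all $|\alpha|=m$. Setting $P(\xi)=\sum_{j=1}^d\xi_j^{2m}$, which is smooth and strictly positive off the origin, the functions $\xi\mapsto (i\xi)^\alpha/P(\xi)^{1/2}$ and $\xi\mapsto \xi_j^m/P(\xi)^{1/2}$ are smooth on $\bR^d\setminus\{0\}$ and homogeneous of degree $0$, hence are $L_p$ Fourier multipliers by the Mikhlin--H\"ormander theorem; writing $P(\xi)^{1/2}\hat v=\sum_j\big(\xi_j^m/P(\xi)^{1/2}\big)\xi_j^m\hat v$ and $\xi_j^m\hat v=i^{-m}\widehat{D_j^m v}$ exhibits $D^\alpha v$ as a finite sum of compositions of these bounded operators applied to the $D_j^m v$. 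Then, applying the crude bound to the dilated function $v_R(x_1,x')=v(Rx_1,x')$ and using $\|D_1^k D_{x'}^{m-k}v_R\|_p=R^{k-1/p}\|D_1^k D_{x'}^{m-k}v\|_p$, $\|D_1^m v_R\|_p=R^{m-1/p}\|D_1^m v\|_p$, $\|D_j^m v_R\|_p=R^{-1/p}\|D_j^m v\|_p$ for $j\ge2$, and dividing by $R^{k-1/p}$, gives
\[
\|D_1^k D_{x'}^{m-k}v\|_p\le N\Big(R^{\,m-k}\|D_1^m v\|_p+R^{-k}\sum_{j=2}^d\|D_j^m v\|_p\Big);
\]
since $k\le m-1$ we have $m-k\ge1$, so choosing $R$ small forces the coefficient of $\|D_1^m v\|_p$ below any prescribed $\varepsilon$, which is exactly the required estimate.

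The proof is not hard; the one genuinely essential idea is the use of $\cE_\tau$ to eliminate the boundary, after which everything is standard harmonic analysis plus scaling. The only place requiring care is the bookkeeping: the constraint $2\tau-1\ge m$ is needed for $\cE_\tau u\in W^m_p(\bR^d)$, and one must verify that invoking the whole-space inequality with parameter $\varepsilon/C$ reproduces exactly $\varepsilon$ in front of $\|D_1^m u\|_{L_p(\bR^d_+)}$ after restriction, while the remaining constant depends only on $d,n,m,p,\varepsilon$.
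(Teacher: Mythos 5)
Your proof is correct and follows essentially the same strategy as the paper: extend $u$ to the whole space via $\cE_\tau$ (with $\tau$ large enough that the extension lands in $W^m_p(\bR^d)$), establish a crude bound $\|D^\alpha v\|_{L_p}\le N\sum_j\|D_j^m v\|_{L_p}$ for $|\alpha|=m$, and then use the anisotropic dilation $x_1\mapsto Rx_1$ to make the coefficient of $\|D_1^m\cdot\|_{L_p}$ small. The one place where you diverge is in how the crude bound is obtained: the paper observes that $\hat u=\cE_\tau u$ satisfies $\sum_j D_j^{2m}\hat u=\sum_j D_j^m\hat f_j$ with $\hat f_j$ controlled by $\sum_j D_j^m u$, and then quotes its own $L_p$-solvability for whole-space elliptic systems, whereas you write $D^\alpha=\sum_j T_{j,\alpha}D_j^m$ with $T_{j,\alpha}$ a degree-zero homogeneous Fourier multiplier and apply Mikhlin--H\"ormander directly. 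The two routes prove the same estimate; yours is self-contained at the level of this lemma, the paper's reuses machinery already in place, and the resulting dependence of $N$ on $d,m,p$ is the same either way. Your observation that $\cE_\tau$ respects each pure norm $\|D_j^m\cdot\|_{L_p}$ up to a constant, and the subsequent bookkeeping replacing $\varepsilon$ by $\varepsilon/C$, are both correct, as is your computation of the scaling exponents.
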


\begin{proof}
Without loss of generality we assume that $u \in C_0^{\infty}(\overline{\bR^d_+})$.
Let $\hat{u} = \cE_{\tau}u$.
For a sufficiently large $\tau$, the extension $\hat{u}$
is in $W_p^{m}(\bR^d)$
and satisfies
$\sum_{j=1}^d D_j^{2m} \hat w = \sum_{j=1}^d D_j^m \hat f_j$,
where
$$
\hat f_j = \left\{
             \begin{array}{ll}
               \sum_{j=1}^d D_j^{m} u, & \hbox{in $\bR^d_+$;} \\
               \sum_{j=1}^d\sum_{k=1}^{2\tau} \hat{c}_k D_j^{m}w(-\frac1k x_1,x'), & \hbox{in $\bR^d_-$.}
             \end{array}
           \right.
$$
Here $\hat{c}_k$ are appropriate constants.
Observe that
$$
\|D_1^kD_{x'}^{m-k} u\|_{L_p(\bR^d_+)}
\le \| D^{m} \hat u \|_{L_p(\bR^d)}
\le N \|\hat f\|_{L_p(\bR^d)}
\le N \sum_{j=1}^d \| D_j^{m} u \|_{L_p(\bR^d_+)},
$$
where the second inequality is due to the $L_p$-estimate of elliptic systems in the whole space (see Remark \ref{remark02}) and $N = N(d,n,m,p)$.
By replacing $u(x_1,x')$ by $u(\varepsilon_1 x_1,x')$ in the above inequality
we have
$$
\varepsilon_1^k \|D_1^k D_{x'}^{2m-k} u\|_{L_p(\bR^d_+)}
\le \varepsilon_1^{2m} N \|D_1^{2m} u\|_{L_p(\bR^d_+)}
+ N \sum_{j=2}^d \|D_j^{2m} u\|_{L_p(\bR^d_+)}.
$$
The proposition is proved.
\end{proof}

\begin{lemma}							\label{lemma0523_01}
Let $T \in (-\infty,\infty]$.	
There exists $N = N(d,n,m,\delta)$ such that
$$
\sum_{|\alpha| = m} \|D^{\alpha}D_{x'}^m u\|_{L_2(\cO^+_T)}
+ \lambda \| u \|_{L_2(\cO^+_T)}
\le N \| u_t + (-1)^m\cL_0 u + \lambda u \|_{L_2(\cO^+_T)}
$$
for all $\lambda \ge 0$
and $u \in W_2^{1,2m}(\cO^+_T)$ satisfying
\begin{equation}							\label{eq0532_01}
u(t,0,x') = \cdots = D_1^{m-1} u(t,0,x') = 0.
\end{equation}
on $(-\infty,T) \times \bR^{d-1}$.
\end{lemma}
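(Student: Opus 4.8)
The plan is to deduce everything from the divergence-form estimate of Theorem \ref{thVMO02}, using two elementary observations: since $A^{\alpha\beta}=A^{\alpha\beta}(t)$ are independent of $x$, spatial differentiation commutes with $\cL_0$; and differentiation in the tangential variables $x'$ preserves the boundary conditions \eqref{eq0532_01}. Set $f=u_t+(-1)^m\cL_0 u+\lambda u\in L_2(\cO_T^+)$. I will prove the two terms on the left-hand side separately.

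For the term $\lambda\|u\|_{L_2(\cO_T^+)}$, note that $u\in W_2^{1,2m}(\cO_T^+)\subset\cH_2^m(\cO_T^+)$ satisfies \eqref{eq0532_01} and the equation $u_t+(-1)^m\cL_0 u+\lambda u=\sum_{|\alpha|\le m}D^\alpha f_\alpha$ with $f_0=f$ and $f_\alpha=0$ for $|\alpha|\ge 1$; so Theorem \ref{thVMO02} gives $\sum_{|\alpha|\le m}\lambda^{1-\frac{|\alpha|}{2m}}\|D^\alpha u\|_{L_2(\cO_T^+)}\le N\|f\|_{L_2(\cO_T^+)}$, and in particular $\lambda\|u\|_{L_2(\cO_T^+)}\le N\|f\|_{L_2(\cO_T^+)}$. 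For the $2m$-th order tangential derivatives, put $v=D_{x'}^m u$. Because the $A^{\alpha\beta}$ depend only on $t$, applying $D_{x'}^m$ to the equation yields $v_t+(-1)^m\cL_0 v+\lambda v=D_{x'}^m f=D^\gamma f$, where $\gamma$ is the tangential multi-index with $|\gamma|=m$. Here $v\in\cH_2^m(\cO_T^+)$: $D^\alpha v=D^{\alpha+\gamma}u\in L_2$ for $|\alpha|\le m$ since $u\in W_2^{1,2m}$, and $v_t=D^\gamma u_t\in\bH_2^{-m}$ since $u_t\in L_2$. Also $D_1^j v=D_{x'}^m D_1^j u$ vanishes on $(-\infty,T)\times\bR^{d-1}$ for $j=0,\dots,m-1$ by \eqref{eq0532_01}, so $v$ satisfies the boundary conditions \eqref{eqVMO5-2}. (If one prefers to avoid interpreting $v$ as a weak solution directly, a routine approximation reduces the lemma to $u\in C_0^\infty(\overline{\cO_T^+})$ satisfying \eqref{eq0532_01}, for which every step above is classical.) Thus Theorem \ref{thVMO02} applies to $v$ with right-hand side $D^\gamma f$, i.e. with $f_\gamma=f$ and the remaining $f_\alpha=0$, and the $|\alpha|=m$ term of its conclusion reads
$$
\lambda^{\frac12}\|D^m v\|_{L_2(\cO_T^+)}\le\sum_{|\alpha|\le m}\lambda^{1-\frac{|\alpha|}{2m}}\|D^\alpha v\|_{L_2(\cO_T^+)}\le N\lambda^{\frac{|\gamma|}{2m}}\|f\|_{L_2(\cO_T^+)}=N\lambda^{\frac12}\|f\|_{L_2(\cO_T^+)}.
$$
For $\lambda>0$ we cancel $\lambda^{1/2}$ to get $\sum_{|\alpha|=m}\|D^\alpha D_{x'}^m u\|_{L_2(\cO_T^+)}\le N\|f\|_{L_2(\cO_T^+)}$, and the case $\lambda=0$ follows by letting $\lambda\searrow0$ exactly as in the proof of Theorem \ref{thVMO01}. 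Adding the two bounds gives the assertion.

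There is no essential difficulty here; the content is simply that tangential differentiation is the only spatial differentiation compatible with \eqref{eq0532_01}, which is precisely why the conclusion controls $D^\alpha D_{x'}^m u$ rather than the full $D^{2m}u$ — recovering the remaining $2m$-th order derivatives (those involving fewer than $m$ tangential indices, e.g. $D_1^{2m}u$) is the substantial work postponed to the following results, where the extension operator $\cE_\tau$ and the interpolation estimate of Proposition \ref{prop07_01} come into play. The only points needing a little care are keeping track of the powers of $\lambda$ in Theorem \ref{thVMO02} so that the $\lambda^{1/2}$ factors match and cancel, and the (standard) verification that $v=D_{x'}^m u$ is an admissible $\cH_2^m(\cO_T^+)$ solution with the correct boundary behavior.
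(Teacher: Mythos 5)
Your proof is correct and takes essentially the same route as the paper's: set $f=u_t+(-1)^m\cL_0 u+\lambda u$, apply Theorem \ref{thVMO02} once to $u$ with $f_0=f$ to bound $\lambda\|u\|_{L_2}$, differentiate the equation $m$ times tangentially (using that the $A^{\alpha\beta}$ depend only on $t$ and that $D_{x'}^m$ preserves the boundary conditions), and apply Theorem \ref{thVMO02} again to $D_{x'}^m u$ with $f_\gamma=f$. Your explicit bookkeeping of the $\lambda^{1/2}$ factors and the $\lambda=0$ limiting argument only makes visible a step the paper leaves implicit.
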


\begin{proof}
Define
\begin{equation}							\label{eqVMO004}
f=u_t + (-1)^m D^{\alpha} (A^{\alpha\beta}D^{\beta} u ) + \lambda u
\end{equation}
in $\cO^+_T$.
Then by Theorem \ref{thVMO02}
\begin{equation*}							
\lambda \| u \|_{L_2(\cO^+_T)} \le N \|f\|_{L_2(\cO^+_T)}.	
\end{equation*}
Now differentiate with respect to $x' \in \bR^{d-1}$ both sides of \eqref{eqVMO004} $m$ times to get
$$
(D_{x'}^m u)_t + (-1)^m D^{\alpha} (A^{\alpha\beta}D^{\beta} D_{x'}^m u )
+ \lambda D_{x'}^m u = D_{x'}^m f
$$
in $\cO^+_T$.
Note that $D_{x'}^m u$ satisfies \eqref{eq0532_01}.
Thus by Theorem \ref{thVMO02} again
we have
\begin{equation*}							
\sum_{|\alpha|=m}\|D^{\alpha}D_{x'}^m u\|_{L_2(\cO^+_T)}
\le N \| f \|_{L_2(\cO^+_T)}.
\end{equation*}
The lemma is proved.
\end{proof}

\begin{lemma}							\label{lemma0523_10}
Let $T \in (-\infty, \infty]$ and $\lambda \ge 0$.
There exists $N = N(d,n,m,\delta)$ such that,
for $u \in W_2^{1,2m}(\cO^+_T)$ satisfying \eqref{eq0532_01},
\begin{equation}							\label{eq0523_02}
\| D_1^{2m} u \|_{L_2(\cO^+_T)}
\le N \sum_{j = 1}^{2m} \| D_1^{2m-j}D_{x'}^ju\|_{L_2(\cO^+_T)}
+ N \|f\|_{L_2(\cO^+_T)}
\end{equation}
provided that
\begin{equation}							\label{eq0523_03}
u_t + (-1)^m\cL_0 u + \lambda u = f
\end{equation}
in $\cO^+_T$.
\end{lemma}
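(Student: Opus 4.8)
\emph{Proof proposal.} The plan is to multiply \eqref{eq0523_03} by $(-1)^m D_1^{2m}u$ and integrate over $\cO^+_T$. The guiding observation is that testing against the \emph{pure normal} derivative of order $2m$ makes the contributions of $u_t$ and $\lambda u$ sign-definite after an integration by parts in $x_1$ (the boundary conditions \eqref{eq0532_01}, which pass to $u_t$ upon differentiating in $t$, kill every boundary term at $\{x_1=0\}$), while the diagonal leading term $A^{\alpha_0\alpha_0}D_1^{2m}u$, with $\alpha_0=(m,0,\dots,0)$, provides exactly the coercivity needed to bound $\|D_1^{2m}u\|_{L_2}$. As in the proof of Theorem \ref{thVMO01}, it suffices to establish the estimate for $u\in C_0^\infty(\overline{\cO^+_T})$ satisfying \eqref{eq0532_01}; the general case follows by approximation, since every norm in the estimate and the right-hand side $f=u_t+(-1)^m\cL_0 u+\lambda u$ depend continuously on $u$ in $W_2^{1,2m}(\cO^+_T)$.

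So let $u\in C_0^\infty(\overline{\cO^+_T})$ satisfy \eqref{eq0532_01} and \eqref{eq0523_03}, and set $X=\|D_1^{2m}u\|_{L_2(\cO^+_T)}$. Expanding $\langle(-1)^m D_1^{2m}u,\,f\rangle_{\cO^+_T}$ by means of the equation, we treat the three resulting terms separately. Differentiating \eqref{eq0532_01} in $t$ gives $D_1^j u_t=0$ on $\{x_1=0\}$ for $0\le j\le m-1$, so in $\langle(-1)^m D_1^{2m}u,\,u_t\rangle_{\cO^+_T}$ and $\langle(-1)^m D_1^{2m}u,\,u\rangle_{\cO^+_T}$ we may move $m$ of the $x_1$-derivatives onto the second factor with no boundary contribution at $\{x_1=0\}$. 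The first becomes $\langle D_1^m u,\,\partial_t D_1^m u\rangle_{\cO^+_T}$, whose real part is $\frac{1}{2}\int_{\bR^d_+}|D_1^m u|^2(T,x)\,dx\ge 0$, and the second becomes $\lambda\|D_1^m u\|_{L_2(\cO^+_T)}^2\ge 0$. In the term $\langle D_1^{2m}u,\,\cL_0 u\rangle_{\cO^+_T}=\sum_{|\alpha|=|\beta|=m}\langle D_1^{2m}u,\,A^{\alpha\beta}D^\alpha D^\beta u\rangle_{\cO^+_T}$, the diagonal summand $(\alpha,\beta)=(\alpha_0,\alpha_0)$ has real part at least $\delta X^2$ by the Legendre--Hadamard condition \eqref{eq7.9.17} evaluated at $\xi=e_1$.

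Every other pair satisfies $\alpha_1+\beta_1\le 2m-1$, hence $D^\alpha D^\beta u=D_1^{2m-j}D_{x'}^j u$ for some $1\le j\le 2m$; using $|A^{\alpha\beta}|\le\delta^{-1}$ and the Cauchy--Schwarz inequality, the corresponding summand is bounded in modulus by $\delta^{-1}X\sum_{j=1}^{2m}\|D_1^{2m-j}D_{x'}^j u\|_{L_2(\cO^+_T)}$. Collecting everything and using $|\langle(-1)^m D_1^{2m}u,\,f\rangle_{\cO^+_T}|\le X\|f\|_{L_2(\cO^+_T)}$, we obtain
$$
\delta X^2\le X\|f\|_{L_2(\cO^+_T)}+N\delta^{-1}X\sum_{j=1}^{2m}\|D_1^{2m-j}D_{x'}^j u\|_{L_2(\cO^+_T)},
$$
where $N$ is the number of admissible pairs $(\alpha,\beta)$; dividing by $X$ (the case $X=0$ being trivial) yields \eqref{eq0523_02}. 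I expect the only genuinely delicate step to be verifying that all boundary integrals at $\{x_1=0\}$ cancel in the integrations by parts, which is precisely the reason one must test with the pure normal derivative $D_1^{2m}u$ rather than an arbitrary $2m$-th order derivative; the rest is routine integration by parts and Cauchy--Schwarz.
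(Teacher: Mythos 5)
Your proof is correct and follows essentially the same approach as the paper: test \eqref{eq0523_03} against $D_1^{2m}u$, observe that the boundary conditions \eqref{eq0532_01} kill every boundary term in the integrations by parts, exploit the sign of the $u_t$ contribution, isolate the coefficient $A^{\hat\alpha\hat\alpha}$, and apply the Legendre--Hadamard condition at $\xi=e_1$ to obtain coercivity in $D_1^{2m}u$. The one place you deviate is the $\lambda u$ term: you observe that after $m$ integrations by parts it equals $\lambda\|D_1^m u\|^2_{L_2(\cO^+_T)}\ge 0$ and can be dropped outright, whereas the paper bounds $\lambda\langle D_1^{2m}u, u\rangle$ by Young's inequality, producing a term $N(\varepsilon)\lambda^2\|u\|^2_{L_2(\cO^+_T)}$ which it then controls through Lemma \ref{lemma0523_01}; both are valid, and your version is marginally more economical at this step (note that the paper still needs Lemma \ref{lemma0523_01} afterward to pass from \eqref{eq0523_02} to the full statement of Theorem \ref{th06_01}, so the dependency is not eliminated from the overall argument).
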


\begin{proof}
By multiplying both sides of the equation \eqref{eq0523_03} from the left by $D_1^{2m}u$ we get
\begin{equation}							\label{eq0609_001}
\langle D_1^{2m}u, u_t \rangle_{\cO^+_T}
+ (-1)^m \langle D_1^{2m}u, A^{\alpha\beta}D^{\alpha}D^{\beta}u \rangle_{\cO^+_T}
+ \lambda \langle D_1^{2m}u, u \rangle_{\cO^+_T}
= \langle D_1^{2m}u, f \rangle_{\cO^+_T}.	
\end{equation}
Note that
\begin{equation}							\label{eq0609_002}
\Re (-1)^m \langle D_1^{2m}u, u_t \rangle_{\cO^+_T}  = \frac{1}{2}\int_{\bR^d_+} |D_1^m u|^2(T,x) \, dx \ge 0.	
\end{equation}
Indeed, this holds true because
$$
\int_{\bR^d_+} |D_1^m u|^2(T,x) \, dx = \int_{\cO^+_T} \frac{\partial}{\partial t} | D_1^m u |^2 \, dx \, dt
= \langle D_1^m u, D_1^m u_t \rangle_{\cO^+_T}
+ \overline{ \langle D_1^m u, D_1^m u_t \rangle_{\cO^+_T}}	
$$
and
$$
\langle D_1^{2m}u, u_t \rangle_{\cO^+_T} = (-1)^m \langle D_1^m u, D_1^m u_t \rangle_{\cO^+_T},
$$
the latter of which follows from the boundary condition \eqref{eq0532_01} and integration by parts.
Hence by taking the real parts of \eqref{eq0609_001} and using \eqref{eq0609_002}
we have
$$
\Re \langle D_1^{2m} u, A^{\hat{\alpha}\hat{\alpha}}D_1^{2m}u \rangle_{\cO^+_T}
\le -\Re \sum_{(\alpha,\beta) \ne (\hat{\alpha},\hat{\alpha})}
\langle D_1^{2m} u, A^{\alpha\beta}D^{\alpha}D^{\beta}u \rangle_{\cO^+_T}
$$
$$
- (-1)^m \lambda \Re \langle D_1^{2m}u, u \rangle_{\cO^+_T}
+ (-1)^m \Re \langle D_1^{2m}u, f \rangle_{\cO^+_T},
$$
where $\hat{\alpha} = (m,0,\cdots,0)$.
Thanks to the ellipticity condition and Young's inequality,
$$
\delta \| D_1^{2m}u \|^2_{L_2(\cO^+_T)}
\le \Re \langle D_1^{2m} u, A^{\hat{\alpha}\hat{\alpha}}D_1^{2m}u \rangle_{\cO^+_T}
\le \varepsilon \| D_1^{2m}u \|^2_{L_2(\cO^+_T)}
$$
$$
+ N(\varepsilon,\delta) \sum_{j=1}^{2m}\|D_1^{2m-j}D_{x'}^j u\|^2_{L_2(\cO^+_T)}
+ N(\varepsilon) \lambda^2 \|u\|^2_{L_2(\cO^+_T)} + N(\varepsilon)\|f\|^2_{L_2(\cO^+_T)}.
$$
Choosing a sufficiently small $\varepsilon$ and using Lemma \ref{lemma0523_01},
we prove \eqref{eq0523_02}.
\end{proof}

Now we are ready to state and prove the main theorem of the section.

\begin{theorem}							\label{th06_01}
Let $T \in (-\infty,\infty]$.
There exists $N = N(d,n,m,\delta)$ such that
$$
\| u_t \|_{L_2(\cO^+_T)}
+ \sum_{|\alpha|\le 2m} \lambda^{1-\frac{|\alpha|}{2m}}\|D^{\alpha}u\|_{L_2(\cO^+_T)}
\le N\| u_t + (-1)^m\cL_0 u + \lambda u \|_{L_2(\cO^+_T)}
$$
for all $\lambda \ge 0$
and $u \in W_2^{1,2m}(\cO^+_T)$ satisfying
\begin{equation*}							
u(t,0,x') = \cdots = D_1^{m-1} u(t,0,x') = 0
\end{equation*}
on $(-\infty,T) \times \bR^{d-1}$.
\end{theorem}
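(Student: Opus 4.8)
\emph{Proof idea.} Set $f=u_t+(-1)^m\cL_0 u+\lambda u$, so that the claim is that the left-hand side is bounded by $N\|f\|_{L_2(\cO^+_T)}$. The plan is to first establish $\sum_{|\alpha|=2m}\|D^\alpha u\|_{L_2(\cO^+_T)}+\lambda\|u\|_{L_2(\cO^+_T)}\le N\|f\|_{L_2(\cO^+_T)}$, and then to recover the full left-hand side by interpolation exactly as in the proof of Theorem \ref{th06_05}. From Lemma \ref{lemma0523_01} we already have
\begin{equation*}
\lambda\|u\|_{L_2(\cO^+_T)}+\sum_{|\alpha|=m}\|D^\alpha D_{x'}^m u\|_{L_2(\cO^+_T)}\le N\|f\|_{L_2(\cO^+_T)} .
\end{equation*}
In particular every derivative $D^\beta u$ with $|\beta|=2m$ and $\beta_1\le m$ (equivalently, at least $m$ of the derivatives tangential) is bounded in $L_2(\cO^+_T)$ by $N\|f\|_{L_2(\cO^+_T)}$; this covers, for instance, $D_1^m D_j^m u$ for $j\ge 2$. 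The outstanding derivatives of order $2m$ are the ``mixed'' ones $D_1^k D_{x'}^{2m-k}u$ with $m<k\le 2m$, and these are the difficult terms.

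For $m<k<2m$ write $k=m+l$ with $1\le l\le m-1$, so that $D_1^k D_{x'}^{2m-k}u=D_1^l D_{x'}^{m-l}\big(D_1^m u\big)$. Since $u\in W_2^{1,2m}(\cO^+_T)$, for almost every $t$ the slice $D_1^m u(t,\cdot)$ lies in $W_2^m(\bR^d_+)$, and applying Proposition \ref{prop07_01} with $p=2$ and $D_1^m u(t,\cdot)$ in place of $u$ gives, for any $\varepsilon>0$,
\begin{equation*}
\|D_1^k D_{x'}^{2m-k}u(t,\cdot)\|_{L_2(\bR^d_+)}\le\varepsilon\|D_1^{2m}u(t,\cdot)\|_{L_2(\bR^d_+)}+N(\varepsilon)\sum_{j=2}^d\|D_1^m D_j^m u(t,\cdot)\|_{L_2(\bR^d_+)} .
\end{equation*}
Squaring, integrating in $t$, and observing that each $D_1^m D_j^m u$ with $j\ge 2$ is of the form $D^\alpha D_{x'}^m u$ with $|\alpha|=m$ (hence bounded by $N\|f\|_{L_2(\cO^+_T)}$ via Lemma \ref{lemma0523_01}), we obtain
\begin{equation*}
\|D_1^k D_{x'}^{2m-k}u\|_{L_2(\cO^+_T)}\le\varepsilon\|D_1^{2m}u\|_{L_2(\cO^+_T)}+N(\varepsilon)\|f\|_{L_2(\cO^+_T)},\qquad m<k<2m .
\end{equation*}

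It remains to control $\|D_1^{2m}u\|_{L_2(\cO^+_T)}$ itself. Lemma \ref{lemma0523_10} bounds it by $N\sum_{j=1}^{2m}\|D_1^{2m-j}D_{x'}^j u\|_{L_2(\cO^+_T)}+N\|f\|_{L_2(\cO^+_T)}$; among these terms, those with $j\ge m$ carry at most $m$ normal derivatives and are bounded by $N\|f\|_{L_2(\cO^+_T)}$ via Lemma \ref{lemma0523_01}, while those with $1\le j<m$ are precisely the mixed derivatives estimated just above and are bounded by $\varepsilon\|D_1^{2m}u\|_{L_2(\cO^+_T)}+N(\varepsilon)\|f\|_{L_2(\cO^+_T)}$. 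Choosing $\varepsilon$ small (depending only on $d,n,m,\delta$) absorbs the $\|D_1^{2m}u\|$ term into the left side and yields $\|D_1^{2m}u\|_{L_2(\cO^+_T)}\le N\|f\|_{L_2(\cO^+_T)}$; feeding this back into the previous display controls all derivatives of order $2m$, i.e. $\sum_{|\alpha|=2m}\|D^\alpha u\|_{L_2(\cO^+_T)}\le N\|f\|_{L_2(\cO^+_T)}$. Combining with $\lambda\|u\|_{L_2(\cO^+_T)}\le N\|f\|_{L_2(\cO^+_T)}$ and the interpolation inequality $\lambda^{1-\frac{|\alpha|}{2m}}\|D^\alpha u\|_{L_2(\cO^+_T)}\le N\big(\|D^{2m}u\|_{L_2(\cO^+_T)}+\lambda\|u\|_{L_2(\cO^+_T)}\big)$ for $0\le|\alpha|\le 2m$ (valid on $\bR^d_+$, e.g.\ after extension to $\bR^d$), and finally using $\|u_t\|_{L_2(\cO^+_T)}=\|f-(-1)^m\cL_0 u-\lambda u\|_{L_2(\cO^+_T)}\le N\|f\|_{L_2(\cO^+_T)}$, completes the proof.

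The genuinely delicate point is the interplay in the last two paragraphs. The energy identity behind Lemma \ref{lemma0523_01} only produces estimates for derivatives with at least $m$ tangential factors, since integrating by parts in $x'$ leaves the Dirichlet conditions $u=D_1 u=\cdots=D_1^{m-1}u=0$ undisturbed; and the energy identity of Lemma \ref{lemma0523_10}, obtained by testing against $D_1^{2m}u$, regenerates all the mixed derivatives $D_1^{2m-j}D_{x'}^j u$ on its right-hand side, so on its own it is circular. What breaks the circle is the polynomial-reflection interpolation of Proposition \ref{prop07_01}: it trades each bad mixed derivative for a small multiple of $\|D_1^{2m}u\|_{L_2(\cO^+_T)}$ plus purely tangential derivatives already controlled by Lemma \ref{lemma0523_01}, so that the two energy estimates can be chained together and $\|D_1^{2m}u\|_{L_2(\cO^+_T)}$ absorbed. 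Making that reflection work — so that the whole-space $L_p$-estimate applies to the extended function — is the new ingredient, and it is already packaged in Proposition \ref{prop07_01} through the operator $\cE_\tau$.
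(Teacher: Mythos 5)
Your proof is correct and follows essentially the same route as the paper: Lemma \ref{lemma0523_01} controls $\lambda\|u\|$ and all order-$2m$ derivatives with at least $m$ tangential factors, Lemma \ref{lemma0523_10} bounds $\|D_1^{2m}u\|$ by mixed derivatives, and Proposition \ref{prop07_01} lets you absorb the remaining mixed terms. The only cosmetic difference is that the paper invokes Proposition \ref{prop07_01} with $2m$ in place of $m$, whereas you apply it with $m$ to $D_1^m u$ — both yield the same absorption.
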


\begin{proof}
Thanks to Lemma \ref{lemma0523_01} and interpolation inequalities,
it suffices to prove
that
\begin{equation}
								\label{eq07_07}
\|u_t\|_{L_2(\cO^+_T)}
+ \|D^{2m} u\|_{L_2(\cO^+_T)}
\le N \| f \|_{L_2(\cO^+_T)},	
\end{equation}
where $f = u_t + (-1)^m \cL_0 u + \lambda u$.
Lemma \ref{lemma0523_10} and Proposition \ref{prop07_01} (with $2m$ in place of $m$)
imply that
$$
\|D^{2m} u\|_{L_2(\cO^+_T)}
\le N \|f\|_{L_2(\cO^+_T)}
+ \varepsilon \|D_1^{2m} u\|_{L_2(\cO^+_T)}
+ N \|D_{x'}^{2m} u\|_{L_2(\cO^+_T)}.
$$
This along with Lemma \ref{lemma0523_01} and a sufficiently small $\varepsilon$
proves the inequality \eqref{eq07_07}
without the $u_t$ term on the left-hand side.
To complete the proof we simply note that
$$
u_t = -(-1)^m\cL_0 u - \lambda u + f.
$$
\end{proof}

\mysection{Mean oscillation estimates of some partial derivatives of solutions to systems on a half space}
                            \label{sec7}

The aim of this section is to derive several mean oscillation estimates of highest order derivatives of solutions to systems on a half space. Contrary to the whole space case, here we are only able to estimate parts of the highest order derivatives.
More precisely, for divergence form systems, we give estimate of $D_{x'}^m u$, while for non-divergence form systems we present the estimate of $D_{x'}^{2m} u$. We emphasize that these estimates alone are not sufficient for proving Theorem \ref{Thm5} and \ref{Thm6}.

We still denote
$$
\cL_0 u = \sum_{|\alpha|=|\beta|=m} D^{\alpha} A^{\alpha\beta}(t) D^{\beta} u
= \sum_{|\alpha|=|\beta|=m} A^{\alpha\beta}(t) D^{\alpha} D^{\beta} u.
$$
Recall that
$$
Q_r^+(t,x)=Q_r(t,x) \cap \cO_{\infty}^+,
\quad
Q_r^+ = Q_r \cap \cO_{\infty}^+,
$$
$$
Q_r' = (-r^{2m},0) \times B_r',
\quad
B_r' = \{ x' \in \bR^{d-1}: |x'| < r\}.
$$

\subsection{Some auxiliary results for systems on a half space}

We first prove some auxiliary estimates in this subsection. The first two are counterparts of Lemma \ref{lem06_01} and Corollary \ref{cor33}.

\begin{lemma}
                                            \label{lem6.2}
Let $0<r<R<\infty$. Assume that
$u \in W_p^{1,2m}(Q_R^+)$ 
satisfies
\begin{equation}
								\label{eq06_01}
u(t,0,x') = \cdots = D_1^{m-1}u(t,0,x') = 0
\end{equation}
on $Q_R'$ and
$$
u_t + (-1)^m\cL_0 u=f
$$
in $Q_R^+$, where $f \in L_2(Q_R^+)$.
Then there exists a constant $N=N(d,n,m,\delta)$ such that
\begin{equation*}
\|u_t\|_{L_2(Q_r^+)}
+ \|D^{2m} u\|_{L_2(Q_r^+)}
\leq N \|f\|_{L_2(Q_R^+)}
+N(R-r)^{-2m}\|u\|_{L_2(Q_R^+)}.
\end{equation*}
Furthermore,
\begin{equation*}
\|u\|_{W_2^{1,2m}(Q_r^+)}
\le N \|f\|_{L_2(Q_R^+)}
+ N \|u\|_{L_2(Q_R^+)},
\end{equation*}
where $N = N(d,n,m,\delta, r,R)$.
\end{lemma}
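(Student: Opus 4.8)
The plan is to localize Theorem~\ref{th06_01} in exactly the way Lemma~\ref{lem06_01} localizes Theorem~\ref{th06_05}; the only new point is that the cutoff functions must be chosen so that the boundary conditions \eqref{eq06_01} are preserved under multiplication.

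First I would set $R_0 = r$ and $R_j = r + (R-r)\sum_{l=1}^j 2^{-l}$ for $j \ge 1$, and take the very same cutoff functions $\zeta_j \in C_0^\infty(\bR^{d+1})$ constructed in the proof of Lemma~\ref{lem06_01}, so that $\zeta_j = 1$ on $Q_{R_j}$, $\zeta_j = 0$ outside $(-R_{j+1}^{2m}, R_{j+1}^{2m}) \times B_{R_{j+1}}$, $|D^k\zeta_j| \le N 2^{kj}(R-r)^{-k}$ for $k \le 2m$, and $|(\zeta_j)_t| \le N 2^{2mj}(R-r)^{-2m}$. These functions involve only $|x|$ and $|t|^{1/2m}$, so no modification near $\{x_1=0\}$ is needed. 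The key observation is that, since $u$ satisfies $u = D_1 u = \cdots = D_1^{m-1}u = 0$ on $Q_R'$, Leibniz's rule shows that $D_1^k(\zeta_j u)\big|_{x_1=0}$ is a linear combination of $D_1^i\zeta_j\,D_1^{k-i}u\big|_{x_1=0}$ with $k-i \le m-1$, hence it vanishes for every $k \le m-1$. Therefore $\zeta_j u \in W_2^{1,2m}(\cO_0^+)$ satisfies the boundary conditions of Theorem~\ref{th06_01}, and we may apply that theorem with $\lambda = 0$ on $\cO_0^+$ to $\zeta_j u$.

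Then I would run the iteration verbatim. Expanding, $(\zeta_j u)_t + (-1)^m\cL_0(\zeta_j u)$ equals $\zeta_j f + (\zeta_j)_t u$ plus a linear combination of terms $D^k\zeta_j\,D^{2m-k}u$ with $1 \le k \le 2m$, so Theorem~\ref{th06_01} gives
\begin{multline*}
\|(\zeta_j u)_t\|_{L_2(\cO_0^+)} + \|D^{2m}(\zeta_j u)\|_{L_2(\cO_0^+)}
\le N\|f\|_{L_2(Q_R^+)} + N\|(\zeta_j)_t u\|_{L_2(\cO_0^+)}
\\
+ N\sum_{k=1}^{2m}\|D^k\zeta_j\,D^{2m-k}u\|_{L_2(\cO_0^+)}.
\end{multline*}
On the support of $D^k\zeta_j$ one has $D^{2m-k}u = D^{2m-k}(\zeta_{j+1}u)$, and the standard $L_2$ interpolation inequality on the half space together with the bounds on $\zeta_j$ yields, for $I_j := \|(\zeta_j u)_t\|_{L_2(\cO_0^+)} + \|D^{2m}(\zeta_j u)\|_{L_2(\cO_0^+)}$, the recursion $I_j \le \varepsilon I_{j+1} + N\|f\|_{L_2(Q_R^+)} + N 2^{2mj}(R-r)^{-2m}\|u\|_{L_2(Q_R^+)}$ valid for all $\varepsilon > 0$. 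Multiplying by $\varepsilon^j$, summing over $j$, and choosing $\varepsilon = 2^{-2m-1}$ so that the resulting series converge, the tail $\sum_{j\ge 1}\varepsilon^j I_j$ is absorbed into the left-hand side, leaving the estimate for $j = 0$; since $\zeta_0 = 1$ on $Q_r^+$, its left-hand side dominates $\|u_t\|_{L_2(Q_r^+)} + \|D^{2m}u\|_{L_2(Q_r^+)}$, which is the first asserted inequality. The full $W_2^{1,2m}(Q_r^+)$ bound then follows by inserting an intermediate radius between $r$ and $R$ and using interpolation inequalities to control $D^\alpha u$ for $|\alpha| < 2m$, exactly as at the end of the proof of Lemma~\ref{lem06_01}.

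The main obstacle is bookkeeping rather than a genuine analytic difficulty: one must check that the interpolation inequalities are legitimately applied on the half space (valid because $\zeta_{j+1}u$ is compactly supported in $x$ and lies in $W_2^{1,2m}$ up to $\{x_1=0\}$) and that the commutator expansion of $\cL_0(\zeta_j u)$ produces no terms other than $(\zeta_j)_t u$ and $D^k\zeta_j\,D^{2m-k}u$ with $1 \le k \le 2m$, all handled precisely as in the whole-space argument. No analytic input beyond Theorem~\ref{th06_01} is needed.
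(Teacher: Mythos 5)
Your proof is correct and matches the paper's intent exactly: the paper's own proof of Lemma~\ref{lem6.2} simply says to apply Theorem~\ref{th06_01} in place of Theorem~\ref{th06_05} and repeat the argument of Lemma~\ref{lem06_01} with minor changes, and you have spelled out precisely those changes — in particular the (simple but necessary) check that the cutoffs preserve the Dirichlet conditions $D_1^k(\zeta_j u)|_{x_1=0}=0$ for $k\le m-1$, and that interpolation is legitimate on the half space.
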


\begin{proof}
By Theorem \ref{th06_01} the $L_2$-estimate of systems on a half spaces is available.
Then the proof is the same as that of Lemma \ref{lem06_01}
with some minor changes.
\end{proof}	


\begin{corollary}
								\label{cor06_01}
Let $0 < r < R < \infty$.
Assume that $u \in C_{\text{loc}}^{\infty}(\overline{\cO_{\infty}^+})$
satisfies \eqref{eq06_01} on $Q_R'$ and
\begin{equation}
								\label{eq06_09}
u_t + (-1)^m \cL_0 u = 0	
\end{equation}
in $Q_R^+$.
Then for any multi-indices $\gamma$
and $\vartheta$ such that
$$
\gamma = (\gamma_1, \gamma_2, \cdots, \gamma_d),
\quad \gamma_1 \le 2m,
\quad
\vartheta = (0, \vartheta_2, \cdots, \vartheta_d),
$$
we have
$$
\| D^{\gamma} u \|_{L_2(Q_r^+)}
+ \| D^{\vartheta} u_t \|_{L_2(Q_r^+)}
\le N \| u \|_{L_2(Q_R^+)},
$$
where $N = N(d,n,m,\delta,r,R, \gamma, \vartheta)$.
\end{corollary}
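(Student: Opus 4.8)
The plan is to mirror the argument for Corollary~\ref{cor33}, with one essential new observation built in. Because the coefficients $A^{\alpha\beta}$ depend only on $t$, \emph{tangential} differentiation --- in the variables $x'=(x_2,\dots,x_d)$ and in $t$ --- commutes with the map $u\mapsto u_t+(-1)^m\cL_0 u$ and also preserves the Dirichlet conditions \eqref{eq06_01}, since those conditions only constrain $D_1^j u$ at $x_1=0$; a normal derivative $D_1$, by contrast, destroys \eqref{eq06_01}. This is exactly why the statement allows arbitrarily many tangential derivatives through $\vartheta$ but only $\gamma_1\le 2m$ normal derivatives in $D^\gamma u$: one can strip off all tangential derivatives "for free" and is then left with at most $2m$ normal derivatives, which a single application of the half-space a priori $W_2^{1,2m}$-estimate (Lemma~\ref{lem6.2}, itself resting on Theorem~\ref{th06_01}) absorbs.

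Step~1: I would first prove that $\|D^\vartheta u\|_{L_2(Q_\rho^+)}\le N\|u\|_{L_2(Q_{\rho'}^+)}$ for every \emph{purely tangential} multi-index $\vartheta=(0,\vartheta_2,\dots,\vartheta_d)$ and every pair $0<\rho<\rho'$ for which $u$ solves \eqref{eq06_09} in $Q_{\rho'}^+$ and satisfies \eqref{eq06_01} on $Q_{\rho'}'$, by induction on $|\vartheta|$. When $|\vartheta|\le 2m$, this is immediate from the second inequality of Lemma~\ref{lem6.2} with $f=0$, since $D^\vartheta u$ is a spatial derivative of $u$ of order at most $2m$. When $|\vartheta|>2m$, split $\vartheta=\vartheta_a+\vartheta_b$ with $\vartheta_a,\vartheta_b$ tangential and $|\vartheta_a|=2m$, set $w=D^{\vartheta_b}u$ (again smooth, again solving \eqref{eq06_09} and \eqref{eq06_01}), pick $\rho<\rho''<\rho'$, and apply the first inequality of Lemma~\ref{lem6.2} to $w$ to get $\|D^\vartheta u\|_{L_2(Q_\rho^+)}=\|D^{\vartheta_a}w\|_{L_2(Q_\rho^+)}\le\|D^{2m}w\|_{L_2(Q_\rho^+)}\le N(\rho''-\rho)^{-2m}\|w\|_{L_2(Q_{\rho''}^+)}$; since $|\vartheta_b|=|\vartheta|-2m$, the induction hypothesis applied to $\vartheta_b$ with radii $\rho''<\rho'$ closes the loop. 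Only finitely many peeling steps occur, and $N$ picks up dependence on $\rho,\rho',\vartheta$.

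Step~2: For general $\gamma$ with $\gamma_1\le 2m$, write $D^\gamma=D_1^{\gamma_1}D^{\gamma'}$ with $\gamma'=(0,\gamma_2,\dots,\gamma_d)$, put $w=D^{\gamma'}u$ (tangential, hence solving \eqref{eq06_09} in $Q_R^+$ and \eqref{eq06_01} on $Q_{R_0}'$ for $r<R_0<R$), and invoke the second inequality of Lemma~\ref{lem6.2} with $f=0$: since $D_1^{\gamma_1}w$ is a spatial derivative of $w$ of order $\gamma_1\le 2m$, $\|D^\gamma u\|_{L_2(Q_r^+)}=\|D_1^{\gamma_1}w\|_{L_2(Q_r^+)}\le\|w\|_{W_2^{1,2m}(Q_r^+)}\le N\|w\|_{L_2(Q_{R_0}^+)}$, and Step~1 bounds $\|w\|_{L_2(Q_{R_0}^+)}=\|D^{\gamma'}u\|_{L_2(Q_{R_0}^+)}$ by $N\|u\|_{L_2(Q_R^+)}$. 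Step~3: For the $u_t$ term, use that $D^\vartheta u$ satisfies \eqref{eq06_09}, so $D^\vartheta u_t=-(-1)^m\cL_0 D^\vartheta u$ in $Q_R^+$; applying the first inequality of Lemma~\ref{lem6.2} to $D^\vartheta u$ with an intermediate radius $r<R_1<R$ and then Step~1 gives $\|D^\vartheta u_t\|_{L_2(Q_r^+)}\le N(R_1-r)^{-2m}\|D^\vartheta u\|_{L_2(Q_{R_1}^+)}\le N\|u\|_{L_2(Q_R^+)}$. Adding the bounds from Steps~2 and~3 yields the corollary. There is no genuinely hard step once Theorem~\ref{th06_01} and Lemma~\ref{lem6.2} are in hand; the only things requiring care are the bookkeeping of nested radii in the induction of Step~1 and the recognition --- which is the very reason for the restriction $\gamma_1\le 2m$ --- that the single available half-space estimate can afford only $2m$ normal derivatives beyond what commutes through the boundary condition. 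The smoothness hypothesis $u\in C^\infty_{\text{loc}}(\overline{\cO_\infty^+})$ enters only to ensure that the differentiated functions $D^{\vartheta_b}u,D^{\gamma'}u,\dots$ lie in $W_2^{1,2m}$ on the relevant subcylinders, so that Lemma~\ref{lem6.2} applies.
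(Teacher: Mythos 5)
Your proof is correct and follows essentially the same route as the paper: the key observation in both is that purely tangential (and time) derivatives commute with $u\mapsto u_t+(-1)^m\cL_0 u$ (since the coefficients depend only on $t$) and preserve the Dirichlet conditions \eqref{eq06_01}, so one may peel off $2m$ derivatives at a time via Lemma~\ref{lem6.2} applied to a tangential derivative of $u$, while the at most $2m$ normal derivatives are absorbed by a single use of the half-space $W_2^{1,2m}$ estimate. The only cosmetic difference from the paper's proof is in handling $D^\vartheta u_t$: you invoke the parabolic part of Lemma~\ref{lem6.2} (the first inequality) directly to control $w_t=D^\vartheta u_t$, whereas the paper first rewrites $D^\vartheta u_t=-(-1)^m A^{\alpha\beta}D^\alpha D^\beta D^\vartheta u$ via the equation and folds it into the $D^\gamma u$ estimate; the two are equivalent.
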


\begin{proof}
From \eqref{eq06_09} it follows that
$$
D^{\theta}u_t = -(-1)^m A^{\alpha\beta}D^{\alpha}D^{\beta}D^{\theta}u
$$
in $Q_R^+$. Each of the terms on the right-hand side is
a constant times a term of the form $D^{\gamma}u$,
where $|\gamma| = 2m + |\beta|$ and $\gamma_1 \le 2m$.
Hence we only need to prove
$$
\|D^{\gamma} u\|_{L_2(Q_r^+)}
\le N \|u\|_{L_2(Q_R^+)},	
$$
where $\gamma = (\gamma_1,\cdots,\gamma_d)$ satisfies $\gamma_1\le 2m$.
The proof of this inequality is identical to that of \eqref{eq06_05}
in Corollary \ref{cor33}
with the only difference that, in $|\gamma| > 2m$, we write
$$
D^{\gamma}u = D^{2m}D^{\vartheta}u,
\quad
\vartheta=(0, \vartheta_2,\cdots,\vartheta_d),
$$
where $D^{\vartheta}u$ satisfies \eqref{eq06_09}
in $Q_R^+$ as well as \eqref{eq06_01} on $Q_R'$,
so that we can apply Lemma \ref{lem6.2} to $D^{\vartheta}u$.
\end{proof}

Next we derive a few H\"older estimates of solutions. Throughout the paper, for a function $g$ defined on a subset $\cD$ in $\bR^{d+1}$,
we set
$$
[g]_{\cC^{\nu}(\cD)}
= \sup_{\substack{(t,x), (s,y) \in \cD \\ (t,x) \ne (s,y)}}
\frac{|g(t,x)-g(s,y)|}{|t-s|^{\frac \nu 2} + |x-y|^\nu},
$$
where $0 < \nu \le 1$.

\begin{lemma}
								\label{lem06_02}
If $u \in C_{\text{loc}}^{\infty}(\overline{\cO_{\infty}^+})$
satisfies \eqref{eq06_01} on $Q'_4$
and \eqref{eq06_09} in $Q_4^+$, then
$$
[u]_{\cC^{1/2}(Q_1^+)}
\le N \| u \|_{L_2(Q_4^+)}.
$$
\end{lemma}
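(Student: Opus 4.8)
\textit{Approach.} The plan is to upgrade the $L_2$-estimates of Corollary \ref{cor06_01} into enough Sobolev regularity for $u$ and then to invoke an anisotropic Sobolev embedding. Since $\cL_0$ has $x$-independent coefficients, Corollary \ref{cor06_01} applies to $u$ itself and gives, for every $\rho\in(0,4)$, every multi-index $\gamma=(\gamma_1,\dots,\gamma_d)$ with $\gamma_1\le 2m$, and every tangential multi-index $\vartheta=(0,\vartheta_2,\dots,\vartheta_d)$,
$$
\|D^\gamma u\|_{L_2(Q_\rho^+)}+\|D^\vartheta u_t\|_{L_2(Q_\rho^+)}\le N\|u\|_{L_2(Q_4^+)},
$$
with $N$ also depending on $\gamma,\vartheta,\rho$. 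In other words $u$ has arbitrarily many $L_2$-derivatives in the tangential variables, $2m$ in the normal variable (also in combination with tangential ones), and one in $t$ (again in combination with tangential ones), all controlled by $\|u\|_{L_2(Q_4^+)}$.

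\textit{Localization and flattening.} Pick a cutoff $\zeta(t,x)=\zeta_0(t)\zeta_1(x_1)\zeta_2(x')$ with $\zeta\equiv 1$ on $Q_1^+$, $\operatorname{supp}\zeta\subset Q_2^+$, and $\zeta_1\equiv 1$ near $x_1=0$, so that the cutoff does not touch the boundary; then $\zeta u$ still satisfies \eqref{eq06_01} on the boundary and, by the previous step, $D^\gamma(\zeta u)$ and $D^\vartheta(\zeta u)_t$ lie in $L_2$ with the same bounds. Because $\zeta u$ has vanishing normal derivatives up to order $m-1$, its extension $V:=\cE_m(\zeta u)$ (the operator $\cE_{\tau}$ from Section \ref{sec6}, applied in $x$ for each fixed $t$) lies in $L_2(\bR^{d+1})$ together with $V_t$, $D_1^{2m}V$, and $D^\vartheta V$ for all tangential $\vartheta$ with $|\vartheta|\le K$, each with norm $\le N(K)\|u\|_{L_2(Q_4^+)}$; moreover $V=u$ on $Q_1^+$.

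\textit{Fourier embedding.} Fix $K=K(d)$ large and even. Translating the bounds of the previous step to the Fourier side by Plancherel gives
$$
\big\|(1+|\tau|+|\xi_1|^{2m})(1+|\xi'|^2)^{K/2}\,\widehat V(\tau,\xi_1,\xi')\big\|_{L_2(\bR^{d+1})}\le N\|u\|_{L_2(Q_4^+)}.
$$
For $(t,x),(s,y)\in\bR^{d+1}$ one writes $V(t,x)-V(s,y)=\int\widehat V\,(e^{i(t\tau+x\cdot\xi)}-e^{i(s\tau+y\cdot\xi)})\,d\tau\,d\xi$, uses $|e^{ia}-e^{ib}|\le\min(2,|a-b|)$, and splits each of the $\tau$-, $\xi_1$- and $\xi'$-integrals at the frequency comparable to the reciprocal of the corresponding increment. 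A Cauchy--Schwarz estimate against the weight above then bounds $|V(t,x)-V(s,y)|$ by $N(|t-s|^{1/4}+|x-y|^{1/2})\|u\|_{L_2(Q_4^+)}$, the relevant frequency integrals converging precisely because $m\ge 1$ and $2K>d+1$. Restricting to $Q_1^+$ yields $[u]_{\cC^{1/2}(Q_1^+)}\le N\|u\|_{L_2(Q_4^+)}$.

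\textit{Main difficulty.} The obstruction is the anisotropy of the available regularity. The boundary condition controls only $m$ normal derivatives, so one gets $D_1^j u\in L_2$ only for $j\le 2m$ and no mixed normal--time derivatives $D_1^j u_t$ with $j\ge 1$; consequently the standard parabolic embedding $W_2^{1,2m}\hookrightarrow\cC^\nu$, which would need $m>d/2+\nu$, is unavailable, and so is any iterated argument that slices one variable at a time (each such slicing would require normal derivatives of $u_t$). The compensation has to come from the unlimited tangential smoothness, and combining it with the $2m$ normal and the single time derivative forces the genuine joint estimate above; the delicate points are the convergence of the frequency integrals all the way down to $m=1$, and the verification that $\cE_m$ extends $\zeta u$ without destroying the $L_2$-membership of $D_1^{2m}V$ across $\{x_1=0\}$.
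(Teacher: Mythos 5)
Your approach is correct, and it takes a genuinely different route from the paper's. The paper proceeds by an iterated slicing argument: for the oscillation in $(t,x_1)$ with $x'$ frozen it applies the two-dimensional parabolic embedding $W_2^{1,2}(\Theta_1^+)\hookrightarrow \cC^{1/2}$, and then recovers the supremum over $x'$ by a separate $W_2^k(B_1')$-embedding in the tangential variables (and symmetrically for the oscillation in $x'$); combining the two and invoking Corollary \ref{cor06_01} gives the result. Notably, this never requires more than \emph{two} normal derivatives of $u$ in the final count, because the inner $(t,x_1)$-embedding used is $W_2^{1,2}$, not $W_2^{1,2m}$. Your route instead localizes, extends across $\{x_1=0\}$ by $\cE_m$, and runs a single anisotropic Plancherel/Cauchy--Schwarz argument against the weight $(1+|\tau|+|\xi_1|^{2m})(1+|\xi'|^2)^{K/2}$; the split $|e^{ia}-e^{ib}|\le\min(2,|a-b|)$ and the frequency integrals then deliver the Hölder modulus in one stroke, with the exponent arithmetic working exactly down to $m=1$ (the $\tau$-contribution gives $h_1^{1/2-1/(4m)}\le h_1^{1/4}$). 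The advantage of your version is that the roles of the available derivatives — one in $t$, $2m$ in $x_1$, arbitrarily many in $x'$ — are encoded transparently in the single weight and you avoid iterating two different Sobolev embeddings; the cost is that you invoke $D_1^{2m}u$, whereas the paper gets away with $D_1^{2}u$. Both sets of bounds are covered by Corollary \ref{cor06_01}, so nothing is lost. Two small points worth fixing in the write-up. First, the weight estimate $\|(1+|\tau|+|\xi_1|^{2m})(1+|\xi'|^2)^{K/2}\widehat V\|_{L_2}\le N\|u\|_{L_2(Q_4^+)}$ needs, in addition to $V_t$, $D_1^{2m}V$, $D^\vartheta V$ separately, the \emph{mixed} terms $D^\vartheta V_t$ and $D^\vartheta D_1^{2m}V$ in $L_2$; these are available (they correspond in Corollary \ref{cor06_01} to $\gamma=(2m,\vartheta)$ and to tangential derivatives of $u_t$), but they should be named. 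Second, the Dirichlet conditions on $u$ are not what make $\cE_m$ an extension — $\cE_\tau$ matches $D_1^j$ across $x_1=0$ for all $j\le 2\tau-1$ for \emph{any} smooth $w$; what the boundary conditions buy you is Corollary \ref{cor06_01} itself, and what makes the distributional $D_1^{2m}V$ an $L_2$-function without a singular part on $\{x_1=0\}$ is simply the continuity of $D_1^{2m-1}V$ there.
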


\begin{proof}
Let	
$$
\Theta_r^+ = \{(t,x_1) \in (-r^{2m},0) \times (0,r)\},
\quad
B_r' = \{x' \in \bR^{d-1}: |x'| < r\}.
$$
The triangle inequality gives
$$
\sup_{\substack{(t,x),(s,y) \in Q_1^+\\ (t,x) \ne (s,y)}}
\frac{|u(t,x) - u(s,y)|}{|t-s|^{1/4}+|x-y|^{1/2}}
\le
\sup_{\substack{(t,x_1),(s,y_1) \in \Theta_1^+\\ x' \in B_1'}}
\frac{|u(t,x_1,x') - u(s,y_1,x')|}{|t-s|^{1/4}+|x_1-y_1|^{1/2}}
$$
$$
+\sup_{\substack{(s,y_1) \in \Theta_1^+\\x',y' \in B_1', x'\ne y'}}
\frac{|u(s,y_1,x') - u(s,y_1,y')|}{|x'-y'|^{1/2}}
:= I_1 + I_2.
$$

To estimate $I_1$, we view $u(t,x_1,x')$ as a function of $(t,x_1)$
for a fixed $x' \in B_1'$.
Then by the Sobolev embedding theorem
\begin{equation}
								\label{eq06_15}
\sup_{(t,x_1),(s,y_1) \in \Theta_1^+}
\frac{|u(t,x_1,x') - u(s,y_1,x')|}{|t-s|^{1/4}+|x_1-y_1|^{1/2}}
\le N \| u(\cdot,x') \|_{W_2^{1,2}(\Theta_1^+)}	
\end{equation}
for each $x' \in B_1'$.
On the other hand, there exists a positive integer $k$
such that, for each $(t,x_1) \in \Theta_1^+$,
\begin{multline}
								\label{eq06_16}
\sum_{j=0}^2\sup_{x'\in B_1'}|D_1^j u(t,x_1,x')|
+ \sup_{x'\in B_1'}|u_t(t,x_1,x')|
\\
\le N \sum_{j=0}^2\|D_1^j u(t,x_1,\cdot)\|_{W_2^k(B_1')}
+ N \|u_t(t,x_1,\cdot)\|_{W_2^k(B_1')},
\end{multline}
where $D^j_1u(t,x_1,x')$ and $u_t(t,x_1,x')$ are viewed as functions of $x' \in B_1'$.
Combining \eqref{eq06_15} and \eqref{eq06_16} proves
\begin{equation}
									\label{eq06_17}
I_1 \le N \sum_{\substack{|\gamma| \le k+2\\\gamma_1 \le 2}} \| D^{\gamma} u \|_{L_2(Q_2^+)}
+ N \sum_{\substack{|\vartheta| \le k\\\vartheta_1 = 0}}\|D^{\vartheta}u_t\|_{L_2(Q_2^+)}
\le N \|u\|_{L_2(Q_4^+)},	
\end{equation}
where the last inequality is due to Corollary \ref{cor06_01}.

For the estimate of $I_2$, we look at $u(s,y_1,x')$ as a function of $x' \in B_1'$
for each $(s,y_1) \in \Theta_1^+$.
Again by the Sobolev embedding theorem, there exists a sufficiently large integer $k$ such that
$$
\sup_{\substack{x',y' \in B_1'\\x'\ne y'}}
\frac{|u(s,y_1,x') - u(s,y_1,y')|}{|x'-y'|^{1/2}}
\le N \|u(s,y_1,\cdot)\|_{W_2^k(B_1')}.
$$
Moreover, as a function of $(s,y_1) \in \Theta_1^+$,
$D_{x'}^ju(s,y_1,x')$, $j=0,\cdots,k$, satisfy
$$
\sum_{j=0}^k\sup_{(s,y_1) \in \Theta_1^+}
|D^j_{x'}u(s,y_1,x')| \le
N \sum_{j=0}^k \|D^j_{x'}u(\cdot,x')\|_{W_p^{1,2}(\Theta_1^+)}
$$
for each $x'\in B_1'$.
From the above two inequalities, we obtain \eqref{eq06_17} with $I_2$ in place of $I_1$.
The lemma is proved.
\end{proof}

In the sequel,
for a function $g$ defined on $\cO_T^+$, $T \in (-\infty, \infty]$,
we denote by $\cE(g)$ ($=\cE g$) the even extension of $g$ defined
on $\cO_T$.

\begin{corollary}
								\label{cor06_02}
Let $\lambda \ge 0$,
$X_0 = (t_0,0,x_0')$, where $t_0 \in \bR$
and $x'_0 \in \bR^{d-1}$.
Assume that $u \in C_{\text{loc}}^{\infty}(\overline{\cO_{\infty}^+})$
satisfies \eqref{eq06_01} on $Q_4'(X_0) = (t_0-4^{2m},t_0) \times B_4'(x'_0)$ 
and
\begin{equation}							\label{eq06_02}
u_t + (-1)^m \cL_0 u + \lambda u = 0	
\end{equation}
in $Q_4^+(X_0)$.
Then there exists $N = N(d,n,m,\delta)$ such that
$$
[\cE(D^m_{x'} u)]_{\cC^{1/2}(Q_1(X_0))}
+ \lambda^{1/2}[\cE u]_{\cC^{1/2}(Q_1(X_0))}
\le N \sum_{k=0}^{m} \lambda^{\frac{1}{2}-\frac{k}{2m}}\| \cE (D^k u) \|_{L_2(Q_4(X_0))}.
$$
\end{corollary}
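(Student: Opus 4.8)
\emph{Strategy.} The plan is to bootstrap from the results already proved --- Lemma~\ref{lem06_02} and, for the $\lambda>0$ part, the variable-adjoining device of Lemma~\ref{lemma6.2} --- by differentiating along the boundary. Translating in the $(t,x')$-variables (which preserves $\cO_\infty^+$ and the fact that $A^{\alpha\beta}=A^{\alpha\beta}(t)$), I may assume $X_0=(0,0)$. Two elementary facts about even reflection across $\{x_1=0\}$ will be used repeatedly: since $Q_r$ is symmetric in $x_1$, for any $g$ continuous on $\overline{Q_r^+}$ one has $\|\cE g\|_{L_2(Q_r)}=\sqrt2\,\|g\|_{L_2(Q_r^+)}$ and $[\cE g]_{\cC^{1/2}(Q_r)}=[g]_{\cC^{1/2}(Q_r^+)}$ (for the latter the only nontrivial pairing is between a point with $x_1>0$ and one with $y_1<0$, and there $|x_1+y_1|\le|x_1-y_1|$); moreover $\cE$ commutes with $D_{x'}$ and $D_t$.

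\emph{Case $\lambda=0$.} Here the right-hand side is $N\|\cE(D^m u)\|_{L_2(Q_4)}$. Put $w=D_{x'}^m u\in C^\infty_{\text{loc}}(\overline{\cO_\infty^+})$. Since the coefficients depend on $t$ only, differentiating \eqref{eq06_02} (which is \eqref{eq06_09} when $\lambda=0$) in $x'$ gives $w_t+(-1)^m\cL_0 w=0$ in $Q_4^+$, and differentiating \eqref{eq06_01} in $x'$ gives $w=D_1 w=\dots=D_1^{m-1}w=0$ on $Q_4'$. Lemma~\ref{lem06_02} applied to $w$ yields $[D_{x'}^m u]_{\cC^{1/2}(Q_1^+)}\le N\|D_{x'}^m u\|_{L_2(Q_4^+)}$, which by the reflection facts above is exactly $[\cE(D_{x'}^m u)]_{\cC^{1/2}(Q_1)}\le N\|\cE(D^m u)\|_{L_2(Q_4)}$.

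\emph{Case $\lambda>0$.} Following Lemma~\ref{lemma6.2}, let $\zeta(y)=\cos(\lambda^{\frac{1}{2m}}y)+\sin(\lambda^{\frac{1}{2m}}y)$, so $(-1)^m D_y^{2m}\zeta=\lambda\zeta$, $\zeta(0)=1$, $|D_y^m\zeta(0)|=\lambda^{1/2}$, and $|D_y^{m-k}\zeta(y)|\le N\lambda^{\frac12-\frac{k}{2m}}$ for all $y$ and $0\le k\le m$. With $z=(x_1,x',y)$, set $\hat u(t,z)=u(t,x)\zeta(y)$ and $\hat\cL_0=\cL_0+D_y^{2m}$; since $\delta|\xi|^{2m}+\eta^{2m}\ge N^{-1}(|\xi|^2+\eta^2)^m$, the leading coefficients of $\hat\cL_0$ are Legendre--Hadamard elliptic with a constant depending only on $d,m,\delta$. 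From \eqref{eq06_01}--\eqref{eq06_02} one checks $\hat u\in C^\infty_{\text{loc}}(\overline{\hat\cO_\infty^+})$, $\hat u_t+(-1)^m\hat\cL_0\hat u=0$ in $\hat Q_4^+$, and $\hat u=D_1\hat u=\dots=D_1^{m-1}\hat u=0$ on $\hat Q_4'$. Running the $\lambda=0$ argument one dimension up (tangential block $(x',y)$ in place of $x'$) gives
\[
\big[\cE\big(D^m_{(x',y)}\hat u\big)\big]_{\cC^{1/2}(\hat Q_1)}\le N\big\|\cE(D^m\hat u)\big\|_{L_2(\hat Q_4)}.
\]
Restrict to the slice $\{y=0\}\cap\hat Q_1=Q_1$: since $\cE$ commutes with $D_{x'}$ and $D_y$, on this slice $\cE(D^m_{x'}\hat u)=\zeta(0)\,\cE(D^m_{x'}u)=\cE(D^m_{x'}u)$ and $\cE(D^m_y\hat u)=D_y^m\zeta(0)\,\cE u=\pm\lambda^{1/2}\cE u$, and both $D^m_{x'}$ and $D^m_y$ are among the $D^m_{(x',y)}$; as restriction to a slice does not increase a $\cC^{1/2}$-seminorm, the left-hand side of the corollary is $\le 2\big[\cE(D^m_{(x',y)}\hat u)\big]_{\cC^{1/2}(\hat Q_1)}$. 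Finally $D^m\hat u$ is a linear combination of the terms $D^k_x u\cdot D^{m-k}_y\zeta$, $0\le k\le m$, so using $|D^{m-k}_y\zeta|\le N\lambda^{\frac12-\frac{k}{2m}}$ and the boundedness of the $y$-section of $\hat Q_4$, $\|\cE(D^m\hat u)\|_{L_2(\hat Q_4)}\le N\sum_{k=0}^m\lambda^{\frac12-\frac{k}{2m}}\|\cE(D^k u)\|_{L_2(Q_4)}$, which completes the proof.

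\emph{Main obstacle.} Conceptually there is none: the real work is in Theorem~\ref{th06_01} and Lemma~\ref{lem06_02}, which are already available. The only delicate points are technical: (i) verifying that adjoining $D_y^{2m}$ preserves the Legendre--Hadamard condition with constants controlled by $d,m,\delta$ alone, and (ii) the reflection bookkeeping across $\{x_1=0\}$ --- that even extension preserves both the $L_2$-norm and the $\cC^{1/2}$-seminorm on the symmetric cylinders, and that the slice $\{y=0\}$ recovers precisely $\cE(D^m_{x'}u)$ and $\lambda^{1/2}\cE u$.
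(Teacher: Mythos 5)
Your proof is correct and follows essentially the same route as the paper: for $\lambda=0$ you apply Lemma~\ref{lem06_02} to $D_{x'}^m u$ (noting it inherits the boundary condition and the homogeneous equation), and for $\lambda>0$ you carry out the Agmon device of adjoining the extra variable $y$ with $\zeta(y)=\cos(\lambda^{1/2m}y)+\sin(\lambda^{1/2m}y)$, exactly the step the paper abbreviates as ``follow the steps in the proof of Lemma~\ref{lemma6.2}.'' Your explicit verification of the reflection identities $\|\cE g\|_{L_2(Q_r)}=\sqrt2\|g\|_{L_2(Q_r^+)}$, $[\cE g]_{\cC^{1/2}(Q_r)}=[g]_{\cC^{1/2}(Q_r^+)}$ (using $|x_1+y_1|\le|x_1-y_1|$ when $x_1y_1<0$) and of the Legendre--Hadamard condition for $\hat\cL_0=\cL_0+D_y^{2m}$ via $\delta|\xi|^{2m}+\eta^{2m}\ge N^{-1}(|\xi|^2+\eta^2)^m$ are sound and fill in exactly the gaps the paper leaves implicit.
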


\begin{proof}
By using a translation in $t$ and $x'$, we assume that $X_0 = (0,0)$.
Let $\lambda =0$. In this case,
the inequality in the corollary follows from
$$
[D^m_{x'} u]_{\cC^{1/2}(Q_1^+)}
\le N \| D^m u \|_{L_2(Q_4^+)}.
$$
To prove this, we apply Lemma \ref{lem06_02} to $D^m_{x'}u$.
This is indeed possible because $D^m_{x'} u$ satisfies
\eqref{eq06_01} on $Q_4'$ 
and \eqref{eq06_09} in $Q_4^+$.
To prove the case $\lambda > 0$, we follow the steps in the proof of Lemma \ref{lemma6.2}.
\end{proof}

\begin{lemma}
								\label{lem06_03}
Let $r \in (0,\infty)$, $\kappa \in [64,\infty)$, $\lambda \ge 0$,
and $X_0 = (t_0,x_0) \in \overline{\cO_\infty^+}$.
Assume that $u \in C_{\text{loc}}^{\infty}(\overline{\cO_{\infty}^+})$
satisfies \eqref{eq06_01} on $\bR \times \bR^{d-1}$
and \eqref{eq06_02} in $Q_{\kappa r}^+(X_0)$.
Then
\begin{multline}
								\label{eq06_18}
(  |\cE (D_{x'}^m u) - (\cE (D_{x'}^m u))_{Q_r(X_0)}  |)_{Q_r(X_0)}
+ \lambda^{\frac 1 2} ( | \cE u - (\cE u)_{Q_r(X_0)}  |)_{Q_r(X_0)}
\\
\le N \kappa^{-\frac 1 2} \sum_{k=0}^m \lambda^{\frac{1}{2}-\frac{k}{2m}} (|\cE(D^k u)|^2)^{\frac 1 2}_{Q_{\kappa r}(X_0)},
\end{multline}
where $N = N(d,n,m,\delta)$.
\end{lemma}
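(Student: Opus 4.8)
The plan is to adapt the whole-space argument of Lemma~\ref{lem4.1}, separating an interior case from a boundary case according to the distance of $X_0$ to $\partial\bR^d_+$. A translation in $t$ and $x'$ reduces everything to $X_0=(0,x_{0,1},0)$ with $x_{0,1}\ge0$; let $\bar X_0=(0,0,0)$ denote its projection onto the boundary. Both sides of \eqref{eq06_18} only involve $\cE$ applied to derivatives of $u$, and on any cylinder contained in $\cO_\infty^+$ the operator $\cE$ acts as the identity, so the effect of $\cE$ is felt only near the boundary.

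\textbf{Interior case: $x_{0,1}\ge\kappa r/7$.} Here $Q_{(\kappa/8)r}(X_0)\subset\cO_\infty^+$, so $u$ is a smooth solution of \eqref{eq06_02} on this cylinder, and $\cE$ is the identity there and on $Q_r(X_0)$. I would apply Lemma~\ref{lem4.1} with $\kappa$ replaced by $\kappa/8\ge8$ (its proof uses $u$ only on the cylinder where the equation holds), which bounds the left side of \eqref{eq06_18} by $N(\kappa/8)^{-1}\sum_{k=0}^m\lambda^{\frac12-\frac k{2m}}(|D^k u|^2)^{1/2}_{Q_{(\kappa/8)r}(X_0)}$. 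Since $\sqrt\kappa\ge8$ we have $(\kappa/8)^{-1}=8\kappa^{-1}\le\kappa^{-1/2}$, and since $\cE(D^ku)=D^ku$ on $Q_{(\kappa/8)r}(X_0)$, enlarging the cylinder gives $(|D^k u|^2)_{Q_{(\kappa/8)r}(X_0)}\le N(|\cE(D^k u)|^2)_{Q_{\kappa r}(X_0)}$; this is \eqref{eq06_18}.

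\textbf{Boundary case: $x_{0,1}<\kappa r/7$.} I would rescale to the boundary point. Put $\rho=\kappa r/5$ and $v(t,x)=u(\rho^{2m}t,\rho x)$. Using $\kappa\ge64$ one checks $Q_r(X_0)\subset Q_\rho(\bar X_0)$ and $Q^+_{4\rho}(\bar X_0)\subset Q^+_{\kappa r}(X_0)$, so $v\in C^{\infty}_{\text{loc}}(\overline{\cO^+_\infty})$ satisfies \eqref{eq06_01} on $Q_4'(\bar X_0)$ and $v_t+(-1)^m\cL_0 v+\lambda' v=0$ in $Q^+_4(\bar X_0)$, where $\lambda'=\rho^{2m}\lambda$ and the leading coefficients $A^{\alpha\beta}(\rho^{2m}t)$ still satisfy \eqref{eq7.9.17} with the same $\delta$. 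Corollary~\ref{cor06_02} applied to $v$ gives
$$
[\cE(D^m_{x'}v)]_{\cC^{1/2}(Q_1(\bar X_0))}+(\lambda')^{\frac12}[\cE v]_{\cC^{1/2}(Q_1(\bar X_0))}\le N\sum_{k=0}^m(\lambda')^{\frac12-\frac k{2m}}\|\cE(D^kv)\|_{L_2(Q_4(\bar X_0))}.
$$
To finish I would undo the scaling: using $(|g-(g)_Q|)_Q\le2\sup_Q|g-g(X_0)|$, together with the fact that the image of $Q_r(X_0)$ under the rescaling lies in $Q_1(\bar X_0)$ and has diameter $\le N\kappa^{-1/2}$ for the quasimetric $|t-s|^{1/4}+|x-y|^{1/2}$ (here $\kappa^{-m/2}\le\kappa^{-1/2}$ since $m\ge1$), the left side of \eqref{eq06_18} is at most $N\rho^{-m}\kappa^{-1/2}$ times $[\cE(D^m_{x'}v)]_{\cC^{1/2}(Q_1(\bar X_0))}+(\lambda')^{1/2}[\cE v]_{\cC^{1/2}(Q_1(\bar X_0))}$. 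Plugging in the Corollary and using $\|\cE(D^kv)\|_{L_2(Q_4(\bar X_0))}=\rho^{k-m-d/2}\|\cE(D^k u)\|_{L_2(Q_{4\rho}(\bar X_0))}$, $(\lambda')^{1/2-k/2m}=\rho^{m-k}\lambda^{1/2-k/2m}$, and $|Q_{4\rho}(\bar X_0)|^{1/2}\sim\rho^{m+d/2}$ to pass to $L_2$-averages, all powers of $\rho$ cancel; finally $Q_{4\rho}(\bar X_0)\subset Q_{\kappa r}(X_0)$ with bounded volume ratio yields \eqref{eq06_18}, with all constants depending only on $d,n,m,\delta$.

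The delicate point is the boundary case: on a half space only the Hölder (not Lipschitz) estimate of Corollary~\ref{cor06_02} is available for the tangential derivatives $D^m_{x'}u$, so the decay obtained is merely $\kappa^{-1/2}$ rather than the $\kappa^{-1}$ of the interior, and one must arrange the scaling so that exactly this power emerges from $\cC^{1/2}$ continuity over a cylinder of parabolic size $\sim\kappa^{-1}$. The mismatch between the scaling $t\sim|x|^{2m}$ of the equation and the $t\sim|x|^2$ scaling implicit in $[\cdot]_{\cC^{1/2}}$ is harmless precisely because $m\ge1$.
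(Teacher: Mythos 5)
Your proof is correct and follows essentially the same strategy as the paper: split into an interior case (handled via the whole-space mean-oscillation estimate of Lemma~\ref{lem4.1}) and a boundary case (handled via the $\cC^{1/2}$ Hölder estimate of Corollary~\ref{cor06_02}), with a scaling argument to reduce to cylinders of fixed size. The only cosmetic difference is that the paper first rescales to $r=16/\kappa$ and then splits on whether $x_{0,1}\ge 1$, whereas you keep $r$ general and split on $x_{0,1}$ versus $\kappa r/7$; the two reductions are equivalent.
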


\begin{proof}
We first prove that, using a scaling argument,
it suffices to prove the inequality \eqref{eq06_18} only for $r=16/\kappa$.
Indeed, assume that the inequality \eqref{eq06_18} holds true for $r=16/\kappa$.
For a given $r \in (0,\infty)$, let $r_0 = 16/\kappa$, $R = r/r_0$, and $w(t,x) = u(R^{2m}t,Rx)$.
It is easy to check that
$w$ satisfies \eqref{eq06_01} on $\bR \times \bR^{d-1}$ and
\begin{equation}							\label{eq2001}
w_t + (-1)^mA^{\alpha\beta}(R^{2m}t) D^{\alpha}D^{\beta}w + \lambda R^{2m} w = 0	
\end{equation}
in $Q_{\kappa r_0}^+(Y_0)$, where $Y_0 = (s_0,y_0) = (R^{-2m}t_0, R^{-1}x_0) \in \overline{\cO_\infty^+}$.
Then by the inequality \eqref{eq06_18} with $r=r_0$ applied to the system \eqref{eq2001},
we have
\begin{multline}							\label{eq0010}
\left(|\cE (D^m_{x'} w) - (\cE(D^m_{x'}w))_{Q_{r_0}(Y_0)}|\right)_{Q_{r_0}(Y_0)}
+ \lambda^{\frac 1 2}R^m\left(|\cE w-(\cE w)_{Q_{r_0}(Y_0)}|\right)_{Q_{r_0}(Y_0)}
\\
\le N\kappa^{-\frac 1 2} \sum_{k=0}^m\lambda^{\frac 1 2-\frac k {2m}}R^{m-k}(|\cE(D^k w)|^2)_{Q_{\kappa {r_0}}(Y_0)}^{\frac 1 2}.
\end{multline}
Note that, for example,
$$
(\cE(D^k w))_{Q_{r_0}(Y_0)} = R^k (\cE(D^k u))_{Q_r(X_0)}.
$$
Thus the inequality \eqref{eq0010} leads to the inequality \eqref{eq06_18}
for arbitrary $r \in (0,\infty)$.

Now we assume $r=16/\kappa$. We consider two cases.

{\em Case 1: the first coordinate of $x_0 \ge 1$.}
In this case, we see that
$Q_{\kappa r/16}^+(X_0) = Q_{\kappa r/16}(X_0)$
and $u$ satisfies the assumptions in Lemma \ref{lem4.1},
especially, $u$ satisfies \eqref{eq06_02} in $Q_{\kappa r/16}(X_0)$
and $u$ can be extended to a function in $\cH_{2,\text{loc}}^m(\bR^{d+1})$
without changing the values of $u$ on $Q_{\kappa r/16}(X_0)$.
Hence by the inequality \eqref{eq5.111} with $Q_{\kappa r/16}(X_0)$
in place of $Q_{\kappa r}(X_0)$ (note that $\kappa/16 \ge 4$),
the left-hand side of \eqref{eq06_18} is controlled by
$$
N \kappa^{-1} \sum_{k=0}^n\lambda^{\frac 1 2-\frac k {2m}}(|D^k u|^2)_{Q_{\kappa r/16}(X_0)}^{\frac 1 2},
$$
which is less than the right-hand side of \eqref{eq06_18}.

{\em Case 2: the first coordinate of $x_0$ is in $[0,1]$.}
By denoting $Y_0 =(t_0,0,x_0')$, we have
$$
Q_r(X_0)\subset Q_{2}(Y_0)\subset Q_{8}(Y_0)
\subset Q_{\kappa r}(X_0).
$$
By Corollary \ref{cor06_02}
applied to $u$ with $2$ and $8$ in place of $1$ and $4$, respectively
(this case can be seen using a scaling argument as above),
we have
$$
(|\cE (D_{x'}^m u) - (\cE (D_{x'}^m u))_{Q_r(X_0)}  |)_{Q_r(X_0)}
\le N r^{\frac 1 2} [\cE (D_{x'}^m u)]_{\cC^{1/2}(Q_2(Y_0))}
$$
$$
\le N \kappa^{-\frac 1 2} \sum_{k=0}^m\lambda^{\frac 1 2-\frac k {2m}}(|\cE(D^k u)|^2)_{Q_8(Y_0)}^{\frac 1 2}
\le N \kappa^{-\frac 1 2} \sum_{k=0}^m\lambda^{\frac 1 2-\frac k {2m}}(|\cE(D^k u)|^2)_{Q_{\kappa r}(X_0)}^{\frac 1 2}.
$$
The second term on the left-hand side of \eqref{eq06_18} are estimated similarly.
\end{proof}

\subsection{Mean oscillation estimates of $D^m_{x'}u$ for divergence type systems
on a half space}

Now we state and prove the main result of this section.

\begin{proposition}
								\label{thm07_02}
Let $r \in (0,\infty)$, $\kappa \in [128,\infty)$, $\lambda \ge 0$,
and $X_0 = (t_0,x_0) \in \overline{\cO_{\infty}^+}$.
Assume that $u \in \cH_{2,\text{loc}}^m(\cO_{\infty}^+)$ 
satisfies \eqref{eq06_01} on $\bR \times \bR^{d-1}$ and
\begin{equation}
								\label{eq07_08}
u_t + (-1)^m \cL_0 u + \lambda u = \sum_{|\alpha|\le m} D^{\alpha}f_{\alpha}	
\end{equation}
in $Q_{\kappa r}^+(X_0)$,
where $f_{\alpha} \in L_{2,\text{loc}}(\cO_{\infty}^+)$, $|\alpha| \le m$.
Then we have
\begin{multline}
								\label{eq0917}
(  |\cE (D_{x'}^m u) - (\cE (D_{x'}^m u))_{Q_r(X_0)}  |)_{Q_r(X_0)}
+ \lambda^{\frac 1 2} ( | \cE u - (\cE u)_{Q_r(X_0)}  |)_{Q_r(X_0)}
\\
\le N \kappa^{-\frac12} \sum_{k=0}^m \lambda^{\frac{1}{2}-\frac{k}{2m}} (|\cE(D^k u)|^2)^{\frac 1 2}_{Q_{\kappa r}(X_0)}
+ N \kappa^{m+\frac d2} \sum_{|\alpha|\le m} \lambda^{\frac{|\alpha|}{2m}-\frac12}
(|\cE f_{\alpha}|^2)_{Q_{\kappa r}(X_0)}^{\frac 1 2},
\end{multline}
where $N = N(d,n,m,\delta)$.
\end{proposition}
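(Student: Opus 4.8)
The plan is to mimic the proof of Theorem \ref{thm4.2}, splitting $u$ into a part that solves an inhomogeneous equation on the whole half space and a part that solves the homogeneous equation locally. Taking $X_0=(0,0)$ for simplicity, fix a smooth cutoff $\zeta$ equal to $1$ on $Q_{\kappa r/2}^+$ and supported in $(-(\kappa r)^{2m},(\kappa r)^{2m})\times B_{\kappa r}$, and use Theorem \ref{thVMO02} to produce the unique $w\in\cH_2^m(\cO_\infty^+)$ solving
$$
w_t+(-1)^m\cL_0 w+\lambda w=\sum_{|\alpha|\le m}D^\alpha(\zeta f_\alpha)
$$
in $\cO_\infty^+$ with the Dirichlet boundary condition \eqref{eq06_01} on $\bR\times\bR^{d-1}$. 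Then $v:=u-w$ satisfies \eqref{eq06_01} on $\bR\times\bR^{d-1}$ and $v_t+(-1)^m\cL_0 v+\lambda v=0$ in $Q_{\kappa r/2}^+$, and one can apply Lemma \ref{lem06_03} to $v$ (note $\kappa/2\ge 64$) to obtain
$$
(|\cE(D_{x'}^m v)-(\cE(D_{x'}^m v))_{Q_r}|)_{Q_r}
+\lambda^{\frac12}(|\cE v-(\cE v)_{Q_r}|)_{Q_r}
\le N\kappa^{-\frac12}\sum_{k=0}^m\lambda^{\frac12-\frac k{2m}}(|\cE(D^k v)|^2)_{Q_{\kappa r/2}}^{\frac12}.
$$

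Next I would estimate $w$ using the $L_2$-bound of Theorem \ref{thVMO02} applied on $\cO_0^+$:
$$
\sum_{|\alpha|\le m}\lambda^{1-\frac{|\alpha|}{2m}}\|D^\alpha w\|_{L_2(\cO_0^+)}
\le N\sum_{|\alpha|\le m}\lambda^{\frac{|\alpha|}{2m}}\|\zeta f_\alpha\|_{L_2(\cO_0^+)}.
$$
Since $\zeta$ is supported in the cylinder of radius $\kappa r$, this yields, after dividing by $|Q_r|^{1/2}$ and accounting for the volume ratio $|Q_{\kappa r}|/|Q_r|=\kappa^{2m+d}$,
$$
(|D^m w|^2)_{Q_r}^{\frac12}+\lambda^{\frac12}(|w|^2)_{Q_r}^{\frac12}
\le N\kappa^{m+\frac d2}\sum_{|\alpha|\le m}\lambda^{\frac{|\alpha|}{2m}-\frac12}(|f_\alpha|^2)_{Q_{\kappa r}^+}^{\frac12},
$$
$$
\sum_{k=0}^m\lambda^{\frac12-\frac k{2m}}(|D^k w|^2)_{Q_{\kappa r}}^{\frac12}
\le N\sum_{|\alpha|\le m}\lambda^{\frac{|\alpha|}{2m}-\frac12}(|f_\alpha|^2)_{Q_{\kappa r}^+}^{\frac12}.
$$
Here one should note that the even extension is harmless: $(|\cE f_\alpha|^2)_{Q_{\kappa r}}=(|f_\alpha|^2)_{Q_{\kappa r}^+}$ when $\kappa r$-cylinders straddle the boundary, and similarly for $w$ and its tangential derivatives, because $\cE$ preserves $L_2$ averages over symmetric cylinders; moreover $\cE(D_{x'}^m w)=D_{x'}^m\cE w$ since tangential differentiation commutes with even extension. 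For the normal derivatives $D^k w$ with $k$ involving $D_1$, one uses $\|\cE(D^k w)\|_{L_2(Q_{\kappa r})}\le N\|D^k w\|_{L_2(Q_{\kappa r}^+)}$, which is all that is needed on the right-hand side.

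Finally I would combine. Writing the left side of \eqref{eq0917} in terms of $v=u-w$ and using $2(|g-(g)_{Q_r}|)_{Q_r}\le 2(|g-c|)_{Q_r}$ for the constant $c=(\cE(D_{x'}^m v))_{Q_r}$ (resp. $c=(\cE v)_{Q_r}$), the left side of \eqref{eq0917} is bounded by
$$
(|\cE(D_{x'}^m v)-(\cE(D_{x'}^m v))_{Q_r}|)_{Q_r}
+\lambda^{\frac12}(|\cE v-(\cE v)_{Q_r}|)_{Q_r}
+(|\cE(D_{x'}^m w)|^2)_{Q_r}^{\frac12}
+\lambda^{\frac12}(|\cE w|^2)_{Q_r}^{\frac12}.
$$
The first two terms are controlled by the Lemma \ref{lem06_03} estimate for $v$, the last two by the $w$-estimate; then one replaces $\cE(D^k v)$ by $\cE(D^k u)-\cE(D^k w)$ on $Q_{\kappa r/2}\subset Q_{\kappa r}$ and absorbs the $w$-contribution with the second $w$-inequality above. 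This chain produces exactly the right-hand side of \eqref{eq0917}. The main obstacle, and the point requiring the most care, is the bookkeeping of the even extension $\cE$ through the various norms—ensuring that $\cE$ commutes with tangential derivatives, that $L_2$ averages over boundary-straddling cylinders match the one-sided averages, and that the solution $w$ produced by Theorem \ref{thVMO02} genuinely satisfies the homogeneous Dirichlet data so that $v$ inherits it and Lemma \ref{lem06_03} applies. A scaling reduction to a fixed radius (as in Lemma \ref{lem06_03}) can be invoked to streamline the constants, but is not strictly necessary here since all the cited estimates are already scale-consistent.
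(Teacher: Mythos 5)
Your overall decomposition mirrors the paper's: split $u$ so that one piece solves the homogeneous equation near $X_0$ and gets fed into Lemma~\ref{lem06_03}, while the other piece is controlled by the global $L_2$ estimate Theorem~\ref{thVMO02}. However, there is a genuine gap in the step where you apply Lemma~\ref{lem06_03} to $v:=u-w$. That lemma (and Corollary~\ref{cor06_02}, Lemma~\ref{lem06_02} behind it) is stated for $v\in C_{\text{loc}}^{\infty}(\overline{\cO_{\infty}^+})$, i.e.\ smoothness in \emph{both} $t$ and $x$, because the boundary H\"older estimate is built out of iterated Sobolev embeddings in $(t,x_1)$ and $x'$. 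But your $v$ is only in $\cH_{2,\text{loc}}^m$, and --- unlike the whole-space case, where one can mollify the solution in $x$ --- on the half space you cannot regularize $v$ without either destroying the equation (mollifying in $t$ when $A^{\alpha\beta}$ is only measurable in $t$) or destroying the Dirichlet boundary condition (mollifying across $x_1=0$). You also cannot differentiate the equation in $t$ more than once, since that would require differentiating $A^{\alpha\beta}(t)$.

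The paper handles this by mollifying the \emph{coefficients} in $t$: it replaces $\cL_0$ by $\cL_0^{(\varepsilon)}$ with $A_{(\varepsilon)}^{\alpha\beta}$ smooth, replaces $f_\alpha$ by smooth $f_\alpha^{(\varepsilon)}$, and solves
$v^{(\varepsilon)}_t+(-1)^m\cL_0^{(\varepsilon)}v^{(\varepsilon)}+\lambda v^{(\varepsilon)}=\sum D^\alpha\bigl((1-\zeta)f_\alpha^{(\varepsilon)}\bigr)$.
By classical parabolic theory $v^{(\varepsilon)}$ \emph{is} $C^\infty$, so Lemma~\ref{lem06_03} applies legitimately; the complementary part $w^{(\varepsilon)}=u-v^{(\varepsilon)}$ picks up the extra source $(\cL_0^{(\varepsilon)}-\cL_0)u$ plus $f_\alpha-f_\alpha^{(\varepsilon)}$, yielding an error term $I^{(\varepsilon)}$ that vanishes as $\varepsilon\searrow0$. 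Your proof omits this regularization entirely, so the key application of Lemma~\ref{lem06_03} is not justified as written. (As a secondary, minor point, $(|\cE f_\alpha|^2)_{Q_{\kappa r}(X_0)}$ and $(|f_\alpha|^2)_{Q_{\kappa r}^+(X_0)}$ are not equal when $Q_{\kappa r}(X_0)$ is not symmetric about $\{x_1=0\}$; the correct statement is the two-sided comparison $\|\cE g\|_{L_2(Q_r(X_0))}\le 2\|g\|_{L_2(Q_r^+(X_0))}\le 2\|\cE g\|_{L_2(Q_r(X_0))}$ for $X_0\in\overline{\cO_\infty^+}$, which suffices.)
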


\begin{proof}
Multiplying $u$ by an infinitely smooth function as $\zeta$ below,
we see that \eqref{eq07_08} can be extended to a system defined on $\cO_{\infty}^+$
without changing the values of $u$ and $f_{\alpha}$ on, for example, $Q_{\kappa r/2}$.
Thus without loss of generality we assume that
$u \in \cH_2^m(\cO_\infty^+)$, $f_{\alpha} \in L_2(\cO_\infty^+)$,
and \eqref{eq07_08} is satisfied in $\cO_\infty^+$.
We consider only $\lambda > 0$.

Take a $\zeta \in C_0^{\infty}(Q_{\kappa r}(X_0))$ such that
$$
\zeta = 1
\,\,\,
\text{on}
\,\,\,
Q_{\kappa r/2}(X_0),
\,\,\,
\zeta = 0
\,\,\,
\text{outside}
\,\,\,
(t_0 - (\kappa r)^{2m}, t_0 + (\kappa r)^{2m}) \times B_{\kappa r}(x_0).
$$
Let $\cL_0^{(\varepsilon)} = A_{(\varepsilon)}^{\alpha\beta}D^{\alpha}D^{\beta}$,
where $A_{(\varepsilon)}^{\alpha\beta}$
are the standard mollifications with respect to $t$ of $A^{\alpha\beta}(t)$.
Also let $f^{(\varepsilon)}_{\alpha}$
be infinitely differentiable functions approaching $f_{\alpha}$ in $L_2(\cO_{\infty}^+)$
as $\varepsilon \searrow 0$.
By Theorem \ref{thVMO02}, there exists a unique solution $v^{(\varepsilon)} \in \cH_2^m(\cO_\infty^+)$, satisfying \eqref{eq06_01} on $\bR \times \bR^{d-1}$,
to the equation
\begin{equation*}
v^{(\varepsilon)}_t + (-1)^m \cL_0^{(\varepsilon)} v^{(\varepsilon)} + \lambda v^{(\varepsilon)} = \sum_{|\alpha|\le m}D^\alpha((1- \zeta) f^{(\varepsilon)}_\alpha )
\end{equation*}
in $\cO_{\infty}^+$.
Since $f_{\alpha}^{(\varepsilon)}$ and $A_{(\varepsilon)}^{\alpha\beta}$ are infinitely differentiable,
by the classical theory for higher order parabolic systems, $v^{(\varepsilon)}$ is infinitely differentiable.
Moreover, for any $\varepsilon$,
$$
v^{(\varepsilon)}_t + (-1)^m\cL^{(\varepsilon)}_0 v^{(\varepsilon)} + \lambda v^{(\varepsilon)}  = 0 \quad \text{in}\,\,Q^+_{\kappa r/2}(X_0).
$$
Thus by Lemma \ref{lem06_03} (note that $\kappa /2 \ge 64$)
\begin{multline*}
(  |\cE (D_{x'}^m v^{(\varepsilon)}) - (\cE (D_{x'}^m v^{(\varepsilon)}))_{Q_r(X_0)}  |)_{Q_r(X_0)}
+ \lambda^{\frac 1 2} ( | \cE v^{(\varepsilon)} - (\cE v^{(\varepsilon)})_{Q_r(X_0)}  |)_{Q_r(X_0)}
\\
\le N \kappa^{-\frac 1 2} \sum_{k=0}^m \lambda^{\frac{1}{2}-\frac{k}{2m}} (|\cE(D^k v^{(\varepsilon)})|^2)^{\frac 1 2}_{Q_{\kappa r}(X_0)}.
\end{multline*}

Set $w^{(\varepsilon)} = u - v^{(\varepsilon)}$.
Then $w^{(\varepsilon)} \in \cH_2^m(\cO_{\infty}^+)$ and it satisfies
\eqref{eq06_01} on $\bR \times \bR^{d-1}$
and
$$
w^{(\varepsilon)}_t + (-1)^m\cL^{(\varepsilon)}_0 w^{(\varepsilon)}+ \lambda w^{(\varepsilon)}  =
D^{\alpha}( \zeta f_{\alpha}^{(\varepsilon)} + f_{\alpha} - f_{\alpha}^{(\varepsilon)})
+ (-1)^m (\cL_0^{(\varepsilon)} - \cL_0 ) u
$$
in $\cO_\infty^+$.
Denote the right-hand side of the above equality by $D^{\alpha} g_{\alpha}^{(\varepsilon)}$.
We apply Theorem \ref{thVMO02} to the above equation
as one defined on $\cO_{t_0}^+$ so that we have
$$
\sum_{|\alpha|\le m}\lambda^{1-\frac {|\alpha|} {2m}} \|D^\alpha w^{(\varepsilon)} \|_{L_2(\cO_{t_0}^+)}
\le N \sum_{|\alpha|\le m}\lambda^{\frac {|\alpha|} {2m}} \| g_{\alpha}^{(\varepsilon)} \|_{L_2(\cO_{t_0}^+)}.
$$
In particular,
\begin{multline}							\label{eq9001}
\|D^m w^{(\varepsilon)}\|_{L_2(Q_r^+(X_0))}
+ \lambda^{\frac 1 2}  \|w^{(\varepsilon)} \|_{L_2(Q_r^+(X_0))}
\\
\le N \sum_{|\alpha|\le m}\lambda^{\frac {|\alpha|} {2m}-\frac 1 2}\|f^{(\varepsilon)}_{\alpha}\|_{L_2(Q_{\kappa r}^+(X_0))}
+ I^{(\varepsilon)},
\end{multline}
\begin{equation}							\label{eq9002}
\sum_{k=0}^m\lambda^{\frac 1 2-\frac k {2m}}\|D^k  w^{(\varepsilon)}\|_{L_2(Q_{\kappa r}^+(X_0))}
\le N \sum_{|\alpha|\le m}\lambda^{\frac {|\alpha|} {2m}-\frac 1 2}\|f^{(\varepsilon)}_\alpha\|_{L_2(Q_{\kappa r}^+(X_0))}
+ I^{(\varepsilon)}
\end{equation}
for all sufficiently small $\varepsilon$,
where
$$
I^{(\varepsilon)} = N \sum_{|\alpha|\le m}\lambda^{\frac {|\alpha|} {2m}-\frac 1 2} \|f_{\alpha}-f^{(\varepsilon)}_{\alpha}\|_{L_2(\cO_{t_0}^+)}
+ N \sum_{|\alpha|=|\beta|= m}\| (A_{(\varepsilon)}^{\alpha\beta} - A^{\alpha\beta}) D^{\beta} u\|_{L_2(\cO_{t_0}^+)}.
$$
Note that, for the even extension $\cE g$ of a function $g$ defined on $\cO^+_\infty$,
we have
$$
\|\cE g\|_{L_2(Q_r(X_0))}
\le 2\|g\|_{L_2(Q_r^+(X_0))}
\le 2\|\cE g\|_{L_2(Q_r(X_0))}
$$
whenever $X_0 \in \overline{\cO^+_{\infty}}$.
This combined with \eqref{eq9001} and \eqref{eq9002}
gives
\begin{multline*}			
\left(|\cE(D^m w^{(\varepsilon)})|^2\right)_{Q_r(X_0)}^{\frac 1 2}
+ \lambda^{\frac 1 2}  \left(|\cE w^{(\varepsilon)}|^2\right)_{Q_r(X_0)}^{\frac 1 2}
\\
\le N \kappa^{m+\frac d 2} \sum_{|\alpha|\le m}\lambda^{\frac {|\alpha|} {2m}-\frac 1 2}(|\cE f^{(\varepsilon)}_\alpha|^2)_{Q_{\kappa r}(X_0)}^{\frac 1 2}
+ r^{-m-\frac d 2} I^{(\varepsilon)},
\end{multline*}
\begin{multline*}			
\sum_{k=0}^m\lambda^{\frac 1 2-\frac k {2m}}\left(|\cE(D^k w^{(\varepsilon)})|^2\right)_{Q_{\kappa r}(X_0)}^{\frac 1 2}\\
\le N \sum_{|\alpha|\le m}\lambda^{\frac {|\alpha|} {2m}-\frac 1 2}(|\cE  f^{(\varepsilon)}_\alpha|^2)_{Q_{\kappa r}(X_0)}^{\frac 1 2}
+ (\kappa r)^{-m-\frac d 2} I^{(\varepsilon)}.
\end{multline*}
Now by following the corresponding steps in the proof
of Theorem \ref{thm4.2} we see that the left-hand side of the inequality \eqref{eq0917}
is less than the right-hand side of the same inequality
with $f^{(\varepsilon)}_{\alpha}$ in place of $f_{\alpha}$
plus the error term
$$
(r^{-m-\frac d 2} + (\kappa r)^{-m-\frac d 2})I^{(\varepsilon)}.
$$
To finish the proof we let $\varepsilon \searrow 0$.
\end{proof}

\begin{remark}
								\label{rem0824}
Later we need to have the mean oscillation estimate \eqref{eq0917} for all $X_0 \in \cO_{\infty}$,
instead of $X_0 \in \overline{\cO_{\infty}^+}$, for
functions $\cE(D^k u)$, $\cE u$, and $\cE f_{\alpha}$ defined on $\cO_{\infty}$
if the equation \eqref{eq07_08} is satisfied in $\cO_{\infty}^+$.
In order to do this, in case $X_0 \in \cO_{\infty} \setminus \overline{\cO_{\infty}^+}$,
we let $Y_0$ be the reflection point of $X_0$ with respect to the hyper-plane
$\{(t,0,x'): t \in \bR, x' \in \bR^{d-1}\}$.
By Proposition \ref{thm07_02} we get the estimate \eqref{eq0917} with $Y_0$ in place of $X_0$.
Then it is not difficult to see that the estimate \eqref{eq0917} holds true as well for $X_0$ using the evenness of functions involved.
The same claim can be repeated for Corollary \ref{cor8.7}, Proposition \ref{prop3.35}, and Proposition \ref{prop3.36}.
\end{remark}

\subsection{Mean oscillation estimates of $D^{2m}_{x'}u$ for non-divergence type systems on a half space} As a consequence of Proposition \ref{thm07_02}, we easily get

\begin{corollary}
                                \label{cor8.7}
Let $r \in (0,\infty)$, $\kappa \in [64,\infty)$, $\lambda \ge 0$,
and $X_0 = (t_0,x_0) \in \overline{\cO_{\infty}^+}$.
Assume that $u \in W_{2,\text{loc}}^{1,2m}(\cO_{\infty}^+)$
satisfies \eqref{eq06_01} on $\bR \times \bR^{d-1}$ and
$$
u_t + (-1)^m \cL_0 u + \lambda u = f
$$
in $Q_{\kappa r}^+(X_0)$, where $f \in L_{2,\text{loc}}(\cO_{\infty}^+)$.
Then we have
\begin{multline*}
(  |\cE (D_{x'}^{2m} u) - (\cE (D_{x'}^{2m} u))_{Q_r(X_0)}  |)_{Q_r(X_0)}
+ \lambda ( | \cE u - (\cE u)_{Q_r(X_0)}  |)_{Q_r(X_0)}
\\
\le N \kappa^{-\frac 1 2} \sum_{k=0}^{2m} \lambda^{1-\frac{k}{2m}} (|\cE(D^k u)|^2)^{\frac 1 2}_{Q_{\kappa r}(X_0)}
+ N \kappa^{m+\frac d 2} (|\cE f|^2)_{Q_{\kappa r}(X_0)}^{\frac 1 2},
\end{multline*}
where $N = N(d,n,m,\delta)$.
\end{corollary}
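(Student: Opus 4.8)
The plan is to obtain this exactly as Corollary \ref{cor06_03} was obtained from Theorem \ref{thm4.2} in the whole-space case, now using Proposition \ref{thm07_02} in place of Theorem \ref{thm4.2}. The asserted inequality splits into two pieces: a bound for $\lambda(|\cE u-(\cE u)_{Q_r(X_0)}|)_{Q_r(X_0)}$ and a bound for $(|\cE(D_{x'}^{2m}u)-(\cE(D_{x'}^{2m}u))_{Q_r(X_0)}|)_{Q_r(X_0)}$, and each piece is produced by a single application of the divergence-form half-space estimate to a suitable function. Adding the two gives the claim. By Remark \ref{rem0824}, it suffices to treat $X_0\in\overline{\cO_\infty^+}$.

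For the first piece, I would view the non-divergence equation $u_t+(-1)^m\cL_0 u+\lambda u=f$ as a divergence-form equation with datum $\sum_{|\alpha|\le m}D^\alpha f_\alpha$ where $f_0=f$ and $f_\alpha=0$ otherwise, and apply Proposition \ref{thm07_02} directly to $u$. Multiplying the resulting inequality through by $\lambda^{1/2}$ converts $\lambda^{1/2}(|\cE u-(\cE u)_{Q_r(X_0)}|)_{Q_r(X_0)}$ into $\lambda(|\cE u-(\cE u)_{Q_r(X_0)}|)_{Q_r(X_0)}$, each $\lambda^{1/2-k/2m}(|\cE(D^ku)|^2)_{Q_{\kappa r}(X_0)}^{1/2}$ into $\lambda^{1-k/2m}(|\cE(D^ku)|^2)_{Q_{\kappa r}(X_0)}^{1/2}$, and the datum term $\lambda^{0/2m-1/2}(|\cE f|^2)_{Q_{\kappa r}(X_0)}^{1/2}$ into $(|\cE f|^2)_{Q_{\kappa r}(X_0)}^{1/2}$; the $\cE(D_{x'}^m u)$-oscillation on the left is discarded as nonnegative. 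This already has the shape of the right-hand side of the corollary.

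For the second piece, I would differentiate the equation $m$ times in the tangential variables. Since $x'$-derivatives commute with $\partial_t$ and with $\cL_0$ (whose coefficients depend only on $t$) and preserve the boundary conditions $u=D_1u=\dots=D_1^{m-1}u=0$ on $\{x_1=0\}$, the function $v:=D_{x'}^m u$ satisfies \eqref{eq06_01} together with $v_t+(-1)^m\cL_0 v+\lambda v=D_{x'}^m f$ in $Q_{\kappa r}^+(X_0)$, where $D_{x'}^m f=D^{\hat\alpha}f$ for a multi-index $\hat\alpha$ with $\hat\alpha_1=0$, $|\hat\alpha|=m$. Thus the right-hand side is again of divergence type, now with datum $f_{\hat\alpha}=f\in L_{2,\text{loc}}$ and $f_\alpha=0$ otherwise; the only regularity point to check is $v\in\cH_{2,\text{loc}}^m$, which holds precisely because $u\in W_{2,\text{loc}}^{1,2m}$ (all spatial derivatives of $u$ up to order $2m$ lie in $L_{2,\text{loc}}$, so $v_t\in\bH_{2,\text{loc}}^{-m}$ via the equation). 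Applying Proposition \ref{thm07_02} to $v$, the left-hand side contains $(|\cE(D_{x'}^m v)-(\cE(D_{x'}^m v))_{Q_r(X_0)}|)_{Q_r(X_0)}=(|\cE(D_{x'}^{2m}u)-(\cE(D_{x'}^{2m}u))_{Q_r(X_0)}|)_{Q_r(X_0)}$; on the right the datum term is $\lambda^{|\hat\alpha|/2m-1/2}(|\cE f|^2)_{Q_{\kappa r}(X_0)}^{1/2}=(|\cE f|^2)_{Q_{\kappa r}(X_0)}^{1/2}$, while each term $\lambda^{1/2-k/2m}(|\cE(D^kv)|^2)_{Q_{\kappa r}(X_0)}^{1/2}$ with $k=0,\dots,m$ is bounded by $\lambda^{1-(k+m)/2m}(|\cE(D^{k+m}u)|^2)_{Q_{\kappa r}(X_0)}^{1/2}$ since $D^k v=D^kD_{x'}^m u$ is a component of $D^{k+m}u$.

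Adding the two inequalities and merging $\sum_{k=0}^m$ and $\sum_{k=m}^{2m}$ into $\sum_{k=0}^{2m}$ yields exactly the stated estimate, and the passage to general $X_0\in\cO_\infty$ is the reflection argument of Remark \ref{rem0824}. There is no substantive obstacle here — this is why the statement is phrased as a corollary. The only things requiring care are purely bookkeeping: tracking the powers of $\lambda$ through the two applications of Proposition \ref{thm07_02}, recognizing $D_{x'}^m f$ as a legitimate divergence-form datum of order $m$ (rather than something rougher), and confirming $D_{x'}^m u$ is admissible as an $\cH_{2,\text{loc}}^m$ solution.
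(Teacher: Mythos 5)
Your proposal is correct and is precisely the argument the paper intends: the paper's proof is the one-line remark that $D_{x'}^m u$ satisfies \eqref{eq06_01}, so one proceeds as in Corollary \ref{cor06_03}, which is exactly your two-step scheme of applying Proposition \ref{thm07_02} once to $u$ (with $f_0=f$, multiply by $\lambda^{1/2}$) and once to $v=D_{x'}^m u$ (with $f_{\hat\alpha}=f$, $|\hat\alpha|=m$), then merging the two ranges of $k$. The bookkeeping you flag (the identity $\tfrac12-\tfrac{k}{2m}=1-\tfrac{k+m}{2m}$, $D_{x'}^m$ preserving the boundary conditions, and $D_{x'}^m u\in\cH_{2,\mathrm{loc}}^m$) is all handled correctly.
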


\begin{proof}
Since $D_{x'}^m$ satisfies \eqref{eq06_01} on $\bR \times \bR^{d-1}$, we can proceed as in the proof of Corollary \ref{cor06_03}.
\end{proof}

\mysection{Estimates for systems with special coefficients on a half space}
                                        \label{sec8}

The estimates in the previous section imply the $L_p$-estimate of $D_{x'}^m u$ in the divergence case and that of $D_{x'}^{2m}u$ in the non-divergence case. In order to estimate the remaining highest order derivatives, by the interpolation inequality (Proposition \ref{prop07_01}), it suffices to estimate $D_1^{m} u$ in the divergence case and $D_1^{2m}u$ in the non-divergence case. To this end, in this section we consider
$$
\fL_0 u = A(t)D^{2m}_1u + \sum_{j=2}^d D_j^{2m} u,
$$
where $A(t) = A^{\hat{\alpha}\hat{\alpha}}(t)$, $\hat{\alpha}=(m,0, \cdots,0)$.

For this special operator, we have the following improved $L_2$-estimate.
\begin{lemma}
								\label{lem06_04}
Assume that $u \in C_{\text{loc}}^{\infty}(\overline{\cO_{\infty}^+})$
satisfies
\begin{equation}
								\label{eq06_19}
u(t,0,x') = \cdots = D_1^{m-1}u(t,0,x') = 0
\end{equation}
on $Q_R'$
and
\begin{equation}
								\label{eq06_20}
u_t + (-1)^m \fL_0 u = 0	
\end{equation}
in $Q_R^+$.
Then, for any multi-index $\gamma$,
we have
\begin{equation}
								\label{eq06_22}
\| D^\gamma u \|_{L_2(Q_r^+)}+\| D^\gamma u_t \|_{L_2(Q_r^+)}
\le N \| u \|_{L_2(Q_R^+)},	
\end{equation}
where $N = N(d,n,m,\delta,r,R, \gamma)$.	
\end{lemma}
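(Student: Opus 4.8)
The plan is to run the bootstrap scheme behind Corollary~\ref{cor33} and Corollary~\ref{cor06_01}, but to use the diagonal structure of $\fL_0$ so that one can iterate in the normal direction $x_1$, not only in the tangential directions. The decisive point --- the analogue of the ``periodic vanishing of normal derivatives'' advertised in the introduction --- is the following claim: \emph{if $u$ satisfies \eqref{eq06_19} on $Q_R'$ and \eqref{eq06_20} in $Q_R^+$, then so does $v := D_1^{2m}u$}, i.e.
$$
v = D_1 v = \cdots = D_1^{m-1}v = 0 \quad\text{on}\quad Q_R', \qquad v_t + (-1)^m \fL_0 v = 0 \quad\text{in}\quad Q_R^+ .
$$
The equation for $v$ is immediate because $A(t)$ is independent of $x$. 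For the boundary condition I would apply $D_1^k$ to \eqref{eq06_20} for $k = 0,1,\dots,m-1$ and pass to the limit $x_1 \to 0^+$: since $u(t,0,x')=0$ and $D_1^j u(t,0,x')=0$ for $j\le m-1$, every term of the differentiated equation that contains a $t$-derivative or an $x'$-derivative of $D_1^k u$ vanishes on $Q_R'$, which leaves $A(t)\,D_1^{2m+k}u(t,0,x')=0$ there. By the Legendre-Hadamard condition \eqref{eq7.9.17} with $\xi=(1,0,\dots,0)$ the matrix $A(t)=A^{\hat\alpha\hat\alpha}(t)$ satisfies $\Re(\theta^{\text{tr}}A(t)\bar\theta)\ge\delta|\theta|^2$, hence is invertible; therefore $D_1^{2m+k}u(t,0,x')=0$ for $k=0,\dots,m-1$, which is exactly \eqref{eq06_19} for $v$.

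Granting the claim, I would prove the $L_2$ bound by induction on $\gamma_1$, peeling off one factor $D_1^{2m}$ at each step. The base case $\gamma_1\le 2m$, with arbitrary tangential part, is exactly Corollary~\ref{cor06_01} applied with $\cL_0=\fL_0$ (an admissible operator of that form, whose Legendre-Hadamard constant depends only on $d,m,\delta$). For the inductive step, let $\gamma_1>2m$, fix $r<R'<R$, and write $D^\gamma u = D^{\gamma'}v$ with $\gamma'=(\gamma_1-2m,\gamma_2,\dots,\gamma_d)$ and $v=D_1^{2m}u$. By the claim $v$ again satisfies the hypotheses of the lemma on $Q_{R'}^+$ and $Q_{R'}'$, so the induction hypothesis (note $\gamma_1'<\gamma_1$) gives $\|D^{\gamma'}v\|_{L_2(Q_r^+)}\le N\|v\|_{L_2(Q_{R'}^+)}$, while $\|v\|_{L_2(Q_{R'}^+)}=\|D_1^{2m}u\|_{L_2(Q_{R'}^+)}\le N\|u\|_{L_2(Q_R^+)}$ by the base case applied to $u$. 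Finitely many such steps, through a chain of shrinking cylinders $r=\rho_0<\rho_1<\cdots<\rho_L=R$, finish the bound on $\|D^\gamma u\|_{L_2(Q_r^+)}$ and account for the dependence of $N$ on $\gamma$. For the $u_t$ term I would differentiate \eqref{eq06_20} directly: $D^\gamma u_t = -(-1)^m\bigl(A(t)D^{\gamma+2m e_1}u+\sum_{j=2}^d D^{\gamma+2m e_j}u\bigr)$, so $\|D^\gamma u_t\|_{L_2(Q_r^+)}$ is dominated by finitely many $L_2(Q_r^+)$-norms of pure spatial derivatives of $u$, each already controlled by the first part.

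The only real content will be the claim; once the periodic boundary vanishing is established, the rest is the routine iteration used for Corollary~\ref{cor33}/Corollary~\ref{cor06_01}, resting ultimately on the half-space $L_2$-estimate (Lemma~\ref{lem6.2}, hence Theorem~\ref{th06_01}). The minor technical points to watch are the justification of the limit $x_1\to 0^+$ in the differentiated equations --- legitimate since $u\in C^\infty_{\text{loc}}(\overline{\cO_\infty^+})$, so all derivatives are continuous up to $\{x_1=0\}$ --- and the bookkeeping of the finite chain of intermediate radii, so that the constants produced at each stage stay under control.
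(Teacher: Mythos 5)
Your proof is correct and takes essentially the same approach as the paper: the key observation in both is that (via the diagonal form of $\fL_0$, the invertibility of $A(t)$ from Legendre--Hadamard, and the boundary condition) $D_1^{2m}u$ again vanishes to order $m-1$ on the boundary, which allows iteration of Corollary~\ref{cor06_01}. The paper organizes the reduction by first passing to $\gamma'=0$ and then to $\gamma_1=2lm$ using interpolation in $x_1$, whereas you run a single induction on $\gamma_1$, but the substance is identical.
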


\begin{proof}
As noted in the proof of Corollary \ref{cor33}, it suffices to estimate the first term on the left-hand side of \eqref{eq06_22}.
Also, we only need to treat the case when the multi index $\gamma$ satisfies $\gamma' = 0$,
where $\gamma = (\gamma_1,\gamma')$.
In fact, if the inequality \eqref{eq06_22} is shown to be true with $\gamma' = 0$ and a smaller $R$,
since $D^{\gamma'} u$ satisfies \eqref{eq06_19} on $Q_R'$ and \eqref{eq06_20} in $Q_R^+$,
we can replace $u$ by $D^{\gamma'} u$ in \eqref{eq06_22}.
Then the right-hand side, $N\|D^{\gamma'} u\|_{L_2(Q_R^+)}$, is bounded by that of \eqref{eq06_22}
by Corollary \ref{cor06_01}. Furthermore, by the interpolation inequality with respect to $x_1$,
it suffices to show
\begin{equation}
								\label{eq06_22b}
\| D_1^{2lm} u\|_{L_2(Q_r^+)}
\le N \| u \|_{L_2(Q_R^+)}
\end{equation} for $l=0,1,2,...$.
To prove the above inequality, we first observe that,
thanks to \eqref{eq06_20},
we have
$$
D^{2m}_1u = A^{-1}(t)(-1)^{m+1} u_t - A^{-1}(t)\sum_{j=2}^d D_j^{2m}u
$$
in $Q_{R}^+$.
This together with \eqref{eq06_19} implies that
(first with $l=0$, then inductively)
$$
D^k_1 D^{2ml}_1u(t,0,x') = 0, \quad k = 0, \cdots, m-1,
$$
on $Q_R'$.
Moreover, $D^{2ml}_1u$ satisfies
\eqref{eq06_20} in $Q_R^+$.
Therefore, by Corollary \ref{cor06_01}
applied to $D^{2ml}_1u$
we have
$$
\|D^{2(l+1)m}_1 u \|_{L_2(Q_r^+)}
\le N \|D^{2lm}_1 u\|_{L_2(Q_{r_0}^+)},
$$
where $r < r_0 < R$. This implies \eqref{eq06_22b} by an induction on $l$.
\end{proof}

As a consequence of the previous lemma, we get

\begin{lemma}
								\label{lem06_05}
Let $u \in C_{\text{loc}}^{\infty}(\overline{\cO_{\infty}^+})$
satisfy \eqref{eq06_19}
on $Q'_4$
and \eqref{eq06_20} in $Q_4^+$.
Then, for any multi-index $\gamma$,
$$
\sup_{Q_1^+}|D^\gamma u|+\sup_{Q_1^+}|D^\gamma u_t|
\le N \| u \|_{L_2(Q_4^+)},
$$	
where $N = N(d,n,m,\delta,\gamma)$.
\end{lemma}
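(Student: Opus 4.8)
The plan is to reduce the estimate to a pure sup-bound for the spatial derivatives $D^\gamma u$ and then feed in the $L_2$-bounds of Lemma \ref{lem06_04}, closely mirroring the proof of Lemma \ref{lemma0611_01}. First I would use the equation: differentiating \eqref{eq06_20} (recall $\fL_0 u = A(t)D_1^{2m}u+\sum_{j=2}^d D_j^{2m}u$) gives $D^\gamma u_t = -(-1)^m\big(A(t)D^\gamma D_1^{2m}u+\sum_{j=2}^d D^\gamma D_j^{2m}u\big)$ in $Q_4^+$, so $\sup_{Q_1^+}|D^\gamma u_t|$ is controlled by finitely many terms of the form $\sup_{Q_1^+}|D^{\gamma'}u|$. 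Hence it suffices to prove $\sup_{Q_1^+}|D^\gamma u|\le N\|u\|_{L_2(Q_4^+)}$ for an arbitrary spatial multi-index $\gamma$, which is the half-space counterpart of \eqref{eq06_08}.

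For that bound I would run the two-step parabolic Sobolev embedding on the half-cylinder $Q_1^+=(-1,0)\times(B_1\cap\bR^d_+)$. Step one: for each fixed $x\in B_1\cap\bR^d_+$, regard $D^\gamma u(\cdot,x)$ as a function of $t$ and apply $W_2^1((-1,0))\hookrightarrow C([-1,0])$ to get $\sup_{t\in(-1,0)}|D^\gamma u(t,x)|^2\le N\int_{-1}^0(|D^\gamma u(s,x)|^2+|D^\gamma u_t(s,x)|^2)\,ds$. Step two: choose a time slice $t_0$ at which the sup is essentially attained, and use the Sobolev embedding $W_2^k(B_1\cap\bR^d_+)\hookrightarrow C(\overline{B_1\cap\bR^d_+})$ for a fixed integer $k>d/2$; applying step one to each derivative $D^\vartheta D^\gamma u$ and $D^\vartheta D^\gamma u_t$ with $|\vartheta|\le k$ and then integrating in $x$ over $B_1\cap\bR^d_+$ yields
$$
\sup_{Q_1^+}|D^\gamma u|^2\le N\sum_{|\vartheta|\le k}\left(\|D^\vartheta D^\gamma u\|_{L_2(Q_1^+)}^2+\|D^\vartheta D^\gamma u_t\|_{L_2(Q_1^+)}^2\right).
$$

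To close the argument I would invoke Lemma \ref{lem06_04} with $r=1$, $R=4$ and the multi-indices $\vartheta+\gamma$: since $u$ satisfies \eqref{eq06_19} on $Q_4'$ and \eqref{eq06_20} in $Q_4^+$ (exactly the hypotheses of that lemma), each term on the right-hand side above is bounded by $N\|u\|_{L_2(Q_4^+)}^2$, finishing the proof. I expect the only point that genuinely differs from the whole-space Lemma \ref{lemma0611_01} to be that the half-ball $B_1\cap\bR^d_+$ carries an edge along $\partial B_1\cap\{x_1=0\}$; I would simply remark that it is nonetheless a bounded Lipschitz domain, so the embedding $W_2^k\hookrightarrow C$ used in step two stays valid and no reflection across $\{x_1=0\}$ is needed (the boundary condition \eqref{eq06_19} enters only through Lemma \ref{lem06_04}). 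Everything else is routine.
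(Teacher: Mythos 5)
Your proposal is correct and takes essentially the same route the paper intends: the paper's one-line proof simply says to deduce Lemma~\ref{lem06_05} from Lemma~\ref{lem06_04} exactly as Lemma~\ref{lemma0611_01} is deduced from Corollary~\ref{cor33}, and you have spelled out the same two-step parabolic Sobolev embedding on the half-cylinder (time then space), correctly observing that $B_1\cap\bR^d_+$ is a bounded Lipschitz domain so the spatial embedding holds without any reflection, and then closing with the $L_2$ bounds of Lemma~\ref{lem06_04} applied to $\vartheta+\gamma$.
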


\begin{proof}
This is deduced from Lemma \ref{lem06_04} in the same way as Lemma \ref{lemma0611_01} is deduced from Corollary \ref{cor33}.
\end{proof}

Note that in the following H\"{o}lder estimates
the first inequality is for all $D^{\gamma} u$, $|\gamma| = m$,
whereas the second inequality is for $D^{2m}_1 u$ only.
Similarly we see $D^m u$ and $D_1^{2m} u$ in the following lemma and  propositions as well.

\begin{corollary}
								\label{cor07_01}
Let $\lambda \ge 0$, $X_0 = (t_0,0,x_0')$, where $t_0 \in \bR$
and $x'_0 \in \bR^{d-1}$.
Assume that $u \in C_{\text{loc}}^{\infty}(\overline{\cO_{\infty}^+})$
satisfies \eqref{eq06_19}
on $Q_4'(X_0)$ and
\begin{equation}
								\label{eq06_21}
u_t + (-1)^m \fL_0 u + \lambda u = 0	
\end{equation}
in $Q_4^+(X_0)$.
Then
there exists $N = N(d,n,m,\delta)$
such that
\begin{equation}
								\label{eq06_23}
[\cE (D^m u)]_{\cC^{1}(Q_1(X_0))}
\le N\sum_{k=0}^m\lambda^{\frac12-\frac{k}{2m}} \| \cE (D^k u) \|_{L_2(Q_4(X_0))},
\end{equation}
\begin{equation}
								\label{eq06_24}
[\cE (D^{2m}_1 u)]_{\cC^{1}(Q_1(X_0))}
\le N\sum_{k=0}^{2m}\lambda^{1-\frac{k}{2m}} \| \cE (D^k u) \|_{L_2(Q_4(X_0))}.
\end{equation}
\end{corollary}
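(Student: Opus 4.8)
The plan is to derive \eqref{eq06_23} and \eqref{eq06_24} from Lemma \ref{lem06_05} by the same mechanism by which Corollary \ref{cor06_02} follows from Lemma \ref{lem06_02} and Lemma \ref{lemma6.2}; the genuinely new inputs are (i) a Hardy inequality, used to convert the $\|u\|_{L_2}$ on the right of Lemma \ref{lem06_05} into the $\|D^m u\|_{L_2}$ that \eqref{eq06_23} demands, and (ii) the fact that, for the special operator $\fL_0$, the function $D_1^{2m}u$ again satisfies the homogeneous Dirichlet conditions on the flat boundary. After a translation in $(t,x')$ we may assume $X_0=(0,0)$, so that $Q_r^+=Q_r\cap\cO_\infty^+$ and $Q_r'$ are centered at the origin.

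I would first treat the case $\lambda=0$, in which \eqref{eq06_21} reduces to \eqref{eq06_20} and the right-hand sides of \eqref{eq06_23}, \eqref{eq06_24} are comparable to $\|D^m u\|_{L_2(Q_4^+)}$ and $\|D_1^{2m}u\|_{L_2(Q_4^+)}$, respectively. For \eqref{eq06_23}: since $u$ itself satisfies \eqref{eq06_19}--\eqref{eq06_20}, Lemma \ref{lem06_05}, whose conclusion bounds $\sup_{Q_1^+}|D^\gamma u|+\sup_{Q_1^+}|D^\gamma u_t|$ for every $\gamma$, controls $\sup_{Q_1^+}(|\nabla_x D^\alpha u|+|\partial_t D^\alpha u|)$, hence $[D^\alpha u]_{\cC^1(Q_1^+)}$, by $N\|u\|_{L_2(Q_4^+)}$ for each $|\alpha|=m$; Hardy's inequality (using $u=D_1u=\cdots=D_1^{m-1}u=0$ on $Q_4'$) then bounds $\|u\|_{L_2(Q_4^+)}$ by $N\|D_1^m u\|_{L_2(Q_4^+)}\le N\|D^m u\|_{L_2(Q_4^+)}$. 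For \eqref{eq06_24}: I claim $v:=D_1^{2m}u$ satisfies \eqref{eq06_19}--\eqref{eq06_20} as well. It solves \eqref{eq06_20} because $\fL_0$ has $t$-only coefficients; and rewriting \eqref{eq06_21} (with $\lambda=0$) as $A(t)D_1^{2m}u=(-1)^{m+1}\bigl(u_t+\sum_{j=2}^d D_j^{2m}u\bigr)$ and restricting to $\{x_1=0\}$ forces $D_1^{2m}u=0$ there, since $u_t$ and the tangential derivatives $D_j^{2m}u$ ($j\ge2$) vanish on $\{x_1=0\}$ together with $u$, while $A(t)$ is invertible by the Legendre--Hadamard condition; running the same argument with $D_1^j u$ ($j=0,\dots,m-1$) in place of $u$ gives $D_1^{2m+j}u=0$ on $Q_4'$ for $j=0,\dots,m-1$, which is \eqref{eq06_19} for $v$. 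Then Lemma \ref{lem06_05} applied to $v$ bounds $[v]_{\cC^1(Q_1^+)}$ by $N\|v\|_{L_2(Q_4^+)}$. In both cases the even extension, acting only in $x_1$, enlarges the $\cC^1$ seminorm by at most a fixed factor while $\|\cE g\|_{L_2(Q_4)}=\sqrt2\,\|g\|_{L_2(Q_4^+)}$ as $X_0$ is on the boundary; this gives \eqref{eq06_23}, \eqref{eq06_24} when $\lambda=0$.

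For $\lambda>0$ I would use Agmon's device exactly as in Lemma \ref{lemma6.2}. Introduce an auxiliary variable $y$, set $\zeta(y)=\cos(\lambda^{1/2m}y)+\sin(\lambda^{1/2m}y)$ — so that $(-1)^m D_y^{2m}\zeta=\lambda\zeta$ and $|D_y^j\zeta(0)|=\lambda^{j/2m}$ — and put $\hat u(t,x,y)=u(t,x)\zeta(y)$. Then $\hat u$ solves $\hat u_t+(-1)^m\hat\fL_0\hat u=0$, where $\hat\fL_0=A(t)D_1^{2m}+\sum_{j=2}^d D_j^{2m}+D_y^{2m}$ is again an operator of the form $\fL_0$ in $d+1$ spatial variables (same distinguished direction $x_1$, ellipticity constant depending only on $d,m,\delta$), and $\hat u$ inherits the Dirichlet conditions on $\{x_1=0\}$. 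Applying the case $\lambda=0$ already proved, in dimension $d+1$, to $\hat u$ and then restricting to $y=0$ — where $\hat D^\gamma\hat u|_{y=0}=D_x^{\gamma_x}u\cdot D_y^{\gamma_y}\zeta(0)$, so $D^\alpha u$ is recovered from $\gamma=(\alpha,0)$ and $D_1^{2m}\hat u|_{y=0}=D_1^{2m}u$ — bounds the left sides of \eqref{eq06_23}, \eqref{eq06_24}. On the right, each order-$m$ (resp.\ order-$2m$) derivative $\hat D^{\gamma'}\hat u=D_x^{\gamma'_x}u\cdot D_y^{\gamma'_y}\zeta$ satisfies $\|\cE(\hat D^{\gamma'}\hat u)\|_{L_2(\hat Q_4)}\le N\lambda^{\gamma'_y/2m}\|\cE(D^{|\gamma'_x|}u)\|_{L_2(Q_4)}$, because $\int|D_y^{\gamma'_y}\zeta|^2\,dy\le N\lambda^{\gamma'_y/m}$ over a bounded $y$-range; and since $\gamma'_y=m-|\gamma'_x|$ (resp.\ $2m-|\gamma'_x|$), summing over $\gamma'$ reproduces exactly the weighted sums $\sum_{k=0}^m\lambda^{1/2-k/2m}\|\cE(D^k u)\|_{L_2(Q_4)}$ (resp.\ $\sum_{k=0}^{2m}\lambda^{1-k/2m}\|\cE(D^k u)\|_{L_2(Q_4)}$).

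The step demanding the most care is the verification that $D_1^{2m}u$ satisfies the homogeneous boundary conditions \eqref{eq06_19}: this is precisely where the special form of $\fL_0$ — every top-order term other than $A(t)D_1^{2m}$ being purely tangential, hence vanishing on $\{x_1=0\}$ together with $u$ — is indispensable, and it has no analogue for a general leading operator $\cL_0$, for which one obtains only the tangential estimate \eqref{eq0917}. The remaining ingredients — the behavior of the even extension in the $\cC^1$ seminorm, the dimensional bookkeeping in the Agmon lifting, and the persistence of ellipticity under that lifting — are routine.
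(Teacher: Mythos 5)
Your proof is correct and follows essentially the same route as the paper: for $\lambda=0$, apply Lemma \ref{lem06_05} to $u$ and to $D_1^{2m}u$ (the latter after verifying it inherits \eqref{eq06_19}--\eqref{eq06_20}, which the paper simply cites from the proof of Lemma \ref{lem06_04} but which you spell out directly from the identity $A(t)D_1^{2m}u=(-1)^{m+1}(u_t+\sum_{j\ge2}D_j^{2m}u)$ and the invertibility of $A(t)$), bound $\|u\|_{L_2(Q_4^+)}$ by $\|D^m u\|_{L_2(Q_4^+)}$ via a Hardy/Poincar\'e inequality exploiting the vanishing of $u,\ldots,D_1^{m-1}u$ on the flat boundary, and pass to $\lambda>0$ by Agmon's trick. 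The paper's proof is more terse (referring to Lemma \ref{lem06_04} and to the proof of Corollary \ref{cor06_02} where you give the details), but the content is identical.
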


\begin{proof}
Similar to the proof of Corollary	\ref{cor06_02}, we prove only the case $\lambda = 0$
and $X_0 =(0,0)$.
As noted in the proof of Lemma \ref{lem06_04},
$D_1^{2m}u$ satisfies \eqref{eq06_19}
on $Q'_4$ and \eqref{eq06_20} in $Q_4^+$.
In this case, \eqref{eq06_24} follows immediately from Lemma \ref{lem06_05} applied to $D^{2m}_1 u$.

Lemma \ref{lem06_05} also shows that
$$
[D^m u]_{\cC^{1}(Q_1^+)}
\le N \|u\|_{L_2(Q_4^+)}
\le N \|D^m u\|_{L_2(Q_4^+)},
$$
where the second inequality
is due to the fact that $u$ satisfies \eqref{eq06_19}
and the boundary version of the Poincar\'{e} inequality.
This gives the inequality \eqref{eq06_23}.
\end{proof}

\begin{lemma}
                    \label{lem3.34}
Let $r \in (0,\infty)$, $\kappa \in [64,\infty)$, $\lambda \ge 0$,
and $X_0 = (t_0,x_0) \in \overline{\cO_{\infty}^+}$.
Assume that $u \in C_{\text{loc}}^{\infty}(\overline{\cO_{\infty}^+})$
satisfies \eqref{eq06_19} on $\bR \times \bR^{d-1}$
and \eqref{eq06_21} on $Q_{\kappa r}^+(X_0)$.
Then
$$
(  |\cE (D^m u) - (\cE (D^m u))_{Q_r(X_0)}  |)_{Q_r(X_0)}
\le N \kappa^{-1} \sum_{k=0}^m \lambda^{\frac{1}{2}-\frac{k}{2m}} (|\cE(D^k u)|^2)^{\frac 1 2}_{Q_{\kappa r}(X_0)},
$$
$$
(  |\cE (D_1^{2m} u) - (\cE (D_1^{2m} u))_{Q_r(X_0)}  |)_{Q_r(X_0)}
\le N \kappa^{-1} \sum_{k=0}^{2m} \lambda^{1-\frac{k}{2m}} (|\cE(D^k u)|^2)^{\frac 1 2}_{Q_{\kappa r}(X_0)}.
$$
where $N = N(d,n,m,\delta)$.
\end{lemma}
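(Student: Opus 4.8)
\emph{Proof proposal.} The plan is to run the two–step scheme used in the proof of Lemma \ref{lem06_03}, with the interior mean oscillation estimate (Lemma \ref{lem4.1}) and the boundary H\"older estimate (Corollary \ref{cor06_02}) there replaced by their counterparts for the special operator $\fL_0$. Since $\fL_0$ is a particular case of $\cL_0$ with leading coefficients depending only on $t$, Lemmas \ref{lem4.1}, \ref{lemma6.2} and Corollary \ref{cor06_03} apply to it, and, the coefficients being independent of $x$, $D^\gamma u$ solves the same homogeneous equation \eqref{eq06_21} as $u$ for every multi-index $\gamma$. By the dilation argument of Lemma \ref{lem06_03} (applied to $w(t,x)=u(R^{2m}t,Rx)$) it suffices to prove both inequalities for $r=16/\kappa$; note that then $r\le 1/4$ because $\kappa\ge 64$, and that $\cE g$ agrees with $g$ on $\{x_1>0\}$. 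I split into two cases according to the first coordinate $x_0^{(1)}$ of $x_0$.

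If $x_0^{(1)}\ge 1$, then $Q_1(X_0)=Q_1^+(X_0)\subset\bR^d_+$ and, since $Q_1(X_0)\subset Q_{\kappa r}^+(X_0)=Q_{16}^+(X_0)$, $u$ satisfies \eqref{eq06_21} in $Q_1(X_0)$ and can be extended to an element of $\cH^m_{2,\text{loc}}(\bR^{d+1})$ without changing its values there. Applying Lemma \ref{lem4.1} with ratio $\kappa/16\ge 4$ and large cylinder $Q_1(X_0)$ (so its ``$\kappa r$'' is $1$), then enlarging the averaging cylinder from $Q_1(X_0)$ to $Q_{16}(X_0)=Q_{\kappa r}(X_0)$, yields the first inequality. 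For the second, set $v:=D_1^m u$; it also solves $v_t+(-1)^m\fL_0 v+\lambda v=0$ in $Q_1(X_0)$, so Lemma \ref{lemma6.2} (after rescaling $Q_1(X_0)$ to $Q_4$) bounds $\sup_{Q_{1/4}(X_0)}\big(|D^{m+1}v|+|D^m v_t|\big)$ by $N\sum_{k=0}^m\lambda^{\frac12-\frac{k}{2m}}\|D^k v\|_{L_2(Q_1(X_0))}$. Since $Q_r(X_0)\subset Q_{1/4}(X_0)$, since $|\nabla_x D_1^{2m}u|+|\partial_t D_1^{2m}u|\le N(|D^{m+1}v|+|D^m v_t|)$ pointwise, and since $r\le 1$, one gets $(|D_1^{2m}u-(D_1^{2m}u)_{Q_r(X_0)}|)_{Q_r(X_0)}\le Nr\sup_{Q_r(X_0)}\big(|D^{m+1}v|+|D^m v_t|\big)$; substituting $r=16/\kappa$, writing $\|D^k v\|_{L_2(Q_1(X_0))}$ in terms of $(|\cE(D^{m+k}u)|^2)^{1/2}_{Q_{16}(X_0)}$, and re-indexing $j=m+k$ (so $\lambda^{\frac12-\frac{k}{2m}}=\lambda^{1-\frac{j}{2m}}$) gives the second inequality.

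If $x_0^{(1)}\in[0,1]$, let $X_0'=(t_0,0,x_0')$ be the boundary projection of $X_0$. Using $r\le 1/4$ one checks the inclusions $Q_r(X_0)\subset Q_2(X_0')$, $Q_8(X_0')\subset Q_{16}(X_0)=Q_{\kappa r}(X_0)$ and $Q_8^+(X_0')\subset Q_{16}^+(X_0)=Q_{\kappa r}^+(X_0)$, so $u$ satisfies \eqref{eq06_19} on $\bR\times\bR^{d-1}$ and \eqref{eq06_21} in $Q_8^+(X_0')$. Applying Corollary \ref{cor07_01} (with $2$ and $8$ in place of $1$ and $4$, via a dilation) bounds $[\cE(D^m u)]_{\cC^1(Q_2(X_0'))}$ and $[\cE(D_1^{2m}u)]_{\cC^1(Q_2(X_0'))}$ by $N\sum_{k=0}^m\lambda^{\frac12-\frac{k}{2m}}\|\cE(D^k u)\|_{L_2(Q_8(X_0'))}$ and $N\sum_{k=0}^{2m}\lambda^{1-\frac{k}{2m}}\|\cE(D^k u)\|_{L_2(Q_8(X_0'))}$ respectively. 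Finally, for $g\in\cC^1(Q_2(X_0'))$ and $r\le 1$ one has $(|g-(g)_{Q_r(X_0)}|)_{Q_r(X_0)}\le Nr\,[g]_{\cC^1(Q_2(X_0'))}$, since $Q_r(X_0)\subset Q_2(X_0')$ and the parabolic $\cC^1$–seminorm controls the oscillation of $g$ over $Q_r(X_0)$ by a constant times $r$. Taking $g=\cE(D^m u)$ and $g=\cE(D_1^{2m}u)$, using $r=16/\kappa$, and enlarging the averaging cylinder from $Q_8(X_0')$ to $Q_{\kappa r}(X_0)$ produces both estimates with the factor $\kappa^{-1}$. (The extension to $X_0\in\cO_\infty\setminus\overline{\cO_\infty^+}$, needed later, follows by the reflection argument of Remark \ref{rem0824}.)

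The weight is carried by the new ingredient Corollary \ref{cor07_01}: it uses that for $\fL_0$ all normal derivatives $D_1^{2m\ell+k}u$, $0\le k\le m-1$, vanish on the boundary, which upgrades the boundary estimate from $\cC^{1/2}$ to $\cC^1$ and hence yields the $\kappa^{-1}$ gain. Given that, the present lemma is essentially bookkeeping; the points demanding a little care are the nesting of the parabolic cylinders (which is exactly why $\kappa\ge 64$ is imposed) and the absence of a ready-made ``order $2m$'' interior mean oscillation estimate valid on the full range $\kappa\ge 64$, which forces one to pass through Lemma \ref{lemma6.2} applied to $D_1^m u$ in the interior case rather than quoting Corollary \ref{cor06_03} directly.
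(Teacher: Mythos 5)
Your proposal is correct and executes precisely the strategy the paper has in mind: reduce by dilation to $r=16/\kappa$, split on whether $x_0^{(1)}\ge 1$ or $x_0^{(1)}\in[0,1]$, use the interior mean-oscillation estimate (Lemma \ref{lem4.1}, or equivalently Lemma \ref{lemma6.2} applied to $u$ and to $D_1^m u$) in the interior case, and in the boundary case replace the $\cC^{1/2}$ bound of Corollary \ref{cor06_02} used in Lemma \ref{lem06_03} by the $\cC^{1}$ bounds of Corollary \ref{cor07_01}, which is exactly what upgrades the factor $\kappa^{-1/2}$ to $\kappa^{-1}$. The paper's own proof is the one-line instruction ``Thanks to the H\"older estimates in Corollary \ref{cor07_01}, we process as in the proof of Lemma \ref{lem06_03}''; your write-up is a faithful and correct unwinding of that instruction, including the correct cylinder nesting checks and the re-indexing $j=m+k$ that converts the weights $\lambda^{\frac12-\frac{k}{2m}}$ for $D^k(D_1^m u)$ into $\lambda^{1-\frac{j}{2m}}$ for $D^j u$.
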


\begin{proof}
Thanks to the H\"{o}lder estimates in Corollary \ref{cor07_01},
we process as in the proof of Lemma \ref{lem06_03}.	
\end{proof}

From the above lemma, by following the steps in
the proof of Proposition \ref{thm07_02} we prove the following two propositions.

\begin{proposition}
                                    \label{prop3.35}
Let $r \in (0,\infty)$, $\kappa \in [128,\infty)$, $\lambda \ge 0$,
and $X_0 = (t_0,x_0) \in \overline{\cO_{\infty}^+}$.
Assume that $u \in C_{\text{loc}}^{\infty}(\overline{\cO_{\infty}^+})$
satisfies \eqref{eq06_19} on $\bR \times \bR^{d-1}$ and
$$
u_t + (-1)^m \fL_0 u + \lambda u = \sum_{|\alpha|\le m} D^{\alpha}f_{\alpha}
$$
in $Q_{\kappa r}^+(X_0)$,
where $f_{\alpha} \in L_{2,\text{loc}}(\cO_{\infty}^+)$, $|\alpha| \le m$.
Then we have
$$
(  |\cE (D^m u) - (\cE (D^m u))_{Q_r(X_0)}  |)_{Q_r(X_0)}
\le N \kappa^{-1} \sum_{k=0}^m \lambda^{\frac{1}{2}-\frac{k}{2m}} (|\cE(D^k u)|^2)^{\frac 1 2}_{Q_{\kappa r}(X_0)}
$$
$$
+ N \kappa^{m+\frac d 2} \sum_{|\alpha|\le m} \lambda^{\frac{|\alpha|}{2m}-\frac12}
(|\cE f_{\alpha}|^2)_{Q_{\kappa r}(X_0)}^{\frac 1 2},
$$
where $N = N(d,n,m,\delta)$.
\end{proposition}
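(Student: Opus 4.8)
The plan is to transcribe the proof of Proposition~\ref{thm07_02} almost verbatim, replacing the operator $\cL_0$ throughout by $\fL_0$ and Lemma~\ref{lem06_03} by Lemma~\ref{lem3.34}. The one genuinely new ingredient is the observation that $\fL_0$, and each of its mollifications in $t$, still satisfies the Legendre-Hadamard condition: taking $\xi=e_1$ in \eqref{eq7.9.17} for the original leading coefficients gives $\Re(\theta^{\mathrm{tr}}A(t)\bar\theta)\ge\delta|\theta|^2$, whence
$$
\Re\Big(\theta^{\mathrm{tr}}\big(\xi_1^{2m}A(t)+\sum_{j=2}^d\xi_j^{2m}I\big)\bar\theta\Big)
\ge\delta\,\xi_1^{2m}|\theta|^2+\sum_{j=2}^d\xi_j^{2m}|\theta|^2
\ge c(d,m)\,\delta\,|\xi|^{2m}|\theta|^2
$$
for all $\xi\in\bR^d$, $\theta\in\bC^n$, with $c(d,m)>0$. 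Hence Theorem~\ref{thVMO02} (the $L_2$-solvability and estimate for divergence systems on a half space) applies both to $\fL_0$ and to the mollified operator $\fL_0^{(\varepsilon)}=A_{(\varepsilon)}(t)D_1^{2m}+\sum_{j=2}^dD_j^{2m}$, with constants depending only on $d,n,m,\delta$ (the mollification $A_{(\varepsilon)}$ of $A$ in $t$ preserves the lower bound above, uniformly in $\varepsilon$), and likewise Lemma~\ref{lem3.34} applies to solutions of the homogeneous $\fL_0^{(\varepsilon)}$-system with the same uniformity.

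First I would assume $\lambda>0$ (the case $\lambda=0$ being handled as in the earlier proofs, or by letting $\lambda\searrow0$) and, by the localization already used in the proof of Proposition~\ref{thm07_02}, reduce to $u\in\cH_2^m(\cO_\infty^+)$ and $f_\alpha\in L_2(\cO_\infty^+)$ with the equation holding in all of $\cO_\infty^+$; the hypothesis $\kappa\ge128$ is exactly what is needed for this cutoff to be compatible with the half-radius cylinder $Q_{\kappa r/2}^+(X_0)$. Then I would fix $\zeta\in C_0^\infty(Q_{\kappa r}(X_0))$ with $\zeta\equiv1$ on $Q_{\kappa r/2}(X_0)$, choose smooth $f_\alpha^{(\varepsilon)}\to f_\alpha$ in $L_2(\cO_\infty^+)$, and use Theorem~\ref{thVMO02} to solve
$$
v^{(\varepsilon)}_t+(-1)^m\fL_0^{(\varepsilon)}v^{(\varepsilon)}+\lambda v^{(\varepsilon)}=\sum_{|\alpha|\le m}D^\alpha\big((1-\zeta)f_\alpha^{(\varepsilon)}\big)
$$
in $\cO_\infty^+$ with $v^{(\varepsilon)}=\cdots=D_1^{m-1}v^{(\varepsilon)}=0$ on $\bR\times\bR^{d-1}$. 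By the classical parabolic theory $v^{(\varepsilon)}$ is smooth; it satisfies \eqref{eq06_19} and the homogeneous equation $v^{(\varepsilon)}_t+(-1)^m\fL_0^{(\varepsilon)}v^{(\varepsilon)}+\lambda v^{(\varepsilon)}=0$ in $Q_{\kappa r/2}^+(X_0)$, so Lemma~\ref{lem3.34} (note $\kappa/2\ge64$, and $N$ there is uniform in $\varepsilon$) bounds $(|\cE(D^m v^{(\varepsilon)})-(\cE(D^m v^{(\varepsilon)}))_{Q_r(X_0)}|)_{Q_r(X_0)}$ by $N\kappa^{-1}\sum_{k=0}^m\lambda^{1/2-k/(2m)}(|\cE(D^k v^{(\varepsilon)})|^2)_{Q_{\kappa r}(X_0)}^{1/2}$.

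Next I would set $w^{(\varepsilon)}=u-v^{(\varepsilon)}$, which satisfies \eqref{eq06_19} and, in $\cO_\infty^+$, a system $w^{(\varepsilon)}_t+(-1)^m\fL_0^{(\varepsilon)}w^{(\varepsilon)}+\lambda w^{(\varepsilon)}=\sum_{|\alpha|\le m}D^\alpha g_\alpha^{(\varepsilon)}$ whose right-hand side collects $\zeta f_\alpha^{(\varepsilon)}$, $f_\alpha-f_\alpha^{(\varepsilon)}$, and the commutator term $(A_{(\varepsilon)}-A)D_1^m u$. Applying Theorem~\ref{thVMO02} on $\cO_{t_0}^+$ controls $\sum_{|\alpha|\le m}\lambda^{1-|\alpha|/(2m)}\|D^\alpha w^{(\varepsilon)}\|_{L_2(\cO_{t_0}^+)}$ by $N\sum_{|\alpha|\le m}\lambda^{|\alpha|/(2m)}\|g_\alpha^{(\varepsilon)}\|_{L_2(\cO_{t_0}^+)}$; combined with the elementary inequality $\|g\|_{L_2(Q_r^+(X_0))}\le\|\cE g\|_{L_2(Q_r(X_0))}\le 2\|g\|_{L_2(Q_r^+(X_0))}$ and its analogue with $\kappa r$ (valid since $X_0\in\overline{\cO_\infty^+}$), this bounds $(|\cE(D^m w^{(\varepsilon)})|^2)_{Q_r(X_0)}^{1/2}$ and $\sum_{k=0}^m\lambda^{1/2-k/(2m)}(|\cE(D^k w^{(\varepsilon)})|^2)_{Q_{\kappa r}(X_0)}^{1/2}$ by $N\kappa^{m+d/2}\sum_{|\alpha|\le m}\lambda^{|\alpha|/(2m)-1/2}(|\cE f_\alpha^{(\varepsilon)}|^2)_{Q_{\kappa r}(X_0)}^{1/2}$ plus an error $(r^{-m-d/2}+(\kappa r)^{-m-d/2})I^{(\varepsilon)}$, where $I^{(\varepsilon)}\to0$ as $\varepsilon\searrow0$ because $f_\alpha^{(\varepsilon)}\to f_\alpha$ in $L_2$ and $A_{(\varepsilon)}\to A$ a.e.\ in $t$ with a uniform bound, so that $(A_{(\varepsilon)}-A)D_1^m u\to0$ in $L_2(\cO_{t_0}^+)$ by dominated convergence. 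Finally, writing $u=v^{(\varepsilon)}+w^{(\varepsilon)}$, estimating the left-hand side of the desired inequality by $(|\cE(D^m v^{(\varepsilon)})-(\cE(D^m v^{(\varepsilon)}))_{Q_r(X_0)}|)_{Q_r(X_0)}+2(|\cE(D^m w^{(\varepsilon)})|^2)_{Q_r(X_0)}^{1/2}$, substituting $v^{(\varepsilon)}=u-w^{(\varepsilon)}$ back into the resulting $D^k v^{(\varepsilon)}$ terms, and letting $\varepsilon\searrow0$, yields the claim.

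I do not anticipate a real obstacle here; the only points requiring care are the ellipticity verification above (which is precisely what allows Theorem~\ref{thVMO02} and Lemma~\ref{lem3.34} to be invoked with constants of the stated form) and the uniform-in-$\varepsilon$ control of the mollification error $I^{(\varepsilon)}$, which is the reason the perturbation argument is run on the finite-time slab $\cO_{t_0}^+$ after the cutoff reduction.
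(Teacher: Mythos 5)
Your proposal is correct and takes exactly the same route as the paper, which proves this proposition by the one-line remark that one should follow the steps of Proposition~\ref{thm07_02} using Lemma~\ref{lem3.34} in place of Lemma~\ref{lem06_03}; you carry out precisely that program, with the mollified operator $\fL_0^{(\varepsilon)}$, the cutoff $\zeta$, the decomposition $u=v^{(\varepsilon)}+w^{(\varepsilon)}$, and the finite-time slab $\cO_{t_0}^+$ all matching. Your explicit verification that $\fL_0$ and $\fL_0^{(\varepsilon)}$ satisfy the Legendre--Hadamard condition with constants depending only on $d,m,\delta$ is a detail the paper leaves implicit but is exactly the observation needed to justify invoking Theorem~\ref{thVMO02} and Lemma~\ref{lem3.34} with uniform constants.
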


\begin{proposition}
                            \label{prop3.36}
Let $r \in (0,\infty)$, $\kappa \in [128,\infty)$, $\lambda \ge 0$,
and $X_0 = (t_0,x_0) \in \overline{\cO_{\infty}^+}$.
Assume that $u \in C_{\text{loc}}^{\infty}(\overline{\cO_{\infty}^+})$
satisfies \eqref{eq06_19} on $\bR \times \bR^{d-1}$ and
$$
u_t + (-1)^m \fL_0 u + \lambda u = f
$$
in $Q_{\kappa r}^+(X_0)$, where $f \in L_{2,\text{loc}}(\cO_{\infty}^+)$.
Then we have
$$
(  |\cE (D_1^{2m} u) - (\cE (D_1^{2m} u))_{Q_r(X_0)}  |)_{Q_r(X_0)}
\le N \kappa^{-1} \sum_{k=0}^{2m} \lambda^{1-\frac{k}{2m}} (|\cE(D^k u)|^2)^{\frac 1 2}_{Q_{\kappa r}(X_0)}
$$
$$
+ N \kappa^{m+\frac d 2} (|\cE f|^2)_{Q_{\kappa r}(X_0)}^{\frac 1 2},
$$
where $N = N(d,n,m,\delta)$.
\end{proposition}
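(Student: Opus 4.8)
The plan is to repeat the proof of Proposition~\ref{thm07_02} almost verbatim, with two substitutions: the second inequality of Lemma~\ref{lem3.34} (the mean oscillation estimate for $D_1^{2m}$ of solutions of the homogeneous equation with the special operator) replaces Lemma~\ref{lem06_03}, and the non-divergence $L_2$-estimate on a half space, Theorem~\ref{th06_01}, replaces Theorem~\ref{thVMO02}. As the first step, after the usual cutoff reduction (cf.\ the proof of Proposition~\ref{thm07_02}) we may assume $u\in W_2^{1,2m}(\cO_\infty^+)$, $f\in L_2(\cO_\infty^+)$, and that $u_t+(-1)^m\fL_0 u+\lambda u=f$ holds on all of $\cO_\infty^+$; as there, it suffices to treat $\lambda>0$, the case $\lambda=0$ being recovered afterward by letting $\lambda\searrow0$. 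Next I would mollify in $t$: let $A_{(\varepsilon)}(t)$ be the standard mollification of $A(t)=A^{\hat\alpha\hat\alpha}(t)$, set $\fL_0^{(\varepsilon)}w=A_{(\varepsilon)}(t)D_1^{2m}w+\sum_{j=2}^d D_j^{2m}w$, and choose smooth $f^{(\varepsilon)}$ with $f^{(\varepsilon)}\to f$ in $L_2(\cO_\infty^+)$. Since $A^{\hat\alpha\hat\alpha}$ satisfies $\Re(\theta^{\text{tr}}A^{\hat\alpha\hat\alpha}(t)\bar\theta)\ge\delta|\theta|^2$, which follows from \eqref{eq7.9.17} with $\xi=(1,0,\ldots,0)$, the operators $\fL_0$ and $\fL_0^{(\varepsilon)}$ retain the special structure of this section and are Legendre--Hadamard elliptic with a constant depending only on $\delta,d,m$; hence Theorem~\ref{th06_01} and Lemma~\ref{lem3.34} apply to them with constants independent of $\varepsilon$.

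Fix $\zeta\in C_0^\infty(Q_{\kappa r}(X_0))$ with $\zeta=1$ on $Q_{\kappa r/2}(X_0)$, and let $v^{(\varepsilon)}\in W_2^{1,2m}(\cO_\infty^+)$ be the unique solution of $v^{(\varepsilon)}_t+(-1)^m\fL_0^{(\varepsilon)}v^{(\varepsilon)}+\lambda v^{(\varepsilon)}=(1-\zeta)f^{(\varepsilon)}$ in $\cO_\infty^+$ satisfying \eqref{eq06_19} on $\bR\times\bR^{d-1}$, furnished by Theorem~\ref{th06_01}. By the classical regularity theory for parabolic systems with smooth coefficients, $v^{(\varepsilon)}\in C^\infty_{\text{loc}}(\overline{\cO_\infty^+})$, and since $(1-\zeta)f^{(\varepsilon)}=0$ on $Q_{\kappa r/2}^+(X_0)$, $v^{(\varepsilon)}$ solves the homogeneous equation there; thus Lemma~\ref{lem3.34} (applied with $\kappa/2\ge64$) bounds the mean oscillation of $\cE(D_1^{2m}v^{(\varepsilon)})$ over $Q_r(X_0)$ by $N\kappa^{-1}\sum_{k=0}^{2m}\lambda^{1-k/(2m)}(|\cE(D^k v^{(\varepsilon)})|^2)_{Q_{\kappa r}(X_0)}^{1/2}$. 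Put $w^{(\varepsilon)}=u-v^{(\varepsilon)}$; then $w^{(\varepsilon)}$ satisfies \eqref{eq06_19} and, on $\cO_\infty^+$, $w^{(\varepsilon)}_t+(-1)^m\fL_0^{(\varepsilon)}w^{(\varepsilon)}+\lambda w^{(\varepsilon)}=\zeta f^{(\varepsilon)}+(f-f^{(\varepsilon)})+(-1)^m(\fL_0^{(\varepsilon)}-\fL_0)u$. Applying Theorem~\ref{th06_01} to $w^{(\varepsilon)}$ on $\cO_{t_0}^+$ controls $\sum_{k\le2m}\lambda^{1-k/(2m)}\|D^k w^{(\varepsilon)}\|_{L_2(\cO_{t_0}^+)}$ by the $L_2(\cO_{t_0}^+)$-norm of this right-hand side; bounding $\|\zeta f^{(\varepsilon)}\|$ by $|Q_{\kappa r}|^{1/2}(|\cE f^{(\varepsilon)}|^2)_{Q_{\kappa r}(X_0)}^{1/2}$ and absorbing $\|f-f^{(\varepsilon)}\|_{L_2(\cO_{t_0}^+)}$ and $\|(A_{(\varepsilon)}-A)D_1^{2m}u\|_{L_2(\cO_{t_0}^+)}$ into an error term $I^{(\varepsilon)}$ that tends to $0$ as $\varepsilon\searrow0$ by dominated convergence.

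To finish, I would assemble exactly as at the end of the proof of Proposition~\ref{thm07_02}: estimate $(|\cE(D_1^{2m}u)-(\cE(D_1^{2m}u))_{Q_r(X_0)}|)_{Q_r(X_0)}$ by $2(|\cE(D_1^{2m}u)-c|)_{Q_r(X_0)}$ with $c$ the average of $\cE(D_1^{2m}v^{(\varepsilon)})$ over $Q_r(X_0)$, use $u=v^{(\varepsilon)}+w^{(\varepsilon)}$ to split it into the mean oscillation of $\cE(D_1^{2m}v^{(\varepsilon)})$ plus $2(|\cE(D_1^{2m}w^{(\varepsilon)})|^2)_{Q_r(X_0)}^{1/2}$, insert the Lemma~\ref{lem3.34} bound and the Theorem~\ref{th06_01} bound, then write $D^k v^{(\varepsilon)}=D^k u-D^k w^{(\varepsilon)}$ in the former and absorb the $D^k w^{(\varepsilon)}$ contributions using the latter, passing freely between half-space $L_2$-norms and averages over full cylinders via $\|\cE g\|_{L_2(Q_\rho(X_0))}\le2\|g\|_{L_2(Q_\rho^+(X_0))}\le2\|\cE g\|_{L_2(Q_\rho(X_0))}$ for $X_0\in\overline{\cO_\infty^+}$. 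This gives the stated inequality with $f^{(\varepsilon)}$ in place of $f$, plus an error of the form $(r^{-m-d/2}+(\kappa r)^{-m-d/2})I^{(\varepsilon)}$, and letting $\varepsilon\searrow0$ removes it.

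The one point requiring genuine thought, as opposed to bookkeeping, is already behind us: that a mean oscillation estimate is available for the single derivative $D_1^{2m}u$ at all. This is precisely what the special form of $\fL_0$ buys, through the observation (see Lemma~\ref{lem06_04}) that $D_1^{2m}$ of a solution again satisfies the homogeneous Dirichlet conditions \eqref{eq06_19}, which feeds into Lemma~\ref{lem3.34}; granting that, the only residual subtleties in the perturbation step are that $\fL_0^{(\varepsilon)}$ stays uniformly Legendre--Hadamard elliptic and that the commutator $(\fL_0^{(\varepsilon)}-\fL_0)u$ vanishes in $L_2$ as $\varepsilon\searrow0$.
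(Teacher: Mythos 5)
Your proof is correct and follows essentially the same approach as the paper: the paper's own ``proof'' of Proposition~\ref{prop3.36} is the single sentence that it follows from Lemma~\ref{lem3.34} ``by following the steps in the proof of Proposition~\ref{thm07_02},'' and your write-up is precisely that translation, with Theorem~\ref{th06_01} replacing Theorem~\ref{thVMO02} and the $D_1^{2m}$ part of Lemma~\ref{lem3.34} replacing Lemma~\ref{lem06_03}. The observation that $\fL_0$ (and its time-mollification $\fL_0^{(\varepsilon)}$) is Legendre--Hadamard elliptic with a constant depending only on $d,m,\delta$, because $\Re(\theta^{\text{tr}}A^{\hat\alpha\hat\alpha}\bar\theta)\ge\delta|\theta|^2$ follows from \eqref{eq7.9.17} at $\xi=e_1$, is left implicit in the paper but is needed and is correctly supplied.
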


\mysection{$L_p$-estimates for systems on a half space}
                    \label{sec9}

With the preparation in the previous two sections, we complete the proofs of Theorem \ref{Thm5} and Theorem \ref{Thm6} in this section.

\begin{proof}[Proof of Theorem \ref{Thm5}]
Recall that the leading coefficients satisfy Assumption \ref{assumption20080424} ($\rho$). As before, we may assume that $T=\infty$, $p>2$, the lower-order coefficients of $\cL$ are all zero, and
$u \in C^{\infty}(\overline{\cO_\infty^+})$
vanishing on $\cO_\infty^+ \setminus Q_{R_0}(X_1)$
for some $X_1\in \overline{\cO_\infty^+}$. 
In this case, it follows from Proposition \ref{thm07_02} (also see Remark \ref{rem0824})
and the proofs of Lemma \ref{lem10.48} as well as
Theorem \ref{Thm1} that
$$
\|D_{x'}^m u\|_{L_p(\cO_\infty^+)}+\lambda^{\frac 1 2}\|u\|_{L_p(\cO_\infty^+)}
\le \|\cE (D_{x'}^m u)\|_{L_p(\cO_\infty)}+\lambda^{\frac 1 2}\|\cE u\|_{L_p(\cO_\infty)}
$$
$$
\le N \kappa_1^{m+\frac d2} \sum_{|\alpha|\le m} \lambda^{\frac{|\alpha|}{2m}-\frac12}
\|\cE f_\alpha\|_{L_p(\cO_\infty)}+N (\kappa_1^{-\frac 12}+\kappa_1^{m+\frac d 2}\rho^{\frac 1 {2\nu}}) \sum_{k=0}^m \lambda^{\frac{1}{2}-\frac{k}{2m}} \|\cE (D^k u)\|_{L_p(\cO_\infty)}
$$
\begin{equation}
                                \label{re2.54}
\le N \kappa_1^{m+\frac d2} \sum_{|\alpha|\le m} \lambda^{\frac{|\alpha|}{2m}-\frac12}
\|f_\alpha\|_{L_p(\cO_\infty^+)}+N (\kappa_1^{-\frac 12}+\kappa_1^{m+\frac d 2}\rho^{\frac 1 {2\nu}}) \sum_{k=0}^m \lambda^{\frac{1}{2}-\frac{k}{2m}} \|D^k u\|_{L_p(\cO_\infty^+)}
\end{equation}
for any $\kappa_1\ge 128$.

Now we move all the spatial derivatives except $D_1^m(A^{\hat\alpha\hat\alpha}D_1^m u)$ to the right-hand side of \eqref{eq081902c}, and add $(-1)^m\sum_{j=2}^d D^{2m}_j u$ to both sides. Here  $\hat\alpha=(m,0,\cdots,0)$.
Then for any $Q_{\kappa_2r}(X_0), \kappa_2\in [128,\infty), r\in (0,\infty),X_0\in \overline{\cO_{\infty}^+}$ and $y\in \bR^d_+$, we have
$$
u_t+(-1)^m(D^m_1(A^{\hat\alpha\hat\alpha}(t,y)D_1^m u)+\sum_{j=2}^d D^{2m}_j u)=\sum_{|\alpha|\le m}D^\alpha\tilde f_\alpha
+ (-1)^m \sum_{j=2}^d D_j^m D_j^m u,
$$
where $\tilde f_\alpha=f_\alpha$ for $|\alpha|<m$,
$$
\tilde f_{\hat\alpha}=f_{\hat\alpha}-\sum_{\substack{|\beta|=m \\ \beta\neq \hat\alpha}}(-1)^m (A^{\hat\alpha\beta}(t,x)+A^{\beta\hat\alpha}(t,y)) D^\beta u
+(-1)^m (A^{\hat\alpha\hat\alpha}(t,y)-A^{\hat\alpha\hat\alpha}(t,x))D^{\hat\alpha}u,
$$
and
$$
\tilde f_\alpha=f_\alpha-\sum_{\substack{|\beta|=m\\\beta\neq \hat\alpha}}(-1)^m A^{\alpha\beta} D^\beta u+(-1)^m (A^{\alpha\hat \alpha}(t,y)-A^{\alpha\hat \alpha}(t,x))D^{\hat \alpha} u
$$
for $|\alpha|=m$, $\alpha\neq \hat \alpha$.
In the last two expressions, we used the fact that
$$
D^{\alpha}_x A^{\alpha\beta}(t,y)D^{\beta}_x u(t,x) = D^{\beta}_x A^{\alpha\beta}(t,y)D^{\alpha}_x u(t,x).
$$
As a consequence of Proposition \ref{prop3.35} and the proof of Lemma \ref{lem10.48}, for any $\kappa_2\ge 128$,
$$
(  |\cE (D^m u) - (\cE (D^m u))_{Q_r(X_0)}  |)_{Q_r(X_0)}
\le N \kappa_2^{-1} \sum_{k=0}^m \lambda^{\frac{1}{2}-\frac{k}{2m}} (|\cE(D^k u)|^2)^{\frac 1 2}_{Q_{\kappa_2 r}(X_0)}
$$
$$
+ N \kappa_2^{m+\frac d 2} \sum_{|\alpha|\le m} \lambda^{\frac{|\alpha|}{2m}-\frac12}
(|\cE f_{\alpha}|^2)_{Q_{\kappa_2 r}(X_0)}^{\frac 1 2}+N\kappa_2^{m+\frac d 2}\rho^{\frac 1 {2\nu}} (|\cE(D_1^m u)|^{2\mu})^{\frac 1 {2\mu}}_{Q_{\kappa_2 r}(X_0)}
$$
$$
+N\kappa_2^{m+\frac d 2}\sum_{|\alpha|=m,\alpha\neq \hat\alpha}(|\cE(D^\alpha u)|^2)^{\frac 1 2}_{Q_{\kappa_2 r}(X_0)}.
$$
Choose a $\mu\in (1,p/2)$. This estimate combined with the Fefferman-Stein theorem and the Hardy-Littlewood maximal function theorem gives
$$
\|D^m u\|_{L_p(\cO_\infty^+)}
\le N \kappa_2^{-1} \sum_{k=0}^m \lambda^{\frac{1}{2}-\frac{k}{2m}}\|D^k u\|_{L_p(\cO_\infty^+)}+N\kappa_2^{m+\frac d 2}\rho^{\frac 1 {2\nu}}\|D_1^m u\|_{L_p(\cO_\infty^+)}
$$
\begin{equation}
                                \label{eq3.40}
+N \kappa_2^{m+\frac d2} \sum_{|\alpha|\le m} \lambda^{\frac{|\alpha|}{2m}-\frac12}
\|f_\alpha\|_{L_p(\cO_\infty^+)}+N\kappa_2^{m+\frac d2}\sum_{|\alpha|=m,\alpha\neq \hat\alpha}\|D^\alpha u\|_{L_p(\cO_\infty^+)}.
\end{equation}
From \eqref{eq3.40} and Proposition \ref{prop07_01}, we get
$$
\|D^m u\|_{L_p(\cO_\infty^+)}
\le N \kappa_2^{-1} \sum_{k=0}^m \lambda^{\frac{1}{2}-\frac{k}{2m}}\|D^k u\|_{L_p(\cO_\infty^+)}+N\kappa_2^{m+\frac d 2}(\rho^{\frac 1 {2\nu}}+\varepsilon)\|D_1^m u\|_{L_p(\cO_\infty^+)}
$$
\begin{equation}
                                \label{eq3.41}
+N \kappa_2^{m+\frac d2} \sum_{|\alpha|\le m} \lambda^{\frac{|\alpha|}{2m}-\frac12}
\|f_\alpha\|_{L_p(\cO_\infty^+)}+N(\varepsilon)\kappa_2^{m+\frac d2}\|D_{x'}^m u\|_{L_p(\cO_\infty^+)}.
\end{equation}
Combining \eqref{re2.54} and \eqref{eq3.41} we obtain the desired estimate by first taking $\kappa_2$ sufficiently large, then $\varepsilon$ sufficiently small, $\kappa_1$ sufficiently large, and finally $\rho$ sufficiently small.
\end{proof}

\begin{proof}[Proof of Theorem \ref{Thm6}]
It suffices to establish the apriori estimate when $T=\infty$, the lower-order coefficients of $L$ are all zero, and
$u \in C^{\infty}(\overline{\cO_\infty^+})$
vanishes on $\cO_\infty^+ \setminus Q_{R_0}(X_1)$
for some $X_1\in \overline{\cO_\infty^+}$. We use the strategy in the proof of Theorem \ref{Thm5} and consider two cases.

{\em Case 1: $p\in (2,\infty)$.} It follows from Corollary \ref{cor8.7} that
$$
\|D_{x'}^{2m} u\|_{L_p(\cO_\infty^+)}+\lambda\|u\|_{L_p(\cO_\infty^+)}
\le N \kappa_1^{m+\frac d2} \|f\|_{L_p(\cO_\infty^+)}
$$
\begin{equation}
                                \label{eq5.31}
+N (\kappa_1^{-\frac 12}+\kappa_1^{m+\frac d 2}\rho^{\frac 1 {2\nu}}) \sum_{k=0}^m \lambda^{1-\frac{k}{2m}} \|D^k u\|_{L_p(\cO_\infty^+)}
\end{equation}
for any $\kappa_1\ge 64$. We move all the spatial derivatives except $A^{\hat\alpha\hat\alpha} D_1^{2m} u$ to the right-hand side of \eqref{eq081902d}, and add $(-1)^m\sum_{j=2}^d D^{2m}_j u$ to both sides.
As a consequence of Proposition \ref{prop3.36} and the proof of Lemma \ref{lem10.48}, for any $\kappa_2\ge 128$,
$$
(  |\cE (D_1^{2m} u) - (\cE (D_1^{2m} u))_{Q_r(X_0)}  |)_{Q_r(X_0)}
\le N \kappa_2^{-1} \sum_{k=0}^{2m} \lambda^{1-\frac{k}{2m}} (|\cE(D^k u)|^2)^{\frac 1 2}_{Q_{\kappa_2 r}(X_0)}
$$
$$
+ N \kappa_2^{m+\frac d 2}
(|\cE f|^2)_{Q_{\kappa_2 r}(X_0)}^{\frac 1 2}+N\kappa_2^{m+\frac d 2}\rho^{\frac 1 {2\nu}} (|\cE(D_1^{2m} u)|^{2\mu})^{\frac 1 {2\mu}}_{Q_{\kappa_2 r}(X_0)}
$$
$$
+N\kappa_2^{m+\frac d 2}\sum_{|\alpha|=2m,\alpha\neq 2\hat\alpha}(|\cE(D^\alpha u)|^2)^{\frac 1 2}_{Q_{\kappa_2 r}(X_0)}.
$$
This estimate combined with the Fefferman-Stein theorem and the Hardy-Littlewood maximal function theorem gives
$$
\|D_1^{2m} u\|_{L_p(\cO_\infty^+)}
\le N \kappa_2^{-1} \sum_{k=0}^{2m} \lambda^{1-\frac{k}{2m}}\|D^k u\|_{L_p(\cO_\infty^+)}+N\kappa_2^{m+\frac d 2}\rho^{\frac 1 {2\nu}}\|D_1^{2m} u\|_{L_p(\cO_\infty^+)}
$$
\begin{equation}
                                \label{eq3.40b}
+N \kappa_2^{m+\frac d2} \|f\|_{L_p(\cO_\infty^+)}+N\kappa_2^{m+\frac d2}\sum_{|\alpha|=2m,\alpha\neq 2\hat\alpha}\|D^\alpha u\|_{L_p(\cO_\infty^+)}.
\end{equation}
From \eqref{eq3.40b} and Proposition \ref{prop07_01}, we get
$$
\|D^{2m} u\|_{L_p(\cO_\infty^+)}
\le N \kappa_2^{-1} \sum_{k=0}^{2m} \lambda^{1-\frac{k}{2m}}\|D^k u\|_{L_p(\cO_\infty^+)}+N\kappa_2^{m+\frac d 2}(\rho^{\frac 1 {2\nu}}+\varepsilon)\|D_1^{2m} u\|_{L_p(\cO_\infty^+)}
$$
\begin{equation}
                                \label{eq3.41b}
+N \kappa_2^{m+\frac d2} \|f\|_{L_p(\cO_\infty^+)}+N(\varepsilon)\kappa_2^{m+\frac d2}\|D_{x'}^{2m} u\|_{L_p(\cO_\infty^+)}.
\end{equation}
Combining \eqref{eq5.31} and \eqref{eq3.41b} we obtain the desired estimate by first taking $\kappa_2$ sufficiently large, then $\varepsilon$ sufficiently small, $\kappa_1$ sufficiently large, and finally $\rho$ sufficiently small.

{\em Case 2: $p\in (1,2]$.} Thanks to Case 1 and Remark \ref{rem2.01}, we already have the $W^{1,2m}_q$ solvability of
$$
u_t+(-1)^m \cL_0 u+\lambda u=f
$$
on the half space for any $q\in (2,\infty)$ and $\lambda>0$.
The same duality argument in the proof of Theorem \ref{Thm2} yields the solvability of the same equation for any $q\in (1,2)$. We can repeat the argument in Section \ref{sec8} to deduce a version of Proposition \ref{prop3.36} with $2$ norms replaced by $q$ norms. Inspecting the proof of Case 1, to finish the proof it remains to have a proper version of Corollary \ref{cor8.7} with $2$ norms replaced by $q$ norms.

We claim that Lemma \ref{lem06_02} is still true with $L_2$ replaced by $L_q,q\in (1,\infty)$, i.e., if $u \in C_{\text{loc}}^{\infty}(\overline{\cO_{\infty}^+})$
satisfies \eqref{eq06_01} on $Q'_4$
and \eqref{eq06_09} in $Q_4^+$, then
$$
[u]_{\cC^{1/2}(Q_1^+)}
\le N \| u \|_{L_q(Q_4^+)}.
$$
This easily yields the desired version of Corollary \ref{cor8.7} by following the lines in Section \ref{sec7}. However, the claim does not follow directly from the proof of Lemma \ref{lem06_02} because \eqref{eq06_16} doesn't hold if the $W^{1,2}_2$ norm on the right-hand side is replace by the $W^{1,2}_q$ norm when $q$ is close to $1$. To get around this, we use a bootstrap argument. We first note that under the assumption of Lemma \ref{lem06_02}, for any $1<r<R\le 4$, it holds that
\begin{equation}
                        \label{eq11.25}
\|u\|_{W_q^{1,2m}(Q_r^+)}\le N\|u\|_{L_q(Q_R^+)}.
\end{equation}
This can be shown in the same way as Lemma \ref{lem6.2} and \ref{lem06_01} based on the global $W_q^{1,2m}$ estimate on the half space. By the Sobolev imbedding theorem and \eqref{eq11.25}, we have
\begin{equation*}
\|u\|_{L_{q_1}(Q_r^+)}\le N\|u\|_{L_q(Q_R^+)}
\end{equation*}
for any $q_1>q$ satisfying
$$
\frac 1 {q_1}>\frac 1 q-\frac 1 {d+1}.
$$
We iterate this bootstrap process for a finite many steps on a sequence of shrinking half cylinders, and get
\begin{equation*}
\|u\|_{W_{q_l}^{1,2m}(Q_1^+)}\le N\|u\|_{L_q(Q_4^+)},
\end{equation*}
where $q_l>2(d+1)$. Now by the Sobolev imbedding theorem again, we deduce
\begin{equation*}
\|u\|_{\cC^{1/2}(Q_1^+)}\le N\|u\|_{L_q(Q_4^+)},
\end{equation*}
which is exactly our claim. The theorem is proved.
\end{proof}

\begin{remark}
From the bootstrap argument above, we actually can get a finer boundary estimate as follows. If $u \in W_{q,\text{loc}}^{1,2m}(\overline{\cO_{\infty}^+}),q\in (1,\infty)$
satisfies \eqref{eq06_01} on $Q'_4$
and \eqref{eq06_09} in $Q_4^+$, then for any  and $\varepsilon\in (0,1)$,
$$
[u]_{\cC^{1-\varepsilon,2m-\varepsilon}(Q_1^+)}
\le N \| u \|_{L_q(Q_4^+)},
$$
where $N=N(d,m,n,q,\varepsilon)$.
\end{remark}

\mysection{Systems on a bounded domain}         \label{sec10}

We present the proofs of Theorem \ref{Thm7} and \ref{Thm8} in this section. We first treat the non-divergence systems. In this case, the proof is quite standard by using the technique of flattening the boundary and a partition of the unity. We give a sketched proof for the sake of completeness.

\begin{proof}[Proof of Theorem \ref{Thm8}]
First, in a same way as Lemma \ref{lem06_01} by using Theorem \ref{Thm2} instead of Theorem \ref{th06_05}, we obtain the following interior estimate for any $0<r<R<\infty$, $Q_r\subset Q_R\subset \Omega_T$ and $\lambda\ge \lambda_0$
\begin{equation}							 \label{eq7.5.08}
\| u_t\|_{L_p(Q_r)}+\sum_{|\alpha|\le 2m} \lambda^{1-\frac{|\alpha|}{2m}} \| D^{\alpha} u \|_{L_p(Q_r)}
\le N \| f\|_{L_p(Q_R)}+N \| u\|_{L_p(Q_R)}.
\end{equation}

Similarly, Theorem \ref{Thm6} yields a boundary estimate: let $0<r<R<\infty$, $f\in L_p(Q_R^+)$, and $\rho$ be the constant taken from Theorem \ref{Thm6}. Then under Assumption \ref{assumption20080424} ($\rho$),
for any $\lambda\ge \lambda_0$ and $u \in W_p^{1,2m}(Q_R^+)$,
we have
\begin{equation}							 \label{eq7.5.09}
\| u_t\|_{L_p(Q_r^+)}+\sum_{|\alpha|\le 2m} \lambda^{1-\frac{|\alpha|}{2m}} \| D^{\alpha} u \|_{L_p(Q_r^+)}
\le N \| f\|_{L_p(Q_R^+)}+N \| u\|_{L_p(Q_R^+)},	
\end{equation}
provided that $u=D_1u=...=D_1^{m-1}u=0$ on $Q_R'$ and
\begin{equation*}		
u_t+(-1)^m L u + \lambda u = f \quad \text{in}\,\,Q_R^+.
\end{equation*}

It is well-known that the ellipticity condition \eqref{eq7.9.17} is preserved under a change of variables. Take $t_0\in (-\infty,T)$, a point $x_0\in \partial \Omega$ and a number $r_0=r_0(\Omega)$, so that
$$
\Omega\cap B_{r_0}(x_0) = \{x \in  B_{r_0}(x_0)\, :\, x_1 >\phi(x')\}
$$
in some coordinate system.
We now locally flatten the boundary of $\partial\Omega$ by defining
$$
y_1=x_1-\phi(x'):=\Phi^1(x),\quad y_j=x_j:=\Phi^j(x),\,\,j\ge 2.
$$
Under the assumptions of the theorem, $\Phi$ is a $C^{2m-1,1}$ diffeomorphism in a neighborhood of $x_0$. It is easily seen that the leading coefficients of the new operator in the $y$-coordinates also satisfy Assumption \ref{assumption20080424} with a possibly different $\rho$. Thus, we can choose a sufficiently small $\rho$ such that from \eqref{eq7.5.09}, for $X_0 = (t_0,x_0)$ and some $r_1=r_1(\Omega)<r_0$,
$$
\| u_t\|_{L_p(\Omega_T\cap Q_{r_1}(X_0))}+\sum_{|\alpha|\le 2m} \lambda^{1-\frac{|\alpha|}{2m}} \| D^{\alpha} u \|_{L_p(\Omega_T\cap Q_{r_1}(X_0))}
$$
\begin{equation}							 \label{eq7.5.31}
\le N \| f\|_{L_p(\Omega_T\cap Q_{r_0}(X_0))}+N\sum_{j=0}^{m-1}\|D^j u\|_{L_p(\Omega_T\cap Q_{r_0}(X_0))}.
\end{equation}
Finally, a partition of the unity together with \eqref{eq7.5.08} and \eqref{eq7.5.31} completes the proof for a sufficiently large $\lambda_0$.
\end{proof}

Now we turn to the divergence case. We need to introduce a special mollification, which was used, for instance, in \cite{GH90,Mi06}.

\begin{proof}[Proof of Theorem \ref{Thm7}]
Again we only give an outline of the proof. The interior estimate is similar to that of the non-divergence case. Theorem \ref{Thm1} implies that, for any $0<r<R<\infty$, $Q_r\subset Q_R\subset \Omega_T$ and $\lambda\ge \lambda_0$,
\begin{equation*}							
\sum_{|\alpha|\le m} \lambda^{1-\frac{|\alpha|}{2m}} \| D^{\alpha} u \|_{L_p(Q_r)}
\le N \sum_{|\alpha| \le m} \lambda^{\frac{|\alpha|}{2m}} \| f_{\alpha} \|_{L_p(Q_R)}+N \| u \|_{L_p(Q_R)}.
\end{equation*}
We also have the boundary estimate by Theorem \ref{Thm5}: Let $0<r<R<\infty$, $f\in L_p(Q_R^+)$, and $\rho$ be the constant taken from Theorem \ref{Thm5}. Then under Assumption \ref{assumption20080424} ($\rho$),
for any $\lambda\ge \lambda_0$ and $u \in \cH_p^{2m}(Q_R^+)$,
we have
\begin{equation}							 \label{eq8.2.36}
\sum_{|\alpha|\le m} \lambda^{1-\frac{|\alpha|}{2m}} \| D^{\alpha} u \|_{L_p(Q_r^+)}
\le N \sum_{|\alpha| \le m} \lambda^{\frac{|\alpha|}{2m}} \| f_{\alpha} \|_{L_p(Q_R^+)}
+N\|u \|_{L_p(Q_R^+)},	
\end{equation}
provided that $u=D_1u=...=D_1^{m-1}u=0$ on $Q_R'$ and
\begin{equation*}		
u_t+(-1)^m \cL u + \lambda u = f \quad \text{in}\,\,Q_R^+.
\end{equation*}

Take $t_0\in (-\infty,T)$, a point $x_0\in \partial \Omega$ and a number $r_0\in (0,R_1]$. By Assumption \ref{assump2}, locally in some coordinate system, we have
$$
\Omega\cap B_{r_0}(x_0) = \{x \in  B_{r_0}(x_0)\, :\, x^1 >\phi(x')\},
$$
and the local Lipschitz norm of $\phi$ is less than $\rho_1$.
The goal is to locally flatten the boundary of $\partial\Omega$. However, $\phi$ is not smooth in this case since it is only assumed to be Lipschitz continuous. To construct a smooth  diffeomorphism, we define a function $\tilde \phi$ on $\bR_+^d$ by
$$
\tilde \phi(x)=\int_{\bR^{d-1}}\eta(y')\phi(x'- x_1 y')\,dy'.
$$
Here $\eta\in C_0^\infty(B_1')$ has unit integral. It is easy to check that $\tilde \phi(0,x')=\phi(x')$ and $|D^k \tilde \phi(x)|\le N(x_1)^{1-k}\rho_1$.
We now  define
$$
y_1=x_1-\tilde \phi(x):=\tilde \Phi^1(x),\quad y_j=x_j:=\tilde \Phi^j(x),\,\,j\ge 2.
$$
As before, the leading coefficients of the new operator in the $y$-coordinates also satisfy Assumption \ref{assumption20080424} with a possibly different $\rho$. After some straightforward calculations using \eqref{eq7.5.09} and Hardy's inequality, we conclude, for $X_0 = (t_0,x_0)$ and some $r_1=r_1(\Omega) \in (0,r_0)$,
$$
\sum_{|\alpha|\le m} \lambda^{1-\frac{|\alpha|}{2m}} \| D^{\alpha} u \|_{L_p(\Omega_T\cap Q_{r_1}(X_0))}
\le N \sum_{|\alpha|\le m}\lambda^{\frac {|\alpha|} {2m}}\| f_\alpha\|_{L_p(\Omega_T\cap Q_{r_0}(X_0))}
$$
\begin{equation}							 \label{eq8.5.23}
+N\rho_1\sum_{|\alpha|\le m} \lambda^{1-\frac{|\alpha|}{2m}} \| D^{\alpha} u \|_{L_p(\Omega_T\cap Q_{r_0}(X_0))}.
\end{equation}
Using a partition of the unity together with \eqref{eq8.2.36} and \eqref{eq8.5.23}, we complete the proof of the theorem upon choosing a sufficiently large $\lambda_0$ and small $\rho_1$.
\end{proof}

\mysection{Remarks on the ellipticity conditions}
							\label{sec11}

In this section we discuss some other ellipticity conditions appeared in the literature, and show how our results can be extended to systems under those conditions.

The following strong ellipticity condition has been widely used before; see, for example, \cite{LKS,B09}.
\begin{assumption}
For all $(t,x) \in \bR^{d+1}$ and complex vectors $\xi=\{\xi_{\alpha,i}\},|\alpha|=m,i=1,...,n$,
\begin{equation}
                                \label{eq9.9.17}
\Re\left(\sum_{|\alpha|=|\beta|=m} \xi_{\alpha,i}\overline{\xi_{\beta,j}}A^{\alpha\beta}_{ij}(t,x)\right)
\ge \delta |\xi|^{2},
\end{equation}
where $\delta > 0$.
\end{assumption}

The next condition is called uniform parabolicity in the sense of Petrovskii, which has been used, for example, in \cite{Solo,LSU,Ejd,PS1}. We define a matrix-valued function on $\bR^{d+1}\times (\bR^d \setminus \{0\})$:
$$
\bfA(t,x,\xi)=|\xi|^{-2m}\sum_{|\alpha|=|\beta|=m} \xi^{\alpha}\xi^{\beta} A^{\alpha\beta}(t,x).
$$
\begin{assumption}
Let $\lambda_j(t,x,\xi)$, $j=1,...,n$, be the eigenvalues of $\bfA(t,x,\xi)$. Then,
\begin{equation}
                                \label{eq14.34}
\Re \left(\lambda_j(t,x,\xi)\right) \ge \delta,\quad j=1,2,...,n,
\end{equation}
for all $(t,x) \in \bR^{d+1}$ and $\xi \in \bR^d\setminus \{0\}$, where $\delta > 0$.
\end{assumption}

We still assume that all the coefficients are bounded and measurable. Clearly, the Legendre-Hadamard ellipticity condition \eqref{eq7.9.17} is weaker than the strong ellipticity condition. However, it is stronger than the uniform parabolicity in the sense of Petrovskii.

\subsection{The strong ellipticity condition} Since it is stronger than our assumption, all the results in this paper hold true under this condition.
Moreover, we can take $\lambda_0=0$ in Theorem \ref{Thm7} for divergence form parabolic systems without lower-order terms.
In this case the solution $u$ satisfies
\begin{equation}							 \label{eq9.5.24}
\sum_{|\alpha|\le m} \| D^{\alpha} u \|_{L_p(\Omega_T)}
\le N \sum_{|\alpha| \le m}\| f_{\alpha} \|_{L_p(\Omega_T)}.
\end{equation}
Indeed, by the method of continuity it suffices to prove the estimate \eqref{eq9.5.24}. Due to \eqref{eq9.9.17} and the Poincar\'{e} inequality, we easily get the unique solvability  for $p=2$ as well as
\begin{equation}							 \label{eq9.5.24b}
\sum_{|\alpha|\le m} \| D^{\alpha} u \|_{L_2(\Omega_T)}
\le N \sum_{|\alpha| \le m}\| f_{\alpha} \|_{L_2(\Omega_T)}.
\end{equation}
In the case when $p>2$, we add $(\lambda_0+1) u$ to both sides of the first equation of \eqref{eq8.10.51}. By Theorem \ref{Thm7}, it holds that
\begin{equation}							 \label{eq9.5.25}
\sum_{|\alpha|\le m} \| D^{\alpha} u \|_{L_p(\Omega_T)}
\le N_1 \sum_{|\alpha| \le m}\| f_{\alpha} \|_{L_p(\Omega_T)}+N_1\|u\|_{L_p(\Omega_T)}.
\end{equation}
Take $p_1\in (p,\infty)$ such that  $1-d/p>-d/p_1$. By H\"older's inequality, Young's inequality and the Poincar\'e-Sobolev inequality, we get for any $\varepsilon>0$,
$$
\|u\|_{L_p(\Omega_T)}\le N(\varepsilon)\|u\|_{L_2(\Omega_T)}+ \varepsilon\|u\|_{L_{p_1}(\Omega_T)}\le N(\varepsilon)\|u\|_{L_2(\Omega_T)}+N_2\varepsilon\|Du\|_{L_{p}(\Omega_T)}.
$$
Choosing $\varepsilon=1/(2N_1N_2)$ and using \eqref{eq9.5.24b} and \eqref{eq9.5.25}, we obtain \eqref{eq9.5.24} for $p>2$. The remaining case $p\in (1,2)$ follows from the standard duality argument.

\subsection{The uniform parabolicity condition in the sense of Petrovskii}

As we noted, this assumption is weaker than the Legendre-Hadamard condition. Under this assumption, for the solvability of parabolic systems, we need to impose a stronger regularity assumption on the leading coefficient, that is, they are VMO in both $t$ and $x$. More precisely, set
$$
\text{osc}_{t,x}\left(A^{\alpha\beta},Q_r(t,x)\right)
= \dashint_{Q_r(t,x)} \big| A^{\alpha\beta}(s,y) - \dashint_{Q_r(t,x)} A^{\alpha\beta}\big| \, dy \, ds,
$$
and
$$
\tilde A^{\#}_R = \sup_{(t,x) \in \bR^{d+1}} \sup_{r \le R}  \sup_{|\alpha|=|\beta|=m} \text{osc}_{x} \left(A^{\alpha\beta},Q_r(t,x)\right).
$$
\begin{assumption}[$\rho$]                          \label{assumption20090718}
There is a constant $R_0\in (0,1]$ such that $\tilde A_{R_0}^{\#} \le \rho$.
\end{assumption}

Next we show that the results in Section \ref{sec082001} about parabolic systems in the whole space (Theorem \ref{Thm1} and \ref{Thm2}) still hold true under the assumptions above. As a consequence, we obtain interior estimates for both divergence and non-divergence type parabolic systems. We note that, for non-divergence type parabolic systems, the corresponding interior estimate was established in a recent interesting paper \cite{PS1} (see Theorem 2.4 there) by using a completely different approach.

By inspecting the proofs of the main theorems, it is apparent that if the $L_2$-estimate Theorem \ref{thVMO01} 
is proved for parabolic systems with {\em constant} coefficients under the uniform parabolicity condition, then the remaining arguments can be carried out as before with obvious modifications. Indeed, we have

\begin{theorem}							\label{thVMO09}
Let $T \in (-\infty,\infty]$ and
$$
\cL_0 u = \sum_{|\alpha|=|\beta|=m}D^{\alpha}(A^{\alpha\beta}D^{\beta} u),
$$
where $A^{\alpha\beta}$ are constants satisfying the uniform parabolicity condition \eqref{eq14.34}. Then there exists $N = N(d,n,m,\delta)$ such that,
for any $\lambda \ge 0$,
\begin{equation}							\label{eqVMO03k}
\sum_{|\alpha|\le m} \lambda^{1-\frac{|\alpha|}{2m}} \| D^{\alpha} u \|_{L_2(\cO_T)}
\le N \sum_{|\alpha| \le m} \lambda^{\frac{|\alpha|}{2m}} \| f_{\alpha} \|_{L_2(\cO_T)},	
\end{equation}
if $u \in \cH_2^m(\cO_T)$, $f_{\alpha} \in L_2(\cO_T)$, $|\alpha|\le m$, and
\begin{equation}							\label{eqVMO01k}
u_t + (-1)^m \cL_0 u + \lambda u = \sum_{|\alpha|\le m} D^{\alpha} f_{\alpha}
\end{equation}
in $\cO_T$.
Furthermore, for $\lambda > 0$ and $f_{\alpha} \in L_2(\cO_T)$, $|\alpha| \le m$,
there exists a unique $u \in \cH_2^m(\cO_T)$ satisfying \eqref{eqVMO01k}.
\end{theorem}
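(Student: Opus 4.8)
The plan is to mimic the proof of Theorem~\ref{thVMO01}, but to replace the energy argument there (which relied on accretivity of the symbol coming from Legendre--Hadamard ellipticity) by a direct analysis of the solution through the partial Fourier transform in $x$; the latter is now available precisely because all coefficients are constants. As in the proof of Theorem~\ref{thVMO01}, it suffices to establish the a priori estimate \eqref{eqVMO03k} for $\lambda>0$ (when $\lambda=0$ the left-hand side vanishes, since $1-|\alpha|/(2m)\ge 1/2$ for $|\alpha|\le m$), after which the unique solvability follows from the estimate, the method of continuity along the family $A_s^{\alpha\beta}=sA^{\alpha\beta}+(1-s)\delta_{\alpha\beta}I_{n\times n}$, $s\in[0,1]$, and the obvious solvability of the diagonal system $u_t+(-1)^m\Delta^m u+\lambda u=\sum_{|\alpha|\le m}D^\alpha f_\alpha$. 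The family $A_s^{\alpha\beta}$ is again uniformly parabolic in the sense of Petrovskii with the same $\delta$ (assuming, as we may, $\delta\le1$): passing to $\bfA_s(\xi)=s\bfA(\xi)+(1-s)I$ merely shifts the eigenvalues of $\bfA(\xi)$ by $(1-s)$, so their real parts stay $\ge s\delta+(1-s)\ge\delta$.

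To prove \eqref{eqVMO03k} for $\lambda>0$, I would take the Fourier transform of \eqref{eqVMO01k} in $x$. Writing $\tilde u(t,\xi)$ for the transform of $u$, we obtain for a.e.\ $\xi\in\bR^d$ the linear ODE system
$$
\tilde u_t+M(\xi)\tilde u=\tilde g(t,\xi),\qquad M(\xi)=|\xi|^{2m}\bfA(\xi)+\lambda I,\quad \tilde g=\sum_{|\alpha|\le m}(\ii\xi)^\alpha\tilde f_\alpha,
$$
on $(-\infty,T)$, where every eigenvalue of $M(\xi)$ has real part $\ge\delta|\xi|^{2m}+\lambda>0$. Since $u\in\cH_2^m(\cO_T)$ forces $\tilde u(\cdot,\xi)\in L_2(-\infty,T)$ while the homogeneous solutions $e^{-tM(\xi)}c$ blow up as $t\to-\infty$ unless $c=0$, we get the representation
$$
\tilde u(t,\xi)=\int_{-\infty}^t e^{-(t-s)M(\xi)}\,\tilde g(s,\xi)\,ds .
$$
The quantitative heart of the matter is the bound
$$
\big\|e^{-\sigma M(\xi)}\big\|\le N\,e^{-(\lambda+\frac\delta2|\xi|^{2m})\sigma}\qquad(\sigma\ge0),\qquad N=N(d,n,m,\delta),
$$
which, using that $\bfA$ is positively homogeneous of degree $0$, is equivalent to $\|e^{-\tau\bfA(\omega)}\|\le N e^{-\delta\tau/2}$ uniformly in $\tau\ge0$ and $\omega\in S^{d-1}$. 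This is the quantitative form of the fact that for a finite-dimensional semigroup the exponential growth rate is governed by the spectral bound: since every eigenvalue of $\bfA(\omega)$ has real part $\ge\delta$ and $\|\bfA(\omega)\|\le N(d,m)\delta^{-1}$, one has the uniform resolvent bound $\|(zI+\bfA(\omega))^{-1}\|\le N(d,n,\delta)$ for $\Re z\ge-\delta/2$, and the claimed estimate follows from the contour-integral representation of $e^{-\tau\bfA(\omega)}$.

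Granting this bound, Young's convolution inequality in $t$ gives $\|\tilde u(\cdot,\xi)\|_{L_2}\le N(\lambda+\frac\delta2|\xi|^{2m})^{-1}\|\tilde g(\cdot,\xi)\|_{L_2}$, and since $|\tilde g(\cdot,\xi)|\le\sum_{|\alpha|\le m}|\xi|^{|\alpha|}|\tilde f_\alpha(\cdot,\xi)|$, Plancherel's theorem yields
$$
\|D^\alpha u\|_{L_2(\cO_T)}^2\le N\sum_{|\beta|\le m}\int_{\bR^d}\frac{|\xi|^{2|\alpha|+2|\beta|}}{\big(\lambda+\frac\delta2|\xi|^{2m}\big)^2}\,\|\tilde f_\beta(\cdot,\xi)\|_{L_2}^2\,d\xi
$$
for $|\alpha|\le m$. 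Multiplying by $\lambda^{2-|\alpha|/m}$ and using the elementary inequality $\rho^{\,2-a-b}\eta^{\,a+b}\le(\rho+\eta)^2$ valid for $\rho,\eta\ge0$, $a,b\in[0,1]$ (applied with $\rho=\lambda$, $\eta=\frac\delta2|\xi|^{2m}$, $a=|\alpha|/m$, $b=|\beta|/m$), each integrand is bounded by $N\lambda^{|\beta|/m}\|\tilde f_\beta(\cdot,\xi)\|_{L_2}^2$, whence $\lambda^{1-|\alpha|/(2m)}\|D^\alpha u\|_{L_2}\le N\sum_{|\beta|\le m}\lambda^{|\beta|/(2m)}\|f_\beta\|_{L_2}$; summing over $|\alpha|\le m$ gives \eqref{eqVMO03k}.

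The main obstacle is precisely the uniform semigroup/resolvent bound described above. Under the Legendre--Hadamard condition the $L_2$-estimate came for free from accretivity of the symbol (the term $\langle D^\alpha u,A^{\alpha\beta}D^\beta u\rangle$ dominates $\delta\|D^m u\|_{L_2}^2$), whereas under the weaker Petrovskii condition $\bfA(\xi)$ need not be accretive, so one cannot integrate by parts directly; instead the exponential decay rate of $e^{-\tau\bfA(\omega)}$ must be extracted from the location of the spectrum alone, uniformly over the compact sphere $S^{d-1}$. Everything else in the argument is routine.
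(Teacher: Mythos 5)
Your proof is correct but follows a genuinely different route from the one in the paper. After taking the Fourier transform in $x$, the paper continues with an energy argument: via the Schur decomposition $\bfA(\xi)=Q^H U Q$ and Lemma \ref{lem16.07} it constructs a diagonal matrix $B$ (depending only on $n$ and $\delta$) so that $\Re(x^H B U x)\ge\delta_1|x|^2$; it then tests the transformed equation against $Q^H B Q\tilde u$, which restores the accretivity that the Petrovskii condition by itself does not provide, and closes the estimate exactly as in Theorem \ref{thVMO01}. You instead solve the ODE in $t$ explicitly by a Duhamel formula and use the uniform semigroup decay $\|e^{-\tau\bfA(\omega)}\|\le Ne^{-\delta\tau/2}$ (derivable either from a resolvent bound and a contour integral, as you indicate, or directly from the Schur form: $e^{-\tau U}$ is upper triangular with diagonal $e^{-\tau\lambda_j}$ and polynomially growing off-diagonal entries, which $e^{-\delta\tau/2}$ still dominates), followed by Young's convolution inequality and Plancherel. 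Both arguments extract the same quantitative content from the spectral gap $\Re\lambda_j\ge\delta$ together with the bound $\|\bfA\|\le\delta^{-1}$; the paper's version is a frequency-by-frequency energy estimate with a modified inner product, in the spirit of the rest of the paper, while yours is an explicit solution-operator bound, which arguably makes the role of the spectrum more transparent. One small inaccuracy worth noting: with the homotopy $A_s^{\alpha\beta}=sA^{\alpha\beta}+(1-s)\delta_{\alpha\beta}I$, one has $\bfA_0(\xi)=c(\xi)I$ with $c(\xi)=|\xi|^{-2m}\sum_{|\alpha|=m}\xi^{2\alpha}\ne1$; the eigenvalues of $\bfA_s(\xi)$ are therefore $s\lambda_j(\xi)+(1-s)c(\xi)$, not a pure shift by $1-s$, but since $c(\xi)\ge c(d,m)>0$ their real parts are still uniformly bounded below and the method of continuity goes through. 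The rest of the argument — the elementary inequality $\rho^{2-a-b}\eta^{a+b}\le(\rho+\eta)^2$ and the reduction to $u\in C_0^\infty(\overline{\cO_T})$ — is routine and correct.
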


Theorem 11.4 is probably known before. For example, it can be derived from the results in \cite{Solo}; see also Theorem 10.4 in Chapter VII of \cite{LSU}. Instead of appealing to those general results, here we present a direct proof of it.
We need an elementary lemma, which is verified by a direction computation.
\begin{lemma}
                                    \label{lem16.07}
Let $\delta>0$ and $U$ be an $n\times n$ upper triangular complex matrix satisfying
$$
|U|\le \delta^{-1},\quad \Re\lambda_j\ge \delta,\quad j=1,2,...,n,
$$
where $\lambda_j$, $j=1,...,n$, are the eigenvalues of $U$.
Then there exist real constants $\varepsilon,\delta_1>0$, depending  only on $n$ and $\delta$, such that for any $x\in \bC^n$
$$
\Re (x^H BU x)\ge \delta_1 |x|^2,
$$
where $B=\text{diag}\{\varepsilon^{n-1},\varepsilon^{n-2},...,\varepsilon,1\}$ and $x^H$ is the conjugate transpose of $x$.
\end{lemma}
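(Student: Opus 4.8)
The plan is to prove this by a direct computation exploiting the triangular structure of $U$. First I would note that, since $U$ is upper triangular, its eigenvalues are precisely its diagonal entries, so $\Re U_{jj}=\Re\lambda_j\ge\delta$ for $j=1,\dots,n$, while $|U|\le\delta^{-1}$ gives $|U_{jk}|\le\delta^{-1}$ for all $j\le k$. With $B=\mathrm{diag}\{\varepsilon^{n-1},\dots,\varepsilon,1\}$, so that $B_{jj}=\varepsilon^{n-j}$, expanding the quadratic form (only the upper-triangular entries of $U$ contribute) gives
$$
\Re\big(x^H B U x\big)=\sum_{j=1}^n\varepsilon^{n-j}\,\Re(U_{jj})\,|x_j|^2+\Re\!\Big(\sum_{1\le j<k\le n}\varepsilon^{n-j}\,U_{jk}\,\overline{x_j}\,x_k\Big)=:S_1+S_2,
$$
and the diagonal term satisfies $S_1\ge\delta\sum_{j=1}^n\varepsilon^{n-j}|x_j|^2$.

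The second step is to bound the off-diagonal term $S_2$ by a small multiple of $\sum_j\varepsilon^{n-j}|x_j|^2$. For $j<k$ I would write $\varepsilon^{n-j}=\varepsilon^{(k-j)/2}(\varepsilon^{n-j})^{1/2}(\varepsilon^{n-k})^{1/2}$ and apply Young's inequality to get
$$
\varepsilon^{n-j}|x_j||x_k|\le\frac{\varepsilon^{(k-j)/2}}{2}\big(\varepsilon^{n-j}|x_j|^2+\varepsilon^{n-k}|x_k|^2\big)\le\frac{\varepsilon^{1/2}}{2}\big(\varepsilon^{n-j}|x_j|^2+\varepsilon^{n-k}|x_k|^2\big)
$$
for $0<\varepsilon\le1$, since $k-j\ge1$. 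Using $|U_{jk}|\le\delta^{-1}$ and summing (each index is paired with at most $n$ others) yields $|S_2|\le n\,\delta^{-1}\varepsilon^{1/2}\sum_{j=1}^n\varepsilon^{n-j}|x_j|^2$.

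Finally I would choose $\varepsilon=\varepsilon(n,\delta)\in(0,1]$ so small that $n\,\delta^{-1}\varepsilon^{1/2}\le\delta/2$, e.g.\ $\varepsilon=\min\{1,\delta^4/(4n^2)\}$. Then $\Re(x^H B U x)=S_1+S_2\ge\frac{\delta}{2}\sum_{j=1}^n\varepsilon^{n-j}|x_j|^2\ge\frac{\delta}{2}\varepsilon^{n-1}|x|^2$, using $\varepsilon^{n-j}\ge\varepsilon^{n-1}$, so the lemma holds with $\delta_1:=\frac{\delta}{2}\varepsilon^{n-1}$, which depends only on $n$ and $\delta$. There is no genuine obstacle here; the only point needing care is arranging the geometric weights in Young's inequality so that the single saved factor $\varepsilon^{1/2}$ (coming from $k-j\ge1$) dominates all off-diagonal contributions uniformly in $x$, which is exactly why the specific scaling $B_{jj}=\varepsilon^{n-j}$ is used.
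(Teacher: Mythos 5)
Your proof is correct and is precisely the direct computation the paper alludes to (it states the lemma "is verified by a direct computation" without giving details): expand $\Re(x^H BU x)$ using the upper-triangular structure, bound the diagonal part from below by $\delta\sum_j\varepsilon^{n-j}|x_j|^2$, absorb the off-diagonal part via Young's inequality with the geometric weights $\varepsilon^{n-j}$, and choose $\varepsilon$ small depending only on $n,\delta$.
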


\begin{proof}[Proof of Theorem \ref{thVMO09}]
It suffices to prove \eqref{eqVMO03k} when $u \in C_0^{\infty}(\overline{\cO_T})$ and $\lambda > 0$. We take the Fourier transform of \eqref{eqVMO01k} in $x$ and get
\begin{equation}
                                    \label{eq11.05}
\tilde u_t + \bfA(\xi) |\xi|^{2m}\tilde u + \lambda \tilde u = \sum_{|\alpha|\le m} (\ii \xi)^\alpha \tilde f_{\alpha}.
\end{equation}
Let $\bfA(\xi)=Q^H U Q$ be the Schur decomposition of $\bfA$, where $Q=Q(\xi)$ is an $n\times n$ unitary matrix and $U=U(\xi)$
is an upper triangular matrix.
Let $B$ be the diagonal matrix in Lemma \ref{lem16.07}. Multiplying both sides of \eqref{eq11.05} by $Q^H B Q\tilde u$ and integrating on $\cO_T$ give us
$$
\langle B Q\tilde u,Q\tilde u_t\rangle_{\cO_T} + \langle B Q\tilde u,U Q |\xi|^{2m} \tilde  u\rangle_{\cO_T} + \lambda \langle B Q\tilde u,Q\tilde u\rangle_{\cO_T}
$$
\begin{equation}
                                    \label{eq11.40}
= \sum_{|\alpha|\le m} \langle Q^H B Q\tilde u,(\ii \xi)^\alpha \tilde f_{\alpha}\rangle_{\cO_T}.
\end{equation}
As in the proof of Theorem \ref{thVMO01},
$$
\Re \langle B Q\tilde u,Q\tilde u_t\rangle_{\cO_T} \ge 0.
$$
By the Plancherel equality,
$$
\quad \lambda \Re \langle B Q\tilde u,Q\tilde u\rangle_{\cO_T}\ge N(\varepsilon)\lambda \|u\|^2_{L_2(\cO_T)}.
$$
To estimate the second term of the left-hand side of \eqref{eq11.40}, we use Lemma \ref{lem16.07} and the Plancherel equality to get
$$
\Re \langle B Q\tilde u,U Q |\xi|^{2m} \tilde  u\rangle_{\cO_T}\ge
\delta_1\langle \tilde u,|\xi|^{2m} \tilde  u\rangle_{\cO_T}
\ge N(n,m,\delta)\|D^m u\|^2_{L_2(\cO_T)}.
$$
The real part of the right-hand side of \eqref{eq11.40} is bounded from above by
\begin{multline*}
N\sum_{|\alpha|\le m}\|D^{\alpha}u\|_{L_2(\cO_T)}\|f_{\alpha}\|_{L_2(\cO_T)}\\ \le \sum_{|\alpha|\le m}\varepsilon \lambda^{\frac {m-|\alpha|} m}\|D^\alpha u\|^2_{L_2(\cO_T)} + N\sum_{|\alpha|\le m}\varepsilon^{-1} \lambda^{-\frac {m-|\alpha|} m}\|f_\alpha\|^2_{L_2(\cO_T)}
\end{multline*}
for all $\varepsilon > 0$. To complete the proof of \eqref{eqVMO03k} it suffices to use the interpolation inequalities and choose an appropriate $\varepsilon$.
\end{proof}

\begin{remark}
In contrast, under Petrovskii's parabolicity condition, the Dirichlet boundary value problem of parabolic systems is in general not well-posed when $d\ge 2$, as pointed out in \S 10 Chapter VII of \cite{LSU}. However, in the case $d=1$, relying on a linear transformation one can extend Theorem \ref{thVMO09} to systems on the half space with the homogeneous Dirichlet boundary condition; see, for instance, \S 10 Chapter VII of \cite{LSU}. Thus, all the results in Section \ref{sec082001} about systems on a half space or a bounded domain remain true in this case.
\end{remark}

\section*{Acknowledgement}
The authors are very grateful to Nicolai V. Krylov and the referees for helpful
comments on the first version of the paper.

\bibliographystyle{plain}

\def\cprime{$'$}\def\cprime{$'$} \def\cprime{$'$} \def\cprime{$'$}
  \def\cprime{$'$} \def\cprime{$'$}

\end{document}